\newcommand{\comments}[1]{}
\let\counterwithin\relax  %DSA: i had to include this to be able to compile
\newcommand{\mcG}{\mathcal{G}}
\newcommand{\mcA}{\mathcal{A}}
\newcommand{\mcC}{\mathcal{C}}
\newcommand{\mcJ}{\mathcal{J}}
\definecolor{dark-gray}{gray}{0.3}
\definecolor{dkgray}{rgb}{.4,.4,.4}
\definecolor{dkblue}{rgb}{0,0,.5}
\definecolor{medblue}{rgb}{0,0,.75}
\definecolor{rust}{rgb}{0.5,0.1,0.1}
\newtheoremstyle{myThm} % name
    {\topsep}                    % Space above
    {\topsep}                    % Space below
    {\itshape}                   % Body font
    {}                           % Indent amount
    {\sffamily\bfseries}                   % Theorem head font
    {.}                          % Punctuation after theorem head
    {.5em}                       % Space after theorem head
    {}  % Theorem head spec (can be left empty, meaning ‘normal’)
\newtheoremstyle{myRem} % name
    {\topsep}                    % Space above
    {\topsep}                    % Space below
    {}                   % Body font
    {}                           % Indent amount
    {\sffamily}                   % Theorem head font
    {.}                          % Punctuation after theorem head
    {.5em}                       % Space after theorem head
    {}  % Theorem head spec (can be left empty, meaning ‘normal’)
\newtheoremstyle{myDef} % name
    {\topsep}                    % Space above
    {\topsep}                    % Space below
    {}                   % Body font
    {}                           % Indent amount
    {\sffamily\bfseries}                   % Theorem head font
    {.}                          % Punctuation after theorem head
    {.5em}                       % Space after theorem head
    {}  % Theorem head spec (can be left empty, meaning ‘normal’)
\theoremstyle{myThm}
\newtheorem{theorem}{Theorem}[section]
\newtheorem{lemma}[theorem]{Lemma}
\newtheorem{proposition}[theorem]{Proposition}
\newtheorem{assumption}[theorem]{Assumption}
\newtheorem{definition}[theorem]{Definition}
 \newenvironment{example}
  {\pushQED{\qed}\examplex}
  {\popQED\endexamplex}
\theoremstyle{myRem}
 \newenvironment{remark}
  {\pushQED{\qed}\remarkx}
  {\popQED\endremarkx}
\let\originalleft\left
\let\originalright\right
\renewcommand{\left}{\mathopen{}\mathclose\bgroup\originalleft}
\renewcommand{\right}{\aftergroup\egroup\originalright}
\providecommand{\mathbbm}{\mathbb} % In case we don't load bbm
\newcommand{\R}{\mathbbm{R}}
\newcommand{\E}{\mathbbm{E}}
\newcommand{\mcD}{\mathcal{D}}
\newcommand{\mcM}{\mathcal{M}}
\newcommand{\mcB}{\mathcal{B}}
\newcommand{\bfF}{\mathbf{F}}
\newcommand{\bfU}{\mathbf{U}}
\newcommand{\bfW}{\mathbf{W}}
\newcommand{\bfA}{\mathbf{A}}
\newcommand{\bfM}{\mathbf{M}}
\newcommand{\bfB}{\mathbf{B}}
\newcommand{\bfC}{\mathbf{C}}
\newcommand{\bfD}{\mathbf{D}}
\newcommand{\bfX}{\mathbf{X}}
\newcommand{\bfu}{\mathbf{u}}
\newcommand{\bff}{\mathbf{f}}
\newcommand{\bfw}{\mathbf{w}}
\newcommand{\bfv}{\mathbf{v}}
\newcommand{\mcN}{\mathcal{N}}
\newcommand{\mcL}{\mathcal{L}}
\newcommand{\mcH}{\mathcal{H}}
\newcommand{\mcW}{\mathcal{W}}
\newcommand{\OVB}{\mathsf{OVB}}
\newcommand{\ErrorOVB}{\mathsf{Error\text{-}OVB}}
\newcommand{\Var}{\mathsf{Var}}
\newcommand{\ErrorVar}{\mathsf{Error\text{-}Var}}
\newcommand{\ErrorCompression}{\mathsf{Error\text{-}Compression}}
\newcommand{\ErrorTruncation}{\mathsf{Error\text{-}Truncation}}
\newcommand{\ErrorEstimation}{\mathsf{Error\text{-}Estimation}}
\newcommand{\OPS}{\mathsf{OPS}}
\newcommand{\supp}{\mathrm{supp}}
\newcommand{\poly}{\mathrm{poly}}
\newcommand{\polylog}{\mathrm{polylog}}
\definecolor{mygreen}{rgb}{0.13,0.55,0.13}
\newcommand{\M}{\mathcal{M}}
\pgfplotsset{compat=newest}
\title{Convergence Rates for Learning Pseudo-Differential Operators}
\author{Jiaheng Chen and Daniel Sanz-Alonso}
\date{University of Chicago}
\makeatletter\@addtoreset{section}{part}\makeatother%
\numberwithin{equation}{section}
\newcommand{\upperRomannumeral}[1]{\uppercase\expandafter{\romannumeral#1}}
\renewcommand{\hat}{\widehat}
\begin{document}
\maketitle %  LEAVE HERE
% The command above causes the title to be displayed.

%>>>>> DELETE ALL CONTENT UNTIL "\end{document}"
% This is the body of your document.

\renewcommand{\thefootnote}{\fnsymbol{footnote}}

\vspace{-2em}
\abstract{This paper establishes convergence rates for learning elliptic pseudo-differential operators, a fundamental operator class in partial differential equations and mathematical physics. In a wavelet--Galerkin framework, we formulate learning over this class as a structured infinite-dimensional regression problem with multiscale sparsity.
Building on this structure, we propose a sparse, data- and computation-efficient estimator, which leverages a novel matrix compression scheme tailored to the learning task and a nested-support strategy to balance approximation and estimation errors. In addition to obtaining convergence rates for the estimator,
we show that the learned operator induces an efficient and stable Galerkin solver whose numerical error matches its statistical accuracy. Our results therefore contribute to  bringing together operator learning, data-driven solvers, and wavelet methods in scientific computing.}

\medskip
\noindent\textbf{Keywords.} Operator learning; pseudo-differential operators; wavelets
\par\smallskip
\noindent\textbf{MSC codes.} 62G05; 65T60; 35S05

\bigskip 

\section{Introduction}\label{sec:introduction}

Operator learning 
is an emerging paradigm at the intersection of scientific computing, partial differential equations (PDEs), and machine learning   \cite{li2020fourier,anandkumar2020neural,lu2021deepxde,lu2021learning,pathak2022fourcastnet,kovachki2023neural,li2024physics,boulle2024mathematical,kovachki2024operator,subedi2025operator}. In a supervised learning setting, the task can be formulated as follows: for an unknown operator $\mcA$, we observe noisy input--output pairs $\{(u_i,f_i)\}_{i=1}^{N}$ satisfying
\begin{align}\label{eq:intro_continuous_model}
f_i = \mcA u_i + w_i, \qquad 1 \le i \le N,
\end{align}
where $\{w_i\}_{i=1}^{N}$ models the noise. The goal is to estimate $\mcA$ accurately and efficiently from the data, under a prescribed error metric and modeling choices for the inputs, outputs, and noise.

This paper establishes convergence rates for learning elliptic pseudo-differential operators (PDOs), a broad and fundamental operator class in analysis and PDEs \cite{hormander2007analysis,shubin1987pseudodifferential,taylor2006pseudo,wong2014introduction}. PDOs encompass both differential and integral operators and naturally capture nonlocality and ill-conditioning; canonical examples include Green's operators, Dirichlet-to-Neumann maps, and boundary integral operators. We assume that the inputs $\{u_i\}_{i=1}^{N}$ and noise terms $\{w_i\}_{i=1}^{N}$ are Gaussian random functions (including Mat\'ern classes) with prescribed Sobolev smoothness, and we measure error in an operator norm between Sobolev spaces.

Motivated by the (quasi-)sparse representation of PDOs in wavelet coordinates and by wavelet--Galerkin methods in scientific computing \cite{beylkin1991fast,dahmen1997wavelet,schneider2013multiskalen,cohen2000wavelet,dahmen2006compression,cohen2001adaptive,cohen2004adaptive2,harbrecht2024multilevel}, we propose a sparse, data- and computation-efficient wavelet-based estimator $\widehat{\mcA}$, and show that, with high probability and up to poly-logarithmic factors,
\[
\|\widehat{\mcA}-\mcA\|_{H^{t}\to H^{-t'}} \;\lesssim\;  N^{-\frac{1}{2+\rho}}.
\]
The exponent $\rho\ge 0$ depends on the Sobolev indices $(t,t')$ in the error metric $\|\cdot\|_{H^{t}\to H^{-t'}}$, the order of the operator $\mcA$, the spatial dimension, the smoothness of the inputs and noise, and wavelet regularity/approximation order parameters. Our theory also covers the noiseless setting ($w_i\equiv 0$ in \eqref{eq:intro_continuous_model}), in which case the estimator achieves super-algebraic convergence. Finally, we show that $\widehat{\mcA}$ yields an efficient and stable wavelet--Galerkin solver whose numerical error matches its statistical accuracy, thereby bridging operator learning, data-driven PDE solvers, and wavelet methods in scientific computing.

\subsection{Main Contributions and Outline}
\begin{itemize}
\item 
\textbf{Learning PDOs in a wavelet--Galerkin framework.}
We formulate the study of learning PDOs in a wavelet--Galerkin framework, casting the operator learning task as the estimation of the operator’s bi-infinite wavelet matrix representation. By doing so, we reduce the continuous operator learning problem to a structured infinite-dimensional matrix regression problem.

\item \textbf{Structured infinite-dimensional regression with multiscale sparsity.}
We propose a sparse, data- and computation-efficient estimator for the wavelet--Galerkin discretization of the unknown operator by exploiting the (quasi-)sparse representation of PDOs in wavelet coordinates. Two key ingredients are:
\begin{enumerate}[label=(\roman*)]
    \item \emph{Learning-oriented matrix compression.}
    We introduce a new compression scheme tailored to the operator-learning error metric, identifying a target sparse support that captures the essential wavelet coefficients up to the desired accuracy. Our goal-oriented compression scheme enables sharper learning guarantees than classical  methods in scientific computing. 
    \item \emph{Nested-support regression.}
    To mitigate data-induced \emph{omitted-variable bias} when regressing on the target support, we fit each column on a carefully enlarged regression support and then restrict back to the target one. The enlargement is calibrated so that the omitted-variable bias is reduced to (at most) the level of the truncation and compression error, while the variance remains of the same order. This nested-support regression approach provides a principled way to balance data-induced bias against the approximation error from truncation and compression, and may be of independent interest for other structured learning problems. 
\end{enumerate}

\item \textbf{Convergence rates for learning elliptic PDOs.} Under general assumptions on the inputs, noise, and the choice of wavelets, our first main result, Theorem \ref{thm:main1}, establishes a nearly optimal high-probability error bound of order $N^{-\frac{1}{2+\rho}}$ (up to poly-logarithmic factors) for learning elliptic PDOs. The bound exhibits a \emph{parametric regime}, achieving the rate $N^{-1/2}$ (i.e., $\rho=0$) when the error metric is sufficiently weak; otherwise $\rho>0$, yielding a strictly \emph{nonparametric} rate. The estimator is optimally sparse in wavelet coordinates, containing only $\mathcal{O}(2^{Jn})$ nonzero entries, and can be computed in nearly $\mathcal{O}(N2^{Jn})$ time, where $2^{Jn}$ is the dimension of the learned Galerkin matrix and $N$ is the sample size; under the bias--variance optimal choice of the truncation level $J$ (as a function of $N$), this corresponds to $\mathcal{O}\big(N^{\frac{n}{(2+\rho)(t+t'-r)}}\big)$ nonzero coefficients and a total runtime of $\mathcal{O}\big(N^{1+\frac{n}{(2+\rho)(t+t'-r)}}\big)$, where $n$ is the spatial dimension and $r$ is the order of the PDO. The proof of Theorem~\ref{thm:main1} hinges on a sharp multiscale error analysis that controls truncation and compression errors, as well as omitted-variable bias and variance. A key technical contribution is to combine sharp probabilistic analysis with a refined characterization of the blockwise multiscale sparsity pattern induced by our new learning-oriented matrix compression scheme.  Our theory also covers the noiseless setting, where our estimator attains a super-algebraic convergence rate. As a corollary of our general results, we obtain in Example \ref{rem:example} what is, to our knowledge, the first explicit and nearly optimal tradeoff between statistical accuracy and computational cost for learning Green's functions of elliptic PDEs ---in general dimensions and for general differential order--- from generic noisy Gaussian data. We also sharpen existing accuracy–cost tradeoffs in the noiseless setting to a nearly optimal one.  

\item \textbf{Convergence rates for data-driven PDE solver.}
The learned sparse operator induces an efficient and stable wavelet--Galerkin solver. Our second main result, Theorem \ref{thm:main2}, shows that the solver’s numerical error inherits the statistical accuracy of the operator estimator (up to poly-logarithmic factors), and thus decays at an algebraic rate in the sample size $N$. Theorem \ref{thm:main2} therefore provides a principled bridge between operator learning, data-driven PDE solvers, and wavelet--Galerkin methods in scientific computing.
\end{itemize}

The paper is organized as follows. After discussing related work and setting notation in the rest of the introduction, we formalize our operator learning problem in Section \ref{sec:setup}. Then, in Section \ref{sec:waveletformulation}, we formulate operator learning in a wavelet--Galerkin framework. Section \ref{sec:define_estimator} defines our estimator using learning-oriented matrix compression and nested-support regression. Convergence rates for learning elliptic PDOs and for data-driven PDE solvers are established in Sections \ref{sec:convergencerate} and \ref{sec:stability}. Section \ref{sec:Conclusions} closes with conclusions and directions for future work.

\subsection{Related Work} \label{ssec:relatedwork}

\paragraph{Operator learning}
There is a vast and rapidly growing literature on operator learning, and a comprehensive overview is beyond the scope of this paper. Representative examples include neural-network-based approaches such as Fourier neural operators \cite{li2020fourier,pathak2022fourcastnet,kovachki2023neural}, DeepONets \cite{lu2021learning,lu2021deepxde,lanthaler2022error}, and physics-informed neural operators \cite{li2024physics,karniadakis2021physics}; kernel-based methods \cite{stepaniants2023learning,batlle2024kernel,jalalian2025data,long2024kernel,yang2025kernel,zhang2025minimax,kang2025optimal}; and random feature methods \cite{nelsen2021random,nelsen2024operator,liao2025cauchy,yu2025regularized}. Related lines of work include learning Green’s functions \cite{boulle2023learning,stepaniants2023learning,schafer2024sparse,boulle2022learning,boulle2023elliptic,wang2025operator}, operator learning for dynamical systems (e.g., Koopman operators and generators) \cite{brunton2022modern,kostic2022learning,kostic2023sharp,colbrook2024rigorous,kostic2024learning,llamazares2024data}, structured covariance and precision operator estimation \cite{al2025covariance,al2025covariance2,al2024optimal,chen2025precision}, and applications to inverse problems \cite{molinaro2023neural,gao2024adaptive,nelsen2025operator}. From a theoretical perspective, there has been substantial recent interest in approximation-theoretic foundations \cite{chen1995universal,lu2021learning,kovachki2021universal,lanthaler2022error,deng2022approximation,kovachki2023neural,lanthaler2023operator,lanthaler2023parametric,lanthaler2024operator,lanthaler2025nonlocality,de2025extension}, as well as in statistical and data-complexity analyses of operator learning problems \cite{mollenhauer2022learning,de2023convergence,jin2022minimax,kovachki2024data,liu2024deep,subedi2024controlling,reinhardt2024statistical,adcock2025towards}. Below, we focus on the works most closely related to the present paper, and we refer the reader to recent surveys for broader perspectives \cite{boulle2024mathematical,kovachki2024operator,subedi2025operator}.

The paper \cite{de2023convergence} studies linear operator learning from noisy data in the setting where the operator is diagonalizable in a known basis, thereby reducing the problem to eigenvalue learning, and establishes posterior contraction rates in a Bayesian framework. The work \cite{jin2022minimax} investigates the statistical limits of learning a Hilbert--Schmidt operator between two infinite-dimensional Sobolev reproducing kernel Hilbert spaces via ridge-regression-type estimators, deriving matching upper and lower bounds under the Hilbert--Schmidt norm; their analysis reduces to estimating an infinite-dimensional coefficient matrix with a polynomial-type entrywise decay profile. In contrast to \cite{de2023convergence,jin2022minimax}, we study the learning of elliptic PDOs under the \emph{operator norm} between Sobolev spaces. Such a worst-case guarantee, rather than an error measured in a more ``average'' sense, is directly compatible with stability and error analyses for data-driven PDE solvers and aligns naturally with classical wavelet--Galerkin methods. From a technical standpoint, our problem is intrinsically non-diagonal: the bi-infinite wavelet matrix of a PDO exhibits a multiscale (quasi-)sparse structure, rather than an entrywise polynomial decay profile, giving rise to a highly structured infinite-dimensional regression problem.

A related line of work \cite{boulle2023learning,boulle2023elliptic} considers the recovery of Green’s functions (solution operators) for elliptic PDEs via randomized numerical linear algebra, and obtains exponential convergence in the sample size by exploiting the low-rank structure on well-separated domains together with off-diagonal decay of the Green’s function. Their setting is noiseless (i.e., $w_i\equiv 0$ in \eqref{eq:intro_continuous_model}) and relies on carefully designed inputs aligned with the hierarchical low-rank structure of the Green's function, whereas our framework covers a unified class of integral and differential operators and allows noisy observations with generic Gaussian-process input data.  Moreover, their analysis is specialized to the three-dimensional case and their error bounds depend on the Hilbert--Schmidt norm of the solution operator, which can be infinite in higher dimensions since Green's functions may not be square-integrable. By contrast, our wavelet framework treats general dimensions in a unified way and works with general Sobolev-to-Sobolev operator norms.  More recently, \cite{reinhardt2024statistical} establishes general convergence guarantees for least-squares empirical risk minimizers over general operator classes in terms of their approximation properties and metric entropy bounds, via empirical process techniques. By comparison, we focus on a physically meaningful, PDE-inspired operator class and derive convergence rates through an explicit wavelet-based multiscale regression analysis that leverages the compressibility of PDOs in wavelet coordinates; moreover, our estimator is sparse and computationally efficient.

We also mention neural-network architectures inspired by PDOs, or designed to represent/approximate PDOs \cite{fan2019bcr,bubba2021deep,gupta2021multiwavelet,ying2022solving,shin2022pseudo,chen2024pseudo}. Our contribution is complementary: to the best of our knowledge, we provide the first rigorous statistical convergence rates for learning PDOs. While our estimator can be viewed through the lens of functional linear regression \cite{ramsay1991some,gupta2025optimal}, the key novelty is that the coefficient structure is governed by \emph{a priori} operator-analytic bounds on wavelet representations of PDOs, yielding a multiscale compressibility pattern that we exploit both statistically and computationally.

\paragraph{Wavelet--Garlekin methods}

Wavelet methods in scientific computing and numerical analysis have a long history; see, e.g., \cite{MadayPerrierRavel1991,beylkin1991fast,BacryMallatPapanicolaou1992,Daubechies1993DifferentPerspectives,dahmen1994wavelet,dahmen1993wavelet,dahmen1994multiscale,Harten1995,dahmen1997wavelet,cohen2001adaptive,cohen2002adaptive,harbrecht2006wavelet}. We refer the reader to the books and reviews \cite{Daubechies1993DifferentPerspectives,dahmen1997wavelet,cohen2000wavelet,Cohen2003,schneider2013multiskalen,stevenson2009adaptive}. More recently, related ideas have been deployed in other settings, such as multilevel approximation of Gaussian random fields \cite{herrmann2020multilevel,harbrecht2024multilevel,bachmayr2024multilevel}.
Three features of wavelets are particularly central in numerical analysis. First, wavelet systems characterize Sobolev (and Besov) spaces in the sense that, under suitable constructions, they form Riesz bases for a range of Sobolev spaces. This property yields simple \emph{diagonal preconditioners} for differential and integral operators discretized in wavelet bases via Galerkin methods \cite{dahmen1992multilevel,jaffard1992wavelet,dahmen1997wavelet}. Second, vanishing moments (cancellation property) allow wavelets to exploit operator smoothness, leading to \emph{sparse} or \emph{compressible} representations for broad classes of operators \cite{beylkin1991fast,dahmen1994wavelet,dahmen1993wavelet,dahmen1994multiscale,dahmen1997wavelet,stevenson2004compressibility}, in both standard and non-standard forms. Third, wavelets support adaptivity and nonlinear approximation, enabling optimal finite-term approximations and sparse representations of functions \cite{devore1992compression,devore1998nonlinear,cohen2001adaptive,cohen2002adaptive}. Together, these principles underpin fast, adaptive, multiscale numerical algorithms that leverage approximate sparsity. Beyond numerical analysis, wavelet ideas have had a lasting impact across applied mathematics and data science, including statistical estimation via thresholding \cite{donoho1994ideal,donoho1995adapting,donoho1996density,donoho2002noising}, data compression in image processing (e.g., JPEG 2000) \cite{cohen1992biorthogonal,Taubman2000EBCOT,SkodrasChristopoulosEbrahimi2001JPEG2000,TaubmanMarcellin2002JPEG2000Book}, compressed sensing \cite{donoho2006compressed,candes2008introduction}, and, more recently, connections to neural networks \cite{bruna2013invariant,mallat2016understanding}.

While much of the classical wavelet literature is devoted to numerical simulation and analysis, the present work studies operator learning through a wavelet lens, with the goal of establishing statistical convergence rates for learning an unknown operator from noisy input--output data pairs. Our estimator targets the bi-infinite wavelet matrix representation of the operator and crucially leverages diagonal preconditioning and the (quasi-)sparsity of PDOs in wavelet coordinates, while exhibiting several features that do not arise in classical wavelet–Galerkin PDE solvers. We emphasize the following distinctions from the classical wavelet--Galerkin methods. First, in contrast to the matrix compression schemes used in \cite{schneider2013multiskalen,dahmen2006compression,harbrecht2024multilevel}, we introduce a learning-oriented compression scheme that identifies the matrix entries to be estimated in a way tailored to the statistical learning task. Second, we develop a nested-support regression strategy to construct the estimator and mitigate the omitted-variable bias induced by the data ---an effect absent in the context of deterministic numerical discretizations. In addition, we incorporate a further symmetrization step that exploits the symmetry of the unknown operator to reduce variance. Third, our error analysis requires a refined characterization of the blockwise sparsity structure of the wavelet matrix: beyond the number of nonzero entries (nnz) at a global level, we quantify sparsity within each block of wavelet coefficients corresponding to a given pair of scales. This refinement is essential in operator learning because, under the operator norm, the variance of our estimator aggregates across scales in a way that is not captured by global nnz alone. Finally, in Section \ref{sec:stability} we discuss data-driven PDE solvers that use the learned sparse operator as a surrogate, further clarifying the connections to ---and distinctions from--- classical wavelet–Galerkin methods. Overall, this work contributes to bridging operator learning, data-driven PDE solvers, and wavelet–Galerkin methods.

\subsection{Notation}
Throughout this paper, $\mcM$ will denote a smooth, closed, and connected orientable Riemannian manifold of dimension $n$ immersed into Euclidean space $\R^D$ for some $D>n.$ 
The support of a real-valued function $\phi:\mcM\to \R$ is denoted by $\supp(\phi) := \overline{\{x\in\mcM : \phi(x)\ne 0\}}$, where the closure is taken in $\mcM$.  If for some subset $\mcM'\subset \mcM$ there exists a compact set $\mcM''$ such that $\mcM'\subset \mcM'' \subset \mcM$, we say that $\mcM'$ is compactly included in $\mcM$ and write $\mcM'\Subset\mcM$. The space of all smooth real-valued functions on $\mcM$ is denoted by $C^{\infty}(\mcM)$, and $C_0^{\infty}(\mcM)\subset C^{\infty}(\mcM) $ denotes the subspace of smooth functions $\phi$ whose support satisfies $\supp(\phi) \Subset \mcM$. The space $L^2(\mcM)$ consists of all square-integrable functions with respect to the intrinsic measure on $\mcM$, and its inner product is denoted by $(\cdot,\cdot)_{L^2(\mcM)}$. The Laplace–Beltrami operator on $\mcM$ is denoted by $\Delta_{\mcM}$. For any $s\in [0,\infty)$, we denote by $H^s(\mcM)$ the Sobolev space of order $s$ on $\mcM$, defined via the spectral decomposition of the Laplace–Beltrami operator; its norm is written $\|\cdot\|_{H^s(\mcM)}$. For $s>0$, we denote by $H^{-s}(\mcM)$ the dual space of $H^s(\mcM)$, and write $\langle \cdot,\cdot \rangle$ for the duality pairing between $H^s(\mcM)$ and $H^{-s}(\mcM)$. For notational simplicity, we often omit the dependence on $\mcM$ when referring to Sobolev spaces, 
and the dependence on $s$ when referring to duality pairings, whenever no ambiguity can arise. On occasion, we will work on an open bounded domain rather than a manifold, in which case we will make explicit the dependence of the function spaces on the domain. 

For a vector $x\in\R^m$, we denote by $\|x\|_2$ its Euclidean norm and by $\|x\|_{\max}$ its entry-wise maximum norm. For a matrix $B \in \R^{m_1\times m_2},$ we write $\|B\|$ for its spectral (operator) norm, $\|B\|_1$ its matrix $\ell_1$ norm (maximum absolute column sum), $\|B\|_{\infty}$ its matrix $\ell_{\infty}$ norm (maximum absolute row sum), and $\|B\|_{\max}$ for its entry-wise maximum norm. More generally, for a matrix $B\in \R^{\Lambda_{1}\times \Lambda_{2}}$ indexed by sets $\Lambda_{1}, \Lambda_2$ (possibly infinite), we interpret $B$ as a linear operator $\ell^2(\Lambda_{2})\to \ell^2(\Lambda_1)$ and let $\|B\|$ denote its operator norm.  For a symmetric positive semi-definite matrix, we denote by $\sigma_{\min}(B)$ and $\sigma_{\max}(B)$ its smallest and largest eigenvalues, respectively. For a finite set $\Lambda$, we write $|\Lambda|$ for its cardinality.

Given two positive sequences $\{a_k\}$ and $\{b_k\}$, we write $a_k \lesssim b_k$ if there exists a constant $c>0$, independent of $k$, such that $a_k \le c\, b_k$ for all $k$. If both $a_k \lesssim b_k$ and $b_k \lesssim a_k$ hold, we write $a_k \asymp b_k$. If the constant $c$ depends on some parameter $\tau$, we write $a_k \lesssim_{\tau} b_k,b_k\lesssim_{\tau} a_k$, and $a_k\asymp_{\tau} b_k$ to indicate this dependence.

\section{Problem Setup}\label{sec:setup}
 This section provides background on PDOs and formalizes our operator learning problem.
\subsection{Pseudo-Differential Operators}
Let $\mcD\subset \R^n$ be an open bounded domain. For $r \in \R,$ the class $\OPS^{r}(\mcD)$ of \emph{pseudo-differential operators} (PDOs) of order $r$ consists of all linear maps
\begin{align*}
\mcA: C_0^{\infty}(\mcD)\to C^{\infty}(\mcD),\quad (\mcA u)(x):=\frac{1}{(2\pi)^n}\int_{\R^n} e^{ix\cdot \xi} a(x,\xi)u^{\rm{ft}}(\xi)d\xi,
\end{align*}
where $u^{\rm{ft}}(\xi):=\int_{\mcD} u(x)e^{-ix\cdot \xi}dx$ denotes the Fourier transform of $u$, and $a(x,\xi)$ is a \emph{symbol} belonging to the H\"ormander class $\mathsf{S}^{r}(\mcD)$. The symbol class $\mathsf{S}^{r}(\mcD)$ consists of all functions $a\in C^{\infty}(\mcD\times \R^n) $ such that, for all $K \Subset \mcD$ and every pair of multi-indices $\alpha,\beta\in \mathbb{N}_0^n$, there exists a constant $C_{K,\alpha,\beta}>0$ with 
\begin{align*}
 |\partial^{\alpha}_{\xi} \partial^{\beta}_{x}a(x,\xi)|\le C_{K,\alpha,\beta} (1+|\xi|)^{r-|\alpha|},\quad \text{ for all }  x\in K, \, \xi\in \R^n.
\end{align*}
Here $\mathbb{N}_0^n$ denotes $n$-tuples of nonnegative integers, and 
$|\alpha| := \alpha_1+\cdots+\alpha_n.$ 
Intuitively, the order $r \in \R$ characterizes the regularity properties of operators in the class $\OPS^{r}(\mcD):$ positive order corresponds to differential operators, while negative order corresponds to integral (smoothing) operators.

For a manifold $\mcM$, the class $\OPS^{r}(\mcM)$ is defined in the usual way by lifting to $\mcM$ via local coordinates \cite{taylor2006pseudo}. Specifically, a linear operator $\mcB:C^{\infty}(\mcM)\to C^{\infty}(\mcM)$ is said to belong to $\OPS^{r}(\mcM)$ if, for any finite smooth partition of unity $\{\chi_i\in C_0^{\infty}(\mcM_i):i=1,\ldots,M\}$ with respect to an atlas $\{\mcM_i,\gamma_i\}_{i=1}^{M}$ of $\mcM$, all transported operators
\[
f\mapsto B_{i,i'}f := [(\mcB[\chi_i (f\circ \gamma_i^{-1} )])\chi_{i'}]\circ \gamma_{i'}
\]
belong to $\OPS^{r}\big(\gamma_i^{-1}(\mcM_i)\big)$ for all $i,i' = 1,\ldots,M$. $\OPS^{r}(\mcM)$ is thus invariantly defined and does not depend on the choice of atlas \cite{hormander2007analysis,taylor2006pseudo}. We refer the reader to \cite[Section A.2.2]{harbrecht2024multilevel} for more details. In this paper, we assume throughout that $\mcM$ is an $n$-dimensional smooth closed manifold, e.g.\ $\mcM=\mathbb{T}^n$ (the $n$-torus) and $\mcM=\mathbb{S}^n$ (the $n$-sphere); extensions of our results to domains with boundary can be obtained by imposing appropriate boundary conditions.

For $t, t' > r/2$, we will consider the \emph{operator norm} of $\mcA \in \OPS^{r}$ given by
\begin{align}\label{eq:operator_norm}
\|\mcA\|_{H^{t}\to H^{-t'}}:=\sup_{\|w\|_{H^{t}}\le 1}\|\mcA w\|_{H^{-t'}}=\sup_{\|w\|_{H^t}\le 1, \|v\|_{H^{t'}}\le 1}|\langle \mcA w, v \rangle|,
\end{align}
where the duality pairing $\langle \cdot,\cdot \rangle$ is taken between $H^{t'}$ and $H^{-t'}$.

\subsection{Operator Learning}\label{ssec:operatorlearning}
Consider the statistical model
\begin{align}\label{eq:continuous_model}
f_i = \mcA u_i + w_i, \qquad 1 \le i \le N,
\end{align}
where $\mcA$ is an unknown operator, $\left\{(u_i,f_i)\right\}_{i=1}^{N}$ are given data pairs, and $\{w_i\}_{i=1}^{N}$ are noise terms. 
We are interested in the \emph{operator learning problem}:
\begin{center}
\fbox{\parbox{0.92\linewidth}{
\centering
Given data pairs $\left\{(u_i,f_i)\right\}_{i=1}^{N}$, estimate the unknown operator $\mcA$ under the norm \eqref{eq:operator_norm}.
}}
\end{center}

We will work under the following standing assumption on the unknown operator $\mcA$, the input functions $\{u_i\}_{i=1}^{N}$, and the noise terms $\{w_i\}_{i=1}^{N}$.

\begin{assumption}[Operator, input function, and noise]\label{assumption:operator_data_noise}

\begin{enumerate}[label=(\roman*)]
    \item \textbf{Operator assumption:} For some given $r \in \R,$ $\mcA \in  \OPS^{r}(\mcM).$ Moreover, $\mcA$ is self-adjoint and positive definite in the sense that
\[
\langle \mcA v,v\rangle\gtrsim \|v\|^2_{H^{r/2}},\quad \forall \ v\in H^{r/2}.
\]
\item \textbf{Input function assumption:} The input functions $\{u_i\}_{i=1}^{N}$ are i.i.d. samples from a centered Gaussian process on $\mcM$ with covariance operator $\mcC_u\in \OPS^{-2r_1}(\mcM)$ for some given $r_1>n/2+\max\left\{0,r\right\}$. The operator $\mcC_u$ is self-adjoint and positive definite, satisfying
\[
\langle \mcC_u v,v\rangle\gtrsim \|v\|^2_{H^{-r_1}},\quad \forall \ v\in H^{-r_1}.
\]
\item \textbf{Noise assumption:} The noise terms $\{w_i\}_{i=1}^{N}$ are i.i.d. samples from a centered Gaussian process on $\mcM$, independent of $\{u_i\}_{i=1}^{N}$, with covariance operator $\mcC_w \in \mathsf{OPS}^{-2r_2}(\mcM)$ for some given $r_2 > n/2$.
The operator $\mcC_w$ is self-adjoint and positive definite, satisfying
\[
\langle \mcC_{w} v,v\rangle\gtrsim \|v\|^2_{H^{-r_2}},\quad \forall \ v\in H^{-r_2}.
\]
\end{enumerate}
\end{assumption}

Our theory also covers the noiseless data setting, where  $f_i = \mcA u_i, 1\le i \le N;$ see Remark \ref{rem:noisefree}. Notice that we assume the parameters $r,r_1,r_2$ that determine the order of $\mcA$ and of the input/noise covariance operators to be given. In practice, these parameters may need to be estimated from data  (see, e.g., \cite[Section 5.4]{harbrecht2024multilevel} and \cite{korte2025smoothness}). The study of adaptive estimators agnostic to these nuisance parameters is an interesting direction for future work.

\begin{remark}[Interpretation of Assumption \ref{assumption:operator_data_noise} and examples]\label{remark:assumption_1}
\noindent \textbf{Operator $\mcA$.} Under Assumption \ref{assumption:operator_data_noise} (i), the operator $\mcA$ is a continuous bijection from $H^{s}(\mcM)$ to $H^{s-r}(\mcM)$ for every $s\in \R$; see \cite[Proposition 11]{harbrecht2024multilevel}. Moreover, for any real exponent $\varrho$, the fractional power satisfies  $\mcA^{\varrho}\in \OPS^{\varrho r}(\mcM)$; see \cite[Proposition 12]{harbrecht2024multilevel}.  

A canonical class of examples is given by Schr\"odinger (Hamiltonian) elliptic operators and their fractional powers:
\[
\mcH:=-\Delta_{\mcM}+V(x),\qquad \mcA:=\mcH^{\varrho},
\]
where the potential $V\in C^{\infty}(\mcM)$ is uniformly positive, i.e.\ $V(x)\ge v_{\min}>0$. This includes both differential operators ($\varrho>0$) and integral/smoothing operators ($\varrho<0$). 

Assumption~\ref{assumption:operator_data_noise} (i) is also compatible with a broad class of self-adjoint elliptic boundary integral operators arising from PDEs. In particular, when
$\mcM=\partial\mcD$ is a closed boundary surface of an $(n+1)$-dimensional domain $\mcD\subset \R^{n+1}$, this class includes boundary integral equations of the first kind for the Laplacian as well as boundary integral formulations associated with the
Navier--Lam\'e and Stokes systems; see
\cite{dahmen2006compression,harbrecht2006wavelet,dahmen1997wavelet} for further background.
We believe that the results of this paper extend to broader classes of pseudo-differential and boundary integral operators, including operators that are not self-adjoint and/or not strongly elliptic. We leave these extensions to future work.

Another related (but slightly different) example is provided by infinitesimal generators of reversible diffusions. For the overdamped Langevin diffusion $dX_t=-\nabla U(X_t)\,dt+\sqrt{2/\eta}\,dW_t$, the generator $\mcG =-\nabla U\cdot\nabla +\eta^{-1}\Delta$ is a second-order elliptic differential operator (hence a PDO), but it is generally not self-adjoint in $L^2(\mcM)$. Instead, it is symmetric in the weighted space
$L^2_\pi(\mcM)$, where $\pi(x)\propto e^{-\eta U(x)}$ is the stationary measure. Indeed,
\[
\langle f,(-\mcG)g\rangle_\pi
=\eta^{-1}\int_\mcM \nabla f\cdot\nabla g\,\pi\,dx
=\langle (-\mcG)f,g\rangle_\pi .
\]
Hence $-\mcG$ is self-adjoint and positive semi-definite on $L^2_\pi(\mcM)$ (with a nullspace consisting of constants). We leave a detailed treatment of learning this class of operators using the techniques developed in this paper to future work.

\medskip
\noindent \textbf{Input function $u$ and noise $w$.} Assumption~\ref{assumption:operator_data_noise} (ii)–(iii) states that the input functions $\{u_i\}_{i=1}^{N}$ and the noise terms $\{w_i\}_{i=1}^{N}$ are centered Gaussian processes, sampled independently as $u_i \sim \mathcal N(0,\mcC_u), w_i \sim \mathcal N(0,\mcC_w).$ The assumptions on $\mcC_u$ and $\mcC_w$ imply that $u_i$ and $w_i$ admit variational representations as solutions of coloring-operator equations driven by spatial white noise, with coloring operators in the H\"ormander classes:
\[
\mcL_u u=\mcW_u,\qquad \mcL_w w=\mcW_w,
\]
where $\mcL_u=\mcC_u^{-1/2}\in\OPS^{r_1}(\mcM), \mcL_w=\mcC_w^{-1/2}\in \OPS^{r_2}(\mcM)$, and $\mcW_u,\mcW_w$ are independent Gaussian white noises on $L^2(\mcM)$; see \cite[(2.1), Proposition 1]{harbrecht2024multilevel}. This setting includes Whittle–Mat\'ern-type Gaussian random functions widely used in modeling and applications \cite{whittle1954stationary,matern1960spatial, williams2006gaussian,lindgren2011explicit,stein2012interpolation}. For instance, one may take
\[
\mcL_u:=(\tau_u(x)I-\nabla\cdot(\kappa_u(x)\nabla))^{r_1/2},\qquad \mcL_w:=(\tau_w(x)I-\nabla\cdot(\kappa_w(x)\nabla))^{r_2/2},
\]
where $\tau_u,\tau_w\in C^{\infty}(\mcM)$ are uniformly positive, i.e. $\tau_u(x)\ge \tau_{u,\min}>0$ and $\tau_w(x)\ge \tau_{w,\min}>0$, and $\kappa_u,\kappa_w$ are smooth and uniformly elliptic (e.g. smooth symmetric positive-definite matrix fields); see \cite{sanz2022spde} and \cite[Appendix~C]{harbrecht2024multilevel}.

The parameters $r_1,r_2$ control Sobolev regularity. Spectral asymptotics of $\mcC_u$ and $\mcC_w$ (via Weyl's law), see \cite{cox2020regularity,herrmann2020multilevel,harbrecht2024multilevel}, imply that
\[
u_i \in H^{s}(\mcM) \quad \text{ for all } s<r_1-n/2 \quad \text{a.s.},\qquad  w_i \in H^{s}(\mcM) \quad \text{ for all } s<r_2-n/2 \quad \text{a.s.}
\]
Thus, the conditions $r_1>n/2$ and $r_2>n/2$ ensure $u_i,w_i\in L^2(\mcM)$ almost surely. Moreover, since $\mcA:H^{s}(\mcM)\to H^{s-r}(\mcM)$ is an isomorphism for every $s\in \R$, we have
\[
\mcA u_i \in H^{s-r}(\mcM) \quad \text{ for all } s<r_1 - n/2 \quad \text{a.s.},
\]
so $\mcA u_i\in L^2(\mcM)$ whenever $r_1 - n/2 - r > 0$. Combining these observations, under the assumptions
\[
r_1>n/2+\max\left\{0,r\right\},\qquad r_2>n/2,
\]
all functions in the statistical model \eqref{eq:continuous_model} ---namely $u_i, w_i$, and $f_i=\mcA u_i+w_i$--- belong to $L^2(\mcM)$ almost surely for every $1\le i\le N$.
\end{remark}

\section{Wavelet–Matrix Formulation of Operator Learning}\label{sec:waveletformulation}
In this section, we recast the operator learning problem in terms of learning a bi-infinite wavelet matrix representation. To that end, we discretize the continuous operator-function model \eqref{eq:continuous_model} into a discrete, infinite-dimensional matrix-vector model, using a biorthogonal wavelet system.  

\subsection{Biorthogonal Wavelets}\label{subsec:wavelets_MRA}
Here, we briefly summarize the biorthogonal wavelet framework used in this paper; additional details, properties, and a sketch of the construction of the associated multiresolution analyses (MRAs) can be found in Appendix~\ref{appendix:wavelets}. Let 
\[
\Psi=\{\psi_{j,k}: j\ge j_0,\, k\in\nabla_j\},\qquad \widetilde{\Psi}=\{\widetilde{\psi}_{j,k}: j\ge j_0,\, k\in\nabla_j\}
\]
denote a pair of biorthogonal wavelet bases of $L^2(\mcM)$ with  $|\nabla_j|\asymp 2^{jn}.$ The indices $\lambda = (j,k)$ encode the information about scale $(j)$ and location $(k)$.
We introduce the wavelet index set 
\[
\mcJ:=\{\lambda=(j,k): j\ge j_0,\, k\in\nabla_j\}.
\]
Any function $u\in L^2(\mcM)$ admits expansions in the primal and dual bases given by
\begin{align*}
  u=\bfu^{\top}\Psi=\sum_{\lambda\in\mcJ}u_{\lambda}\psi_{\lambda},
\qquad 
 u = \widetilde{\bfu}^{\top}\widetilde{\Psi}=\sum_{\lambda\in\mcJ}\widetilde{u}_{\lambda} \widetilde{\psi}_{\lambda}, 
\end{align*}
where $u_{\lambda}:=\langle u,\widetilde{\psi}_{\lambda}\rangle$ and $\widetilde{u}_{\lambda}:=\langle u,\psi_{\lambda}\rangle.$

The \emph{bi-infinite wavelet matrix representation} $\bfA \in \R^{\mcJ\times \mcJ}$ of $\mcA\in\OPS^{r}(\mcM)$ is defined by
\[\bfA_{\lambda,\lambda'}:=\langle \mcA\psi_{\lambda'},\psi_{\lambda}\rangle,
\quad \forall \ \lambda, \lambda'\in\mcJ .
\]
The action of $\mcA$ on $u$ can be expressed in the dual basis by
\begin{align*}
\mcA u=\sum_{\lambda\in \mcJ} (\bfA \bfu)_{\lambda}\widetilde{\psi}_{\lambda}.
\end{align*}
Using biorthogonality, the pairing of $\mcA u$ with  $v=\bfv^{\top}\Psi =\sum_{\lambda\in\mcJ}v_{\lambda}\psi_{\lambda}$ is given by
\[
\langle \mcA u,v \rangle=\bigg\langle \sum_{\lambda\in\mcJ}(\bfA\bfu)_{\lambda}\widetilde{\psi}_{\lambda},\sum_{\lambda\in \mcJ} v_{\lambda}\psi_{\lambda} \bigg\rangle=\langle \bfA\bfu,\bfv \rangle=\bfv^{\top}\bfA\bfu.
\]
Consequently, $\mcA$ admits the (formal) wavelet expansion
\begin{align}\label{eq:mcA_expansion}
\mcA=\sum_{\lambda,\lambda'\in \mcJ} \bfA_{\lambda,\lambda'}\, \widetilde{\psi}_{\lambda}\otimes \widetilde{\psi}_{\lambda'}=\sum_{\lambda,\lambda'\in \mcJ} \langle \mcA \psi_{\lambda'},\psi_{\lambda} \rangle \widetilde{\psi}_{\lambda}\otimes \widetilde{\psi}_{\lambda'}.
\end{align} 

The operator norm of $\mcA \in \OPS^r$ defined in \eqref{eq:operator_norm} is equivalent to a discrete norm for its bi-infinite matrix representation $\bfA$. This equivalence between continuous and discrete norms, and the related idea of preconditioning,  will play a key role in our theory.  
Specifically, for each index $\lambda=(j,k)$, let $|\lambda|:=j$ and define the bi-infinite diagonal scaling matrix
\[
\bfD^s:=\mathrm{diag}\Big(2^{s|\lambda|}:\lambda\in \mcJ\Big),\qquad s\in \R.
\]
For $w= \bfw^{\top}\Psi\in H^t(\mcM)$ and $v=\bfv^{\top}\Psi\in H^{t'}(\mcM)$, the wavelet characterization of Sobolev norms (Lemma~\ref{lemma:wavelet_property} (iv)) yields
\[
\|w\|^2_{H^t}\asymp \|\bfD^t \bfw\|_2^2,\qquad \|v\|^2_{H^{t'}}\asymp \|\bfD^{t'} \bfv\|_2^2, \qquad \text{if }\, t,t'\in(-\widetilde{\gamma},\gamma), 
\]
where $\gamma$ and $\widetilde{\gamma}$ are primal/dual regularity parameters. Hence, the norm \eqref{eq:operator_norm} can be expressed as
\begin{align}\label{eq:matrix_norm}
\|\mcA\|_{H^{t}\to H^{-t'}}&=\sup_{\|w\|_{H^t}\le 1, \|v\|_{H^{t'}}\le 1}|\langle \mcA w, v \rangle|\nonumber\\
&\asymp \sup_{\|\bfD^t \bfw\|_2\le 1, \|\bfD^{t'} \bfv\|_2\le 1} |\langle \bfA\bfw,\bfv\rangle| \nonumber\\
&=\sup_{\|\bfD^t \bfw\|_2\le 1, \|\bfD^{t'} \bfv\|_2\le 1} |\langle \bfD^{-t'}\bfA\bfD^{-t}\bfD^t\bfw,\bfD^{t'}\bfv\rangle|\nonumber\\
&= \|\bfD^{-t'}\bfA\bfD^{-t}\|,
\end{align}
where $\|\cdot\|$ in the last line denotes the operator norm from $\ell^2(\mcJ)$ to $\ell^2(\mcJ)$. Notice that since $\mcA$ is self-adjoint under our standing Assumption \ref{assumption:operator_data_noise}, $\bfA$ is symmetric. Hence, we may assume that $t \le t'$ in what follows without loss of generality; see Remark \ref{rem:ttprime}.

 As discussed in \cite{dahmen1997wavelet} and overviewed in Appendix \ref{appendix:wavelets}, a biorthogonal system $(\Psi,\widetilde{\Psi})$ can be
characterized by four parameters
\[
\gamma,\ \widetilde\gamma \quad\text{(primal/dual regularity)}, 
\qquad 
d,\ \widetilde d 
\quad\text{(primal/dual approximation order)}.
\]
The parameters $\gamma,\widetilde\gamma$ control the Sobolev
regularity of the primal and dual wavelets, while $d, \widetilde d$ correspond to the
number of vanishing moments and determine approximation accuracy. 
 For convenience, we now collect all the conditions on wavelet parameters that we will require. In addition to imposing constraints on the wavelet parameters, the following assumption specifies the admissible range for a parameter $\sigma$ used in our estimation procedure and error bounds. This parameter is related to the off-diagonal decay of the wavelet coefficients of $\mcA$ across scale separation, as will become clear in Proposition \ref{app:proof_property_UAW} \ref{approximate_sparsity} (ii). 
\begin{assumption}[Wavelets]\label{assumption:wavelets}
We are given a pair $(\Psi,\widetilde{\Psi})$ of biorthogonal wavelet systems with regularities $(\gamma,\widetilde{\gamma})$ and approximation orders $(d,\widetilde{d})$ satisfying:
\begin{enumerate}[label=(\roman*)]
    \item $ \frac{r}{2}, \, r_1, \, -r_2, \, t,\, t' \in (-\widetilde{\gamma}, \gamma).$  
      \item  $d>\max\{t,t'\}$, $\widetilde{d} > -n/2-r/2$. 
    \item $\min\left\{\gamma-\frac{r}{2},\widetilde{\gamma}+\frac{r}{2}, \widetilde{d}+\frac{n}{2}+\frac{r}{2}\right\}>\sigma >
\max\left\{\frac{n}{2}+\max\{t,t'\}-\frac{r}{2},
\frac{3n}{2} - t + \frac{r}{2},\ \frac{t'+\max\{t',r_1\}-r}{\min\{t',r_1\}+t-r}n
\right\}$.
\end{enumerate}
\end{assumption}

Throughout, we treat the parameters $n,r,r_1,r_2,t,t'$ as fixed.
Conditions~(i)--(iii) in Assumption~\ref{assumption:wavelets} can be satisfied by choosing biorthogonal wavelets with sufficient smoothness and sufficiently many vanishing moments,
i.e., by taking $\gamma,\widetilde{\gamma},d,\widetilde{d}$ sufficiently large relative to these fixed parameters.

\subsection{Operator Learning in the Bi-Infinite Matrix Framework}

Recall the statistical model \eqref{eq:continuous_model}. 
In the dual basis we can write, for $1 \le i \le N,$
\begin{align}\label{eq:output_expansion}
f_i = \widetilde{\bff}_i^{\top}\widetilde{\Psi}= \sum_{\lambda\in \mcJ}\langle f_i,\psi_{\lambda}\rangle \widetilde{\psi}_{\lambda},\quad \mcA u_i=\sum_{\lambda\in \mcJ} (\bfA \bfu_i)_{\lambda}\widetilde{\psi}_{\lambda}, \quad  w_i= \widetilde{\bfw}_i^{\top}\widetilde{\Psi}=\sum_{\lambda\in \mcJ}\langle w_i,\psi_{\lambda}\rangle \widetilde{\psi}_{\lambda}.
\end{align} 
Hence,  \eqref{eq:continuous_model} admits the following infinite-dimensional matrix–vector representation:
\[
\widetilde{\bff}_i=\bfA \mathbf{u}_i+\widetilde{\bfw}_i,\qquad 1\le i\le N,
\] 
where $\bfA\in \R^{\mcJ \times \mcJ}$ is the bi-infinite matrix representation of $\mcA$, and $\widetilde{\bff}_i,\widetilde{\bfw}_i\in \R^{\mcJ}$ are the wavelet coefficients of the output and noise functions $f_i$ and $w_i$, respectively. In compact form,
\begin{align}\label{eq:discrete_model}
\bfF=\bfU\bfA+\bfW,
\end{align}
where we used that $\bfA$ is symmetric since $\mcA$ is self-adjoint, and we defined
\[
\bfF:=[\widetilde{\bff}_1, \ldots, \widetilde{\bff}_N]^{\top} \in \R^{N\times \mcJ}, \quad \bfU:=[\bfu_1,\ldots, \bfu_N]^{\top}\in \R^{N\times \mcJ}, \quad \bfW :=[\widetilde{\bfw}_1,\ldots, \widetilde{\bfw}_N]^{\top}\in \R^{N\times \mcJ}.
\]

Thus, we have derived the following \emph{bi-infinite matrix learning problem}: 
\begin{center}
\fbox{\parbox{0.92\linewidth}{
\centering
Given $\bfU$ and $\bfF$, estimate the unknown bi-infinite matrix $\bfA$ 
under the norm \eqref{eq:matrix_norm}.
}}
\end{center}
This bi-infinite matrix learning problem will play a crucial role in our operator learning theory: we will construct an estimator of $\mcA$ from an estimator of its bi-infinite matrix $\bfA,$ and we will leverage the equivalence of the norms \eqref{eq:operator_norm} and \eqref{eq:matrix_norm} in our analysis.  
We postpone the construction of the estimator to Section \ref{sec:define_estimator} and its error analysis to Section \ref{sec:convergencerate}. Both the construction and the analysis leverage the following properties of the bi-infinite matrix $\bfA$, the data matrix $\bfU$, and the noise matrix $\bfW$ ensuing from  Assumption~\ref{assumption:operator_data_noise} and the wavelet discretization induced by $(\Psi,\widetilde{\Psi})$. These properties are standard \cite{dahmen1997wavelet,dahmen2006compression,schneider2013multiskalen,harbrecht2024multilevel}, but for completeness we include a brief formal derivation in Appendix~\ref{app:proof_property_UAW}.

\begin{proposition}[Properties of $\bfA,\bfU,\bfW$]\label{prop:property_UAW}     Under Assumption \ref{assumption:operator_data_noise} and Assumption \ref{assumption:wavelets} (i)-(ii), the matrices $\bfA, \bfU$, and $\bfW$ satisfy:
    \begin{enumerate}[label=(\Roman*)]
        \item \label{approximate_sparsity} \textbf{Approximate sparsity of $\bfA$:}  For $(j,k)\in \mcJ$, we define $S_{j,k}:=\mathrm{conv \, hull}\bigl(\mathrm{supp}(\psi_{j,k})\bigr)\subset \M$ the convex hull of the support of $\psi_{j,k}$.        
        \begin{enumerate}[label=(\roman*)]
            \item For all $\lambda=(j,k),\lambda'=(j',k')\in \mcJ$ such that $\mathrm{dist}(S_{j,k},S_{j',k'})\gtrsim 2^{-\min\{j,j'\}}$,
        \begin{align*}
        |\bfA_{\lambda,\lambda'}| \lesssim 2^{-(j+j')(\widetilde{d}+n/2)}\mathrm{dist}(S_{j,k},S_{j',k'})^{-(n+r+2\widetilde{d})}.
        \end{align*}
        \item For all $\lambda=(j,k),\lambda'=(j',k')\in \mcJ$ such that $\mathrm{dist}(S_{j,k},S_{j',k'})\lesssim 2^{-\min\{j,j'\}}$, and for any $0<\sigma<\min\left\{\gamma-r/2,\widetilde{\gamma}+r/2\right\}$,
\[
|\bfA_{\lambda,\lambda'}| \lesssim 2^{(j+j')r/2} 2^{-\sigma|j-j'|}.
\]
        \end{enumerate}
        \item \label{diagonal_preconditioning} \textbf{Diagonal preconditioning:}
There exist constants $0<c_{-}<c_{+}< \infty$ such that: 
 \begin{enumerate}[label=(\roman*)]
        \item \label{preconditioning_operator}  $\bfA$ is a symmetric and positive definite operator on $\ell^2(\mcJ)$, and 
\[
c_{-}\le \sigma_{\min}(\bfD^{-r/2} \bfA \bfD^{-r/2})\le \sigma_{\max}(\bfD^{-r/2} \bfA \bfD^{-r/2})\le c_{+}.
\]
\item \label{preconditioning_data} 
The population covariance matrix of the input data, $\widetilde{\bfC}_u:=\E \left[\bfU^{\top}\bfU/N\right]$, is a symmetric, positive definite, and compact operator on $\ell^2(\mcJ)$. Furthermore, 
\[
c_{-}\le \sigma_{\min}(\bfD^{r_1} \widetilde{\bfC}_u \bfD^{r_1})\le \sigma_{\max}(\bfD^{r_1} \widetilde{\bfC}_u \bfD^{r_1})\le c_{+}.
\]
\item \label{preconditioning_noise} The population covariance matrix of the noise, $\bfC_w:=\E \left[\bfW^{\top}\bfW/N\right]$, is a symmetric, positive definite, and compact operator on $\ell^2(\mcJ)$. Furthermore, 
\[
c_{-}\le \sigma_{\min}(\bfD^{r_2} \bfC_w \bfD^{r_2})\le \sigma_{\max}(\bfD^{r_2} \bfC_w \bfD^{r_2})\le c_{+}.
\]    
\end{enumerate}
\end{enumerate}
\end{proposition}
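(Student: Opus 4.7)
The plan is to treat Part I (approximate sparsity of $\bfA$) via the classical interplay between vanishing moments and Sobolev regularity of wavelets, and Part II (diagonal preconditioning) via the wavelet Riesz basis characterization of Sobolev norms already exploited in \eqref{eq:matrix_norm}.

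For Part I(i) (well-separated supports), I would start from the Schwartz kernel representation $\bfA_{\lambda,\lambda'}=\iint K(x,y)\,\psi_{\lambda'}(y)\,\psi_\lambda(x)\,dx\,dy$. Standard off-diagonal kernel estimates for $\mcA\in\OPS^r$ give $|\partial_x^\alpha\partial_y^\beta K(x,y)|\lesssim |x-y|^{-(n+r+|\alpha|+|\beta|)}$ for $x\ne y$. Since the primal wavelets inherit $\widetilde{d}$ vanishing moments from the dual approximation order, Taylor-expanding $K$ up to order $\widetilde d$ in each variable around the centers of $S_{j,k}$ and $S_{j',k'}$ annihilates the polynomial part, leaving a remainder controlled by $|x-y|^{-(n+r+2\widetilde d)}$. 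Combining this with the $L^1$ mass of each wavelet ($\|\psi_\lambda\|_{L^1}\lesssim 2^{-jn/2}$) and the two Taylor factors $2^{-j\widetilde d}$ and $2^{-j'\widetilde d}$ yields the claimed bound $2^{-(j+j')(\widetilde d+n/2)}\,\mathrm{dist}(S_{j,k},S_{j',k'})^{-(n+r+2\widetilde d)}$. For Part I(ii) (close supports), the vanishing-moment argument becomes ineffective (the kernel is singular), so I would instead use the mapping property $\mcA:H^s\to H^{s-r}$ combined with the Sobolev norm estimates $\|\psi_\lambda\|_{H^s}\asymp 2^{js}$ valid on $s\in(-\widetilde\gamma,\gamma)$. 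Assuming $j\le j'$ without loss of generality, the duality bound
\[
|\bfA_{\lambda,\lambda'}|\le \|\mcA\psi_{\lambda'}\|_{H^{-s}}\,\|\psi_\lambda\|_{H^s}\lesssim 2^{j'(r-s)}\,2^{js}
\]
with the choice $s=r/2+\sigma$ yields $2^{(j+j')r/2}\,2^{-\sigma(j'-j)}$. The admissibility range for $\sigma$ is precisely what is required for $s=r/2\pm\sigma$ to lie inside the window $(-\widetilde\gamma,\gamma)$.

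For Part II, the common principle is that each of $\bfA$, $\widetilde{\bfC}_u$, $\bfC_w$ is a wavelet Gram matrix of a self-adjoint, coercive pseudo-differential operator, which translates into two-sided spectral bounds on the preconditioned matrix. For (i), specializing \eqref{eq:matrix_norm} to $t=t'=r/2$ gives $\|\mcA\|_{H^{r/2}\to H^{-r/2}}\asymp\|\bfD^{-r/2}\bfA\bfD^{-r/2}\|$; the upper spectral bound follows from $\mcA\in\OPS^r$, while the lower one follows from coercivity $\langle\mcA v,v\rangle\gtrsim\|v\|_{H^{r/2}}^2$, tested with $v=\bfv^\top\Psi$ and $\bfv=\bfD^{-r/2}\bfy$. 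For (ii), the Gaussian identity $\E[\langle u,f\rangle\langle u,g\rangle]=\langle\mcC_u f,g\rangle$ applied to $f=\widetilde\psi_{\lambda'}$, $g=\widetilde\psi_\lambda$ yields $(\widetilde{\bfC}_u)_{\lambda,\lambda'}=\langle\mcC_u\widetilde\psi_{\lambda'},\widetilde\psi_\lambda\rangle$. Identifying $\widetilde{\bfC}_u$ as the linear map sending the dual-basis coefficients of $f\in H^{-r_1}$ to the primal-basis coefficients of $\mcC_u f\in H^{r_1}$, and applying the Riesz basis characterization on both sides, gives $\|\mcC_u\|_{H^{-r_1}\to H^{r_1}}\asymp\|\bfD^{r_1}\widetilde{\bfC}_u\bfD^{r_1}\|$; the order and coercivity of $\mcC_u\in\OPS^{-2r_1}$ then close the argument exactly as in (i). Part (iii) is completely analogous, with $\mcC_w\in\OPS^{-2r_2}$ in place of $\mcC_u$, and compactness of $\widetilde{\bfC}_u,\bfC_w$ on $\ell^2(\mcJ)$ follows from the strictly negative order of $\mcC_u,\mcC_w$.

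The main technical point requiring care is Part II(ii)--(iii): because the data coefficients are extracted by pairing with the dual wavelets $\widetilde\psi_\lambda$, the identification of $\widetilde{\bfC}_u$ (and $\bfC_w$) as a Gram matrix invokes \emph{both} the primal and the dual Riesz basis characterizations, so the roles of $(\gamma,\widetilde d)$ and $(\widetilde\gamma,d)$ must be tracked consistently to ensure that $\pm r_1,\pm r_2$ lie inside the admissible window guaranteed by Assumption~\ref{assumption:wavelets}(i). Once this bookkeeping is settled, Part II collapses into three parallel applications of the same Riesz basis plus order plus coercivity pattern, and the full proposition follows.
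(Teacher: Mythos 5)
Your proposal follows the same route as the paper's proof: Part I(i) via the Schwartz kernel, the vanishing-moment estimate (which you re-derive through Taylor expansion, but this is exactly the content of Lemma~\ref{lemma:wavelet_property}(iii) applied twice), and the off-diagonal kernel decay; Part I(ii) via Cauchy--Schwarz, the mapping property $\mcA:H^s\to H^{s-r}$, and the Sobolev-norm equivalence for wavelets; and Part II via the wavelet Riesz-basis characterization of Sobolev norms combined with the two-sided bounds $\langle \mcA v,v\rangle\asymp\|v\|^2_{H^{r/2}}$, $\langle \mcC_u v,v\rangle\asymp\|v\|^2_{H^{-r_1}}$, $\langle\mcC_w v,v\rangle\asymp\|v\|^2_{H^{-r_2}}$. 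The only cosmetic difference is that in Part II the paper works directly with the quadratic form to obtain both eigenvalue bounds at once, whereas you first equate operator norms and then invoke coercivity separately; the underlying argument is the same.
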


\begin{remark}
Combining Proposition~\ref{prop:property_UAW} \ref{approximate_sparsity} (i)–(ii) with Assumption~\ref{assumption:wavelets} (ii), we obtain the following uniform bound for all $\lambda=(j,k), \lambda'=(j',k')\in \mcJ$:
\[
|\bfA_{\lambda,\lambda'}|\lesssim 2^{(j+j')r/2}\cdot 2^{-\sigma|j-j'|}\left(1+2^{\min\{j,j'\}}\mathrm{dist}(S_{j,k},S_{j',k'})\right)^{-(n+r+2\widetilde{d})}.
\]
Such estimates are classical in the wavelet–Galerkin literature and underpin the construction of efficient numerical schemes; see, for example, \cite[(7.11), (9.27)]{dahmen1997wavelet} and \cite[(2.28), (2.29)]{cohen2001adaptive}.
\end{remark}

\section{Construction of the Estimator}\label{sec:define_estimator}

In this section, we construct our estimator for the unknown operator $\mcA$ in two stages. In the first stage, we use the \emph{a priori} estimates from Proposition \ref{prop:property_UAW} to identify a subset of significant entries of the bi-infinite matrix representation of $\mcA.$ In the second stage, we estimate these significant entries using the data. The final estimator of $\mcA$ will take the form
\begin{align}\label{eq:estimator_pre1}
\widehat{\mcA}:=\sum_{(\lambda,\lambda')\in \supp(J,t,t')} \widehat{\bfA}_{\lambda,\lambda'}\, \widetilde{\psi}_{\lambda}\otimes \widetilde{\psi}_{\lambda'}.
\end{align}

We will describe the two stages in turn. First, in Subsection \ref{ssec:compression} we introduce the truncation and compression procedure underlying the definition of the set $\supp(J,t,t')$ of significant entries to be estimated. Next, in Subsection \ref{ssec:estimation} we define the entrywise estimates $\widehat{\bfA}_{\lambda,\lambda'}$ for $(\lambda, \lambda') \in \supp(J,t,t')$ using a nested-support regression strategy.

\subsection{First Stage: Truncation and Compression}\label{ssec:compression}

This subsection introduces the set $\supp(J,t,t')$ of significant entries to be estimated. The key idea is that \emph{any} bi-infinite matrix $\bfA$ satisfying Proposition \ref{prop:property_UAW} can be well approximated by truncation into a finite-dimensional matrix and subsequent matrix compression. The set $\supp(J,t,t')$ corresponds to the non-zero entries of the compressed matrix, and is independent of $\bfA.$ We emphasize that the truncation and compression procedures do not make use of the given data, and are based on \emph{a priori} estimates. 

Matrix truncation crops the operator discarding fine scales, as formalized in the following:
\begin{definition}[Matrix truncation]\label{def:matrix_truncation}
    For $J>j_0,$ consider the set of wavelet indices with level at most $J,$ given by \[
\Lambda_{J}:=\left\{(j,k):j_0\le j\le J,k\in \nabla_{j} \right\}.
\]
Given a bi-infinite matrix $\bfA,$ we define its \emph{a priori} truncation
    $\bfA_{\Lambda_J}:=(\bfA_{\lambda,\lambda'})_{\lambda,\lambda'\in \Lambda_J}  \in \R^{\Lambda_{J} \times \Lambda_{J}}.$
\end{definition}
 In our later developments, truncation causes a bias in estimation of $\mcA$ since we will only estimate (a subset of) entries of $\bfA$ up to scale level $J.$ The choice of $J$ in terms of the sample size $N$ will be determined through a bias–variance tradeoff. We will characterize the error induced by matrix truncation in Proposition \ref{prop:Error-Truncation} below.

Compression exploits the sparsity of the bi-infinite matrix representation $\bfA$ of the PDO $\mcA$ in the wavelet basis. In particular, Proposition \ref{prop:property_UAW} \ref{approximate_sparsity} shows that matrix coefficients $\bfA_{\lambda,\lambda'}=\langle \mcA \psi_{\lambda'},\psi_{\lambda} \rangle$ decay rapidly as the supports of the wavelets $\psi_{\lambda},\psi_{\lambda'}$ become separated or as their scales differ, motivating the following definition.  Recall that for $(j,k)\in \mcJ$, $S_{j,k}=\mathrm{conv \, hull}(\mathrm{supp}\bigl(\psi_{j,k})\bigr)$ denotes the convex hull of the support of $\psi_{j,k}$.

\begin{definition}[Matrix compression]\label{def:supp_Jtt'}  Given $J, t,t',r,n,\sigma$ and a biorthogonal wavelet pair $(\Psi,\widetilde{\Psi})$ with regularity $\gamma,\widetilde{\gamma}$ and approximation orders $d,\widetilde{d}$, define the support set
\begin{align}\label{eq:supp_Jtt'}
&\supp(J,t,t')=\supp\big(J,t,t',r,n,\sigma, \widetilde{d}\, \big) \nonumber\\
&:=\bigg\{(\lambda,\lambda')\in \Lambda_J\times \Lambda_J: \mathrm{dist}(S_{j,k},S_{j',k'})\le \tau_{jj'}, j \le \frac{t+t'-r}{\sigma-\frac{n}{2}+t'-\frac{r}{2}}J+\frac{\sigma-\frac{n}{2}-(t-\frac{r}{2})}{\sigma-\frac{n}{2}+t'-\frac{r}{2}} j',\nonumber\\
&\qquad j'\le \frac{t+t'-r}{\sigma-\frac{n}{2}+t-\frac{r}{2}}J+\frac{\sigma-\frac{n}{2}-(t'-\frac{r}{2})}{\sigma-\frac{n}{2}+t-\frac{r}{2}} j\bigg\},
\end{align}
where the threshold $\tau_{jj'}$ is chosen as
\begin{equation}\label{eq:tau}
\tau_{jj'}:=a\max \left\{2^{-\min\{j,j'\}}, 2^{(J(t+t'-r)-jt'-j't-(j+j
')\widetilde{d})/(2\widetilde{d}+r)}\right\}
\end{equation}
for some sufficiently large constant $a > 1$.

The indicator matrix associated to $\supp(J,t,t')$ is
\begin{align}\label{eq:mask_Jtt'}
\bfM_{(J,t,t')}:=\mathbf{1}_{\supp(J,t,t')}\in\{0,1\}^{\Lambda_J\times \Lambda_J},
\end{align}
where the entry $(\bfM_{(J,t,t')})_{\lambda,\lambda'}=1$ if and only if $(\lambda,\lambda')\in\supp(J,t,t')$. 

The \emph{a priori} compressed matrix $\bfA^{\varepsilon}_{\Lambda_J}\in \R^{\Lambda_J\times \Lambda_J}$ is defined by
\[
\bfA^{\varepsilon}_{\Lambda_J}:= \bfM_{(J,t,t')} \odot \bfA_{\Lambda_J},
\]
 where $\odot$ denotes the Hadamard (entrywise) product.
\end{definition}

\begin{figure}[htbp]
\centering
\includegraphics[scale=0.85]{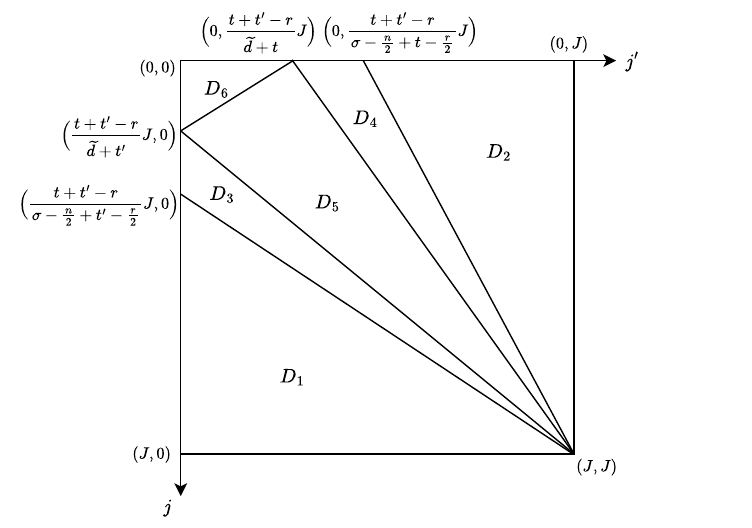}
\caption{Illustration of the $(J,t,t')$-compression of Definition~\ref{def:supp_Jtt'} in the $(j,j')$–plane. 
The index set is partitioned into regions $D_1$–$D_6$: blocks in $D_1\cup D_2$ are discarded; 
in $D_3$ and $D_4$, entries are retained only when $\mathrm{dist}(S_{j,k},S_{j',k'})\le\tau_{jj'}$ with 
$\tau_{jj'}\asymp 2^{-j'}$ and $\tau_{jj'}\asymp 2^{-j}$, respectively; in $D_5$, entries are retained only when $\mathrm{dist}(S_{j,k},S_{j',k'})\le\tau_{jj'}$ with 
$\tau_{jj'}\asymp 2^{(J(t+t'-r)-jt'-j't-(j+j')\widetilde d)/(2\widetilde d+r)}$; $D_6$ is uncompressed (all entries retained). 
}
\label{fig:figure1}
\end{figure}

Throughout, we write $\supp(J,t,t')$ to emphasize its dependence on $(J,t,t')$, suppressing the parameters $(r,n,\sigma,\widetilde{d})$ that remain fixed. 
In our later developments, matrix compression introduces a second layer of bias, which will be bounded in Proposition \ref{prop:Error-Compression}.

\begin{remark}[Discussion: matrix compression and comparison with \cite{dahmen2006compression}]
In wavelet methods for scientific computing and numerical analysis ---particularly in boundary integral equation solvers--- matrix compression techniques play a central role in achieving (near-)optimal computational complexity, and are a key ingredient in wavelet--Galerkin methods \cite{dahmen1997wavelet, cohen2000wavelet,cohen2004adaptive2, dahmen2006compression, harbrecht2006wavelet, harbrecht2024multilevel}. In the present work, motivated by operator learning rather than classical PDE solvers, we introduce a compression scheme specifically tailored to the statistical estimation setting. Below we discuss the motivation and key properties of the matrix compression in Definition~\ref{def:supp_Jtt'} and compare it with the compression scheme developed in \cite{dahmen2006compression}; see \cite[Section~7]{dahmen2006compression}.

\begin{enumerate}
\item \textbf{Key properties of matrix compression.}
Proposition~\ref{prop:Error-Compression} shows that the weighted compression error is of order
$2^{-J(t+t'-r)}$ (up to a factor $J$), matching the truncation error $2^{-J(t+t'-r)}$ induced by restricting to indices up to level $J$; see Definition~\ref{def:matrix_truncation} and Proposition~\ref{prop:Error-Truncation}. This is a standard feature of wavelet compression schemes; see \cite[Theorem~9.1]{dahmen2006compression} and \cite[Proposition~6]{harbrecht2024multilevel}.
Moreover, the total number of nonzero entries of the compressed matrix
$\bfA^{\varepsilon}_{\Lambda_J}\in \R^{\Lambda_J\times \Lambda_J}$ is of order $\mathcal{O}(2^{Jn})$, i.e.,
linear in the number of wavelet indices $|\Lambda_J|\asymp 2^{Jn}$; see the bound
$\mathrm{nnz}\big(\bfM_{(J,t,t')}\big)\lesssim 2^{Jn}$ in Proposition~\ref{prop:sparsity_count}.
This ``linear complexity'' property guarantees optimally sparse approximations and optimal computational complexity; see \cite[Theorem~11.1]{dahmen2006compression}. It is used in \cite{harbrecht2006wavelet} to obtain quadrature strategies with linear scaling, and analogous ideas appear in covariance/precision approximation in \cite{harbrecht2024multilevel}. 

\item \textbf{Choice of $\tau_{jj'}$.}
The thresholding based on $\tau_{jj'}$ exploits Proposition~\ref{prop:property_UAW} \ref{approximate_sparsity} (i), which implies that the matrix coefficients
$\bfA_{\lambda,\lambda'}=\langle \mcA \psi_{\lambda'},\psi_{\lambda} \rangle$ decay rapidly as the supports of $\psi_{\lambda}$ and $\psi_{\lambda'}$ separate. Accordingly, we keep only those entries with
$\mathrm{dist}(S_{j,k},S_{j',k'})\le \tau_{jj'}$.

A key conceptual difference from the compression strategies in
\cite{dahmen2006compression, harbrecht2024multilevel}
(e.g., \cite[Section~7]{dahmen2006compression} and \cite[Definition~1]{harbrecht2024multilevel})
is that in our learning setting $\tau_{jj'}$ depends on the regularities $t,t'$ appearing in the error metric $\|\cdot\|_{H^t\to H^{-t'}}$, whereas in \cite{dahmen2006compression, harbrecht2024multilevel} the threshold depends instead on a user-specified hyperparameter $d'\in (d,\widetilde{d}+r)$. For prescribed parameters $t$ and $t'$, our choice typically yields smaller values of $\tau_{jj'}$, leading to more aggressive entry deletion while preserving the same bias level $2^{-J(t+t'-r)}$. More precisely, since
$d'>d>\max\{t,t'\}$,
\[
2^{(J(t+t'-r)-jt'-j't-(j+j')\widetilde{d})/(2\widetilde{d}+r)}
\le
2^{(2J(d'-r/2)-(j+j')(d'+\widetilde{d}))/(2\widetilde{d}+r)},
\]
and when $j,j'<J$ the inequality is strict; cf.\ $\tau_{jj'}$ in \cite[(3.20)]{harbrecht2024multilevel}.

From a statistical perspective, since our estimator (defined later) targets the sparse compressed matrix $\bfA^{\varepsilon}_{\Lambda_J}$ and is supported on $\supp(J,t,t')$, its variance analysis depends crucially on the sparsity pattern induced by the compression scheme. Our learning-oriented compression is advantageous: within each block, it retains fewer coefficients (equivalently, a smaller support), which leads to a smaller variance bound. We emphasize that this refinement is essential for variance analysis in Proposition~\ref{prop:Error-Var}, because under the matrix operator norm the variance aggregates across scales in a way that is not captured by the global number of nonzero entries alone. In particular, both our compression and the matrix compression scheme in \cite{dahmen2006compression} guarantee a global nnz of optimal order $\mathcal{O}(2^{Jn})$, i.e., linear in $|\Lambda_J|$. However, obtaining a sharp variance bound requires near-optimal sparsity \emph{within each block}; this is precisely where our learning-oriented compression provides an advantage. 

\item \textbf{Slope conditions.}
Beyond entrywise thresholding, we retain only those $(j,j')$ blocks satisfying the two slope conditions in \eqref{eq:supp_Jtt'}, which exploit Proposition~\ref{prop:property_UAW} \ref{approximate_sparsity} (ii). By Assumption~\ref{assumption:wavelets} (iii), we have $\sigma-n/2>\max\{t,t'\}-r/2$ and $\widetilde{d}>\sigma-n/2-r/2$, which ensures that the slope constraints define a nontrivial admissible region in the $(j,j')$ plane; see Figure~\ref{fig:figure1}. As will become clear in the proof of Proposition~\ref{prop:Error-Compression}, these slope conditions are chosen sharply so that the discarded blocks in $D_1\cup D_2$ contribute at most $2^{-J(t+t'-r)}$ to the bias.  In this sense, the slope conditions are deliberately aggressive: they discard as many blocks as possible while still ensuring that the resulting bias remains of order at most $2^{-J(t+t'-r)}$. At the same time, discarding more blocks reduces the number of retained coefficients and hence helps control the variance of the estimator. 

 In summary, both the choice of the thresholding levels $\tau_{jj'}$ and the design of the slope conditions are guided by the same principle: keep as few entries as possible within each $(j,j')$ block while ensuring that the induced bias is at most of order $2^{-J(t+t'-r)}$ under the weighted matrix operator norm $\|\bfD^{-t'}(\cdot)\bfD^{-t}\|$ in \eqref{eq:matrix_norm}, matching the truncation bias introduced in Definition \ref{def:matrix_truncation}. This is why $\tau_{jj'}$ and the slope conditions depend on the prescribed $(t,t')$ in the learning error metric. By contrast, the matrix compression scheme in \cite{dahmen2006compression} is designed to accommodate a range of weight parameters simultaneously (up to the approximation order of the MRAs); see \cite[Theorem~9.1]{dahmen2006compression} and \cite[Proposition~6]{harbrecht2024multilevel}.   

\item \textbf{Asymmetry of $\supp(J,t,t')$.}
Our compression is inherently asymmetric when $t\neq t'$. In this case, $\tau_{jj'}\neq \tau_{j'j}$, and the slope conditions are likewise asymmetric, as reflected in the partition in Figure~\ref{fig:figure1} (illustrated for $t<t'$). This asymmetry stems from the fact that our compression is tailored to the asymmetric error metric $\|\cdot\|_{H^{t}\to H^{-t'}}$ in our learning setting and, equivalently, to the weighted matrix operator norm $\|\bfD^{-t'}(\cdot)\bfD^{-t}\|$. In contrast, the thresholding parameters and matrix compression schemes in \cite{dahmen2006compression,harbrecht2024multilevel} are symmetric.

\item \textbf{On the ``second compression'' in \cite{schneider2013multiskalen, dahmen2006compression}.}
Our compression uses only Proposition~\ref{prop:property_UAW} \ref{approximate_sparsity}. In addition, \cite{schneider2013multiskalen,dahmen2006compression} derives sharper estimates for entries corresponding to wavelets with overlapping supports but widely separated scales, and then applies a second thresholding step. Concretely, for a function $f$ on $\mcM$, let
$\mathrm{sing\,supp}(f):=\{x\in\mcM:\ f \text{ is not smooth at }x\}$ and let
$S'_{j,k}:=\mathrm{sing\,supp}(\psi_{j,k})\subset \mcM$ be the singular support of $\psi_{j,k}$.
For $(j,k),(j',k')\in \mcJ$ such that $j'>j$ and $\mathrm{dist}(S'_{j,k},S_{j',k'})\gtrsim 2^{-j'}$,
\cite[Theorem~6.3]{dahmen2006compression} proves
\[
|\bfA_{\lambda,\lambda'}|\lesssim
2^{jn/2}2^{-j'(\widetilde{d}+n/2)}\,
\mathrm{dist}(S'_{j,k},S_{j',k'})^{-(r+\widetilde{d})}.
\]
This motivates a second thresholding step that retains only entries satisfying
\[
\mathrm{dist}(S'_{j,k},S_{j',k'})\le \tau'_{jj'},
\]
where $\tau'_{jj'}$ is a parameter distinct from $\tau_{jj'}$; see \cite[Definition~1]{harbrecht2024multilevel} and \cite[Section~7]{dahmen2006compression}. We do not use this refinement here: in our setting it does not appear to improve the variance bound significantly, while discarding the regions $D_1\cup D_2$ yields a simpler analysis and already provides strong variance estimates (at least for fixed $t,t'$). \qedhere
\end{enumerate}
\end{remark}

\subsection{Second Stage: Estimation}\label{ssec:estimation}

In this subsection, we construct an estimator $\widehat{\bfA} \in \R^{\Lambda_{J} \times \Lambda_{J}}$ for the sparse compressed matrix $\bfA_{\Lambda_J}^{\varepsilon} \in \R^{\Lambda_{J} \times \Lambda_{J}}.$
The sparse support $\supp(J,t,t')$ will determine the structure of our regression-based estimator: the support specifies which matrix entries are actively regressed and which entries are set to zero and treated as bias.

We first observe that \eqref{eq:discrete_model} can be written columnwisely. For any index $\lambda'=(j',k')\in\mcJ$, the $\lambda'$-th column of the matrix equation \eqref{eq:discrete_model} reads
\begin{align}\label{eq:model_column}
\bfF_{\cdot,\lambda'}=\bfU \bfA_{\cdot,\lambda'}+\bfW_{\cdot,\lambda'},
\end{align}
where $\bfF_{\cdot,\lambda'}\in\R^{N}, \bfA_{\cdot,\lambda'}\in \R^{\mcJ},$ and $\bfW_{\cdot,\lambda'}\in \R^{N}$ denote the $\lambda'$-th columns of $\bfF,\bfA$, and $\bfW$, respectively. For each $\lambda'$, this yields an \emph{infinite-dimensional linear regression problem} with response vector $\bfF_{\cdot,\lambda'}$, design matrix $\bfU$, coefficient vector $\bfA_{\cdot,\lambda'}$, and noise vector $\bfW_{\cdot,\lambda'}$. 
Based on \eqref{eq:model_column}, our strategy is to estimate the compressed matrix $\bfA_{\Lambda_J}^{\varepsilon}$ columnwisely via regression, while carefully exploiting the sparse support structure of $\bfA_{\Lambda_J}^{\varepsilon}$.

Note that the $\lambda'$-th column of $\bfA_{\Lambda_J}^{\varepsilon}=\bfM_{(J,t,t')} \odot \bfA_{\Lambda_J}$ is supported on the finite set 
\[
\left\{\lambda\in \mcJ: (\lambda,\lambda')\in \supp(J,t,t')\right\}.
\]
 However, directly regressing on this index set can induce a non-negligible \emph{omitted-variable bias}, since the neglected components ---though omitted from the approximation--- still interact with the retained ones through the data; see Remark \ref{remark:structured_regression} for a detailed explanation. This bias is data-dependent and can be substantially larger than the deterministic truncation and compression errors incurred in the first stage. To address this issue, 
a key idea of our regression estimator is as follows:  for each column $\lambda'$, we choose a slightly larger index set containing the support of $(\bfA_{\Lambda_J}^{\varepsilon})_{\cdot,\lambda'}$ as \emph{regression support}, perform regression over this enlarged set, and then restrict the resulting estimator back to $\supp(J,t,t')$.  This \emph{nested-support regression} strategy allows us to efficiently control the omitted-variable bias, keeping it at most of the same order as the truncation/compression error.

More precisely, let $\widetilde{J},\widetilde{t},$ and $\widetilde{t}'$ be three tuning parameters. Lemma \ref{lemma:supp_monotonicity} in Appendix \ref{ssec:Appendixsupportmonotonicity} ensures that the inclusion
\begin{equation}\label{eq:inclusion}
   \supp(J,t,t')\subset \supp(\widetilde{J},\widetilde{t},\widetilde{t}') 
\end{equation}
 holds provided that $\widetilde{J}\ge J,\widetilde{t}\ge t$, and $\widetilde{t}'\ge t'.$ The values of the hyperparameters $\widetilde{J},\widetilde{t},\widetilde{t}'$ defining the enlarged regression support $\supp(\widetilde{J},\widetilde{t},\widetilde{t}')\subset \Lambda_{\widetilde{J}}\times \Lambda_{\widetilde{J}}$ will be chosen so that the omitted-variable bias is of the same order as the truncation and compression errors.   
 
Now, for each $\lambda'\in\Lambda_{J}$, define
\[
\Omega_{\lambda'}:=\left\{\lambda\in \mcJ: (\lambda,\lambda')\in \supp(\widetilde{J},\widetilde{t},\widetilde{t}') \right\}\subset \Lambda_{\widetilde{J}},\qquad \Omega_{\lambda'}^c:=\mcJ\setminus \Omega_{\lambda'}
\]
and decompose the design matrix and coefficient vector accordingly:
\begin{align}\label{eq:partition}
\bfU=[\bfU_{\cdot,\Omega_{\lambda'}}, \bfU_{\cdot,\Omega_{\lambda'}^c}],\quad \bfA_{\cdot,\lambda'}=\begin{bmatrix} \bfA_{\Omega_{\lambda'},\lambda'}  \\ \bfA_{\Omega_{\lambda'}^c,\lambda'} \end{bmatrix}.
\end{align}
Here we use the convention that, for any subset $\Lambda\subset \mcJ$, $\bfU_{\cdot,\Lambda}\in R^{N\times \Lambda}$ denotes the submatrix of $\bfU$ formed by the columns indexed by $\Lambda$, and for any vector $\bfv\in \R^{\mcJ}$, $\bfv_{\Lambda}\in \R^{\Lambda}$  denotes the subvector of $\bfv$ with components indexed by $\Lambda$. The inclusion $\supp(J,t,t')\subset \supp(\widetilde{J},\widetilde{t},\widetilde{t}')$ in \eqref{eq:inclusion} implies that, for each column $\lambda'$, the vector $\bfA_{\Omega_{\lambda'},\lambda'}$  (strictly)  contains all nonzero entries of $(\bfA_{\Lambda_J}^{\varepsilon})_{\cdot,\lambda'}$ that we aim to estimate.

Substituting \eqref{eq:partition} into \eqref{eq:model_column}, we obtain
\begin{align}\label{eq:model_column_partitioned}
\bfF_{\cdot,\lambda'}=\bfU \bfA_{\cdot,\lambda'}+\bfW_{\cdot,\lambda'}=\bfU_{\cdot,\Omega_{\lambda'}}\bfA_{\Omega_{\lambda'},\lambda'}+\bfU_{\cdot,\Omega_{\lambda'}^c} \bfA_{\Omega_{\lambda'}^c,\lambda'}+\bfW_{\cdot,\lambda'}.
\end{align}

 Assuming that $\bfU_{\cdot,\Omega_{\lambda'}}^{\top} \bfU_{\cdot,\Omega_{\lambda'}}$ is invertible,  we estimate $\bfA_{\Omega_{\lambda'},\lambda'}$ using the ordinary least squares estimator \begin{align}\label{eq:estimator_column}(\bfU_{\cdot,\Omega_{\lambda'}}^{\top} \bfU_{\cdot,\Omega_{\lambda'}})^{-1} \bfU_{\cdot,\Omega_{\lambda'}}^{\top} \bfF_{\cdot,\lambda'},
 \end{align}
ignoring the contribution from $\bfA_{\Omega_{\lambda'}^c,\lambda'}$. We then concatenate the columnwise regression estimates, restrict the resulting matrix to $\supp(J,t,t')$, and exploit the symmetry of $\bfA$ to assemble the full estimator $\widehat{\bfA}$ of the compressed matrix $\bfA^{\varepsilon}_{\Lambda_J}.$  Using $\widehat{\bfA}$, we define the final estimator $\widehat{\mcA}$ of the unknown operator $\mcA$ via \eqref{eq:estimator_pre1}. The complete definitions of $\widehat{\bfA}$ and $\widehat{\mcA}$ are given below.

\begin{tcolorbox}[scale =1,
  enhanced,
  colback=white,
  frame hidden, boxrule=0pt,            
  borderline north={0.8pt}{0pt}{black},   
  borderline north={0.8pt}{17pt}{black!60!white},
  borderline south={0.8pt}{0pt}{black!60!white}, 
  title=\textbf{Algorithm: Construction of the Estimator $\widehat{\mcA}$},
  label=alg:construction,
  colbacktitle=white,
  coltitle=black,              
  titlerule=0pt,              
  left=1mm,right=1mm,top=1mm,bottom=1mm
]

\noindent\textbf{Inputs:}
\begin{itemize}[leftmargin=2em, itemsep=0pt]
  \item Maximum wavelet scales $J$ and $\widetilde{J}$; regularity parameters $t,t',\widetilde{t},\widetilde{t}'$.
  \item Compression support $\mathrm{supp}(J,t,t')$, regression support $\mathrm{supp}(\widetilde{J},\widetilde{t},\widetilde{t}')$.
  \item Data matrices $\bfU_{\cdot,\Lambda_{\widetilde{J}}} \in \mathbb{R}^{N \times \Lambda_{\widetilde{J}}}$ and $\bfF_{\cdot,\Lambda_{\widetilde{J}}} \in \mathbb{R}^{N \times \Lambda_{\widetilde{J}}}$.
\end{itemize}

%\smallskip
\noindent\textbf{Output:} Estimator $\widehat{\mcA}: H^t(\mcM) \to H^{-t'}(\mcM)$.

\medskip
\noindent\textbf{Step 1: Columnwise regression.}
\vspace{0.3em}

Let $\Omega_{\lambda'}:=\left\{\lambda\in \mcJ: (\lambda,\lambda')\in \supp(\widetilde{J},\widetilde{t},\widetilde{t}') \right\}$. Compute the preliminary estimator $\widehat{\bfA}^{(1)} \in \mathbb{R}^{\Lambda_{\widetilde{J}}\,\times \Lambda_{J}}$ by concatenating columnwise regression estimates:
\begin{align}\label{eq:estimator_column_box}
(\widehat{\bfA}^{(1)})_{\cdot,\lambda'}
:=
\begin{bmatrix}
(\bfU_{\cdot,\Omega_{\lambda'}}^{\top} \bfU_{\cdot,\Omega_{\lambda'}})^{-1}
\bfU_{\cdot,\Omega_{\lambda'}}^{\top} \bfF_{\cdot,\lambda'} \\
\boldsymbol{0}
\end{bmatrix}
\in \mathbb{R}^{\Lambda_{\widetilde{J}}},
\qquad
\lambda' \in \Lambda_{J},
\end{align}
where
$(\bfU_{\cdot,\Omega_{\lambda'}}^{\top} \bfU_{\cdot,\Omega_{\lambda'}})^{-1}
\bfU_{\cdot,\Omega_{\lambda'}}^{\top} \bfF_{\cdot,\lambda'} \in \mathbb{R}^{\Omega_{\lambda'}}$
and
$\boldsymbol{0} \in \mathbb{R}^{\Lambda_{\widetilde{J}} \,\setminus\, \Omega_{\lambda'}}$.

\vspace{1em}
\noindent\textbf{Step 2: Restrict to the target sparsity pattern and use symmetry.}
\vspace{0.3em}

Define the estimator $\widehat{\bfA} \in \mathbb{R}^{\Lambda_J \times \Lambda_J}$ of the compressed matrix $\bfA^{\varepsilon}_{\Lambda_J}\in \R^{\Lambda_J\times \Lambda_J}$ entrywise by
\begin{align}\label{eq:estimator_matrix_box}
\widehat{\bfA}_{\lambda,\lambda'}
:=
\begin{cases}
 (\widehat{\bfA}^{(1)})_{\lambda,\lambda'}  & \text{if } (\lambda,\lambda')\in \supp(J,t,t') \text{ and }  j \le j', \\[2mm]
 (\widehat{\bfA}^{(1)})_{\lambda',\lambda} & \text{if } (\lambda,\lambda')\in \supp(J,t,t') \text{ and } j > j',\\[2mm]
 0 & \text{if } (\lambda,\lambda')\notin  \supp(J,t,t').
\end{cases}
\end{align}

\noindent\textbf{Step 3: Operator construction.}
\vspace{0.3em}

Define the estimator $\widehat{\mcA}$ as
\begin{align}\label{eq:estimator_operator_box}
\widehat{\mcA}
:= \sum_{(\lambda,\lambda') \in \supp(J,t,t')}
\widehat{\bfA}_{\lambda,\lambda'}\,
\widetilde{\psi}_{\lambda} \otimes \widetilde{\psi}_{\lambda'}.
\end{align}
% \end{algobox}
\end{tcolorbox}

\smallskip

\begin{remark}[Interpretation of the construction of $\widehat{\mcA}$]
Since the index set $\supp(J,t,t')$ is not symmetric, the compressed matrix
$\bfA^\varepsilon_{\Lambda_J}$ (and hence our estimator) is not symmetric.
In \textbf{Step~1}, we construct a preliminary estimator $\widehat{\bfA}^{(1)}$
via column-wise regressions, without using the symmetry of the target matrix $\bfA$.
We now explain the motivation for the symmetrization procedure in \textbf{Step~2},
which uses the lower-variance side of $\widehat{\bfA}^{(1)}$ to estimate its symmetric counterpart, thereby reducing the overall variance.

As will become clear from the variance analysis in Proposition~\ref{prop:Error-Var}
(see \eqref{eq:var_aux3}), the standard deviation of an entry in the $(j,j')$-block of
$\widehat{\bfA}^{(1)}$ is of order $2^{j r_1-j' r_2}/\sqrt{N}$, up to logarithmic factors.
Under Assumption~\ref{assumption:operator_data_noise}, we have
$r_1>n/2+\max\{0,r\}$ and $r_2>n/2$, and therefore, whenever $j<j'$,
\[
2^{j r_1-j' r_2} < 2^{j' r_1-j r_2}.
\]
Consequently, for $j<j'$ (i.e., for blocks \emph{above} the diagonal $j=j'$ in the $(j,j')$-plane in Figure \ref{fig:figure1}),
the entries of $\widehat{\bfA}^{(1)}$ in the $(j,j')$-block have smaller variance than
their symmetric counterparts in the transposed $(j',j)$-block (which lies \emph{below} the diagonal).

This variance imbalance can be understood from the model
\[
\bfF=\bfU\bfA+\bfW.
\]
Roughly speaking, due to the regularity assumptions $r_1,r_2>n/2$, the wavelet coefficients
of the input matrix $\bfU$ and the noise matrix $\bfW$ decay with the scale index. In particular,
when estimating the $\lambda'$-th column of $\bfA$ at a finer scale (large $j'$), the effective noise level is smaller.
Moreover, the entries of $\bfU$ at coarser scales are typically larger in magnitude, so
the corresponding regression has a stronger signal. Together, these effects yield smaller variance
for estimating $\bfA$ above the diagonal than for estimating the entries below the diagonal.

Motivated by this observation, in \textbf{Step~2} we exploit the symmetry of the target matrix $\bfA$ to transfer the more accurate
estimates (the lower-variance side, above the diagonal) to the opposite side (the higher-variance side, below the diagonal). Since our goal is to estimate the compressed matrix $\bfA^\varepsilon_{\Lambda_J}$ supported on $\supp(J,t,t')$, when $t\le t'$,
Lemma~\ref{lemma:upper_contain_lower} below guarantees that the reflection of the below-diagonal support
is contained in the above-diagonal support. This inclusion allows us to define the final estimator
$\widehat{\bfA}$ in \eqref{eq:estimator_matrix_box} by symmetrizing $\widehat{\bfA}^{(1)}$ using the
lower-variance entries, thereby reducing the overall variance compared with $\widehat{\bfA}^{(1)}$.
\end{remark}

\smallskip

\begin{lemma}\label{lemma:upper_contain_lower}
Let $\bfM_{(J,t,t')}$ be the indicator matrix introduced in  Definition~\ref{def:supp_Jtt'}. If $t \le t'$, then for any pair of indices $(\lambda,\lambda')=((j,k),(j',k'))$ with $j > j'$, 
we have
\[
(\bfM_{(J,t,t')})_{\lambda,\lambda'} = 1 
\quad\Longrightarrow\quad
(\bfM_{(J,t,t')})_{\lambda',\lambda} = 1.
\]
\end{lemma}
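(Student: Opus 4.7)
The plan is to unpack the definition of $\supp(J,t,t')$ into its three defining inequalities and verify that each is preserved when we swap the first and second indices. Concretely, membership of $((j,k),(j',k'))$ in $\supp(J,t,t')$ is equivalent to the distance constraint $\mathrm{dist}(S_{j,k},S_{j',k'}) \le \tau_{jj'}$ together with the two slope inequalities in \eqref{eq:supp_Jtt'}. So I must show, under $j > j'$ and $t \le t'$, that (a) $\tau_{jj'} \le \tau_{j'j}$, and (b) both slope inequalities for the swapped pair $((j',k'),(j,k))$ hold whenever they hold for $((j,k),(j',k'))$.

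For (a), the first term of the max in \eqref{eq:tau} is symmetric in $(j,j')$, so only the second term requires attention. Its exponent (up to the factor $2\widetilde{d}+r > 0$) is $J(t+t'-r) - jt' - j't - (j+j')\widetilde{d}$; subtracting this from the corresponding exponent for $\tau_{j'j}$ yields exactly $(j-j')(t'-t)$. Under the hypotheses $j > j'$ and $t \le t'$ this quantity is nonnegative, so $\tau_{j'j} \ge \tau_{jj'}$, and by symmetry of $\mathrm{dist}$ the distance constraint is inherited.

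For (b), I would introduce the abbreviation $\alpha := \sigma - n/2 - r/2$ and clear denominators (using $\alpha + t,\alpha+t' > 0$, which follows from Assumption~\ref{assumption:wavelets}(iii)). The two original slope conditions become
\begin{align*}
(\alpha+t')\,j - (\alpha-t)\,j' &\le (t+t'-r)\,J,\\
(\alpha+t)\,j' - (\alpha-t')\,j &\le (t+t'-r)\,J,
\end{align*}
while the swapped pair reads
\begin{align*}
(\alpha+t')\,j' - (\alpha-t)\,j &\le (t+t'-r)\,J,\\
(\alpha+t)\,j - (\alpha-t')\,j' &\le (t+t'-r)\,J.
\end{align*}
A direct subtraction shows that the first swapped inequality differs from the \emph{second} original one by $(t-t')(j-j')$ on the left side, and likewise the second swapped inequality differs from the \emph{first} original by $(t-t')(j-j')$. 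Under $t \le t'$ and $j > j'$ this difference is $\le 0$, so in each case the swapped LHS is bounded above by the corresponding original LHS, which in turn is bounded by $(t+t'-r)J$.

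I do not anticipate a serious obstacle; the whole argument amounts to exploiting the explicit linearity of the slope constraints in $(j,j')$ together with a symmetry-breaking term $(t-t')(j-j')$ whose sign is controlled by the hypotheses. The one subtle point worth emphasizing is that the sign analysis in step (b) genuinely uses the pairing of the first swapped slope inequality with the \emph{second} original one (and vice versa), so that the cross-terms in $(\alpha \pm t)$ and $(\alpha \pm t')$ collapse cleanly into a single factor $(t-t')(j-j')$.
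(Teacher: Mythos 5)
Your proof is correct and follows essentially the same plan as the paper's: verify the distance condition via $\tau_{jj'}\le\tau_{j'j}$ (your computation matches the paper's), then check the two slope constraints for the swapped pair. The one place you diverge is in establishing the first swapped slope constraint (the upper bound on $j'$ in terms of $j$): you pair it with the \emph{second} original constraint and use the $(t-t')(j-j')$ cancellation, whereas the paper observes that this particular constraint, after clearing denominators, reduces to $j\le J$, which holds automatically, so $j'<j\le J$ already suffices without invoking the hypothesized slope constraints. Your pairing treats the two swapped inequalities symmetrically; the paper's route for the first one is more elementary. For the second swapped constraint (the upper bound on $j$ in terms of $j'$), your pairing with the first original constraint matches the paper.

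One small bookkeeping slip: with $\alpha:=\sigma-n/2-r/2$, the numerator $\sigma-n/2-(t-r/2)$ equals $\alpha+r-t$, not $\alpha-t$, and likewise $\sigma-n/2-(t'-r/2)=\alpha+r-t'$. So the cleared-denominator inequalities should read, e.g., $(\alpha+t')\,j-(\alpha+r-t)\,j'\le(t+t'-r)\,J$. Since the extra $+r$ enters every one of the four inequalities in the same way, it cancels in each subtraction, so your key identity that the swapped LHS minus the paired original LHS equals $(t-t')(j-j')$ is still correct, and the sign argument under $t\le t'$, $j>j'$ goes through unchanged.
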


\smallskip

\begin{remark}[Effective data and computational cost]\label{remark:cost1}
Observe that, in the algorithm constructing the estimator $\widehat{\bfA}$, we only use finite-dimensional data matrices \[
\bfU_{\cdot,\Lambda_{\widetilde{J}}} \in \mathbb{R}^{N \times \Lambda_{\widetilde{J}}},\qquad \bfF_{\cdot,\Lambda_{\widetilde{J}}} \in \mathbb{R}^{N \times \Lambda_{\widetilde{J}}},
\]
rather than the full coefficient matrices $\bfU,\bfF$. In our algorithm, we assume oracle access to the wavelet coefficients of the input–output data,
i.e., the quantities $(\langle u_i,\widetilde{\psi}_{\lambda}\rangle,\langle f_i,\psi_{\lambda}\rangle)$.
This does not reduce the generality of the learning problem, since the (bi)orthogonal wavelet
transform is an invertible linear map. For algorithmic and implementation details of
wavelet–Galerkin methods, we refer the reader to \cite{harbrecht2006wavelet,harbrecht2021fast}. In the following discussion on the computational cost of our learning algorithm, we ignore the cost of constructing the wavelets and computing these coefficients, and assume that $\bfU,\bfF$ are given.

We now discuss the computational cost of the algorithm. For a fixed column index $\lambda'$, we
compute
\[
(\bfU_{\cdot,\Omega_{\lambda'}}^{\top}\bfU_{\cdot,\Omega_{\lambda'}})^{-1}\,
\bfU_{\cdot,\Omega_{\lambda'}}^{\top}\bfF_{\cdot,\lambda'},
\]
where
\[
\Omega_{\lambda'}
:=\bigl\{\lambda\in\mcJ:\;(\lambda,\lambda')\in\supp(\widetilde{J},\widetilde{t},\widetilde{t}')\bigr\}.
\]
Forming the Gram matrix $\bfU_{\cdot,\Omega_{\lambda'}}^{\top}\bfU_{\cdot,\Omega_{\lambda'}}$
and the cross term $\bfU_{\cdot,\Omega_{\lambda'}}^{\top}\bfF_{\cdot,\lambda'}$ costs
$\mathcal{O} \left(N|\Omega_{\lambda'}|^{2}\right)$, and solving the resulting linear system
(e.g.\ via a Cholesky factorization) costs $\mathcal{O} \left(|\Omega_{\lambda'}|^{3}\right)$.
Hence the per-column cost is
\[
\mathcal{O}\left(N|\Omega_{\lambda'}|^{2}+|\Omega_{\lambda'}|^{3}\right).
\]
Since we only perform column-wise regressions for indices $\lambda'\in \Lambda_J$ to construct $\widehat{\bfA}^{(1)}$, summing over all such columns $\lambda'\in\Lambda_J$ yields a total computational cost of order
\begin{align}\label{eq:cost}
\mathcal{O}\bigg(
N\bigg(\sum_{\lambda'\in\Lambda_J}|\Omega_{\lambda'}|^{2}\bigg)
+\sum_{\lambda'\in\Lambda_J}|\Omega_{\lambda'}|^{3}
\bigg),
\end{align}
which depends crucially on the sparsity pattern of the regression support $\supp(\widetilde{J},\widetilde{t},\widetilde{t}')$. In Remark~\ref{remark:cost2}, we provide a more explicit bound under a specific choice of $\widetilde{J},\widetilde{t},\widetilde{t}'$.
\end{remark}

\smallskip

\begin{remark}[Structured infinite-dimensional linear regression]\label{remark:structured_regression} In this remark, we interpret the column-wise model \eqref{eq:model_column_partitioned} and explain the structure behind the error analysis for the estimator \eqref{eq:estimator_column}. To streamline the discussion, we rewrite \eqref{eq:model_column} as an abstract infinite-dimensional linear regression problem,
\begin{align*}
y=\bfX\theta+\xi,
\end{align*}
where $y\in\R^N$ is the response vector, $ \bfX\in \R^{N\times \mcJ}$ is a design matrix indexed by an infinite set $\mcJ$, $\theta\in \R^{\mcJ}$ is an unknown coefficient vector, and $\xi\in \R^{N}$ is a noise vector. The goal is to recover $\theta$ given $(\bfX, y)$.

Suppose that the infinite-dimensional vector $\theta$ exhibits structure through \emph{a priori} information, suggesting that its dominant coordinates are contained in a finite index set $\Omega\subset \mcJ$; denote the complement by $\Omega^c:=\mcJ\backslash \Omega$. Decomposing both the design matrix and the parameter vector according to this partition yields
\[
y=\bfX\theta+\xi=\bfX_{\cdot,\Omega}\theta_{\Omega}+\bfX_{\cdot,\Omega^c} \theta_{\Omega^c}+\xi.
\]
If $\bfX_{\cdot,\Omega}^{\top} \bfX_{\cdot,\Omega}$ is invertible, we define the estimator that regresses on $\Omega$ and sets the remaining coordinates to zero,
\begin{align*}
\widehat{\theta} \,:=\,
\begin{pmatrix}
\widehat{\theta}_{\Omega} \\
\boldsymbol{0}
\end{pmatrix},\qquad \widehat{\theta}_{\Omega}:= (\bfX_{\cdot,\Omega}^{\top} \bfX_{\cdot,\Omega})^{-1} \bfX_{\cdot,\Omega}^{\top} y.
\end{align*}
A direct calculation gives
\begin{align*}
(\bfX_{\cdot,\Omega}^{\top} \bfX_{\cdot,\Omega})^{-1} \bfX_{\cdot,\Omega}^{\top} y-\theta_{\Omega}=(\bfX_{\cdot,\Omega}^{\top} \bfX_{\cdot,\Omega})^{-1} \bfX_{\cdot,\Omega}^{\top}\bfX_{\cdot,\Omega^c} \theta_{\Omega^c}+(\bfX_{\cdot,\Omega}^{\top} \bfX_{\cdot,\Omega})^{-1} \bfX_{\cdot,\Omega}^{\top}\xi,
\end{align*}
and therefore
\begin{align}\label{eq:error_abstract}
\widehat{\theta}-\theta=\begin{pmatrix}
(\bfX_{\cdot,\Omega}^{\top} \bfX_{\cdot,\Omega})^{-1} \bfX_{\cdot,\Omega}^{\top}\bfX_{\cdot,\Omega^c} \theta_{\Omega^c}+(\bfX_{\cdot,\Omega}^{\top} \bfX_{\cdot,\Omega})^{-1} \bfX_{\cdot,\Omega}^{\top}\xi \\
-\theta_{\Omega^c}
\end{pmatrix}.
\end{align}

On the index set $\Omega^c$, the error is the \emph{bias} term $-\theta_{\Omega^c}$, which is deterministic and independent of the design $\bfX$ and the noise $\varepsilon$. Since $\Omega$ is chosen to contain the significant entries of $\theta$, this term is expected to be small.

On the index set $\Omega$, the error decomposes into two terms. The first term 
\[(\bfX_{\cdot,\Omega}^{\top} \bfX_{\cdot,\Omega})^{-1} \bfX_{\cdot,\Omega}^{\top}\bfX_{\cdot,\Omega^c} \theta_{\Omega^c}
\]
is the \emph{omitted-variable bias} ($\OVB$) term that depends on the covariance structure of the design. In classical finite-dimensional regression, $\OVB$ arises when relevant covariates are omitted and their effect is partially attributed to included covariates; see, e.g., \cite[Chapter 18]{barreto2005introductory} and \cite[Chapter 3-3]
{wooldridge2016introductory}. Here, $\OVB$ appears from a slightly different perspective: when $\theta$ is infinite-dimensional, any finite-dimensional regression necessarily ignores a (presumably negligible) tail $\theta_{\Omega^c}$, and this tail can bias the estimate on $\Omega$ when the columns of $\bfX$ are correlated.

For random design with i.i.d.\ rows, one can make this precise. Let
\[
\bfC:=\E[\bfX^{\top}\bfX]/N
\]
denote the population covariance of each row of the design. Under standard regularity conditions, one typically has $\bfX_{\cdot,\Omega}^{\top}\bfX_{\cdot,\Omega}/N\to \bfC_{\Omega,\Omega}$ and $\bfX_{\cdot,\Omega}^{\top}\bfX_{\cdot,\Omega^c}/N\to \bfC_{\Omega,\Omega^c}$ as $N\to\infty$, and hence the $\OVB$ term converges to \[
\bfC_{\Omega,\Omega}^{-1}\bfC_{\Omega,\Omega^c}\,\theta_{\Omega^c}.
\]
In general, this limit is not zero without additional structure on $\theta_{\Omega^c}$ and/or $\bfC$ (e.g.\ approximate orthogonality across the partition), so it contributes a genuine bias. Our nested-support strategy is designed to balance the two bias contributions: the deterministic bias $-\theta_{\Omega^c}$ and the $\OVB$ induced by $\theta_{\Omega^c}$ through design correlations; see Proposition \ref{prop:Error-OVB} in Subsection \ref{ssec:proof_main1}.

The second term on the index set $\Omega$ in \eqref{eq:error_abstract},
\[
(\bfX_{\cdot,\Omega}^{\top} \bfX_{\cdot,\Omega})^{-1} \bfX_{\cdot,\Omega}^{\top}\xi,
\]
is the usual \emph{variance} term. In the standard setting where $\xi\sim \mathcal{N}(0,\sigma_{\xi}^2 I_N)$ is independent of $\bfX$, one has $\E\left[(\bfX_{\cdot,\Omega}^{\top} \bfX_{\cdot,\Omega})^{-1} \bfX_{\cdot,\Omega}^{\top}\xi \right]=0$, and, conditional on $\bfX$,
\[
(\bfX_{\cdot,\Omega}^{\top} \bfX_{\cdot,\Omega})^{-1} \bfX_{\cdot,\Omega}^{\top}\xi
\sim \mathcal{N} \left(0, \sigma_{\xi}^2(\bfX_{\cdot,\Omega}^{\top} \bfX_{\cdot,\Omega})^{-1}\right).
\]
In particular, under standard conditions ensuring $\bfX_{\cdot,\Omega}^{\top}\bfX_{\cdot,\Omega}/N\to \bfC_{\Omega,\Omega}$, the typical size of this term decays at rate $N^{-1/2}$.

In our analysis of the matrix estimator \eqref{eq:estimator_matrix_box} in Subsection~\ref{ssec:proof_main1}, the total error admits an analogous decomposition for each column $\lambda'$. We note, however, that the regression support $\supp(\widetilde{J},\widetilde{t},\widetilde{t}')$ strictly contains the compression support $\supp(J,t,t')$: this enlargement-restriction step in the nested-support strategy mitigates the omitted-variable bias, but introduces additional technical difficulty in the analysis. Moreover, since both the regression support set $\Omega_{\lambda'}$ and the noise level depend on $\lambda'$, we first derive column-wise error bounds that exploit the associated sparsity pattern and noise scaling, and then aggregate these bounds across scales to control the overall matrix error under the weighted operator norm \eqref{eq:matrix_norm}.
\end{remark}

\section{Convergence Rates for Operator Learning}\label{sec:convergencerate}
\subsection{First Main Result}
We are now ready to state and discuss our first main result:
\begin{theorem}\label{thm:main1}    
Suppose that Assumption \ref{assumption:operator_data_noise} and Assumption \ref{assumption:wavelets} hold. Let $t'\ge t>r/2$. Consider the estimator $\widehat{\mcA}=\widehat{\mcA}(\{u_i,f_i\}_{i=1}^{N})$ defined in Section \ref{sec:define_estimator} with parameters
\begin{align}\label{eq:def_parameter}
\widetilde{J}:=\left\lceil \frac{t+t'-r+\varepsilon_1}{\min\{t',r_1\}+t-r}\, J\right\rceil, \quad \widetilde{t}:=t',\quad \widetilde{t}':=\max\{t',r_1\},\quad J:=\left\lceil \frac{\log_2 N}{(2+\rho)(t+t'-r)} \right\rceil,
\end{align}
where $\varepsilon_1:= n(t+t'-r)/(\sigma-n/2+t-r/2)$ and
\begin{align}\label{eq:rho}
\rho:=2\max\left\{ \frac{-t-r_2+n/2}{\sigma-n/2+t-r/2},\, \frac{-t'-r_2+n/2}{\sigma-n/2+t'-r/2},\,\frac{-t-t'+r_1-r_2+n}{t+t'+2\widetilde{d}} ,\,\frac{-t-t'+r_1-r_2}{t+t'-r},0\right\}.
\end{align}
For any $\delta\in(0,1)$, if $N\gtrsim \log (1/\delta)$, then with probability at least $1-\delta$,
\[
\|\widehat{\mcA}-\mcA\|_{H^{t}\to H^{-t'}}
\;\lesssim\;
N^{-\frac{1}{2+\rho}}\,
\sqrt{\log\!\Big(\frac{N}{\delta}\Big)}\,
\log N.
\]
\end{theorem}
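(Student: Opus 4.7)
The plan is to exploit the norm equivalence \eqref{eq:matrix_norm} to recast the operator-norm error as the weighted matrix operator norm $\|\bfD^{-t'}(\bfA - \widehat{\bfA}_{\mathrm{ext}})\bfD^{-t}\|$, where $\widehat{\bfA}_{\mathrm{ext}}$ is $\widehat{\bfA}$ extended by zeros to $\mcJ \times \mcJ$. The natural decomposition splits this into four pieces,
\[
\bfA - \widehat{\bfA}_{\mathrm{ext}} = \underbrace{(\bfA - \bfA_{\Lambda_J})}_{\text{truncation}} + \underbrace{(\bfA_{\Lambda_J} - \bfA^{\varepsilon}_{\Lambda_J})}_{\text{compression}} + \underbrace{(\bfA^{\varepsilon}_{\Lambda_J} - \widehat{\bfA})}_{\text{estimation}},
\]
and the estimation term is further split, column by column via the identity \eqref{eq:error_abstract} from Remark \ref{remark:structured_regression}, into an omitted-variable bias ($\OVB$) term and a variance term. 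So the task is to bound each of $\ErrorTruncation$, $\ErrorCompression$, $\ErrorOVB$, and $\ErrorVar$ in the weighted operator norm and then balance them.

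The first two (deterministic) bounds will be corollaries of Proposition \ref{prop:property_UAW}. For the truncation error I would apply Schur-type row/column sum estimates to the tail blocks with $j > J$ or $j' > J$, using the entrywise decay in \ref{approximate_sparsity} together with the slope balance built into the diagonal weights $\bfD^{-t'}, \bfD^{-t}$; this gives $\lesssim 2^{-J(t+t'-r)}$. For the compression error the same toolbox applies on the complement of $\supp(J,t,t')$: the thresholds $\tau_{jj'}$ and the slope cuts in \eqref{eq:supp_Jtt'} are calibrated precisely so that the discarded blocks in $D_1 \cup D_2$ (via \ref{approximate_sparsity}(ii)) and the entrywise-thresholded entries in $D_3, D_4, D_5$ (via \ref{approximate_sparsity}(i)) contribute at most $\lesssim J \cdot 2^{-J(t+t'-r)}$. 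Both bounds are independent of the data.

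The $\OVB$ contribution per column $\lambda'$ is $(\bfU_{\cdot,\Omega_{\lambda'}}^\top \bfU_{\cdot,\Omega_{\lambda'}})^{-1} \bfU_{\cdot,\Omega_{\lambda'}}^\top \bfU_{\cdot,\Omega_{\lambda'}^c} \bfA_{\Omega_{\lambda'}^c, \lambda'}$. Here is where the nested-support strategy pays off: the regression support $\supp(\widetilde{J},\widetilde{t},\widetilde{t}')$ strictly contains $\supp(J,t,t')$, so $\bfA_{\Omega_{\lambda'}^c,\lambda'}$ only carries the already-small, compression-sized entries of $\bfA$. I would condition on $\bfU$, use concentration of the preconditioned empirical input Gram $\bfD^{r_1} (\bfU^\top \bfU/N) \bfD^{r_1}$ (via Proposition \ref{prop:property_UAW} \ref{preconditioning_data} and Gaussian Hanson–Wright/matrix concentration, together with Assumption \ref{assumption:wavelets}(iii) guaranteeing a suitable effective dimension), and reduce to an inequality of the form $\|\bfD^{-t'} (\ErrorOVB\text{ matrix}) \bfD^{-t}\| \lesssim \|\bfD^{-t'} (\bfA_{\Lambda_{\widetilde J}} - \bfA^\varepsilon_{\Lambda_{\widetilde J}}) \bfD^{-t}\|$ (up to a data-dependent but $\OO(1)$ factor). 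The specific choice $\widetilde{J} = \lceil (t+t'-r+\varepsilon_1)/(\min\{t',r_1\}+t-r) \cdot J \rceil$, $\widetilde{t}=t'$, $\widetilde{t}' = \max\{t', r_1\}$ is exactly what makes this ``leftover'' at most of the order of the compression error itself.

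The main obstacle, and the most delicate piece, is the variance term. Conditional on $\bfU$, the column residual $(\bfU_{\cdot,\Omega_{\lambda'}}^\top \bfU_{\cdot,\Omega_{\lambda'}})^{-1} \bfU_{\cdot,\Omega_{\lambda'}}^\top \bfW_{\cdot,\lambda'}$ is centered Gaussian with conditional covariance $(\bfU_{\cdot,\Omega_{\lambda'}}^\top \bfU_{\cdot,\Omega_{\lambda'}})^{-1}$ scaled by $\Cov(\bfW_{\cdot,\lambda'})_{\lambda',\lambda'} \asymp 2^{-2|\lambda'| r_2}$ (by Proposition \ref{prop:property_UAW} \ref{preconditioning_noise}). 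After symmetrization using Lemma \ref{lemma:upper_contain_lower} (keeping the lower-variance side above the diagonal), one gets an entrywise standard deviation of order $2^{j r_1 - j' r_2}/\sqrt{N}$, up to logs. The difficulty is that naive Frobenius or entrywise bounds are far too weak under the weighted operator norm: the variance has to be aggregated blockwise, so I would split $\bfD^{-t'}(\bfA^\varepsilon_{\Lambda_J}-\widehat{\bfA})\bfD^{-t}$ into its $(j,j')$-blocks, apply Gaussian matrix concentration (e.g.\ tail bounds for suprema of Gaussian chaoses or a matrix-Bernstein argument) on each block using the blockwise sparsity count from Proposition \ref{prop:sparsity_count}, and finally combine via the $(2,\infty)$-to-operator-norm inequality across scales. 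The scale-dependent factors $2^{j(r_1 - t) + j'(-r_2 - t')}$ combined with the refined per-block sparsity produce a supremum over $(j,j')$ that matches exactly the four-way maximum defining $\rho$ in \eqref{eq:rho}; the extra terms $\varepsilon_1, \widetilde{J}$ in \eqref{eq:def_parameter} absorb the contribution of blocks at scales between $J$ and $\widetilde J$. Balancing the deterministic biases $\asymp 2^{-J(t+t'-r)}$ against this stochastic contribution $\asymp N^{-1/2} 2^{J\rho(t+t'-r)/2}$ yields the bias-variance optimal choice $J \asymp \log_2 N / ((2+\rho)(t+t'-r))$ and the advertised rate $N^{-1/(2+\rho)}$ (with the $\sqrt{\log(N/\delta)} \log N$ factor coming from Gaussian tails and a union bound over scales and columns).
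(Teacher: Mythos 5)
Your decomposition and the route through truncation, compression, omitted-variable bias, and variance matches the paper's proof exactly (Lemma~\ref{lemma:error_decomposition} and Propositions~\ref{prop:Error-Truncation}--\ref{prop:Error-Var}), and the bias--variance balance giving $J \asymp \log_2 N / ((2+\rho)(t+t'-r))$ is also the paper's. Two small clarifications: the OVB column bound reduces, after preconditioning by $\bfD^{r_1}$ on the design, to a compression-type estimate in the weights $(r_1,t)$ (not $(t',t)$), which is exactly why the enlarged support uses $\widetilde{t}'=\max\{r_1,t'\}$ and why Proposition~\ref{prop:Error-Compression} is stated for general weights $(t_\ell,t_r)$; and for the variance the paper does not invoke matrix-Bernstein or Gaussian-chaos suprema on each block—it instead bounds each block's spectral norm by $\sqrt{\mathrm{nnz}_{\mathrm{row}}\cdot\mathrm{nnz}_{\mathrm{col}}}\cdot\|\cdot\|_{\max}$, controls the entrywise max by a conditional Gaussian tail (Lemma~\ref{lemma:tech1}), and aggregates across blocks via the norm-compression inequality (Lemma~\ref{lemma:norm_compression}). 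Your remark that ``naive Frobenius or entrywise bounds are far too weak'' is true of the crude global versions, but the per-block entrywise-max-plus-sparsity refinement is precisely what the paper shows to be sharp enough, so the heavier machinery you propose is workable but not needed.
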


Theorem~\ref{thm:main1} establishes a high-probability error bound of order $N^{-\frac{1}{2+\rho}}$ (up to logarithmic factors) for learning elliptic pseudo-differential operator from noisy data. It holds under general assumptions on the inputs and noise, together with an appropriate choice of wavelets. The implicit constant depends only on the parameters listed in Table \ref{tab:key_parameters}, which we treat as fixed throughout.

The rest of this subsection is organized as follows. Remark \ref{rem:ttprime} discusses the assumption $t\le t'$ and explains why it entails no loss of generality. Remark \ref{remark:rate_discussion} provides a detailed discussion of the exponent $\rho$ in \eqref{eq:rho}, which governs the convergence rate; in particular, it highlights several qualitative features of the rate and shows that it is nearly optimal. Remark \ref{rem:sparsity} shows that the estimator $\widehat{\mcA}$ has only $\mathcal{O}(2^{Jn})=\mathcal{O}\big(N^{\frac{n}{(2+\rho)(t+t'-r)}}\big)$ nonzero coefficients, enabling fast numerical algorithms for downstream tasks. Remark \ref{remark:cost2} continues the computational-cost discussion in Remark \ref{remark:cost1} and shows that, under the parameter choice \eqref{eq:def_parameter}, the cost of constructing $\widehat{\mcA}$ is nearly $\mathcal{O}(N 2^{Jn})= \mathcal{O}\big(N^{1+\frac{n}{(2+\rho)(t+t'-r)}}\big)$, which is linear in the sample size $N$ and linear in the learned-matrix dimension $2^{Jn}$. Remark~\ref{rem:noisefree} considers the noiseless setting and shows that our estimator achieves a super-algebraic convergence rate. Finally, Example \ref{rem:example} illustrates our results in the concrete setting of learning the Green's function of elliptic PDEs, and compares both the convergence rate and computational cost with those in \cite{schafer2024sparse}.

\begin{table}[htbp]
\centering
\small
\renewcommand{\arraystretch}{1.5}
\setlength{\tabcolsep}{15pt}
\begin{tabular}{|c|c|}
\hline
\textbf{Parameter} & \textbf{Description} \\
\hline
$N$ & Sample size: number of input--output data pairs $\{(u_i,f_i)\}_{i=1}^N$ \\
\hline
$n$ & Dimension of the physical domain (manifold $\mcM$) \\
\hline
$r$ & Order of the pseudo-differential operator $\mcA$ \\
\hline
$t,t'$ & Input/output Sobolev index in the error norm $\|\cdot\|_{H^{t}\to H^{-t'}}$ \\
\hline
$r_1,r_2$ & \begin{tabular}[c]{@{}l@{}}
Regularity of the input function/noise in Assumption~\ref{assumption:operator_data_noise}: \\
\qquad\qquad \quad $u\in H^{(r_1-n/2)-}, w\in H^{(r_2-n/2)-}$
\end{tabular} \\
\hline
$\gamma,\widetilde{\gamma}$ & Regularity of the primal/dual wavelets \\
\hline
$d,\widetilde d$ & Approximation order of the primal/dual wavelets \\
\hline
$\sigma$ & \begin{tabular}[c]{@{}l@{}}
Exponent controlling scale-separation decay of $\bfA$ \\
\quad (Proposition~\ref{prop:property_UAW} \ref{approximate_sparsity} (ii); Assumption~\ref{assumption:wavelets} (iii))
\end{tabular} \\
\hline
\end{tabular}
\caption{Summary of key parameters.}
\label{tab:key_parameters}
\end{table}

\begin{remark}[On the assumption $t\le t'$]\label{rem:ttprime}
In Theorem~\ref{thm:main1}, we assumed that $t\le t'$ only to simplify the presentation, which entails no loss of generality. Indeed, when $t\ge t'$, we may take the adjoint $(\widehat{\mcA})^{*}$ of the estimator $\widehat{\mcA}$ defined in~\eqref{eq:estimator_operator_box} as our estimator. Since $\mcA$ is self-adjoint,
\[
(\widehat{\mcA})^{*}-\mcA=(\widehat{\mcA}-\mcA)^{*}.
\]
Moreover, by the dual characterization of Sobolev norms and the definition of the adjoint,
\begin{align*}
\|(\widehat{\mcA})^{*}-\mcA\|_{H^{t}\to H^{-t'}}
&=\|(\widehat{\mcA}-\mcA)^{*}\|_{H^{t}\to H^{-t'}} \\
&=\sup_{\|w\|_{H^{t}}\le 1,\|v\|_{H^{t'}}\le 1}|\langle (\widehat{\mcA}-\mcA)^{*}w,\,v\rangle| \\
&=\sup_{\|w\|_{H^{t}}\le 1,\|v\|_{H^{t'}}\le 1}|\langle w,\,(\widehat{\mcA}-\mcA)v\rangle|
=\|\widehat{\mcA}-\mcA\|_{H^{t'}\to H^{-t}}.
\end{align*}
Therefore, applying Theorem~\ref{thm:main1} with $t$ and $t'$ exchanged yields an upper bound for
$\|(\widehat{\mcA})^{*}-\mcA\|_{H^{t}\to H^{-t'}}$. Since the exponent $\rho$ in~\eqref{eq:rho} is symmetric in $(t,t')$, the resulting convergence rate coincides with that in the regime $t\le t'$.
\end{remark}

\begin{remark}[Discussion on the convergence rate]\label{remark:rate_discussion}

We make several remarks on the exponent $\rho$ in \eqref{eq:rho}, which governs the convergence rate.

\begin{enumerate}
    \item \textbf{Dependence on $t,t'$.}
The indices $t,t'$ quantify the strength of the operator norm $\|\cdot\|_{H^t\to H^{-t'}}$ used to measure the estimation
error. Roughly speaking, \emph{larger} $t,t'$ correspond to a \emph{weaker} norm, hence a less stringent error metric. When $t$ and $t'$ are sufficiently large relative to $r_1,r_2,n$, namely when 
\begin{align}\label{eq:rate_discussion_aux1}
\min\{t,t'\}\ge -r_2+\frac{n}{2}
\quad\text{and}\quad
t+t'\ge r_1-r_2+n,
\end{align}
we have $\rho=0$, and thus attain the parametric rate $N^{-1/2}$ (up to logarithmic factors). Indeed, under \eqref{eq:rate_discussion_aux1} the effective variance grows at most polynomially in
$J$ (as will be reflected in our error analysis), so the bias--variance tradeoff takes the form
\[
2^{-J(t+t'-r)}+\frac{\poly(J)}{\sqrt{N}}
\quad \Longrightarrow \quad
N^{-1/2}\,\polylog(N).
\]
When \eqref{eq:rate_discussion_aux1} fails, the variance grows exponentially in $J$, leading to a
genuinely nonparametric rate $(\rho>0)$:
\[
2^{-J(t+t'-r)}+\frac{2^{J\rho(t+t'-r)/2}\poly(J)}{\sqrt{N}}
\quad \Longrightarrow \quad
N^{-\frac{1}{2+\rho}}\,\polylog(N).
\]

\item  \textbf{Dependence on $n,r,r_1,r_2$.}
Recall that $n$ is the dimension of the manifold $\mcM$, and $r$ is the order of the PDO $\mcA$, i.e., $\mcA\in \OPS^r(\mcM)$. The parameters $r_1$ and $r_2$
control the regularity of the input and the noise: $u\in H^{(r_1-n/2)-}$ and $w\in H^{(r_2-n/2)-}$ almost surely; see Remark~\ref{remark:assumption_1}. From \eqref{eq:rho},
$\rho$ is increasing in $n,r,r_1$ and decreasing in $r_2$. Consequently, a lower-dimensional
manifold (smaller $n$), a lower-order operator (smaller $r$), a rougher input (smaller $r_1$), and/or a smoother noise (larger $r_2$) lead to a smaller $\rho$, and hence a faster convergence rate
$N^{-\frac{1}{2+\rho}}$.

\item \textbf{Dependence on $\sigma,\widetilde d$.}
The parameters $\sigma$ and $\widetilde d$ depend on the choice of biorthogonal wavelets.
The parameter $\sigma$ quantifies the off-diagonal decay of the entries of $\bfA$ across scale
separation (especially for wavelets with overlapping supports); see
Proposition~\ref{prop:property_UAW} \ref{approximate_sparsity} (ii). It is required to satisfy
Assumption~\ref{assumption:wavelets} (iii): its lower bound is fixed, while its admissible upper
bound increases with the wavelet regularity and approximation parameters
$(\gamma,\widetilde\gamma,\widetilde d)$. In particular, by choosing smoother wavelets with higher approximation order, one can take $\sigma$ larger (indeed, $\sigma$ may scale
proportionally to $\gamma,\widetilde\gamma,\widetilde d$). When \eqref{eq:rate_discussion_aux1} fails, at least one of the first three terms in
\eqref{eq:rho} is strictly positive. In this case, increasing $\sigma$ and/or $\widetilde d$ reduces $\rho$, and therefore improves the convergence rate $N^{-\frac{1}{2+\rho}}$.

\item \textbf{Intrinsic convergence rate.}
In an idealized regime where $\sigma$ and $\widetilde d$ are taken sufficiently large (formally,
$\sigma,\widetilde d\to\infty$), the first three contributions in \eqref{eq:rho} vanish and $\rho$
reduces to
\[
\rho
=2\max\left\{\frac{-t-t'+r_1-r_2}{t+t'-r},\,0\right\}.
\]
This exponent is independent of the wavelet choice and determines an intrinsic convergence rate governed by $(t,t',r_1,r_2,r)$. Intuitively, increasing $\sigma$ and $\widetilde d$ allows one to fully exploit the
off-diagonal smoothness of $\mcA$, yet the rate is still governed by the interplay between the error metric $(t,t')$, the input function regularity $r_1$, and the noise regularity $r_2$. In this regime, the bias--variance tradeoff takes the form
\begin{align}\label{eq:rate_discussion_aux2}
2^{-J(t+t'-r)}+\frac{2^{J\max\{-t-t'+r_1-r_2,\,0\}}}{\sqrt{N}}
\quad\Longrightarrow\quad
N^{-\min\left\{\frac{t+t'-r}{2(r_1-r_2-r)},\,\frac12\right\}}.
\end{align}
The quantity $r_1-r_2-r$ may be interpreted as an \emph{effective smoothness}, i.e., the gap
between the regularity of the signal $\mcA u\in H^{(r_1-r-n/2)-}$ and that of the noise
$w\in H^{(r_2-n/2)-}$. Larger values of $r_1-r_2-r$ (smoother $\mcA u$ and/or rougher noise $w$) make
the problem harder and lead to slower learning rates. In particular, when $r_1-r_2>t+t'$ (equivalently,
$r_1-r_2-r>t+t'-r$), we have
\[
\rho=\frac{2(-t-t'+r_1-r_2)}{t+t'-r},
\qquad\text{and hence}\qquad
N^{-\frac{1}{2+\rho}}
= N^{-\frac{t+t'-r}{2(r_1-r_2-r)}}.
\]

Finally, we remark that the factor
$2^{J\max\{-t-t'+r_1-r_2,0\}}/\sqrt{N}$ corresponds to the standard deviation of estimating a single
entry in the $(0,0)$ and $(J,J)$ blocks of the compressed matrix $\bfA^{\varepsilon}_{\Lambda_J}$. To see this, consider the simplest idealized setting in which $\bfA$ is exactly diagonal ---an assumption that typically fails for a general PDO $\mcA$ in wavelet coordinates. In this case, the $\lambda'$-th column
equation in \eqref{eq:model_column} reduces to
\[
\bfF_{\cdot,\lambda'}
=\bfU \bfA_{\cdot,\lambda'}+\bfW_{\cdot,\lambda'}
=\bfA_{\lambda',\lambda'} \bfU_{\cdot,\lambda'}+\bfW_{\cdot,\lambda'},
\]
where $\bfU_{\cdot,\lambda'}\sim \mcN \bigl(0,(\sigma^{U}_{\lambda'})^2 I_N\bigr)$ and
$\bfW_{\cdot,\lambda'}\sim \mcN \bigl(0,(\sigma^{W}_{\lambda'})^2 I_N\bigr)$ are independent, with
$\sigma^{U}_{\lambda'}\asymp 2^{-j'r_1}$ and $\sigma^{W}_{\lambda'}\asymp 2^{-j'r_2}$ by
Proposition~\ref{prop:property_UAW} \ref{diagonal_preconditioning} (ii)-(iii). Thus, recovering the $\lambda'$-th column of $\bfA$ reduces to estimating the single scalar parameter $\bfA_{\lambda',\lambda'}\in\R$.

In this Gaussian location model, the scalar least squares estimator (equivalently, the maximum
likelihood estimator) satisfies
\[
|\widehat{\bfA}_{\lambda',\lambda'}-\bfA_{\lambda',\lambda'}|
\asymp \frac{\sigma^{W}_{\lambda'}}{\sqrt{N}\,\sigma^{U}_{\lambda'}}
\asymp \frac{2^{j'(r_1-r_2)}}{\sqrt{N}},
\]
where $\sigma^{U}_{\lambda'}/\sigma^{W}_{\lambda'}$ is the signal-to-noise ratio. Moreover, this rate
is minimax optimal (e.g., by a direct Le Cam's two-point argument). Finally, the norm
$\|\cdot\|_{H^{t}\to H^{-t'}}$ contributes the additional weight $2^{-j'(t+t')}$, so the
\emph{weighted} standard deviation scales as
\[
\frac{2^{j'(-t-t'+r_1-r_2)}}{\sqrt{N}}.
\]
This equals $1/\sqrt{N}$ when $j'=0$, and equals $2^{J(-t-t'+r_1-r_2)}/\sqrt{N}$ when $j'=J$.
This explains the appearance of $2^{J\max\{-t-t'+r_1-r_2,0\}}/\sqrt{N}$ in
\eqref{eq:rate_discussion_aux2}, and suggests that, without further assumptions on the operator or on the covariance
structure of the input function and the noise, the rate in \eqref{eq:rate_discussion_aux2} cannot be improved. \qedhere
\end{enumerate}

\end{remark}

\smallskip

\begin{remark}[Sparsity of $\widehat{\mcA}$: $\mathcal{O}(2^{Jn})$ nonzero coefficients]\label{rem:sparsity}
Our estimator $\widehat{\mcA}$ is optimally sparse in wavelet coordinates, in the sense that its matrix representation
$\widehat{\bfA}\in \R^{\Lambda_J\times \Lambda_J}$ has only $\mathcal{O}(2^{Jn})$ nonzero entries, which is linear in $|\Lambda_J|\asymp 2^{Jn}$. Indeed, $\widehat{\bfA}$ is supported on $\supp(J,t,t')$ by construction, and Proposition~\ref{prop:sparsity_count} guarantees that
$\mathrm{nnz}(\widehat{\bfA})= \mathrm{nnz}\big(\bfM_{(J,t,t')}\big)\lesssim 2^{Jn}$.
Under the bias--variance optimal choice of $J$ in \eqref{eq:def_parameter}, we have $\mathcal{O}(2^{Jn})=\mathcal{O}\big(N^{\frac{n}{(2+\rho)(t+t'-r)}}\big)$ nonzero coefficients in $\widehat{\bfA}$.
\end{remark}

\begin{remark}[Computational cost: nearly $\mathcal{O}(N2^{Jn})$]\label{remark:cost2}
In this remark, we continue the discussion of computational cost from Remark \ref{remark:cost1} and give a more precise bound of this cost under our choice of parameters in~\eqref{eq:def_parameter}.
Recall from \eqref{eq:cost} that the computational cost of our estimator is determined by the sparsity structure of the regression support $\supp(\widetilde{J},\widetilde{t},\widetilde{t}')$:
\[
\mathcal{O}\bigg(
N\bigg(\sum_{\lambda'\in\Lambda_J}|\Omega_{\lambda'}|^{2}\bigg)
+\sum_{\lambda'\in\Lambda_J}|\Omega_{\lambda'}|^{3}
\bigg).
\]
 To simplify notation, let $\widetilde{J}:=(1+\varepsilon')J$ and define
\[
\nu:=\frac{n(\widetilde{t}+\widetilde{t}'-r)}{\sigma-n/2+\widetilde{t}'-r/2}.
\]
By the column-wise support bound in Proposition~\ref{prop:sparsity_count}, for $\lambda'=(j',k')\in\Lambda_J$,
\[
|\Omega_{\lambda'}|\;\lesssim\;\widetilde{J}+2^{\nu(\widetilde{J}-j')}.
\]
For $p\in\{2,3\}$, using $(x+y)^p\le 2^{p-1}(x^p+y^p)$ (for $x,y>0$) and $|\nabla_{j'}|\asymp 2^{j'n}$, we obtain
\begin{align*}
\sum_{\lambda'\in\Lambda_J}|\Omega_{\lambda'}|^p
&\lesssim \sum_{j'=0}^J \sum_{k'\in\nabla_{j'}}\Big(\widetilde{J}+2^{\nu(\widetilde{J}-j')}\Big)^p\\
&\lesssim \sum_{j'=0}^J 2^{j'n}\Big(\widetilde{J}^p+2^{p\nu(\widetilde{J}-j')}\Big)\\
&\lesssim \widetilde{J}^p\sum_{j'=0}^J 2^{j'n} \;+\; 2^{p\nu\widetilde{J}}\sum_{j'=0}^J 2^{(n-p\nu)j'}.
\end{align*}
Assume $\nu<n/p$ (which holds for $\sigma$ large enough). Then the geometric sum satisfies
$\sum_{j'=0}^J 2^{(n-p\nu)j'}\lesssim 2^{(n-p\nu)J}$, and hence
\[
\sum_{\lambda'\in\Lambda_J}|\Omega_{\lambda'}|^p
\lesssim J^p 2^{Jn}+
2^{Jn}\,2^{p\nu(\widetilde{J}-J)}
\lesssim
2^{Jn\Big(1+\frac{p\varepsilon'(\widetilde{t}+\widetilde{t}'-r)}{\sigma-n/2+\widetilde{t}'-r/2}\Big)}.
\]
In particular,
\[
\sum_{\lambda'\in\Lambda_J}|\Omega_{\lambda'}|^2
\lesssim
2^{Jn\Big(1+\frac{2\varepsilon'(\widetilde{t}+\widetilde{t}'-r)}{\sigma-n/2+\widetilde{t}'-r/2}\Big)},
\qquad
\sum_{\lambda'\in\Lambda_J}|\Omega_{\lambda'}|^3
\lesssim
2^{Jn\Big(1+\frac{3\varepsilon'(\widetilde{t}+\widetilde{t}'-r)}{\sigma-n/2+\widetilde{t}'-r/2}\Big)}.
\]
Therefore, the total computational cost satisfies
\begin{align}\label{eq:cost_aux1}
N\bigg(\sum_{\lambda'\in\Lambda_J}|\Omega_{\lambda'}|^2\bigg) + \sum_{\lambda'\in\Lambda_J}|\Omega_{\lambda'}|^3
\lesssim
N 2^{Jn(1+\varepsilon'')} + 2^{Jn\left(1+\frac{3}{2}\varepsilon''\right)}\lesssim
N 2^{Jn(1+\varepsilon'')},
\end{align}
provided that
\[
N \gtrsim 2^{Jn\varepsilon''/2},
\qquad\text{where}\quad
\varepsilon'':=\frac{2\varepsilon'(\widetilde{t}+\widetilde{t}'-r)}{\sigma-n/2+\widetilde{t}'-r/2}>0.
\]
A sufficient (stronger) condition is
\[
N\gtrsim 2^{\widetilde{J}n(\widetilde{t}+\widetilde{t}'-r)/(\sigma-n/2+\widetilde{t}'-r/2)}.
\]
This holds under our choice $J=\left\lceil \frac{\log_2 N}{(2+\rho)(t+t'-r)} \right\rceil$ in \eqref{eq:def_parameter}; see \eqref{eq:sample_size_condition}.  

Since $\varepsilon''\to 0$ as $\sigma\to\infty$ (i.e., as we choose smoother biorthogonal wavelets), the computational cost~\eqref{eq:cost_aux1} can be made arbitrarily close to $\mathcal{O}(N2^{Jn})$, that is, linear in the sample size $N$ and, up to constants, linear in the number of wavelet indices $|\Lambda_J|\asymp 2^{Jn}$. Recall that the estimator in \eqref{eq:estimator_matrix_box} satisfies
$\widehat{\bfA}\in\R^{\Lambda_J\times\Lambda_J}$.
In contrast, any direct estimator that explicitly forms $\bfU_{\cdot,\Lambda_J}^{\top}\bfU_{\cdot,\Lambda_J}$
already incurs a cost of $\mathcal{O}(N2^{2Jn})$, i.e., quadratic in $|\Lambda_J|$.
Furthermore, under the choice of $J$ in~\eqref{eq:def_parameter}, which optimally balances truncation,
compression, and estimation errors, we obtain
\begin{align}\label{eq:cost_final}
\mathcal{O}(N 2^{Jn})= \mathcal{O}\left(N^{1+\frac{n}{(2+\rho)(t+t'-r)}}\right).
\end{align}

Finally, note that the dominant contribution to the above bounds comes from columns with $j'=J$. Indeed, when $j'=J$ and $k'\in\nabla_J$, one expects the corresponding column of the compressed matrix
$\bfA^{\varepsilon}_{\Lambda_J}$ to contain only $\mathcal{O}(1)$ significant entries. We conjecture that, with a sharper choice of the enlarged regression supports, one can improve the bound \eqref{eq:cost_aux1} and achieve the
\emph{exact} optimal computational cost $\mathcal{O}(N2^{Jn})$, while keeping the statistical convergence rate established in Theorem~\ref{thm:main1}; we leave this question for future work.
\end{remark}

\begin{remark}[Noiseless setting: super-algebraic convergence rate]\label{rem:noisefree}  We can adapt the analysis of $\widehat{\mcA}$ to the noiseless setting in a straightforward way. Assume that the data pairs $\{u_i,f_i\}_{i=1}^{N}$ satisfy
\begin{align}\label{eq:continuous_model_noise_free}
f_i = \mcA u_i, \qquad 1 \le i \le N,
\end{align}
so that in the wavelet–matrix formulation
\begin{align}\label{eq:discrete_model_noise_free}
\bfF=\bfU\bfA.
\end{align}
Under Assumption~\ref{assumption:operator_data_noise} (without (iii)) and Assumption \ref{assumption:wavelets}, we use the same estimator
$\widehat{\mcA}$ as in \eqref{eq:estimator_operator_box}. Following the same argument as in the noisy case yields
\[
\|\widehat{\mcA}-\mcA\|_{H^{t}\to H^{-t'}}\lesssim \ErrorTruncation+\ErrorCompression+\ErrorOVB,
\]
where the variance term $\ErrorVar$ is absent. By Propositions~\ref{prop:Error-Truncation}, \ref{prop:Error-Compression}, and
\ref{prop:Error-OVB}, if
\begin{align*}
N\gtrsim 2^{\widetilde{J}n(\widetilde{t}+\widetilde{t}'-r)/(\sigma-n/2+\widetilde{t}'-r/2)}+\log(1/\delta),
\end{align*}
then with probability at least $1-\delta$,
\begin{align*}
\|\widehat{\mcA}-\mcA\|_{H^{t}\to H^{-t'}}
\lesssim
\ErrorTruncation+\ErrorCompression+\ErrorOVB
\lesssim
J\,2^{-J(t+t'-r)}.
\end{align*}

Consequently, we may choose $\widetilde{J}$ by saturating the sample size constraint,
\[
N\asymp 2^{\widetilde{J}n(\widetilde{t}+\widetilde{t}'-r)/(\sigma-n/2+\widetilde{t}'-r/2)}
\quad\Longrightarrow\quad
\widetilde{J}=\left\lceil
\frac{\sigma-n/2+\widetilde{t}'-r/2}{n(\widetilde{t}+\widetilde{t}'-r)}\,\log_2 N
\right\rceil,
\]
and then set, using the scaling relation between $J$ and $\widetilde{J}$ in \eqref{eq:def_parameter},
\begin{align}\label{eq:J_noiseless}
J=\left\lceil
\frac{\min\{t',r_1\}+t-r}{t+t'-r+\varepsilon_1}\,\widetilde{J}
\right\rceil=\left\lceil
\frac{\min\{t',r_1\}+t-r}{t+t'-r+\varepsilon_1}\cdot \frac{\sigma-n/2+\widetilde{t}'-r/2}{n(\widetilde{t}+\widetilde{t}'-r)}\,\log_2 N
\right\rceil.
\end{align}
With this choice,
\begin{align}\label{eq:rate_supe_ralgebraic}
\|\widehat{\mcA}-\mcA\|_{H^{t}\to H^{-t'}}
\lesssim
J\,2^{-J(t+t'-r)}
\,\lesssim_{\,q}\, N^{-q}\log N,
\end{align}
where
\begin{align}\label{eq:q}
q
:=
(t+t'-r)\cdot
\frac{\min\{t',r_1\}+t-r}{t+t'-r+\varepsilon_1}\cdot
\frac{\sigma-n/2+\widetilde{t}'-r/2}{n(\widetilde{t}+\widetilde{t}'-r)}.
\end{align}
In particular, $q$ grows linearly with $\sigma$ (and $\sigma$ can grow linearly with $\gamma,\widetilde{\gamma},\widetilde{d}$), so $q$ can be made arbitrarily large by choosing sufficiently smooth biorthogonal wavelets with sufficiently high approximation orders. Equivalently, for any $q_0>0$, one can take the wavelet so that $q\ge q_0$, which yields the high-probability bound
\[
\|\widehat{\mcA}-\mcA\|_{H^{t}\to H^{-t'}}\;\lesssim_{\,q_0}\; N^{-q_0} \log N.
\]
Thus, in the noiseless setting, the error decays super-algebraically in $N$ (up to a logarithmic factor) as the wavelet smoothness increases. 

Moreover, under the choice of $J$ in \eqref{eq:J_noiseless}, the estimated matrix $\widehat{\bfA}\in \R^{\Lambda_J\times \Lambda_J}$  has size $|\Lambda_J| \asymp 2^{Jn}  \asymp N^{\frac{qn}{t+t'-r}},$
and $\widehat{\bfA}$ contains $\mathcal{O}(2^{Jn})=\mathcal{O}\big(N^{\frac{qn}{t+t'-r}}\big)$ nonzero wavelet coefficients. Moreover, the computational cost of constructing the estimator is nearly $\mathcal{O}(N2^{Jn})=\mathcal{O}\big(N^{1+\frac{qn}{t+t'-r}}\big)$, where $q$ is as in \eqref{eq:q}.
\end{remark}

\begin{example}[Learning Green's function of a uniformly elliptic PDE]\label{rem:example}

In the discussion above, the size of the learned matrix, the sparsity level (i.e., the number of nonzero coefficients), and the computational cost all depend on the sample size $N$. This reflects a standard feature of nonparametric statistics: the effective dimension of the learning problem grows with the amount of data. Since our bounds depend on multiple parameters, we illustrate their implications below through a concrete example.

 Consider the (suitably normalized) solution operator (Green's function) associated with a uniformly elliptic PDE of order $2s$, where $s>0$. In this setting, the solution operator belongs to $\OPS^{-2s}(\mcM)$, i.e., $r=-2s$. Recall that we work on an $n$-dimensional smooth closed manifold; extensions to more general domains with appropriate boundary conditions are natural. To simplify the discussion, we consider the operator norm $\|\cdot\|_{L^2(\mcM)\to L^2(\mcM)}$, i.e., we set $t=t'=0$. 

\begin{itemize}
    \item \textbf{Noisy setting.}  Since $r=-2s$, $t=t'=0$, and $r_2>n/2$, the parameter $\rho$ in \eqref{eq:rho} becomes
\[
\rho=\max\left\{\frac{r_1-r_2+n}{\widetilde{d}} ,\,\frac{r_1-r_2}{s},0\right\}.
\]
By Theorem \ref{thm:main1} and Remarks \ref{rem:sparsity} and \ref{remark:cost2}, the convergence rate, the size of the matrix estimator $\widehat{\bfA}$ and its number of nonzero entries (nnz), and the computational cost of constructing the estimator scale with $N$ as follows:
\begin{align*}
&\text{Convergence rate:} \quad \mathcal{O}\big(N^{-\frac{1}{2+\rho}}\log^{\frac{3}{2}} (N)\big). \\
&\text{Size and nnz of $\widehat{\bfA}$:} \quad \mathcal{O}\big(N^{\frac{n}{2s(2+\rho)}}\big).\\
&\text{Computational cost:} \quad \mathcal{O}\big(N^{1+\frac{n}{2s(2+\rho)}}\big).
\end{align*}
Therefore, to achieve a target accuracy tolerance $\epsilon$ under $\|\cdot\|_{L^2\to L^2}$ in the noisy setting, i.e., to ensure $N^{-\frac{1}{2+\rho}}\log^{\frac{3}{2}} N \asymp \epsilon$, it suffices to take
\begin{align*}
\text{\#input--output noisy data pairs:} \quad \mathcal{O}\big(\epsilon^{-(2+\rho)}\log^{\frac{3(2+\rho)}{2}} (\epsilon^{-1})\big).
\end{align*}
This in turn implies:
\begin{align*}
&\text{Size and nnz of $\widehat{\bfA}$:} \quad \mathcal{O}\big(\epsilon^{-\frac{n}{2s}}\log^{\frac{3n}{4s}} (\epsilon^{-1})\big).\\
&\text{Computational cost:} \quad \mathcal{O}\big(\epsilon^{-(2+\rho)-\frac{n}{2s}}\log^{\frac{6s(2+\rho)+3n}{4s}} (\epsilon^{-1})\big).
\end{align*}

\item \textbf{Noiseless setting.} 

When $r=-2s$ and $t=t'=0$, the exponent $q$ in \eqref{eq:rate_supe_ralgebraic} and \eqref{eq:q} becomes
\[
q=\frac{2s}{1+\frac{n}{\sigma-n/2+s}}\cdot \frac{\sigma-n/2+r_1+s}{n(r_1+2s)}.
\]
By Remark \ref{rem:noisefree},  we now have:
\begin{align*}
&\text{Convergence rate:} \quad \mathcal{O}\big(N^{-q}\log N\big). \\
&\text{Size and nnz of $\widehat{\bfA}$:} \quad \mathcal{O}\big(N^{\frac{qn}{2s}}\big).\\
&\text{Computational cost:} \quad \mathcal{O}\big(N^{1+\frac{qn}{2s}}\big).
\end{align*}

Therefore, to achieve a target accuracy tolerance $\epsilon$ under $\|\cdot\|_{L^2\to L^2}$ in the noiseless setting, i.e., to ensure $N^{-q}\log N \asymp \epsilon$, it suffices to take
\begin{align*}
\text{\#input--output noiseless data pairs:} \quad \mathcal{O}\big(\epsilon^{-\frac{1}{q}}\log^{\frac{1}{q}} (\epsilon^{-1})\big).
\end{align*}
This in turn implies:
\begin{align*}
&\text{Size and nnz of $\widehat{\bfA}$:} \quad \mathcal{O}\big(\epsilon^{-\frac{n}{2s}}\log^{\frac{n}{2s}} (\epsilon^{-1})\big).\\
&\text{Computational cost:} \quad \mathcal{O}\big(\epsilon^{-\frac{1}{q}-\frac{n}{2s}}\log^{\frac{1}{q}+\frac{n}{2s}} (\epsilon^{-1})\big).
\end{align*}
\end{itemize}

We compare the above results with \cite{schafer2024sparse}, which studies recovery of a discretized (finite-dimensional) Green’s matrix from carefully designed, noiseless vector measurements. Their method exploits approximate sparsity in a Cholesky factorization of solution operators for elliptic PDEs of order $2s$, where $s$ is a positive integer, and then approximates the continuous Green’s function via piecewise constant or piecewise affine interpolation. In particular, to achieve accuracy $\epsilon$ for recovering the continuous Green's function under the $\|\cdot\|_{L^2\to L^2}$ norm, \cite{schafer2024sparse} requires computational cost $\mathcal{O}\big(\epsilon^{-n}\log^{2n+2}(\epsilon^{-1})\big)$ using $\mathcal{O}\big(\log^{n+1}(\epsilon^{-1})\big)$ noiseless, carefully chosen data pairs; see \cite[Theorem 3.5]{schafer2024sparse}. The leading $\mathcal{O}(\epsilon^{-n})$ dependence arises from the final low-order interpolation step that uses the recovered discrete Green's matrix to approximate the continuous Green's function; consequently, the PDE order $2s$ (and hence the strength of elliptic smoothing) does not appear in the leading $\epsilon$-exponent.

By contrast, our noiseless guarantees show that with generic Gaussian input data, achieving accuracy $\epsilon$ requires $\mathcal{O}\big(\epsilon^{-\frac{1}{q}}\log^{\frac{1}{q}} (\epsilon^{-1})\big)$ data pairs and the computational cost is of order $\mathcal{O}\big(\epsilon^{-\frac{1}{q}-\frac{n}{2s}}\log^{\frac{1}{q}+\frac{n}{2s}} (\epsilon^{-1})\big)$. By choosing smoother wavelets with higher approximation order, the exponent $q$ can be made large, so the required number of data pairs can grow arbitrarily slower than any fixed polynomial in $\epsilon^{-1}$. In the large-$q$ regime, our computational cost approaches $\mathcal{O}\big(\epsilon^{-\frac{n}{2s}}\log^{\frac{n}{2s}}(\epsilon^{-1})\big)$, which (when $s>1/2$) improves the $\epsilon$-dependence relative to $\mathcal{O}\big(\epsilon^{-n}\log^{2n+2}(\epsilon^{-1})\big)$ in \cite[Theorem 3.5]{schafer2024sparse}. We explain why the computational cost $\mathcal{O}\big(\epsilon^{-\frac{n}{2s}}\log^{\frac{n}{2s}}(\epsilon^{-1})\big)$ is optimal up to logarithmic factors in this setting. To achieve accuracy $\epsilon$ in $\|\cdot\|_{L^2\to L^2}$, one can at most discard information beyond the scale $J$ such that $2^{-2Js}\asymp \epsilon$, since $\mathcal{O}(2^{-2Js})$ is the unavoidable bias incurred by ignoring finer scales. In $n$ dimensions, the number of degrees of freedom up to scale $J$ is of order $\mathcal{O}(2^{Jn})$, and each degree of freedom must be learned at a cost of at least $\mathcal{O}(1)$. Therefore, any method must have total cost at least 
\[
\mathcal{O}(2^{Jn})= \mathcal{O}\big((2^{-2J s})^{-\frac{n}{2s}}\big)= \mathcal{O}\big(\epsilon^{-\frac{n}{2s}}\big),
\]
matching our complexity bound up to logarithmic factors.

In addition, our framework accommodates noisy observations and generic Gaussian inputs, works under general Sobolev-to-Sobolev operator norms, and applies to fractional elliptic PDEs (where the underlying differential operator is nonlocal) ---settings not covered by \cite{schafer2024sparse}. In the noisy case, achieving $\epsilon$ accuracy entails computational cost of order (up to logarithmic factors)
\[
\mathcal{O}\big(\epsilon^{-(2+\rho)-\frac{n}{2s}}\big), \quad \rho=\max\left\{\frac{r_1-r_2+n}{\widetilde{d}} ,\,\frac{r_1-r_2}{s},0\right\},
\]
which is worse by a factor of roughly $\epsilon^{-(2+\rho)}$ compared with our noiseless scaling. The $\mathcal{O}(\epsilon^{-2})$ factor is unavoidable in general: it reflects the central-limit-theorem scaling induced by additive noise. In particular, to estimate each degree of freedom to accuracy $\epsilon$ from noisy samples, one typically needs $\mathcal{O}(\epsilon^{-2})$ observations. The factor $\mathcal{O}\big(\epsilon^{-\frac{n}{2s}}\big)$ accounts for the number of degrees of freedom that must be learned (up to the accuracy-determining scale), as discussed in the previous paragraph. To our knowledge, this provides the first explicit and nearly optimal statistical accuracy--computational cost tradeoff for learning Green's functions of elliptic PDEs from noisy data.
\end{example}

\smallskip

\subsection{Proof of Theorem \ref{thm:main1}}\label{ssec:proof_main1}

This subsection proves Theorem~\ref{thm:main1}. We first decompose the total error into three contributions: truncation, compression, and estimation. The truncation error is the bias from restricting the infinite-dimensional matrix $\bfA$ to its finite-resolution version $\bfA_{\Lambda_J}$, while the compression error is the deterministic approximation error from sparsifying $\bfA_{\Lambda_J}$ to $\bfA^{\varepsilon}_{\Lambda_J}$; see Subsection \ref{ssec:compression}. Both terms are data-independent and are controlled by Propositions~\ref{prop:Error-Truncation} and~\ref{prop:Error-Compression}, respectively.
The estimation error is the only data-dependent term and quantifies the statistical uncertainty in estimating the compressed matrix $\bfA_{\Lambda_J}^{\varepsilon}$. Lemma~\ref{lemma:error_decomposition} further decomposes it into an omitted-variable bias term (still random) and a variance term, which are bounded in Propositions~\ref{prop:Error-OVB} and~\ref{prop:Error-Var}, respectively. Proofs of the lemma and propositions are deferred to Appendices~\ref{app:truncation}, \ref{app:compression}, and~\ref{app:estimation}.

\begin{proof}[Proof of Theorem \ref{thm:main1}]

Let $\bfD^s_{\Lambda_J}:=(\bfD^s_{\lambda,\lambda'})_{\lambda,\lambda'\in \Lambda_J}\in \R^{\Lambda_J\times \Lambda_J}$ denote the diagonal weight matrix restricted to $\Lambda_J$. For any index set $\Lambda\subset \mcJ$, let $P_{\Lambda}:\ell^2(\mcJ)\to\ell^2(\Lambda)$ be the coordinate projection and $I_{\Lambda}:=P_{\Lambda}^*$ be its adjoint (the zero-padding injection).
For our matrix estimator $\widehat{\bfA}\in\R^{\Lambda_J\times\Lambda_J}$ in \eqref{eq:estimator_matrix_box}, and the \emph{a priori} truncation matrix $\bfA_{\Lambda_J}\in\R^{\Lambda_J\times\Lambda_J}$ in Definition \ref{def:matrix_truncation}, define their zero-padded extensions
\[
\widehat{\bfA}^{\uparrow} := I_{\Lambda_J}\widehat{\bfA}P_{\Lambda_J}\in \R^{\mcJ\times\mcJ}, \qquad
\bfA_{\Lambda_J}^{\uparrow} := I_{\Lambda_J}\bfA_{\Lambda_J}P_{\Lambda_J}\in \R^{\mcJ\times\mcJ}.
\]

We begin with the operator-norm equivalence \eqref{eq:matrix_norm},
\[
\|\widehat{\mcA}-\mcA\|_{H^{t}\to H^{-t'}}
\asymp \| \bfD^{-t'}(\widehat{\bfA}^{\uparrow}-\bfA)\bfD^{-t}\|.
\]
Adding and subtracting $\bfA_{\Lambda_J}^{\uparrow}$ yields
\begin{align}\label{eq:error_decomposition_aux1}
\|\widehat{\mcA}-\mcA\|_{H^{t}\to H^{-t'}}
&\asymp \| \bfD^{-t'}(\widehat{\bfA}^{\uparrow}-\bfA)\bfD^{-t}\| \nonumber \\
&\le
\|\bfD^{-t'}(\bfA_{\Lambda_J}^{\uparrow}-\bfA)\bfD^{-t}\|+\| \bfD^{-t'}(\widehat{\bfA}^{\uparrow}-\bfA_{\Lambda_J}^{\uparrow})\bfD^{-t}\|
\nonumber\\
&=\underbrace{\| \bfD^{-t'} (\bfA_{\Lambda_J}^{\uparrow} - \bfA) \bfD^{-t} \|}_{=:\ErrorTruncation}+\|\bfD_{\Lambda_J}^{-t'}(\widehat{\bfA}-\bfA_{\Lambda_J})\bfD_{\Lambda_{J}}^{-t}\|.
\end{align}
Decomposing the second term  gives
\[
\|\bfD_{\Lambda_J}^{-t'}(\widehat{\bfA}-\bfA_{\Lambda_J})\bfD_{\Lambda_{J}}^{-t}\|\le \underbrace{\| \bfD^{-t'}_{\Lambda_J} ( \bfA_{\Lambda_J}^{\varepsilon}-\bfA_{\Lambda_J}) \bfD^{-t}_{\Lambda_J} \|}_{=:\ErrorCompression}+\underbrace{\| \bfD^{-t'}_{\Lambda_J} (\widehat{\bfA} - \bfA_{\Lambda_J}^{\varepsilon}) \bfD^{-t}_{\Lambda_J} \|}_{=:\ErrorEstimation}.
\]
By Lemma~\ref{lemma:error_decomposition}, the estimation error further decomposes into the omitted–variable bias
and the variance contributions:
\[
\ErrorEstimation=\| \bfD^{-t'}_{\Lambda_J} (\widehat{\bfA} - \bfA_{\Lambda_J}^{\varepsilon}) \bfD^{-t}_{\Lambda_J} \|\le \ErrorOVB+\ErrorVar.
\]
Hence,
\begin{align*}
\|\widehat{\mcA}-\mcA\|_{H^{t}\to H^{-t'} }
\lesssim  \ErrorTruncation+\ErrorCompression+\ErrorOVB+\ErrorVar.
\end{align*}

Propositions \ref{prop:Error-Truncation} and \ref{prop:Error-Compression} give the deterministic bounds
\[
\ErrorTruncation\lesssim 2^{-J(t+t'-r)},\qquad \ErrorCompression\lesssim J2^{-J(t+t'-r)}.
\]
Moreover, Propositions \ref{prop:Error-OVB} and \ref{prop:Error-Var} show that, for any $\delta\in (0,1)$, if 
\begin{align}\label{eq:sample_size_condition}
N\gtrsim 2^{\widetilde{J}n(\widetilde{t}+\widetilde{t}'-r)/(\sigma-n/2+\widetilde{t}'-r/2)}+\log (1/\delta),
\end{align}
then with probability at least $1-\delta$,
\[
\ErrorOVB \lesssim J2^{-J(t+t'-r)}, \qquad  \ErrorVar\lesssim \sqrt{\frac{J+\log(1/\delta)}{N}}\cdot J\cdot 2^{\rho J(t+t'-r)/2},
\]
where $\widetilde{J},\widetilde{t},\widetilde{t}'$ are defined in \eqref{eq:def_parameter} and $\rho$ is defined in \eqref{eq:rho}.

Thus, under \eqref{eq:sample_size_condition}, with probability at least $1-\delta$ we have
\[
\|\widehat{\mcA}-\mcA\|_{H^{t}\to H^{-t'}}\lesssim J2^{-J(t+t'-r)}+\sqrt{\frac{J+\log(1/\delta)}{N}}\cdot J\cdot 2^{\rho J(t+t'-r)/2}.
\]

To balance the bias and variance terms, choose
\begin{align*}
J:=\left\lceil \frac{\log_2 N}{(2+\rho)(t+t'-r)} \right\rceil.
\end{align*}
Then
\[
2^{-J(t+t'-r)}\asymp N^{-\frac{1}{2+\rho}},\qquad 2^{\rho J(t+t'-r)/2}\asymp N^{\frac{\rho}{2(2+\rho)}},
\]
and hence
\[
\|\widehat{\mcA}-\mcA\|_{H^{t}\to H^{-t'}}
\;\lesssim\;
N^{-\frac{1}{2+\rho}}\,
\sqrt{\log\!\Big(\frac{N}{\delta}\Big)}\,
\log N.
\]

It remains to verify the sample-size condition~\eqref{eq:sample_size_condition} under this choice of $J$. Since $J\asymp \frac{\log_2 N}{(2+\rho)(t+t'-r)} $, we have
\[
N=2^{(2+\rho)J(t+t'-r)}\gtrsim 2^{\widetilde{J}n(\widetilde{t}+\widetilde{t}'-r)/(\sigma-n/2+\widetilde{t}'-r/2)},
\]
provided
\[
(2+\rho)J(t+t'-r)>\widetilde{J}n\frac{\widetilde{t}+\widetilde{t}'-r}{\sigma-n/2+\widetilde{t}'-r/2}.
\]
By the definition~\eqref{eq:def_parameter} of $\widetilde{J},\widetilde{t},\widetilde{t}'$,
this is equivalent to
\[
(2+\rho)J(t+t'-r)> \frac{t+t'-r+\varepsilon_1}{\min\{t',r_1\}+t-r}\, Jn\, \frac{t'+\max\{t',r_1\}-r}{\sigma-n/2+\max\{t',r_1\}-r/2},
\]
where $\varepsilon_1=n(t+t'-r)/(\sigma-n/2+t-r/2)$. Since $\rho\ge 0$, it suffices that
\[
2>  \frac{n+\frac{n^2}{\sigma-n/2+t-r/2} }{\min\{t',r_1\}+t-r}\, \cdot \frac{t'+\max\{t',r_1\}-r}{\sigma-n/2+\max\{t',r_1\}-r/2}.
\]
Noting that $\max\{t',r_1\}\ge r_1>n/2+\max\{0,r\}\ge n/2+r/2$, we see that a sufficient condition for the above inequality is
\[
\sigma >
\max\left\{
\frac{3n}{2} - t + \frac{r}{2},\ \frac{t'+\max\{t',r_1\}-r}{\min\{t',r_1\}+t-r}n
\right\}.
\]
This is guaranteed by Assumption~\ref{assumption:wavelets}~(v), and thus the sample-size condition~\eqref{eq:sample_size_condition} holds for all sufficiently large $N$. The claimed bound follows.
\end{proof}

In the proof of Theorem~\ref{thm:main1}, we invoked Lemma~\ref{lemma:error_decomposition} to derive the error decomposition, and Propositions~\ref{prop:Error-Truncation}, \ref{prop:Error-Compression}, \ref{prop:Error-OVB}, and \ref{prop:Error-Var} to bound, respectively, the truncation error, the compression error, the omitted-variable bias, and the variance term. The proofs of these results are provided in Appendices~\ref{app:truncation}, \ref{app:compression}, and~\ref{app:estimation}.

\smallskip

\begin{lemma}[Decomposition of  $\ErrorEstimation$]\label{lemma:error_decomposition}
    Under the setting and assumptions of Theorem \ref{thm:main1}, the estimation error $\| \bfD^{-t'}_{\Lambda_J} (\widehat{\bfA} - \bfA_{\Lambda_J}^{\varepsilon}) \bfD^{-t}_{\Lambda_J} \|$ admits the decomposition:
\begin{align*}
\| \bfD^{-t'}_{\Lambda_J} (\widehat{\bfA} - \bfA_{\Lambda_J}^{\varepsilon}) \bfD^{-t}_{\Lambda_J} \|\le \ErrorOVB+ \ErrorVar,
\end{align*}
where the omitted-variable bias and variance contributions are 
\begin{align*}
   \ErrorOVB &:=\left\| \bfD^{-t'}_{\Lambda_J} \left((\bfM_{(J,t,t')})_{\mathrm{up}}\odot \left(\OVB_{\Lambda_J,\cdot}\right)\right) \bfD^{-t}_{\Lambda_J} \right\| + \left\| \bfD^{-t}_{\Lambda_J} \left(((\bfM_{(J,t,t')})_{\mathrm{low}})^{\top}\odot \left(\OVB_{\Lambda_J,\cdot}\right)\right) \bfD^{-t'}_{\Lambda_J} \right\|, \\ 
   \ErrorVar &:=\left\| \bfD^{-t'}_{\Lambda_J} \left((\bfM_{(J,t,t')})_{\mathrm{up}}\odot \left(\Var_{\Lambda_J,\cdot}\right)\right) \bfD^{-t}_{\Lambda_J} \right\| + \left\| \bfD^{-t}_{\Lambda_J} \left(((\bfM_{(J,t,t')})_{\mathrm{low}})^{\top}\odot \left(\Var_{\Lambda_J,\cdot}\right)\right) \bfD^{-t'}_{\Lambda_J} \right\|,
\end{align*}
with
\begin{align*}
    \OVB&:=\sum_{\lambda'\in \Lambda_J} \left(E_{\Omega_{\lambda'}}(\bfU_{\cdot,\Omega_{\lambda'}}^{\top} \bfU_{\cdot,\Omega_{\lambda'}})^{-1}
\bfU_{\cdot,\Omega_{\lambda'}}^{\top}\bfU_{\cdot,\Omega_{\lambda'}^c} \bf\bfA_{\Omega^c_{\lambda'},\lambda'}\right) e_{\lambda'}^{\top}\in \R^{\Lambda_{\widetilde{J}}\,\times \Lambda_J }, \\
\Var &:=\sum_{\lambda'\in \Lambda_J} \left(E_{\Omega_{\lambda'}}(\bfU_{\cdot,\Omega_{\lambda'}}^{\top} \bfU_{\cdot,\Omega_{\lambda'}})^{-1} \bfU_{\cdot,\Omega_{\lambda'}}^{\top}\bfW_{\cdot,\lambda'}\right) e_{\lambda'}^{\top}\in \R^{\Lambda_{\widetilde{J}}\,\times \Lambda_J }.
\end{align*}
Here $\OVB_{\Lambda_J,\cdot}$ and $\Var_{\Lambda_J,\cdot}$ denote the restrictions of $\OVB$ and $\Var$ to rows indexed by $\Lambda_J$, and $\bfM_{(J,t,t')}$ is the indicator matrix associated with $\supp(J,t,t')$ defined in~\eqref{eq:mask_Jtt'}.
\end{lemma}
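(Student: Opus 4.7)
The plan is purely algebraic: peel the definitions, split into ``upper'' and ``lower'' triangular pieces in the scale ordering $j$ versus $j'$, and then separate bias from variance columnwise. Both $\widehat{\bfA}$ and $\bfA_{\Lambda_J}^{\varepsilon}$ vanish outside $\supp(J,t,t')$, so I only need to analyze entries $(\lambda,\lambda')\in\supp(J,t,t')$ and I partition this set according to whether $j\le j'$ (which picks out $(\bfM_{(J,t,t')})_{\mathrm{up}}$) or $j>j'$ (which picks out $(\bfM_{(J,t,t')})_{\mathrm{low}}$), matching the piecewise construction \eqref{eq:estimator_matrix_box}.

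On the upper piece $j\le j'$, the definition of $\widehat{\bfA}$ gives $\widehat{\bfA}_{\lambda,\lambda'}=(\widehat{\bfA}^{(1)})_{\lambda,\lambda'}$, and the error entry is simply $(\widehat{\bfA}^{(1)})_{\lambda,\lambda'}-\bfA_{\lambda,\lambda'}$. On the lower piece $j>j'$, one has $\widehat{\bfA}_{\lambda,\lambda'}=(\widehat{\bfA}^{(1)})_{\lambda',\lambda}$, and using the self-adjointness of $\mcA$ (hence symmetry of $\bfA$) the error entry becomes $(\widehat{\bfA}^{(1)})_{\lambda',\lambda}-\bfA_{\lambda',\lambda}$. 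The key observation that makes this rewriting legitimate is that Lemma~\ref{lemma:upper_contain_lower} guarantees $(\lambda',\lambda)\in\supp(J,t,t')\subset\supp(\widetilde J,\widetilde t,\widetilde t')$, i.e., $\lambda'\in\Omega_{\lambda}$, so that the column-$\lambda$ regression in \eqref{eq:estimator_column_box} actually fills the row $\lambda'$ of $\widehat{\bfA}^{(1)}$ (rather than the zero padding).

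Next, for any $\lambda'\in\Lambda_J$ and any $\lambda\in\Omega_{\lambda'}$, the columnwise model \eqref{eq:model_column_partitioned} combined with the least-squares formula \eqref{eq:estimator_column_box} yields exactly the abstract regression-error identity \eqref{eq:error_abstract} with $\bfX=\bfU$, $\theta=\bfA_{\cdot,\lambda'}$, $\xi=\bfW_{\cdot,\lambda'}$:
\[
(\widehat{\bfA}^{(1)})_{\lambda,\lambda'}-\bfA_{\lambda,\lambda'}
= \OVB_{\lambda,\lambda'} + \Var_{\lambda,\lambda'},
\]
with $\OVB$ and $\Var$ as defined in the lemma. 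Assembling this into the two triangular pieces, one obtains the entrywise identity
\[
\widehat{\bfA}-\bfA_{\Lambda_J}^{\varepsilon}
= (\bfM_{(J,t,t')})_{\mathrm{up}}\odot (\OVB+\Var)_{\Lambda_J,\cdot}
\;+\; (\bfM_{(J,t,t')})_{\mathrm{low}}\odot\bigl((\OVB+\Var)_{\Lambda_J,\cdot}\bigr)^{\top},
\]
where the transpose on the second summand precisely encodes the index swap $(\lambda,\lambda')\mapsto(\lambda',\lambda)$ used on the lower piece.

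The proof finishes by taking the weighted operator norm $\|\bfD_{\Lambda_J}^{-t'}(\cdot)\bfD_{\Lambda_J}^{-t}\|$ and applying the triangle inequality twice (once to separate upper/lower, once to separate OVB from Var). The only subtle step is the lower block: since $\|X\|=\|X^{\top}\|$, one has the identity
\[
\bigl\|\bfD_{\Lambda_J}^{-t'}\bigl((\bfM_{(J,t,t')})_{\mathrm{low}}\odot Y^{\top}\bigr)\bfD_{\Lambda_J}^{-t}\bigr\|
= \bigl\|\bfD_{\Lambda_J}^{-t}\bigl(((\bfM_{(J,t,t')})_{\mathrm{low}})^{\top}\odot Y\bigr)\bfD_{\Lambda_J}^{-t'}\bigr\|,
\]
which exchanges the roles of $t$ and $t'$ in the weights and produces exactly the two terms appearing in the statements of $\ErrorOVB$ and $\ErrorVar$. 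The main obstacle is not depth but bookkeeping: one has to keep track of where a Hadamard mask acts, which block of rows is being restricted, and which entries are well-defined by Lemma~\ref{lemma:upper_contain_lower}, so that the final decomposition lines up with the construction in \eqref{eq:estimator_column_box}--\eqref{eq:estimator_matrix_box}.
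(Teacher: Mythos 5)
Your proof is correct and follows essentially the same route as the paper: split $\widehat{\bfA}-\bfA^{\varepsilon}_{\Lambda_J}$ into upper and lower scale-triangular pieces, use the columnwise regression-error identity on the enlarged support $\Omega_{\lambda'}$, invoke the symmetry of $\bfA$ and the reflection/inclusion structure of the mask to handle the lower piece, then transpose and apply the triangle inequality; the paper packages the same argument through an auxiliary matrix $\mathsf A$ and an explicit annihilation identity, whereas you verify the same cancellation entrywise. One small attribution slip: the inclusion $\supp(J,t,t')\subset\supp(\widetilde J,\widetilde t,\widetilde t')$ is \eqref{eq:inclusion} (Lemma~\ref{lemma:supp_monotonicity}), not part of Lemma~\ref{lemma:upper_contain_lower}, which only furnishes the reflection $(\lambda,\lambda')\mapsto(\lambda',\lambda)$ within $\supp(J,t,t')$.
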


\smallskip

\begin{proposition}[Bound for $\ErrorTruncation$]
\label{prop:Error-Truncation}
Suppose Assumption \ref{assumption:operator_data_noise} (i) and Assumption \ref{assumption:wavelets} hold. For $t,t'\ge r/2$,
\[
\ErrorTruncation=\| \bfD^{-t'} (\bfA_{\Lambda_J}^{\uparrow} - \bfA) \bfD^{-t} \|\lesssim 2^{-J(t+t'-r)}.
\]
\end{proposition}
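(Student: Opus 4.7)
The plan is to combine a scale-block decomposition with a two-level Schur test (first spatial, then scale). Let $\Pi_{\le J}:\ell^2(\mcJ)\to\ell^2(\mcJ)$ denote the orthogonal projection onto $\mathrm{span}\{e_\lambda:\lambda\in\Lambda_J\}$, so that $\bfA_{\Lambda_J}^{\uparrow}=\Pi_{\le J}\bfA\Pi_{\le J}$, and let $\Pi_j:=\Pi_{\le j}-\Pi_{\le j-1}$ (with $\Pi_{\le j_0-1}:=0$) be the per-scale projection, so that $I=\sum_{j\ge j_0}\Pi_j$. Set $T:=\bfD^{-t'}\bfA\bfD^{-t}$. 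Since $\Pi_{\le J}$ and $\bfD^{s}$ are both diagonal in the wavelet basis (and hence commute), $\bfD^{-t'}\bfA_{\Lambda_J}^{\uparrow}\bfD^{-t}=\Pi_{\le J}\,T\,\Pi_{\le J}$, and therefore
\[
T-\Pi_{\le J}T\Pi_{\le J}\;=\;\sum_{(j,j')\,:\,\max(j,j')>J}\Pi_j T\Pi_{j'}.
\]

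First I would establish the scale-block norm bound $\|\Pi_j\bfA\Pi_{j'}\|\lesssim 2^{(j+j')r/2}\cdot 2^{-(\sigma-n/2)|j-j'|}$ via a spatial Schur test applied to Proposition~\ref{prop:property_UAW}~\ref{approximate_sparsity}. WLOG $j\le j'$; the wavelet density estimate $\#\{k'\in\nabla_{j'}:\mathrm{dist}(S_{j,k},S_{j',k'})\le R\}\lesssim \max(1,R^n)\cdot 2^{j'n}$ (and its counterpart with the roles of $(j,k)$ and $(j',k')$ swapped) combined with the unified entrywise decay yields row sums of order $2^{(j+j')r/2}\cdot 2^{-\sigma(j'-j)}\cdot 2^{(j'-j)n}$ and column sums of order $2^{(j+j')r/2}\cdot 2^{-\sigma(j'-j)}$, whose geometric mean gives the claim provided $\sigma>n/2$ (ensured by Assumption~\ref{assumption:wavelets}~(iii)). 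Using $\Pi_j\bfD^{s}=2^{sj}\Pi_j$, this translates into
\[
\|\Pi_j T\Pi_{j'}\|\;\lesssim\;2^{j(r/2-t')+j'(r/2-t)}\cdot 2^{-\alpha|j-j'|},\qquad \alpha:=\sigma-n/2>0.
\]

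Next I would apply a scale-level Schur test. Setting $b_{jj'}:=\|\Pi_j T\Pi_{j'}\|\cdot\mathbf{1}[\max(j,j')>J]$ and using the orthogonality $\Pi_j\Pi_{j''}=\delta_{jj''}\Pi_j$, a standard block argument gives
\[
\|T-\Pi_{\le J}T\Pi_{\le J}\|\;\le\;\|(b_{jj'})\|_{\ell^2\to\ell^2}\;\le\;\sqrt{\Bigl(\sup_j\sum_{j'}b_{jj'}\Bigr)\Bigl(\sup_{j'}\sum_j b_{jj'}\Bigr)}.
\]
Computing each supremum splits into the cases $j\le J$ (where only $j'>J$ contributes) and $j>J$ (where $j'$ ranges over all of $\mcJ$), with each inner sum further split at $j'=j$. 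Under the margins $\alpha>t-r/2$ and $\alpha>t'-r/2$ supplied by Assumption~\ref{assumption:wavelets}~(iii), the resulting geometric series all converge and each supremum is attained at $j=J$ or $j=J+1$; a direct computation then shows both suprema are $\lesssim 2^{-J(t+t'-r)}$, yielding the stated bound.

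The main obstacle is the spatial Schur test in the block-norm estimate: the wavelet density count on $\mcM$ must be tracked carefully so that the sharp $2^{-(\sigma-n/2)|j-j'|}$ scale-separation decay is retained; a loose bound here would contaminate the final geometric summation. Once this block estimate is in place, the scale-level argument is a routine computation, and Assumption~\ref{assumption:wavelets}~(iii) supplies exactly the margin on $\sigma$ needed both for convergence of the sums and for saturation of the suprema at $j=J$, producing the sharp $2^{-J(t+t'-r)}$ rate. By contrast, a direct approach based on $\|\bfD^{-r/2}\bfA\bfD^{-r/2}\|\le c_+$ alone would yield only $2^{-J\min(t-r/2,t'-r/2)}$, missing the decay from the ``other side''.
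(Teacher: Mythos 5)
Your proposal is correct and takes essentially the same route as the paper: both decompose into scale blocks, bound each block's operator norm via a geometric mean of row and column sums (the paper's Lemma~\ref{lemma:basic_estimates}~(i) with $\alpha=1$ together with $\|B\|\le\|B\|_1^{1/2}\|B\|_\infty^{1/2}$ is exactly your ``spatial Schur test''), and then sum over scale blocks via a second $\|\cdot\|_1^{1/2}\|\cdot\|_\infty^{1/2}$-type estimate (the paper's Lemma~\ref{lemma:norm_compression} plays the role of your ``scale-level Schur test''). The case split at $j,j'\lessgtr J$ and the use of $\sigma-n/2>\max\{t,t'\}-r/2$ from Assumption~\ref{assumption:wavelets}~(iii) to ensure convergence and to locate the saturating index at the boundary $j=J$ likewise mirror the paper's argument.
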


\smallskip

\begin{proposition}[Bound for $\ErrorCompression$]\label{prop:Error-Compression}
Suppose Assumption 
\ref{assumption:operator_data_noise} (i) and Assumption \ref{assumption:wavelets} hold. For $\bfA^{\varepsilon}_{\Lambda_J}$ defined in Definition \ref{def:supp_Jtt'} and $t_{\ell},t_{r}\in [r/2,\gamma)$,
\[
\|\bfD^{-t_l}_{\Lambda_{J}}(\bfA^{\varepsilon}_{\Lambda_J}-\bfA_{\Lambda_J})\bfD^{-t_r}_{\Lambda_{J}}\|\lesssim  J 2^{-J\,\left(\min\{t',t_{l}\}+\min\{t,t_{r}\}-r\right)}.
\]
In particular,  for $t,t'\ge r/2$,
\[
\ErrorCompression=\|\bfD^{-t'}_{\Lambda_{J}}(\bfA^{\varepsilon}_{\Lambda_J}-\bfA_{\Lambda_J})\bfD^{-t}_{\Lambda_{J}}\|\lesssim  J\, 2^{-J(t+t'-r)}.
\]
\end{proposition}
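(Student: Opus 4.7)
The strategy is to decompose the weighted error matrix $\bfD^{-t_l}_{\Lambda_J}(\bfA^{\varepsilon}_{\Lambda_J}-\bfA_{\Lambda_J})\bfD^{-t_r}_{\Lambda_J}$ scale by scale and bound each level-pair block via Schur's test combined with the entrywise estimates of Proposition~\ref{prop:property_UAW} \ref{approximate_sparsity}. I would split the discarded entries into two disjoint families: the \emph{slope-discarded} blocks in $D_1\cup D_2$ (the entire $(j,j')$-block is removed because the slope constraints in \eqref{eq:supp_Jtt'} fail), and the \emph{distance-discarded} entries inside retained blocks (individual entries with $\mathrm{dist}(S_{j,k},S_{j',k'})>\tau_{jj'}$). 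Estimate (i) of Proposition~\ref{prop:property_UAW} \ref{approximate_sparsity} is tailored to the second family, while estimate (ii) (together with a shell argument at threshold $2^{-\min\{j,j'\}}$) handles the first.

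For the distance-discarded family I would fix a row $(j,k)$ and bound the row sum over $k'$ at level $j'$ by a shell integral: using estimate (i), the volume count $|\{k':\mathrm{dist}(S_{j,k},S_{j',k'})\in[\rho,\rho+d\rho]\}|\lesssim 2^{j'n}\rho^{n-1}\,d\rho$, and integrability $r+2\widetilde d>0$ (which follows from Assumption~\ref{assumption:wavelets}~(ii)), the inner sum is at most $2^{-(j+j')(\widetilde d+n/2)}\cdot 2^{j'n}\cdot\tau_{jj'}^{-(r+2\widetilde d)}$. A symmetric bound controls the column sum. Schur's test then yields a block norm of order $2^{-(j+j')\widetilde d}\tau_{jj'}^{-(r+2\widetilde d)}$. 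Substituting \eqref{eq:tau} in the regime where the second term dominates and multiplying by $2^{-jt_l-j't_r}$ gives a weighted block contribution of order $2^{j(t'-t_l)+j'(t-t_r)-J(t+t'-r)}$; summing over $(j,j')\in[j_0,J]^2$ produces $J\cdot 2^{-J(\min\{t',t_l\}+\min\{t,t_r\}-r)}$, where the logarithmic factor $J$ absorbs degenerate boundary cases such as $t_l=t'$ or $t_r=t$. In the complementary regime $\tau_{jj'}=a\cdot 2^{-\min\{j,j'\}}$, the block lies in $D_6$, which by construction is uncompressed, so nothing is discarded there.

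For the slope-discarded family I would combine estimate (ii) for overlapping-support pairs (at most $\mathcal{O}(2^{|j-j'|n})$ per row on the coarser side, $\mathcal{O}(1)$ on the finer side) with estimate (i) for separated supports (an analogous shell argument at threshold $2^{-\min\{j,j'\}}$), obtaining a block-norm estimate of order $2^{(j+j')r/2}\cdot 2^{-\sigma|j-j'|+|j-j'|n/2}$. The slope inequalities in \eqref{eq:supp_Jtt'} are designed precisely so that, after reweighting by $2^{-jt_l-j't_r}$, the sum over $(j,j')\in D_1\cup D_2$ telescopes into geometric series in $|j-j'|$ whose total is again at most $J\cdot 2^{-J(\min\{t',t_l\}+\min\{t,t_r\}-r)}$. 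Convergence of these geometric series rests on the lower bound $\sigma>n/2+\max\{t,t'\}-r/2$ from Assumption~\ref{assumption:wavelets}~(iii); the critical endpoints of the slopes produce exactly the target exponent, which is why the slope constants were chosen as in Definition~\ref{def:supp_Jtt'}.

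The main obstacle is the bookkeeping forced by the asymmetric geometry of $\supp(J,t,t')$ when $t\neq t'$, which requires handling the sub-regions $D_3$ and $D_4$ (with different thresholds $\tau_{jj'}\asymp 2^{-j'}$ vs.\ $\tau_{jj'}\asymp 2^{-j}$) separately and tracking, block by block, which term of \eqref{eq:tau} dominates. A secondary difficulty is ensuring that combining slope-discarded and distance-discarded contributions does not inflate the single logarithmic factor $J$ to $J^{2}$: this hinges on the sharpness of the slope bounds, which restrict the "retained" region to a strict sub-triangle of $[j_0,J]^{2}$ and therefore cut off the high-$j$/low-$j'$ corners that would otherwise double-count in a naive block-by-block Schur application.
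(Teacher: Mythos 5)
Your overall strategy matches the paper's: decompose into $(j,j')$-level blocks, use the entrywise estimates of Proposition~\ref{prop:property_UAW}~\ref{approximate_sparsity} together with a Schur-type $\|\cdot\|_1^{1/2}\|\cdot\|_\infty^{1/2}$ bound to control each block norm, and then combine. The per-block bounds you describe (one for the slope-discarded blocks in $D_1\cup D_2$ via estimate~(ii), one for the distance-discarded entries via estimate~(i)) are exactly the paper's Cases~1--3, and both routes lead, after reweighting by $\bfD^{-t'}_{\Lambda_J}\,(\cdot)\,\bfD^{-t}_{\Lambda_J}$, to weighted block norms of order $2^{-J(t+t'-r)}$.

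There is, however, a real gap in the final combination step. You write that ``summing over $(j,j')\in[j_0,J]^2$ produces $J\cdot 2^{-J(\min\{t',t_\ell\}+\min\{t,t_r\}-r)}$,'' but in the central case $t_\ell = t'$, $t_r = t$ \emph{every} weighted block in $D_5$ contributes $\asymp 2^{-J(t+t'-r)}$, and $D_5$ generically contains $\Theta(J^2)$ blocks. A naive $\ell^1$ sum of block norms therefore yields $J^2\, 2^{-J(t+t'-r)}$, not $J\,2^{-J(t+t'-r)}$. Your claim that the slope bounds ``cut off the high-$j$/low-$j'$ corners'' and thereby prevent the $J^2$ inflation is not correct: the slope conditions only carve out $D_1\cup D_2$ (which, being geometrically decaying in $|j-j'|$, would be fine even under an $\ell^1$ sum), whereas the distance-thresholded band $D_5$ still has positive area in the $(j,j')$-plane. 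The $J^2\to J$ improvement does not come from the geometry of the support; it comes from using a better combination inequality. The paper invokes the norm-compression lemma (Lemma~\ref{lemma:norm_compression}), which bounds the operator norm by the Frobenius norm of the matrix of block norms, $\big(\sum_{j,j'}\|\cdot\|_{j,j'}^2\big)^{1/2}$, giving $(J^2\cdot 2^{-2J(t+t'-r)})^{1/2}=J\,2^{-J(t+t'-r)}$. An equivalent fix is to apply Schur's test at the block-norm-matrix level (1D row/column sums over $j$ resp. $j'$, each of which is $\lesssim J\,2^{-J(t+t'-r)}$), rather than over the full 2D index set. You must make one of these choices explicit for the argument to close. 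Two smaller points: the regime $\tau_{jj'}\asymp 2^{-\min\{j,j'\}}$ corresponds to $D_3\cup D_4$, not $D_6$ (in $D_6$ the second term in \eqref{eq:tau} is $\gtrsim 1$ and dominates, making $\tau_{jj'}$ so large that nothing is discarded); this misattribution is harmless because the uniform lower bound $\tau_{jj'}\gtrsim 2^{(J(t+t'-r)-jt'-j't-(j+j')\widetilde d)/(2\widetilde d+r)}$ holds in $D_3,D_4,D_5$ alike, which is all your block-norm estimate requires. Also, the paper proves the result for $(t',t)$ first and then passes to general $(t_\ell,t_r)$ via $\|\bfD^{t'-t_\ell}_{\Lambda_J}\|\cdot\|\bfD^{-t'}_{\Lambda_J}(\cdot)\bfD^{-t}_{\Lambda_J}\|\cdot\|\bfD^{t-t_r}_{\Lambda_J}\|$; working with general weights from the start, as you do, is fine but slightly obscures that the only source of the $J$ factor is the $O(J^2)$ block count in the diagonal case.
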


\smallskip

\begin{proposition}[Bound for $\ErrorOVB$]\label{prop:Error-OVB}
Under the setting and assumptions of Theorem~\ref{thm:main1}, the following holds. For any $\delta\in (0,1)$, if
\[
N\gtrsim 2^{\widetilde{J}n(\widetilde{t}+\widetilde{t}'-r)/(\sigma-n/2+\widetilde{t}'-r/2)}+\log (1/\delta),
\]
then with probability at least $1-\delta$,
\begin{align*}
\ErrorOVB\lesssim J\, 2^{-J(t+t'-r)}.
\end{align*}
\end{proposition}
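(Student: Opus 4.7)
The plan is to decompose the proof into a probabilistic step that reduces the random OVB to a population quantity, followed by a deterministic step that exploits the fact that the composition $\mcC_u\mcA$ is itself a pseudo-differential operator, of order $r-2r_1$. I work one column at a time for $\lambda'\in \Lambda_J$ and then aggregate across columns under the weighted operator norm $\|\bfD^{-t'}_{\Lambda_J}(\cdot)\bfD^{-t}_{\Lambda_J}\|$, exactly as in the assembly in Lemma~\ref{lemma:error_decomposition}.

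First, for each $\lambda'$ I would show that the empirical OVB column
$(\bfU_{\cdot,\Omega_{\lambda'}}^{\top}\bfU_{\cdot,\Omega_{\lambda'}})^{-1}\bfU_{\cdot,\Omega_{\lambda'}}^{\top}\bfU_{\cdot,\Omega_{\lambda'}^c}\bfA_{\Omega^c_{\lambda'},\lambda'}$
concentrates around the population OVB column
$\widetilde{\bfC}_{u,\Omega_{\lambda'},\Omega_{\lambda'}}^{-1}\widetilde{\bfC}_{u,\Omega_{\lambda'},\Omega^c_{\lambda'}}\bfA_{\Omega^c_{\lambda'},\lambda'}$. Diagonal conjugation by $\bfD^{r_1}_{\Omega_{\lambda'}}$ commutes with restriction to $\Omega_{\lambda'}$, so Proposition~\ref{prop:property_UAW}~\ref{diagonal_preconditioning} yields a uniformly bounded spectrum for $\bfD^{r_1}_{\Omega_{\lambda'}}\widetilde{\bfC}_{u,\Omega_{\lambda'},\Omega_{\lambda'}}\bfD^{r_1}_{\Omega_{\lambda'}}$ (the lower bound passes to principal submatrices via Rayleigh quotients). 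A Gaussian matrix concentration bound applied to the preconditioned empirical Gram matrix, together with a Hanson--Wright-type bound for the cross term, and a union bound over the $\asymp 2^{Jn}$ columns, deliver uniform control provided $N\gtrsim \max_{\lambda'}|\Omega_{\lambda'}|+\log(1/\delta)$; this is exactly the content of the sample-size hypothesis once combined with the columnwise sparsity estimate $|\Omega_{\lambda'}|\lesssim 2^{\widetilde{J}n(\widetilde{t}+\widetilde{t}'-r)/(\sigma-n/2+\widetilde{t}'-r/2)}$ of Proposition~\ref{prop:sparsity_count}.

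For the population OVB, I would exploit the identity
\[
\widetilde{\bfC}_{u,\Omega_{\lambda'},\Omega^c_{\lambda'}}\bfA_{\Omega^c_{\lambda'},\lambda'}=(\widetilde{\bfC}_u\bfA)_{\Omega_{\lambda'},\lambda'}-\widetilde{\bfC}_{u,\Omega_{\lambda'},\Omega_{\lambda'}}\bfA_{\Omega_{\lambda'},\lambda'},
\]
which rewrites the population OVB column as $\widetilde{\bfC}_{u,\Omega_{\lambda'},\Omega_{\lambda'}}^{-1}(\widetilde{\bfC}_u\bfA)_{\Omega_{\lambda'},\lambda'}-\bfA_{\Omega_{\lambda'},\lambda'}$. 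The key structural observation is that $\mcC_u\mcA\in \OPS^{r-2r_1}(\mcM)$, so the wavelet matrix $\widetilde{\bfC}_u\bfA$ inherits the multiscale decay and sparsity estimates of Proposition~\ref{prop:property_UAW}~\ref{approximate_sparsity}, but with $r$ replaced by $r-2r_1$. Assembling across $\lambda'$ and restricting to $\supp(J,t,t')$, the weighted operator norm of the population OVB matrix is controlled by a compression-type bound analogous to Proposition~\ref{prop:Error-Compression} applied to $\widetilde{\bfC}_u\bfA$ (with weight exponents appropriately shifted to account for the factor of $\widetilde{\bfC}_u$ and for the order $r-2r_1$), combined with the diagonal preconditioning of $\mcA$ (Proposition~\ref{prop:property_UAW}~\ref{preconditioning_operator}) on the subtracted term. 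The choice of $\widetilde{J},\widetilde{t},\widetilde{t}'$ in~\eqref{eq:def_parameter} is calibrated exactly so that the resulting geometric sums over deleted $(j,j')$-blocks yield the target bound $J\cdot 2^{-J(t+t'-r)}$, matching the compression error of Proposition~\ref{prop:Error-Compression}.

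The hardest part will be the deterministic step: tracking how the asymmetric weight $\bfD^{-t'}_{\Lambda_J}(\cdot)\bfD^{-t}_{\Lambda_J}$ interacts with the columnwise inverses $\widetilde{\bfC}_{u,\Omega_{\lambda'},\Omega_{\lambda'}}^{-1}$ after assembly across all columns, and verifying that the slope conditions and thresholds defining the enlarged support $\supp(\widetilde{J},\widetilde{t},\widetilde{t}')$ are aggressive enough to push the OVB contribution down to level $2^{-J(t+t'-r)}$ while not so aggressive that the submatrices $\widetilde{\bfC}_{u,\Omega_{\lambda'},\Omega_{\lambda'}}$ become poorly conditioned in the preconditioned norm. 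The bookkeeping is essentially a multiscale refinement of the compression calculation already carried out for Proposition~\ref{prop:Error-Compression}, but with one additional layer corresponding to the smoothing operator $\mcC_u$; verifying that the calibrated parameters in~\eqref{eq:def_parameter} make this layer align with the deleted support $\supp(J,t,t')^c$ at the correct scale is the technical heart of the proposition.
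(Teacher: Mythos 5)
Your high-level decomposition (probabilistic concentration for the Gram inverse and cross-covariance, then a deterministic bound, then aggregation over columns with a union bound calibrated via $\max_{\lambda'}|\Omega_{\lambda'}|$) matches the paper's structure, and the probabilistic ingredients you list (Gaussian operator-norm concentration, cross-term bound, sparsity count of $|\Omega_{\lambda'}|$) are essentially those in Lemmas~\ref{lemma:tech1} and~\ref{lemma:empirical_cross_term}. The gap is in the deterministic step. You rewrite the population OVB column as $\widetilde{\bfC}_{u,\Omega_{\lambda'},\Omega_{\lambda'}}^{-1}(\widetilde{\bfC}_u\bfA)_{\Omega_{\lambda'},\lambda'}-\bfA_{\Omega_{\lambda'},\lambda'}$ and propose to bound the first term by compression estimates for $\widetilde{\bfC}_u\bfA\in\OPS^{r-2r_1}$ and the second term by the diagonal preconditioning of $\bfA$. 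But these are separate upper bounds on two quantities that are each $O(1)$ in the relevant weighted norm; a difference of two $O(1)$ quantities is not $O(J\,2^{-J(t+t'-r)})$ unless one exhibits a cancellation, and your rewrite is algebraically equivalent to the original expression, so it does not supply one. Moreover, the compression mask $\supp(\widetilde{J},\widetilde{t},\widetilde{t}')$ and the thresholds $\tau_{jj'}$ were calibrated for an operator of order $r$, not $r-2r_1$, so a ``compression-type bound with shifted weights'' for $\widetilde{\bfC}_u\bfA$ would require a separate derivation of the sparsity and decay structure for that composed operator, which you do not give.

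The mechanism the paper actually uses is more direct and does not invoke the composition $\mcC_u\mcA$. It conjugates by $\bfD^{r_1}$ so that the preconditioned Gram submatrices $\widehat{\bar{\Sigma}}_{\Omega_{\lambda'},\Omega_{\lambda'}}$ and cross blocks $\widehat{\bar{\Sigma}}_{\Omega_{\lambda'},\Omega^c_{\lambda'}}$ are all $O(1)$ in operator norm (Proposition~\ref{prop:property_UAW}~\ref{preconditioning_data}), reducing the bound on the OVB column to a bound on the tail $\|\bfD^{-r_1}_{\Omega^c_{\lambda'}}\bfA_{\Omega^c_{\lambda'},\lambda'}\cdot(\bfD^{-t}_{\Lambda_J})_{\lambda',\lambda'}\|_2$. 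That tail is then controlled \emph{exactly} by the truncation and compression bounds for $\bfA$ with the asymmetric weights $(r_1,t)$ rather than $(t',t)$: Propositions~\ref{prop:Error-Truncation} and~\ref{prop:Error-Compression} at level $\widetilde{J}$ with mask $\supp(\widetilde{J},\widetilde{t},\widetilde{t}')$ give $\widetilde{J}\,2^{-\widetilde{J}(r_1+t-r)}$. Undoing the preconditioning on the retained side costs a factor $2^{\widetilde{J}\max\{r_1-t',0\}}$, and aggregation over columns via the block sparsity counts of Proposition~\ref{prop:sparsity_count} costs a factor $2^{\varepsilon_1 J}$; the specific choice $\widetilde{J}=\lceil\tfrac{t+t'-r+\varepsilon_1}{\min\{t',r_1\}+t-r}J\rceil$ is what makes these factors collapse to $J\,2^{-J(t+t'-r)}$. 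None of this appears in your deterministic step. You should replace the PDO-composition route with the $(r_1,t)$-weighted application of the existing truncation/compression bounds for $\bfA$ itself.
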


\begin{proposition}[Bound for $\ErrorVar$]\label{prop:Error-Var}
Under the setting and assumptions of Theorem~\ref{thm:main1}, the following holds. For any $\delta\in (0,1)$, if
\[
N\gtrsim 2^{\widetilde{J}n(\widetilde{t}+\widetilde{t}'-r)/(\sigma-n/2+\widetilde{t}'-r/2)}+\log (1/\delta),
\]
then with probability at least $1-\delta$,
\[
\ErrorVar\lesssim \sqrt{\frac{J+\log(1/\delta)}{N}}\cdot J\cdot 2^{\rho J(t+t'-r)/2},
\]
where $\rho$ is the exponent defined in \eqref{eq:rho}.
\end{proposition}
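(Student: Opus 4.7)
The plan is to bound $\ErrorVar$ by a two-step conditioning argument: first on the design matrix $\bfU$, then on the noise $\bfW$ given $\bfU$. By Lemma~\ref{lemma:error_decomposition}, $\ErrorVar$ is the sum of two weighted operator norms of sparse matrices whose entries, conditional on $\bfU$, are zero-mean Gaussians. I will focus on the upper-triangular term; the lower-triangular one is handled analogously after exchanging the roles of $(t,t')$, which accounts for the symmetric structure of $\rho$.

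First I would establish a high-probability event on which $\bfU_{\cdot,\Omega_{\lambda'}}^{\top}\bfU_{\cdot,\Omega_{\lambda'}}/N$ is close to its expectation $\widetilde{\bfC}_{u,\Omega_{\lambda'}\Omega_{\lambda'}}$, uniformly in $\lambda'\in\Lambda_J$. Proposition~\ref{prop:property_UAW}~\ref{diagonal_preconditioning}(ii) gives $\bfD^{r_1}\widetilde{\bfC}_u\bfD^{r_1}\asymp \Id$, and a nonasymptotic Wishart-type inequality (e.g.\ matrix Bernstein applied to the preconditioned rows $\bfD^{r_1}_{\Omega_{\lambda'}}\bfu_i|_{\Omega_{\lambda'}}$) transfers this to the empirical level. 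The concentration requires $N\gtrsim |\Omega_{\lambda'}|+\log(|\Lambda_J|/\delta)$; the column-support sparsity underlying the learning-oriented compression (cf.\ Definition~\ref{def:supp_Jtt'} and Remark~\ref{remark:cost2}) identifies $\max_{\lambda'}|\Omega_{\lambda'}|$ with the stated sample-size threshold, so a union bound over $\Lambda_J$ yields $\bigl\|\bfD^{r_1}_{\Omega_{\lambda'}}(\bfU_{\cdot,\Omega_{\lambda'}}^{\top}\bfU_{\cdot,\Omega_{\lambda'}}/N)^{-1}\bfD^{r_1}_{\Omega_{\lambda'}}\bigr\|\lesssim 1$ uniformly.

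On this event, conditional on $\bfU$, the column $\Var_{\Omega_{\lambda'},\lambda'}$ is centered Gaussian with covariance $(\bfC_w)_{\lambda',\lambda'}(\bfU_{\cdot,\Omega_{\lambda'}}^{\top}\bfU_{\cdot,\Omega_{\lambda'}})^{-1}$. Using Proposition~\ref{prop:property_UAW}~\ref{diagonal_preconditioning}(iii) to obtain $(\bfC_w)_{\lambda',\lambda'}\asymp 2^{-2j'r_2}$ together with the previous bound on the inverse Gram, the $(\lambda,\lambda')$-entry of $\Var$ has conditional standard deviation $\lesssim 2^{jr_1-j'r_2}/\sqrt{N}$ for $\lambda=(j,k)$ and $\lambda'=(j',k')$. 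I would then split the masked matrix $(\bfM_{(J,t,t')})_{\mathrm{up}}\odot \Var_{\Lambda_J,\cdot}$ into dyadic blocks $\Var^{(j,j')}$ indexed by scale pairs $j\le j'\le J$, mirroring the partition $D_1,\ldots,D_6$ in Figure~\ref{fig:figure1}. For each block, the learning-oriented compression of Definition~\ref{def:supp_Jtt'} provides explicit row/column sparsities $s_r^{(j,j')}$ and $s_c^{(j,j')}$, and a standard $\epsilon$-net argument for Gaussian matrices (or the bound $\|B\|\le\sqrt{\|B\|_1\|B\|_\infty}$ combined with Gaussian column-sum concentration) yields
\[\|\Var^{(j,j')}\|\lesssim \frac{2^{jr_1-j'r_2}}{\sqrt{N}}\sqrt{s_r^{(j,j')}s_c^{(j,j')}}\sqrt{J+\log(1/\delta)}.\]

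Finally, I would sum the weighted block norms $2^{-jt'-j't}\|\Var^{(j,j')}\|$ over $j\le j'\le J$. Substituting the block sparsities produces several competing geometric-sum regimes, one for each active subregion in Figure~\ref{fig:figure1}; optimizing the exponential growth in $(j,j')$ within each regime yields exactly the four arguments appearing under the max in \eqref{eq:rho}, and taking their overall maximum reproduces $\rho$ and thus the total bound $\sqrt{(J+\log(1/\delta))/N}\cdot J\cdot 2^{\rho J(t+t'-r)/2}$. The main obstacle will be the block-by-block sparsity bookkeeping: any global nnz bound is too coarse here, since the operator norm aggregates variance scale-by-scale, so obtaining the sharp $\rho$ requires tight within-block row/column sparsity estimates (one per subregion) together with a careful max-over-$(j,j')$ optimization. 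A secondary technical point is that the columns of $\bfW$ are coupled through $\mcC_w$; since the per-column conditional variance bound depends only on the diagonal of $\bfC_w$, this coupling does not enter the final rate, but it precludes any column-independence shortcut in the Gaussian step.
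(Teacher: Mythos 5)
Your proposal is correct and follows essentially the same route as the paper: condition on the well-conditioned preconditioned Gram (Lemma~\ref{lemma:tech1}), obtain the per-entry conditional standard deviation $2^{jr_1-j'r_2}/\sqrt{N}$, control each $(j,j')$-block operator norm via $\|B\|\le\sqrt{\|B\|_1\|B\|_\infty}\le\sqrt{s_r s_c}\,\|B\|_{\max}$ with the within-block sparsity counts of Proposition~\ref{prop:sparsity_count}, and aggregate across scales (the paper uses the Frobenius bound from Lemma~\ref{lemma:norm_compression}), with the corner and boundary blocks of $D_3$--$D_6$ producing the max over the terms in \eqref{eq:rho}. The one parenthetical to be careful with is the $\epsilon$-net alternative for the Gaussian block: the entries of $\Var^{(j,j')}$ are not independent (rows correlated through $(\bfU_{\cdot,\Omega_{\lambda'}}^{\top}\bfU_{\cdot,\Omega_{\lambda'}})^{-1}$, columns through $\mcC_w$), so that route would need extra work, whereas the max-norm/$\ell_1$--$\ell_\infty$ argument you also describe requires only marginal variances and is exactly what the paper does.
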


\section{Convergence Rates for Data-Driven PDE Solver}\label{sec:stability}

In the classical wavelet--Galerkin framework, an effective matrix compression strategy is typically expected to preserve the optimal convergence order of the underlying Galerkin scheme. In that setting, no data are involved and the operator $\mcA$ is fully known. In contrast, in our setting the operator $\mcA$ is unknown and must be learned from data. This raises the question of whether the learned operator $\widehat{\mcA}$ provides an effective PDE solver for previously unseen right-hand side $f.$

In this section, we study the convergence rate of the numerical solution obtained by plugging the learned sparse operator $\widehat{\mcA}$ into the elliptic pseudo-differential equation. 
 We show that this rate inherits the statistical accuracy of $\widehat{\mcA}$ and is analogous to that of the classical wavelet--Galerkin method, while exhibiting new features specific to the data-driven setting ---most notably that the maximal wavelet scale is chosen as a function of the sample size in the learning stage. This analysis highlights the usefulness of operator learning for downstream PDE solvers.

Subsection~\ref{subsec:stability_error_bound} presents the error bound and Subsection~\ref{subsec:stability_proof} contains its proof.

\subsection{Second Main Result}\label{subsec:stability_error_bound}

Before presenting the result, we introduce a slight modification of the estimator in Theorem \ref{thm:main1} so that the resulting matrix estimator $\widehat{\bfA}$ satisfies the $V_{J}$ ellipticity condition, which will be crucial in establishing the convergence rate of the numerical solution; see \cite{dahmen2006compression}.

Recall that in the proof of Theorem \ref{thm:main1}, combining Proposition \ref{prop:Error-OVB}, \ref{prop:Error-Var}, and \ref{prop:Error-Compression}, we established that the estimator $\widehat{\bfA}\in \R^{\Lambda_J\times \Lambda_J}$ satisfies the following bound. Let $\delta\in (0,1)$. If
\[
N\gtrsim 2^{\widetilde{J}n(\widetilde{t}+\widetilde{t}'-r)/(\sigma-n/2+\widetilde{t}'-r/2)}+\log (1/\delta),
\]
then with probability at least $1-\delta$,
\begin{align*}
\|\bfD^{-t'}_{\Lambda_J} ( \widehat{\bfA}-\bfA_{\Lambda_J}) \bfD^{-t}_{\Lambda_J}  \| &\le \ErrorOVB+\ErrorVar+\ErrorCompression\\
&\lesssim J\, 2^{-J(t+t'-r)}+\sqrt{\frac{J+\log(1/\delta)}{N}}\cdot J\cdot 2^{\rho J(t+t'-r)/2},
\end{align*}
where $\rho$ is the exponent defined in \eqref{eq:rho}.

We now claim that, after slightly modifying the estimator in Theorem \ref{thm:main1} and adapting the analysis, the following refined bound can be attained: there exists some constant $\kappa>0$ such that, with probability at least $1-\delta$,
\begin{align}\label{eq:enhanced_estimate_0}
\|\bfD^{-t'}_{\Lambda_J} ( \widehat{\bfA}-\bfA_{\Lambda_J}) \bfD^{-t}_{\Lambda_J}\|  \lesssim \varepsilon 2^{-J(t+t'-r)}+\sqrt{\frac{\log(1/\delta)}{N}}\cdot (J/\varepsilon)^{\kappa}\cdot 2^{\rho J(t+t'-r)/2}.
\end{align}
Here $\varepsilon>0$ is a tunable hyperparameter, and $\rho$ is as in \eqref{eq:rho}.
Choosing $J=J_{*}$ such that
\begin{align}\label{eq:J_*}
\varepsilon 2^{-J_{*}(t+t'-r)}\asymp \sqrt{\frac{\log (1/\delta)}{N}}\cdot (J_{*}/\varepsilon)^{\kappa} \cdot 2^{\rho J_{*}(t+t'-r)/2},
\end{align}
we therefore conclude that, if $N\gtrsim \log (1/\delta)$, then with probability at least $1-\delta$,
\begin{align}\label{eq:enhanced_estimate}
\| \bfD^{-t'}_{\Lambda_{J_{*}}} (\widehat{\bfA} - \bfA_{\Lambda_{J_{*}}}) \bfD^{-t}_{\Lambda_{J_{*}}} \|
&\lesssim \varepsilon 2^{-J_{*}(t+t'-r)}+\sqrt{\frac{\log (1/\delta)}{N}}\cdot (J_{*}/\varepsilon)^{\kappa}\cdot 2^{\rho J_{*}(t+t'-r)/2} \nonumber\\
&\asymp \varepsilon 2^{-J_{*}(t+t'-r)} \nonumber\\
&\asymp_{\mathrm{polylog}} N^{-\frac{1}{2+\rho}}.
\end{align}
Here we use the notation $\asymp_{\mathrm{polylog}}$ 
to hide poly-logarithmic factors in $N$ and $1/\delta$.

\begin{remark}[Modification for the enhanced bound \eqref{eq:enhanced_estimate_0}]
We briefly explain how to modify the estimator $\widehat{\bfA}$ in order to obtain the bound \eqref{eq:enhanced_estimate_0}. 
We introduce a new support set $\supp^{\mathrm{new}}(J,t,t')$ by modifying both the thresholding parameters $\tau_{jj'}$ and the slope conditions in the definition of 
$\supp(J,t,t')$ in~\eqref{eq:supp_Jtt'} as follows:
\begin{align}\label{eq:supp_Jtt'_new}
\supp^{\mathrm{new}}(J,t,t') &:=\bigg\{(\lambda,\lambda')\in \Lambda_J\times \Lambda_J: \mathrm{dist}(S_{j,k},S_{j',k'})\le \tau^{\mathrm{new}}_{jj'},\nonumber\\
&\qquad\qquad j \le \frac{t+t'-r}{\sigma-\frac{n}{2}+t'-\frac{r}{2}}J+\frac{\sigma-\frac{n}{2}-(t-\frac{r}{2})}{\sigma-\frac{n}{2}+t'-\frac{r}{2}} j'+\frac{\log_2(J/\varepsilon)}{\sigma-\frac{n}{2}+t'-\frac{r}{2}},\nonumber\\
&\qquad\qquad j'\le \frac{t+t'-r}{\sigma-\frac{n}{2}+t-\frac{r}{2}}J+\frac{\sigma-\frac{n}{2}-(t'-\frac{r}{2})}{\sigma-\frac{n}{2}+t-\frac{r}{2}} j+\frac{\log_2(J/\varepsilon)}{\sigma-\frac{n}{2}+t-\frac{r}{2}}\bigg\},
\end{align}
where the new thresholding parameter is chosen as
\begin{align}\label{eq:thresholding_new}
\tau^{\mathrm{new}}_{jj'}\asymp \max \left\{2^{-\min\{j,j'\}}, 2^{\frac{J(t+t'-r)-jt'-j't-(j+j
')\widetilde{d}+\log_2(J/\varepsilon)}{2\widetilde{d}+r}}\right\}.
\end{align}
We then define $\widehat{\bfA}$ using this updated support $\supp^{\mathrm{new}}(J,t,t')$. We defer the proof of \eqref{eq:enhanced_estimate_0} for this modified estimator to
Appendix~\ref{app:stability}.
\end{remark}

The next theorem quantifies the convergence rate of the PDE solution obtained by replacing $\mcA$ with the learned sparse operator satisfying \eqref{eq:enhanced_estimate}.

\begin{theorem}\label{thm:main2}
Suppose Assumptions \ref{assumption:operator_data_noise} and \ref{assumption:wavelets} hold, and assume that the operator $\mcA$ is unknown. Let $f\in H^{-r/2}(\mcM)$, and let $u$ be the exact solution of $\mcA u=f$. Let $\widehat{\bfA}\in \R^{\Lambda_{J_{*}}\times \Lambda_{J_{*}}}$ be the data-driven sparse estimator satisfying $\eqref{eq:enhanced_estimate}$, and let $\widehat{\bfu}$ solve \[
\widehat{\bfA}\widehat{\bfu}=\widetilde{\bff}_{\Lambda_{J_*}},
\]
where $\widetilde{\bff}_{\Lambda_{J_*}} := \bigl(\langle f, \psi_{\lambda}\rangle\bigr)_{\lambda\in \Lambda_{J_*}}$ 
denotes the vector of wavelet coefficients of $f$ truncated to scales up to $J_{*}$. Define
\[
\widehat{u}:=\widehat{\bfu}^{\top} \Psi=\sum_{\lambda\in \Lambda_{J_{*}}} \widehat{u}_{\lambda}\psi_{\lambda}.
\]
Then, with probability at least $1-\delta$, for any $\alpha\in [0,t'-r/2]$,
\begin{align}\label{eq:main2}
\|\widehat{u}-u\|_{H^{r/2-\alpha}}&\lesssim 2^{-J_{*}(t+\alpha-r/2)}\|u\|_{H^t}\asymp_{\mathrm{polylog}} N^{-\frac{t+\alpha-r/2}{(2+\rho)(t+t'-r)}}\|u\|_{H^t},
\end{align}
where $\rho$ is the exponent defined in \eqref{eq:rho}.
\end{theorem}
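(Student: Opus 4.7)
The plan is to combine a Strang-type perturbation analysis with an Aubin--Nitsche duality argument. The three key ingredients will be: (a) $V_{J_*}$-coercivity of $\widehat{\bfA}$ in the $H^{r/2}$ energy norm; (b) an energy error estimate $\|u-\widehat{u}\|_{H^{r/2}}\lesssim 2^{-J_*(t-r/2)}\|u\|_{H^t}$ together with the $H^t$-stability $\|\widehat{u}\|_{H^t}\lesssim\|u\|_{H^t}$, which handles the $\alpha=0$ case; (c) an Aubin--Nitsche duality argument that upgrades the energy bound to the weaker norms $H^{r/2-\alpha}$ for $\alpha\in[0,t'-r/2]$.

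Step (a) follows by sandwiching \eqref{eq:enhanced_estimate}: using $\|\bfD^{s}_{\Lambda_{J_*}}\|\le 2^{J_* s}$ for $s\ge 0$,
$$\|\bfD^{-r/2}_{\Lambda_{J_*}}(\widehat{\bfA}-\bfA_{\Lambda_{J_*}})\bfD^{-r/2}_{\Lambda_{J_*}}\|\lesssim 2^{J_*(t+t'-r)}\cdot\varepsilon\,2^{-J_*(t+t'-r)}=\varepsilon,$$
so for $\varepsilon$ chosen as a sufficiently small absolute constant, the $r/2$-weighted coercivity of $\bfA_{\Lambda_{J_*}}$ from Proposition \ref{prop:property_UAW}\ref{diagonal_preconditioning}\ref{preconditioning_operator} transfers to $\widehat{\bfA}$. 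For (b), I introduce the exact Galerkin approximation $u_{J_*}\in V_{J_*}$ defined by $\bfA_{\Lambda_{J_*}}\bfu_{J_*}=\widetilde{\bff}_{\Lambda_{J_*}}$; Cea's lemma combined with wavelet--Jackson estimates gives $\|u-u_{J_*}\|_{H^{r/2}}\lesssim 2^{-J_*(t-r/2)}\|u\|_{H^t}$, and comparison with the wavelet projection $P_{J_*}u$ via an inverse inequality yields $\|u_{J_*}\|_{H^t}\lesssim\|u\|_{H^t}$. Subtracting the two Galerkin systems gives $\widehat{\bfA}(\bfu_{J_*}-\widehat{\bfu})=(\widehat{\bfA}-\bfA_{\Lambda_{J_*}})\bfu_{J_*}$; testing against $\bfu_{J_*}-\widehat{\bfu}$, using coercivity of $\widehat{\bfA}$, \eqref{eq:enhanced_estimate}, and an inverse inequality that converts $\bfD^{t'}$ weights into $\bfD^{r/2}$ weights (so that the growth factor $2^{J_*(t'-r/2)}$ cancels part of the $2^{-J_*(t+t'-r)}$ decay) gives $\|u_{J_*}-\widehat{u}\|_{H^{r/2}}\lesssim \varepsilon\,2^{-J_*(t-r/2)}\|u\|_{H^t}$, whence $\|\widehat{u}\|_{H^t}\lesssim\|u\|_{H^t}$ by a further inverse inequality.

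For (c), fix $g\in H^{\alpha-r/2}$ with $\|g\|_{H^{\alpha-r/2}}\le 1$; elliptic regularity of $\mcA$ produces $\phi:=\mcA^{-1}g\in H^{r/2+\alpha}$ with $\|\phi\|_{H^{r/2+\alpha}}\lesssim 1$. Set $\phi_h:=P_{J_*}\phi\in V_{J_*}$ and let $\boldsymbol{\phi}_h\in\R^{\Lambda_{J_*}}$ denote its wavelet coefficient vector. Writing $\langle u-\widehat{u},g\rangle=\langle \mcA(u-\widehat{u}),\phi\rangle$ and splitting $\phi=\phi_h+(\phi-\phi_h)$,
\begin{align*}
\langle u-\widehat{u},g\rangle=\langle \mcA(u-\widehat{u}),\phi-\phi_h\rangle+\bigl(\langle f,\phi_h\rangle-\langle \mcA\widehat{u},\phi_h\rangle\bigr).
\end{align*}
The first summand is at most $\|u-\widehat{u}\|_{H^{r/2}}\|\phi-\phi_h\|_{H^{r/2}}\lesssim 2^{-J_*(t-r/2)}\|u\|_{H^t}\cdot 2^{-J_*\alpha}$, via the wavelet--Jackson bound $\|\phi-\phi_h\|_{H^{r/2}}\lesssim 2^{-J_*\alpha}\|\phi\|_{H^{r/2+\alpha}}$. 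For the second summand, $\langle f,\phi_h\rangle=\boldsymbol{\phi}_h^\top\widetilde{\bff}_{\Lambda_{J_*}}=\boldsymbol{\phi}_h^\top\widehat{\bfA}\widehat{\bfu}$ and $\langle \mcA\widehat{u},\phi_h\rangle=\boldsymbol{\phi}_h^\top\bfA_{\Lambda_{J_*}}\widehat{\bfu}$, so it equals $\boldsymbol{\phi}_h^\top(\widehat{\bfA}-\bfA_{\Lambda_{J_*}})\widehat{\bfu}$; invoking \eqref{eq:enhanced_estimate} together with $\|\phi_h\|_{H^{t'}}\lesssim 2^{J_*(t'-r/2-\alpha)}\|\phi\|_{H^{r/2+\alpha}}$ (inverse inequality on $V_{J_*}$, valid since $\alpha\le t'-r/2$) and $\|\widehat{u}\|_{H^t}\lesssim\|u\|_{H^t}$ yields a bound of $\varepsilon\,2^{-J_*(t+t'-r)}\cdot 2^{J_*(t'-r/2-\alpha)}\|u\|_{H^t}=\varepsilon\,2^{-J_*(t+\alpha-r/2)}\|u\|_{H^t}$. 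Summing and taking the supremum over $g$ yields the claim.

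The main obstacle will be the sharp cancellation of weight factors throughout the argument. Bound \eqref{eq:enhanced_estimate} is expressed in the $(t,t')$-weighted operator norm underlying the compression design of Definition \ref{def:supp_Jtt'}, but both the energy analysis (in the $H^{r/2}$ norm) and the Aubin--Nitsche step (pairing $\phi\in H^{r/2+\alpha}$ against $t'$-weighted quantities) require converting between weighted norms via inverse inequalities whose growth factors of the form $2^{J_*\cdot(\text{positive exponent})}$ must exactly absorb the $2^{-J_*(t+t'-r)}$ decay to produce the final $2^{-J_*(t+\alpha-r/2)}$ rate. The $H^t$-stability $\|\widehat{u}\|_{H^t}\lesssim \|u\|_{H^t}$ is essential in step (c); a crude inverse-inequality bound $\|\widehat{u}\|_{H^t}\lesssim 2^{J_*(t-r/2)}\|u\|_{H^t}$ would spoil the rate by exactly the factor $2^{J_*(t-r/2)}$ that is required for the desired cancellation.
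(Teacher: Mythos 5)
Your proposal is correct and follows the same three-step blueprint as the paper's proof: $V_{J_*}$-coercivity of $\widehat{\bfA}$ obtained by perturbation from \eqref{eq:enhanced_estimate} after downweighting to the $r/2$-scale; an energy estimate $\|u-\widehat{u}\|_{H^{r/2}}\lesssim 2^{-J_*(t-r/2)}\|u\|_{H^t}$ via a Strang/Cea-type argument; and an Aubin--Nitsche duality step exploiting the Galerkin orthogonality $\boldsymbol{\phi}_h^\top\widehat{\bfA}\widehat{\bfu}=\boldsymbol{\phi}_h^\top\widetilde{\bff}_{\Lambda_{J_*}}$ and a careful balance of inverse inequalities against the $2^{-J_*(t+t'-r)}$ decay from \eqref{eq:enhanced_estimate}.

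The one genuine structural difference is in how the consistency error $\langle(\widehat{\mcA}-\mcA)\widehat{u},\cdot\rangle$ is handled in the duality step. You first establish the $H^t$-stability $\|\widehat{u}\|_{H^t}\lesssim\|u\|_{H^t}$ (by comparing $\widehat{u}$ to the exact Galerkin solution $u_{J_*}$, and $u_{J_*}$ to $Q_{J_*}u$), and then bound the consistency term in one shot using the $(t,t')$-weighted norm from \eqref{eq:enhanced_estimate}. The paper instead avoids proving $H^t$-stability explicitly: it decomposes $\widehat{u}=(\widehat{u}-Q_{J_*}u)+Q_{J_*}u$ and bounds the two pieces with two differently scaled operator norms, $\|\widehat{\mcA}-\mcA\|_{H^{r/2}\to H^{-t'}}\lesssim 2^{-J_*(t'-r/2)}$ (paired with the energy bound on $\widehat{u}-Q_{J_*}u$) and $\|\widehat{\mcA}-\mcA\|_{H^{t}\to H^{-t'}}\lesssim 2^{-J_*(t+t'-r)}$ (paired with $\|Q_{J_*}u\|_{H^t}\le\|u\|_{H^t}$). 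Both routes are correct and produce identical rates; yours buys a slightly cleaner duality step at the cost of an extra stability lemma (which, as you correctly note, is the hinge that prevents the loss of a factor $2^{J_*(t-r/2)}$), whereas the paper's two-term decomposition avoids the intermediate $u_{J_*}$ altogether.
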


\begin{remark}[Discussion on Theorem \ref{thm:main2}]

In Theorem \ref{thm:main2}, choosing $\alpha=t'-r/2$ yields the fastest convergence rate
\[
\|\widehat{u}-u\|_{H^{r-t'}}\lesssim 2^{-J_{*}(t+t'-r)}\|u\|_{H^t}\asymp_{\mathrm{polylog}} N^{-\frac{1}{2+\rho}}\|u\|_{H^t}.
\]
This matches the convergence rate for learning the unknown operator $\mcA$ established in Theorem \ref{thm:main1}. Thus, the data-driven PDE solver inherits the statistical accuracy of the estimator $\widehat{\mcA}$.

It is instructive to compare this with the classical deterministic theory of compressed wavelet Galerkin schemes \cite{dahmen2006compression}. When the operator $\mcA$ (or its wavelet discretization $\bfA$) is known exactly, \cite[Theorems 10.2–10.3]{dahmen2006compression} show that the proposed matrix-compression strategy preserves the optimal convergence rate of the Galerkin method: for $J>j_0$, if $u_J$ solves the compressed system, then for any $\alpha\in[0,d-r/2]$,
\begin{align}\label{eq:galerkin}
\|u_J-u\|_{H^{r/2-\alpha}}\lesssim 2^{-J(d+\alpha-r/2)}\|u\|_{H^d}, 
\end{align}
where $d$ is the approximation order of the primal wavelet basis $\Psi$; see also \cite[Lemma 3.1]{dahmen2006compression}. This demonstrates that, in the deterministic setting, compression reduces computational cost by sparsifying $\bfA$ while preserving the approximation accuracy of the underlying Galerkin scheme.

Our setting differs from this classical framework in several crucial ways:

\begin{enumerate}
    \item \textbf{Unknown operator.} The operator $\mcA$ is not available and must be learned from data. The estimator $\widehat{\mcA}$ is constructed to control the error $\|\widehat{\mcA}-\mcA\|_{H^t\to H^{-t'}}$. Consequently, the numerical solution $\widehat{u}$ reflects not only discretization and compression errors but also statistical errors (omitted-variable bias and variance). The resulting convergence rate therefore depends on the sample size $N$  used in the learning stage, which governs the accuracy with which $\widehat{\mcA}$ approximates $\mcA$.
    \item \textbf{Fixed regularity parameters.} Observe that \eqref{eq:main2} has a structural form similar to \eqref{eq:galerkin}. In our setting, the indices $t,t'$ are fixed and determine the metric in which $\mcA$ is learned, thereby directly controlling the attainable accuracy of the PDE solution. In contrast, the deterministic bound \eqref{eq:galerkin} depends on the wavelet approximation order $d$, whereas in the data-driven setting the relevant range of Sobolev regularity scales are prescribed by the operator-learning problem through the choice of $t,t'$.
    \item \textbf{Data-dependent resolution level.} The maximal wavelet scale $J_*$ is chosen as a function of the sample size $N$, balancing truncation, compression and  statistical estimation errors. This stands in stark contrast with the classical Galerkin scheme, where $J$ is a purely numerical refinement parameter independent of data.
\end{enumerate}

In summary, Theorem \ref{thm:main2} shows that ---even in the data-driven setting where the operator $\mcA$ must be learned--- the resulting PDE solver attains a convergence rate that mirrors the nearly optimal statistical rate of operator learning. Moreover, the bound \eqref{eq:main2} exhibits a structural form analogous to the classical approximation result \eqref{eq:galerkin}, thereby extending the classical wavelet--Galerkin methods with matrix compression to a statistical setting.
\end{remark}

\subsection{Proof of Theorem \ref{thm:main2}}\label{subsec:stability_proof}

\begin{proof}[Proof of Theorem \ref{thm:main2}]

The proof proceeds in three steps. In \textbf{Step 1}, we verify that the estimator $\widehat{\mcA}$ is $V_{J_{*}}$-elliptic. Given this, in \textbf{Step 2} we prove the claim for $\alpha=0$ by applying Strang's first lemma \cite{ciarlet2002finite}. In \textbf{Step 3}, we treat the case $\alpha\in (0,t'-r/2]$ using the $\alpha=0$ estimate established in \textbf{Step 2}.

\medskip

\noindent\textbf{Step 1: $V_{J_{*}}$ ellipticity of estimator $\widehat{\mcA}$.}

\smallskip

Given $\widehat{\bfA}\in \R^{\Lambda_{J_{*}}\times \Lambda_{J_{*}}}$, we define the estimator $\widehat{\mcA}$ as
\begin{align}\label{eq:enhanced_estimator}
\widehat{\mcA}
:= \sum_{(\lambda,\lambda') \in \Lambda_{J_{*}}\times \Lambda_{J_{*}} }
\widehat{\bfA}_{\lambda,\lambda'}\,
\widetilde{\psi}_{\lambda} \otimes \widetilde{\psi}_{\lambda'}.
\end{align}

It follows from \eqref{eq:enhanced_estimate} that
\begin{align*}
\| \bfD_{\Lambda_{J_{*}}}^{-r/2}(\widehat{\bfA}-\bfA_{\Lambda_{J_{*}}}) \bfD_{\Lambda_{J_{*}}}^{-r/2} \| &=\| \bfD_{\Lambda_{J_{*}}}^{t-r/2}\,\bfD_{\Lambda_{J_{*}}}^{-t}(\widehat{\bfA}-\bfA_{\Lambda_{J_{*}}}) \bfD_{\Lambda_{J_{*}}}^{-t} \,\bfD_{\Lambda_{J_{*}}}^{t-r/2}\|\\
&\le \|\bfD_{\Lambda_{J_{*}}}^{t-r/2}\| \cdot \| \bfD_{\Lambda_{J_{*}}}^{-t}(\widehat{\bfA}-\bfA_{\Lambda_J}) \bfD_{\Lambda_{J_{*}}}^{-t} \|\cdot  \|\bfD_{\Lambda_{J_{*}}}^{t-r/2}\|\\
&\lesssim 2^{2J_{*}(t-r/2)}\cdot \varepsilon 2^{-J_{*}(2t-r)}=\varepsilon. 
\end{align*}
Moreover,
\[
\| \bfD_{\Lambda_{J_{*}}}^{-t}(\widehat{\bfA}-\bfA_{\Lambda_{J_{*}}}) \bfD_{\Lambda_{J_{*}}}^{-r/2} \|\le \| \bfD_{\Lambda_{J_{*}}}^{-t}(\widehat{\bfA}-\bfA_{\Lambda_{J_{*}}}) \bfD_{\Lambda_J}^{-t} \|\cdot \|\bfD_{\Lambda_{J_{*}}}^{t-r/2}\|  \lesssim \varepsilon 2^{-J_{*}(t-r/2)}.
\]

By Proposition \ref{prop:property_UAW} \ref{diagonal_preconditioning} (i), there exists $c_{-}>0$ such that
$
\sigma_{\min}(\bfD^{-r/2} \bfA \bfD^{-r/2})\ge c_{-}.
$
Since the spectrum of $\bfA_{\Lambda_{J_{*}}}$ is contained in that of $\bfA$, it follows that
\[
\sigma_{\min}( \bfD_{\Lambda_{J_{*}}}^{-r/2}\bfA_{\Lambda_{J_{*}}} \bfD_{\Lambda_{J_{*}}}^{-r/2} )\ge \sigma_{\min}(\bfD^{-r/2} \bfA \bfD^{-r/2})\ge c_{-}.
\]

Then, by choosing $\varepsilon>0$ sufficiently small, we obtain the $V_{J_{*}}$-ellipticity of $\widehat{\mcA}$ in~\eqref{eq:enhanced_estimator}: for any $u_{J_{*}}\in V_{J_{*}}\subset H^{r/2}(\mcM)$,
\begin{align}\label{eq:ellipticity_Ahat}
&\langle \widehat{\mcA} u_{J_{*}},u_{J_{*}}\rangle=\langle \widehat{\bfA} \bfu_{J_{*}}, \bfu_{J_{*}} \rangle \nonumber\\
&=\langle \bfD_{\Lambda_{J_{*}}}^{-r/2}\bfA_{\Lambda_{J_{*}}}\bfD_{\Lambda_{J_{*}}}^{-r/2} \bfD_{\Lambda_{J_{*}}}^{r/2}\bfu_{J_{*}}, \bfD_{\Lambda_{J_{*}}}^{r/2}\bfu_{J_{*}} \rangle-\langle \bfD_{\Lambda_{J_{*}}}^{-r/2}(\bfA_{\Lambda_{J_{*}}}-\widehat{\bfA})\bfD_{\Lambda_{J_{*}}}^{-r/2} \bfD_{\Lambda_{J_{*}}}^{r/2}\bfu_{J_{*}}, \bfD_{\Lambda_{J_{*}}}^{r/2}\bfu_{J_{*}} \rangle \nonumber\\
&\ge \sigma_{\min}(\bfD_{\Lambda_{J_{*}}}^{-r/2}\bfA_{\Lambda_{J_{*}}} \bfD_{\Lambda_{J_{*}}}^{-r/2})\cdot \|\bfD_{\Lambda_{J_{*}}}^{r/2}\bfu_{J_{*}}\|^2-\| \bfD_{\Lambda_{J_{*}}}^{-r/2}(\widehat{\bfA}-\bfA_{\Lambda_{J_{*}}}) \bfD_{\Lambda_{J_{*}}}^{-r/2} \|\cdot \|\bfD_{\Lambda_{J_{*}}}^{r/2}\bfu_{J_{*}}\|^2 \nonumber\\
&\ge (c_{-}-\varepsilon) \|\bfD_{\Lambda_{J_{*}}}^{r/2}\bfu_{J_{*}}\|^2 \nonumber\\
&\asymp \|u_{J_{*}}\|_{H^{r/2}},
\end{align}
where the last step follows from Lemma \ref{lemma:wavelet_property} (iv) and $r/2\in (-\widetilde{\gamma},\gamma)$. 

\bigskip

\noindent\textbf{Step 2: $\alpha=0$.} We first prove the case $\alpha=0$. Using \eqref{eq:ellipticity_Ahat}, we apply Strang's first lemma \cite[Theorem 4.1.1]{ciarlet2002finite} to obtain that
\begin{align}\label{eq:stability_aux1}
\|\widehat{u}-u\|_{H^{r/2}}\lesssim \inf_{v_{J_{*}}\in V_{J_{*}}}\left\{\|v_{J_{*}}-u\|_{H^{r/2}}+\sup_{w_J\in V_{J_{*}},w_{J_{*}}\ne 0}\frac{|\langle (\widehat{\mcA}-\mcA)v_{J_{*}},w_{J_{*}} \rangle|}{\|w_{J_{*}}\|_{H^{r/2}}}\right\}.
\end{align}
By \eqref{eq:enhanced_estimate}, we have
\[
|\langle (\widehat{\mcA}-\mcA)v_{J_{*}},w_{J_{*}} \rangle|\lesssim \varepsilon 2^{-J_{*}(t+t'-r)} \|v_{J_{*}}\|_{H^t} \|w_{J_{*}}\|_{H^{t'}}.
\]
Moreover, Lemma \ref{lemma:wavelet_property} (ii) yields that, for $r/2<t'\le \gamma$,
\[
\|w_{J_{*}}\|_{H^{t'}}\lesssim 2^{J_{*}(t'-r/2)}\|w_{J_{*}}\|_{H^{r/2}}.
\]
Hence, taking $v_{J_{*}}=Q_{J_{*}} u\in V_{J_{*}}$ in \eqref{eq:stability_aux1} yields that
\begin{align*}
\|\widehat{u}-u\|_{H^{r/2}}&\lesssim \|u-Q_{J_{*}} u\|_{H^{r/2}}+\sup_{w_{J_{*}}\in V_{J_{*}}, w_{J_{*}}\ne 0}\frac{|\langle (\widehat{\mcA}-\mcA)Q_{J_{*}}u,w_{J_{*}} \rangle|}{\|w_{J_{*}}\|_{H^{r/2}}}\\
&\lesssim  \|u-Q_{J_{*}} u\|_{H^{r/2}}+2^{-J_{*}(t+t'-r)}\cdot 2^{J_{*}(t'-r/2)}\|Q_{J_{*}} u\|_{H^t}\\
&\lesssim 2^{-J_{*}(t-r/2)} \|u\|_{H^t},
\end{align*}
where the last step follows from $\|Q_{J_{*}} u\|_{H^t}\le \|u\|_{H^t}$ and $\|u-Q_{J_{*}} u\|_{H^{r/2}}\lesssim 2^{-J_{*}(t-r/2)} \|u\|_{H^t}$ by Lemma \ref{lemma:wavelet_property} (ii),  provided that $-\widetilde{d}\le r/2<t\le d$, \,$r/2<\gamma$, and $-\widetilde{\gamma}<t$.

\bigskip

\noindent\textbf{Step 3: $\alpha\in (0,t'-r/2]$.}
We begin with
\begin{align*}
    \|\widehat{u}-u\|_{H^{r/2-\alpha}}=\sup_{g\in H^{\alpha-r/2}} \frac{|\langle \widehat{u}-u,g \rangle|}{\|g\|_{H^{\alpha-r/2}}}.
\end{align*}
Since $\mcA:H^{\alpha+r/2}\to H^{\alpha-r/2}$ is an isomorphism, let $v\in H^{\alpha+r/2}$ satisfy $\mcA v=g$, then
\begin{align}\label{eq:stability_aux2}
\|\widehat{u}-u\|_{H^{r/2-\alpha}}=\sup_{v\in H^{\alpha+r/2}} \frac{|\langle \widehat{u}-u,\mcA v \rangle|}{\|\mcA v\|_{H^{\alpha-r/2}}}\asymp \sup_{v\in H^{\alpha+r/2}} \frac{|\langle \mcA(\widehat{u}-u),v \rangle|}{\|v\|_{H^{\alpha+r/2}}}.
\end{align}
Using the orthogonality $\langle \widehat{\mcA}\widehat{u},Q_{J_{*}} v \rangle=\langle Q_{J_{*}}f,Q_{J_{*}} v \rangle=\langle f, Q_{J_{*}} v \rangle=\langle \mcA u,Q_{J_{*}} v \rangle$, we can decompose
\begin{align}\label{eq:stability_aux3}
    \langle \mcA(\widehat{u}-u),v \rangle&=\langle \mcA(\widehat{u}-u),v-Q_{J_{*}} v \rangle+\langle \mcA(\widehat{u}-u),Q_{J_{*}} v \rangle \nonumber\\
    &=\langle \mcA(\widehat{u}-u),v-Q_{J_{*}} v \rangle+\langle (\mcA-\widehat{\mcA})\widehat{u},Q_{J_{*}} v \rangle.
\end{align}
The first term on the right-hand side of \eqref{eq:stability_aux3} is bounded by
\begin{align}\label{eq:stability_aux4}
|\langle \mcA(\widehat{u}-u),v-Q_{J_{*}} v \rangle|\lesssim \|\widehat{u}-u\|_{H^{r/2}}\|v-Q_{J_{*}} v \|_{H^{r/2}}\lesssim 2^{-J_{*}(t+\alpha-r/2)}\|u\|_{H^t}\|v\|_{H^{\alpha+r/2}},
\end{align}
where in the second inequality we used the bound $\|\widehat{u}-u\|_{H^{r/2}}\lesssim 2^{-J_{*}(t-r/2)}\|u\|_{H^t}$ obtained in \textbf{Step 2}, and $\|v-Q_{J_{*}} v\|_{H^{r/2}}\lesssim 2^{-J_{*}\alpha}\|v\|_{H^{\alpha+r/2}}$ by Lemma \ref{lemma:wavelet_property} (ii), provided that $-\widetilde{d}\le r/2\le \alpha+r/2\le d $,\, $r/2<\gamma$, and $-\widetilde{\gamma}<\alpha+r/2$. These conditions are guaranteed by $r/2\in (-\widetilde{\gamma},\gamma)$, $\alpha\in (0,t'-r/2]$, and $t'\le d$.

For the second term in \eqref{eq:stability_aux3},
\begin{align}\label{eq:stability_aux5}
&|\langle (\widehat{\mcA}-\mcA)\widehat{u},Q_{J_{*}} v \rangle| \nonumber\\
&\le |\langle (\widehat{\mcA}-\mcA)(\widehat{u}-Q_{J_{*}} u),Q_{J_{*}} v \rangle| +|\langle (\widehat{\mcA}-\mcA)Q_{J_{*}} u,Q_{J_{*}} v \rangle| \nonumber\\
&\le \|\widehat{\mcA}-\mcA\|_{H^{r/2}\to H^{-t'}} \|\widehat{u}-Q_{J_{*}} u\|_{H^{r/2}}\|Q_{J_{*}} v\|_{H^{t'}}+\|\widehat{\mcA}-\mcA\|_{H^{t}\to H^{-t'}} \|Q_{J_{*}} u\|_{H^{t}}\|Q_{J_{*}} v\|_{H^{t'}} \nonumber\\
&\lesssim 2^{-J_{*}(t'-r/2)}\cdot 2^{-J_{*}(t-r/2)} \cdot 2^{J_{*}(t'-\alpha-r/2)} \|u\|_{H^t}\|v\|_{H^{\alpha+r/2}} \nonumber\\
&\quad +2^{-J_{*}(t+t'-r)}\cdot 2^{J_{*}(t'-\alpha-r/2)} \|u\|_{H^t}\|v\|_{H^{\alpha+r/2}} \nonumber\\
&\asymp 2^{-J_{*}(t+\alpha-r/2)}\|u\|_{H^t}\|v\|_{H^{\alpha+r/2}},
\end{align}
where we used the following inequalities:
\[
 \|\widehat{\mcA}-\mcA\|_{H^{r/2}\to H^{-t'}} \lesssim 2^{-J_{*}(t'-r/2)},\qquad  \|\widehat{\mcA}-\mcA\|_{H^{t}\to H^{-t'}} \lesssim 2^{-J_{*}(t+t'-r)},
\]
\[
\|Q_{J_{*}} v\|_{H^{t'}}\lesssim 2^{J_{*}(t'-\alpha-r/2)}\|v\|_{H^{\alpha+r/2}},\quad \text{ if } \alpha+r/2\le t'\le \gamma, 
\]
and
\[
\|\widehat{u}-Q_{J_{*}} u\|_{H^{r/2}}\le \|\widehat{u}- u\|_{H^{r/2}}+\|u-Q_{J_{*}} u\|_{H^{r/2}}\lesssim 2^{-J_{*}(t-r/2)}\|u\|_{H^t}.
\]

Combining \eqref{eq:stability_aux2}, \eqref{eq:stability_aux3}, \eqref{eq:stability_aux4}, and \eqref{eq:stability_aux5} yields that, for any $\alpha\in (0,t'-r/2]$,
\begin{align*}
\|\widehat{u}-u\|_{H^{r/2-\alpha}}\asymp \sup_{v\in H^{\alpha+r/2}} \frac{|\langle \mcA(\widehat{u}-u),v \rangle|}{\|v\|_{H^{\alpha+r/2}}}\lesssim 2^{-J_{*}(t+\alpha-r/2)}\|u\|_{H^t}\asymp_{\mathrm{polylog}} N^{-\frac{t+\alpha-r/2}{(2+\rho)(t+t'-r)}}\|u\|_{H^t},
\end{align*}
where the last step follows from the choice of $J_*$ in \eqref{eq:J_*}. This completes the proof.
\end{proof}

\section{Conclusions, Discussion, and Future Directions}\label{sec:Conclusions}

This paper has established convergence rates for learning elliptic pseudo-differential operators from noisy (and noiseless) data. Within a wavelet--Galerkin framework, we formulated the learning task as a structured infinite-dimensional regression problem with multiscale sparsity. Building on this structure, we proposed a sparse, data- and computation-efficient estimator that combines a learning-oriented matrix compression scheme with a nested-support regression strategy to balance approximation and estimation errors. In addition to obtaining convergence rates for the estimator, we showed that the learned operator induces an efficient and stable Galerkin solver whose numerical error inherits its statistical accuracy. Our results therefore contribute to  bringing together operator learning, data-driven solvers, and wavelet methods in scientific computing.

We conclude with several open questions and future directions that arise from this work.

\paragraph{Ellipticity, data/noise distribution, computation, and error metric}

In this paper we assumed that $\mcA$ is strongly elliptic, whereas the wavelet-coordinate estimates in Proposition~\ref{prop:property_UAW} (I) hold more generally for PDOs beyond the elliptic class. It would be interesting to relax the ellipticity assumption and understand what weaker conditions suffice for the learning task. To simplify the exposition, we also focused on the setting where both the inputs and the noise are Gaussian random functions. Since our error analysis relies primarily on the covariance properties in Proposition~\ref{prop:property_UAW} (II) (ii)-(iii), it is natural to expect that both the estimation procedure and the resulting convergence guarantees extend beyond Gaussianity ---for instance, to settings where the input and noise distributions satisfy appropriate finite-moment (or sub-Gaussian/sub-exponential) assumptions. On the computational side, we noted at the end of Remark~\ref{remark:cost2} that it would be interesting to improve the runtime to an exact $\mathcal{O}(N2^{Jn})$ while preserving the same statistical rate. Finally, regarding the error metric, throughout the paper we measured the estimation accuracy in the operator norm $\|\cdot\|_{H^t\to H^{-t'}}$. One may also consider a prediction/generalization criterion under a testing distribution. For example, for a Gaussian test input $v\sim \mathcal{N}(0,\mathcal{C}_v)$, one can study
\[
\mathbb{E}_{v}\big\|(\widehat{\mathcal{A}}-\mathcal{A})v\big\|_{H^{-t'}}.
\]
Suppose that the wavelet representation $\widetilde{\bfC}_v$ of $\mathcal{C}_v$ satisfies an analogue of Proposition~\ref{prop:property_UAW} (II) (ii), namely, for some $t\in\mathbb{R}$,
\[
c_- \le \sigma_{\min} \big(\bfD^{t}\widetilde{\bfC}_v \bfD^{t}\big)
\le \sigma_{\max} \big(\bfD^{t}\widetilde{\bfC}_v \bfD^{t}\big)
\le c_+.
\]
Then a standard covariance computation yields
\begin{align*}
\mathbb{E}_{v}\big\|(\widehat{\mathcal{A}}-\mathcal{A})v\big\|_{H^{-t'}}^{2}
&= \mathrm{Tr} \Big( \bfD^{-t'}(\widehat{\bfA}-\bfA)\bfD^{-t}\big(\bfD^{t}\widetilde{\bfC}_v \bfD^{t}\big)\big(\bfD^{-t'}(\widehat{\bfA}-\bfA)\bfD^{-t}\big)^{\top}\Big)
\\
&\le \big\|\bfD^{t}\widetilde{\bfC}_v \bfD^{t}\big\|\,\|\bfD^{-t'}(\widehat{\bfA}-\bfA)\bfD^{-t}\|_{\mathrm{HS}}^{2}\\
& \lesssim \|\bfD^{-t'}(\widehat{\bfA}-\bfA)\bfD^{-t}\|_{\mathrm{HS}}^{2},
\end{align*}
where $\|\cdot\|_{\mathrm{HS}}$ denotes the Hilbert--Schmidt norm on operators $\ell^2(\mcJ) \to \ell^2(\mcJ)$. The multiscale techniques developed in this paper can be adapted to obtain convergence rates under such Hilbert--Schmidt-type criteria as well; we leave a detailed treatment to future work.

\paragraph{Adaptive operator estimation and learning}
An important avenue for future research is \emph{adaptive} operator estimation and learning. 
A guiding principle behind our learning methodology is the availability of wavelet-coordinate decay estimates for $\mcA$ (Proposition~\ref{prop:property_UAW} (I)), which inform learning-oriented compression and regression procedures that exploit multiscale sparsity and aim to estimate only the significant entries so as to balance approximation (bias) and estimation (variance) errors. 
Throughout this paper, we assume the order of $\mcA$ is known. 
It is therefore natural to ask whether one can develop methods that adapt to an unknown order, and hence to an unknown sparsity pattern. 
Concretely, instead of selecting a support via explicit thresholding parameters $\tau_{jj'}$ and slope conditions as in Definition~\ref{def:supp_Jtt'}, can one learn the effective support (i.e., the significant coordinates) from the data in an adaptive manner?

Related questions are classical in scientific computing and numerical analysis, where adaptive methods ---including adaptive wavelet schemes \cite{cohen2001adaptive,cohen2002adaptive,cohen2004adaptive,stevenson2009adaptive} and adaptive finite element methods \cite{morin2002convergence,bangerth2003adaptive,binev2004adaptive,stevenson2005optimal,bonito2024adaptive}--- play a central role in achieving optimal finite-term approximation and sparse representations of PDE solutions. 
In the context of operator learning, it is natural to explore analogous notions of adaptivity to the structure of the unknown operator, and to investigate suitable concepts of nonlinear approximation for operator classes.

From a statistical perspective, sparsity-inducing regularization and, in particular, the Lasso and its variants occupy a central place in modern high-dimensional statistics \cite{tibshirani1996regression,zou2006adaptive,buhlmann2011statistics,wainwright2019high,zhou2025thresholded}. 
It would be interesting to establish sparse oracle inequalities ---covering variable selection, support recovery, and estimation--- for operator learning problems such as those studied here.

\paragraph{Learning other structured operator classes}
This paper establishes convergence rates for learning elliptic PDOs in wavelet coordinates by leveraging \emph{a priori} structure that yields multiscale compressibility and enables sparsity-based estimation in an infinite-dimensional setting. The methodology and analysis developed here may extend to other operator classes with analogous structured representations. For instance, Fourier integral operators (FIOs) play a central role in wave propagation and hyperbolic PDEs \cite{hormander1971fourier,treves1980introduction,duistermaat1996fio,Hormander2009ALPDO,khoo2019switchnet,ying2022solving,wang2025operator}, and curvelets provide near-optimal sparse representations for broad families of FIOs \cite{CandesDonoho2000Curvelets,candes2004new,candes2003curvelets,candes2005curvelet,candes2006fast}, suggesting that an analogous multiscale, sparsity-based approach could be viable in suitable curvelet coordinates. Related structured operators also arise in parabolic problems \cite{von2003wavelet,schwab2009space,chegini2011adaptive}. More broadly, and more challengingly, one may aim to learn structured \emph{nonlinear} operators; a prototypical example is the parameter-to-solution map for linear, second-order, divergence-form elliptic PDEs \cite{cohen2015approximation,cohen2023near,reinhardt2024statistical}.

\paragraph{Bayesian formulations for operator learning}
We have mentioned that \cite{de2023convergence} establishes posterior contraction rates for learning linear operators that are diagonalizable in a known basis within a Bayesian framework.  More broadly, Bayesian formulations can quantify uncertainty in the learned operator and propagate it to downstream numerical predictions, yielding posterior credible sets for PDE solutions and related functionals. A key question is whether one can leverage the same multiscale compressibility that underpins our frequentist rates to design computationally tractable priors for structured operator classes (e.g., sparsity- or shrinkage-based priors in wavelet coordinates) and to establish corresponding posterior contraction guarantees. We believe that developing a general Bayesian framework for operator learning ---encompassing broader structured operator classes beyond the diagonalizable setting and providing contraction rates (and, ideally, frequentist coverage properties of credible sets)--- is a promising direction for future work.

\section*{Acknowledgments}
The authors were partly funded by the NSF CAREER award DMS-2237628. The authors thank Omar Al-Ghattas for helpful feedback and comments on the manuscript.

\bibliographystyle{siam} 
\bibliography{references}

\newpage

\appendix

\section{Auxiliary Materials Section \ref{sec:waveletformulation} }

\subsection{Background: Biorthogonal Wavelets and  Multiresolution Analysis}\label{appendix:wavelets}

Let $\{V_j\}_{j> j_0}$ be a sequence of nested, linear finite-dimensional subspaces $V_{j}\subset V_{j+1}\subset \cdots \subset L^2(\mcM)$. We say that the family $\{V_j\}_{j> j_0}$ has \emph{regularity} $\gamma>0$ and \emph{(approximation) order} $d\in\mathbb{N}$ if
\begin{align*}
   \gamma & = \sup\left\{s\in \R: V_j\subset H^s(\mcM), \ \forall \ j> j_0 \right\},  \\[0.5em]
   d &=\sup\left\{s\in \R: \inf_{v_j\in V_j} \|v-v_j\|_{L^2(\mcM)} \lesssim 2^{-js}\|v\|_{H^s(\mcM)}, \ \forall \ v\in H^s(\mcM), \ \forall\ j> j_0 \right\}.
\end{align*}
We shall suppose that the subspaces $\{V_j\}_{j> j_0}$ are $H^{r/2}(\mcM)$-conforming, i.e., we have $\gamma >\max\{0, r/2\}$ for some fixed order $r\in \R$.

We furthermore assume that $\mathrm{dim}(V_j)=O(2^{nj})$ and that, for each $j> j_0$, the space $V_j$ is spanned by a single-scale basis $\Phi_j$, i.e.,
\[
V_j =\mathrm{span}\, \Phi_j, \quad \mathrm{where } \ \, \Phi_j:=\{\phi_{j,k}: k\in \Delta_j\}, \quad \forall \ j> j_0.
\]
Here, the index set $\Delta_j$ describes the spatial localization of elements in $\Phi_j$. Analogously
to the spaces $\{V_j\}_{j> j_0}$, we assume without loss of generality that the finite index sets $\{\Delta_j\}_{j> j_0}$ are nested, $\Delta_j\subset \Delta_{j+1}\subset\cdots$. For each $j> j_0$, we associate with $\Phi_j$ the \emph{dual single-scale basis} defined by
\[
\widetilde{\Phi}_j:=\{\widetilde{\phi}_{j,k}: k\in \Delta_j\}, \quad \mathrm{with } \ \, \langle \phi_{j,k},\widetilde{\phi}_{j,k'} \rangle=\delta_{k,k'}, \ \forall \ k,k'\in \Delta_j.
\]
The vector spaces $\widetilde{V}_j:=\mathrm{span} \,\widetilde{\Phi}_j,\ j> j_0$, are also nested, $\widetilde{V}_{j}\subset \widetilde{V}_{j+1}\subset\cdots \subset L^2(\mcM)$, and the family $\{\widetilde{V}_j\}_{j> j_0}$ provides regularity $\widetilde{\gamma}>0$ and approximation order $\widetilde{d}$. For example, let the primal bases $\Phi_j$ be generated by tensor products of univariate $\mathrm{B}$-splines of order $d$, and let the dual bases $\widetilde{\Phi}_j$ be of order $\widetilde d\ge d$, with $d+\widetilde d$ even. Then $V_j =\mathrm{span}\, \Phi_j$ and $\widetilde{V}_j=\mathrm{span} \,\widetilde{\Phi}_j$ have approximation orders $d$ and $\widetilde d$, respectively.  Moreover, the corresponding regularity indices satisfy $\gamma=d-1/2$, while $\widetilde\gamma>0$ grows proportionally with $\widetilde d$. We refer to \cite{dahmen1999biorthogonal,dahmen1999wavelets,nguyen2003finite,nguyen2009finite,rekatsinas2018quadratic} for detailed constructions. 

In view of the biorthogonality of $\Phi_j, \widetilde{\Phi}_j$, we define the canonical projectors
\[
Q_j v:=\sum_{k\in \Delta_j} \langle v, \widetilde{\phi}_{j,k}\rangle \phi_{j,k},\qquad Q^{*}_j v:=\sum_{k\in \Delta_j} \langle v,\phi_{j,k} \rangle \widetilde{\phi}_{j,k},\quad \forall \ v\in L^2(\mcM),
\]
associated with the multiresolution sequences $\{V_j\}_{j> j_0}, \{\widetilde{V}_j\}_{j> j_0}$.  Moreover, the $L^2(\mcM)$-boundedness of $Q_j$ implies the Jackson and Bernstein inequalities; see Lemma \ref{lemma:wavelet_property} (ii) below.

 To define \emph{multiresolution analyses} (MRA), we start by introducing index sets $\nabla_j := \Delta_{j+1}\backslash \Delta_j, \, j>j_0$. Given single-scale bases $\Phi_j$ and $\widetilde{\Phi}_j$, one can construct \emph{biorthogonal complement bases}  
\[
\Psi_j=\{\psi_{j,k}:k\in \nabla_j\} \quad \mathrm{and} \quad \widetilde{\Psi}_j=\{\widetilde{\psi}_{j,k}:k\in\nabla_j\},\qquad j> j_0,
\]
satisfying the \emph{biorthogonality relation}
\[
\langle \psi_{j,k},\widetilde{\psi}_{j,k} \rangle=\delta_{(j,k),(j',k')}=\begin{cases}
  1  &  \text{ if } j=j' \text{ and } k=k', \\
  0  &  \text{ otherwise},
\end{cases}
\]
such that
\[
\mathrm{diam}\bigl(\supp(\psi_{j,k})\bigr) \asymp  2^{-j}, \quad j > j_0.
\]
We refer to \cite{nguyen2003finite,nguyen2009finite,rekatsinas2018quadratic} for particular constructions. In addition, we use the convention $\Psi_{j_0}:=\Phi_{j_0+1}, \widetilde{\Psi}_{j_0}:=\widetilde{\Phi}_{j_0+1}$, and $\nabla_{j_0}:=\Delta_{j_0+1}$.

 For $j> j_0$, define $W_j:=\mathrm{span} \, \Psi_{j}$ and $\widetilde{W}_j:=\mathrm{span}\, \widetilde{\Psi}_j$. The biorthogonality implies that, for all $j> j_0$,
\[
V_{j+1}=W_j \oplus V_j,\quad \widetilde{V}_{j+1}=\widetilde{W}_{j}\oplus \widetilde{V}_{j},\quad \widetilde{V}_j \perp W_j,\quad V_j\perp \widetilde{W}_j.
\]
Hence, $V_J$ and $\widetilde{V}_{J}$ can be written as a direct sum of the complement spaces $W_j$, respectively, $\widetilde{W}_j$, $j_0\le j<J$, using the convention $W_{j_0}:=V_{j_0+1}$ and $ \widetilde{W}_{j_0}:=\widetilde{V}_{j_0+1}$. With the convention $Q_{j_0}=Q^{*}_{j_0} :=0$, one has for $v_{J}\in V_{J}$ and for $ \widetilde{v}_J\in \widetilde{V}_J$ that
\[
v_J=\sum_{j=j_0}^{J-1} (Q_{j+1}-Q_{j})v_{J},\qquad \widetilde{v}_J=\sum_{j=j_0}^{J-1} (Q^{*}_{j+1}-Q^{*}_{j})\widetilde{v}_{J},
\]
where
\[
(Q_{j+1}-Q_j)v =\sum_{k\in \nabla_j}\langle v,\widetilde{\psi}_{j,k} \rangle \psi_{j,k},\qquad (Q^{*}_{j+1}-Q^{*}_j)v =\sum_{k\in \nabla_j}\langle v,\psi_{j,k} \rangle \widetilde{\psi}_{j,k}.
\]
From this observation, a biorthogonal dual pair of wavelet bases is now obtained  from the union of the coarse single-scale basis and the complement bases,
\[
\Psi = \bigcup_{j\ge j_0} \Psi_{j},\qquad \widetilde{\Psi} = \bigcup_{j\ge j_0} \widetilde{\Psi}_j.
\]
 We refer to $\Psi$ and $\widetilde{\Psi}$ as the \emph{primal} and \emph{dual} MRAs, respectively. Here and throughout, all basis functions in $\Psi$ and $\widetilde{\Psi}$ are assumed to be normalized in $L^2(\mcM)$.

The next lemma collects several key properties of the biorthogonal wavelet
system $(\Psi,\widetilde{\Psi})$ that will be used throughout our operator
learning analysis; see 
\cite{dahmen1997wavelet, dahmen1999biorthogonal, cohen2001adaptive, harbrecht2024multilevel}.

\begin{lemma}[Properties of biorthogonal wavelets]\label{lemma:wavelet_property}
Let $(\Psi,\widetilde{\Psi})$ be a biorthogonal wavelet system on $\mcM$
with parameters $(\gamma,\widetilde{\gamma},d,\widetilde d)$. Then the following hold.
  \begin{enumerate}[label=(\roman*)]
\item \textbf{Cardinality and locality.} For each $j \ge j_0$, the number of wavelets at level $j$ satisfies $|\nabla_j| \asymp 2^{jn}$. Moreover, the supports are localized in the sense that $\mathrm{diam}(\mathrm{supp}\,\psi_{j,k}) \asymp 2^{-j},$ and there exists a constant $M$ such that, for every $k\in \nabla_j$, at most $M$ indices $k'\in\nabla_j$ satisfy  $\mathrm{meas}\big(\mathrm{supp}(\psi_{j,k})\cap\mathrm{supp}(\psi_{j,k'})\big)\ne 0.$
\item \textbf{Approximation and stability.} Let $Q_j$ denote the canonical projector onto the approximation space $V_j$. We take $Q_{j_0}=0$ and, for $j>j_0$,
 \begin{align*}
Q_j v = \sum_{j'=j_0}^{j-1}\sum_{k\in \nabla_{j'}}\langle v,\widetilde{\psi}_{j',k} \rangle \psi_{j',k},\quad  \forall \ v\in L^2(\mcM).
\end{align*}
The Jackson and Bernstein inequalities hold: 
\[
\|v- Q_j v\|_{H^s(\mcM)}\lesssim 2^{-j(t-s)}\|v\|_{H^t(\mcM)},\quad \forall \ v\in H^t(\mcM),
\]
for all $-\widetilde{d}\le s \le t \le d,s<\gamma,-\widetilde{\gamma}<t$, and
\[
\|Q_j v\|_{H^{s}(\mcM)}\lesssim 2^{j(s-t)} \|Q_j v\|_{H^{t}(\mcM)}, \quad \forall \ v \in  H^{t}(\mcM),
\]
for all $t\le s\le \gamma$.
    \item \textbf{Vanishing moments.}  For any sufficiently smooth function $v$, \begin{align*}
|\langle v,\psi_{j,k}\rangle|\lesssim 2^{-j(\widetilde{d}+n/2)}|v|_{W^{\widetilde{d},\infty}(\mathrm{supp}(\psi_{j,k}))}, \quad |v|_{W^{\widetilde{d},\infty}(\Omega)}:=\sup_{|\alpha|=\widetilde{d},x\in \Omega}|\partial^{\alpha}v(x)|.
\end{align*} 
\item \textbf{Norm equivalence.}  For any $v \in H^t(\mcM)$, the following equivalences hold:
\begin{align*}
\|v\|_{H^t(\mcM)}^2 & \asymp \sum_{j \geq j_0} \sum_{k \in \nabla_j} 2^{2 j t}|\langle v, \widetilde{\psi}_{j, k}\rangle|^2, \quad t \in(-\widetilde{\gamma}, \gamma), \\
\|v\|_{H^t(\mcM)}^2 & \asymp \sum_{j \geq j_0} \sum_{k \in \nabla_j} 2^{2 j t}\left|\left\langle v, \psi_{j, k}\right\rangle\right|^2, \quad t \in(-\gamma, \widetilde{\gamma}).
\end{align*}   
\end{enumerate}
\end{lemma}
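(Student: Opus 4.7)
The plan is to treat the four items as a repackaging of classical facts from the biorthogonal wavelet literature (Dahmen, Cohen--Daubechies--Feauveau, Stevenson), and to verify each by appealing to the assumed MRA structure $(V_j,\widetilde V_j)$ with regularity/approximation parameters $(\gamma,\widetilde\gamma,d,\widetilde d)$, together with the biorthogonality relation $\langle\psi_{j,k},\widetilde\psi_{j',k'}\rangle=\delta_{(j,k),(j',k')}$. Throughout, I would use that $W_j\perp \widetilde V_j$ and $\widetilde W_j\perp V_j$, which expresses the cancellation (vanishing-moment) property at the abstract level, and that the canonical projector $Q_j$ is bounded on the relevant Sobolev spaces in the range dictated by $\gamma,\widetilde\gamma$.

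For (i), cardinality $|\nabla_j|=|\Delta_{j+1}|-|\Delta_j|\asymp 2^{jn}$ is immediate from $\dim V_j\asymp 2^{jn}$ and the nesting of index sets stated in the MRA construction. The diameter bound $\mathrm{diam}(\mathrm{supp}\,\psi_{j,k})\asymp 2^{-j}$ is part of the assumed construction (e.g.\ the B-spline/dual constructions cited in Appendix~\ref{appendix:wavelets}). Finite overlap at fixed level $j$ follows from combining the uniform diameter bound with a quasi-uniform control of the spatial locations $k\in\nabla_j$: any point in $\mcM$ lies in only $\mathcal O(1)$ wavelet supports, and hence each $\psi_{j,k}$ has at most $M$ neighbors with overlapping support. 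For (iii), the cancellation property $\int v\,\psi_{j,k}=0$ for $v$ locally polynomial of degree $<\widetilde d$ follows from $\psi_{j,k}\in W_j\perp\widetilde V_j$ and the fact that $\widetilde V_j$ locally reproduces polynomials of degree $<\widetilde d$ on a neighborhood of $\mathrm{supp}(\psi_{j,k})$. Expanding $v$ by Taylor around a point $x_{j,k}\in\mathrm{supp}(\psi_{j,k})$ up to order $\widetilde d-1$, subtracting off the polynomial part (which integrates to zero against $\psi_{j,k}$), and using $\|\psi_{j,k}\|_{L^1}\lesssim 2^{-jn/2}$ (from $\|\psi_{j,k}\|_{L^2}=1$ and $|\mathrm{supp}\,\psi_{j,k}|\lesssim 2^{-jn}$) together with $|\mathrm{supp}\,\psi_{j,k}|^{\widetilde d/n}\lesssim 2^{-j\widetilde d}$ yields the stated bound.

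For (ii), the Jackson inequality comes from the approximation order of $V_j$: by definition of $d$ and uniform $L^2$-boundedness of $Q_j$, one has $\|v-Q_jv\|_{L^2}\lesssim 2^{-jt}\|v\|_{H^t}$ for $0\le t\le d$, and interpolation together with duality (using $\widetilde V_j$ to handle $s<0$) extends this to $-\widetilde d\le s\le t\le d$; the constraints $s<\gamma$ and $t>-\widetilde\gamma$ ensure the relevant Sobolev norms are compatible with $V_j$ and $\widetilde V_j$. The Bernstein inequality is an inverse estimate on $V_j$: writing $Q_jv=\sum_{j'<j}\sum_{k\in\nabla_{j'}}\langle v,\widetilde\psi_{j',k}\rangle\psi_{j',k}$, one can bound each scale's contribution in $H^s$ by $2^{j's}$ times its $L^2$-norm (since $V_{j'}\subset H^\gamma$ and $s\le\gamma$), and telescoping gives the $2^{j(s-t)}$ factor; this is the standard direct estimate on finite-element/wavelet spaces.

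The key step is (iv), and it is also where the main bookkeeping obstacle sits. I would follow the now-classical Jackson--Bernstein route: writing the Littlewood--Paley-type decomposition $v=\sum_{j\ge j_0}(Q_{j+1}-Q_j)v=\sum_{j,k}\langle v,\widetilde\psi_{j,k}\rangle\psi_{j,k}$, the two bounds of (ii) together with the orthogonality-like property $\|(Q_{j+1}-Q_j)v\|_{L^2}^2\asymp\sum_{k\in\nabla_j}|\langle v,\widetilde\psi_{j,k}\rangle|^2$ (which follows from local finite overlap in (i) and the Riesz basis property of $\Psi_j$ in $W_j$) yield
\begin{equation*}
\|v\|_{H^t}^2\asymp\sum_{j\ge j_0}2^{2jt}\|(Q_{j+1}-Q_j)v\|_{L^2}^2\asymp\sum_{j\ge j_0}\sum_{k\in\nabla_j}2^{2jt}|\langle v,\widetilde\psi_{j,k}\rangle|^2
\end{equation*}
for $t\in(-\widetilde\gamma,\gamma)$. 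The upper bound uses Bernstein ($s\le\gamma$) on each $W_j$ piece and a standard geometric-series trick; the lower bound uses Jackson ($t>-\widetilde\gamma$) and a discrete Hardy inequality. Swapping the roles of $(\Psi,\widetilde\Psi)$ and of $(\gamma,\widetilde\gamma)$, $(d,\widetilde d)$ gives the dual version for $t\in(-\gamma,\widetilde\gamma)$. The hardest part is ensuring that all telescoping/interpolation arguments remain uniform in $j$ and respect the admissible ranges of $t$; this is where the careful hypotheses $s<\gamma$, $-\widetilde\gamma<t$, $-\widetilde d\le s\le t\le d$ in (ii) are used, and once (ii) is established the square-function characterization in (iv) follows along the standard lines of \cite{dahmen1997wavelet,cohen2001adaptive}.
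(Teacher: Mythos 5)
The paper does not prove this lemma: it is stated as a collection of standard facts about biorthogonal MRAs and is justified by citation to \cite{dahmen1997wavelet,dahmen1999biorthogonal,cohen2001adaptive,harbrecht2024multilevel}. Your sketch correctly reproduces the classical arguments from exactly those references --- finite overlap and cardinality from the single-scale construction, vanishing moments from $W_j\perp\widetilde V_j$ plus local polynomial reproduction in $\widetilde V_j$, and the norm equivalences via the Jackson--Bernstein route --- so it is consistent with the paper's (implicit) approach. The only point to tighten is the Bernstein inequality in (ii): deriving it by telescoping over the $W_{j'}$ blocks risks circularity with (iv), and the standard route is to prove the inverse estimate directly on $V_j$ from the locality and $H^\gamma$-smoothness of the single-scale basis $\Phi_j$, then feed Jackson and Bernstein into the square-function characterization.
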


\subsection{Properties of \texorpdfstring{$\bfA,\bfU,\bfW$}{A, U, W}}\label{app:proof_property_UAW}

\begin{proof}[Proof of Proposition \ref{prop:property_UAW}]
 \begin{enumerate}[label=(\Roman*)]
    \item \textbf{Approximate sparsity:} The estimate in (i) is well known in the literature \cite{dahmen1997wavelet,dahmen2006compression, schneider2013multiskalen,harbrecht2024multilevel}: it shows that the matrix entries decay rapidly when the supports of the testing wavelets are well separated. For the reader’s convenience, we include a brief and formal derivation.  

Let $k_{\mcA}(x,y)=(2\pi)^{-n}\int_{\R^n}e^{i(x-y)\cdot \xi} a(x,\xi)d\xi$ be the (distributional) Schwartz kernel of $\mcA\in \OPS^{r}$. When $\mathrm{dist}(S_{j,k},S_{j',k'})\gtrsim 2^{-\min\{j,j'\}}$, the product $\psi_{j,k}(x)\psi_{j',k'}(y)$ is supported away from the diagonal $x=y$, hence $k_{\mcA}$ is smooth on that set and Fubini's theorem applies. Therefore,
\[
\langle \mcA \psi_{j',k'},\psi_{j,k} \rangle=\iint k_{\mcA}(x,y)\psi_{j,k}(x)\psi_{j',k'}(y)dxdy= \int\left(\int k_{\mcA}(x,y)\psi_{j',k'}(y)dy\right)\psi_{j,k}(x)dx.
\]
By Lemma \ref{lemma:wavelet_property} (iii) (vanishing moments up to order $\widetilde{d}$),
\begin{align*}
    |\langle \mcA \psi_{j',k'},\psi_{j,k} \rangle|
   \lesssim 2^{-j(\widetilde{d}+n/2)} \sup_{|\alpha|=\widetilde{d},x\in \supp(\psi_{j,k})} \left|\int \partial_{x}^{\alpha}k_{\mcA}(x,y)\psi_{j',k'}(y)dy\right|.
\end{align*}   
Applying the same argument in the $y$–variable yields
\begin{align*}
   |\langle \mcA \psi_{j',k'},\psi_{j,k} \rangle|  \lesssim 2^{-(j+j')(\widetilde{d}+n/2)}\sup_{x\in \supp(\psi_{j,k}), y\in \supp(\psi_{j',k'}),|\alpha|=|\beta|=\widetilde{d}} |\partial_x^{\alpha}\partial_y^{\beta}k_{\mcA}(x,y)|.
\end{align*}
For $\mcA\in \OPS^{r}$, the kernel satisfies the standard off–diagonal bounds (see \cite{harbrecht2024multilevel,dahmen1997wavelet,dahmen2006compression,schneider2013multiskalen})
\[
|\partial_{x}^{\alpha}\partial_y^{\beta}k_{\mcA}(x,y)|\lesssim_{\alpha,\beta}\mathrm{dist}(x,y)^{-(n+r+|\alpha|+|\beta|)},\quad x\ne y,
\]
hence
\begin{align*}
   |\langle \mcA \psi_{j',k'},\psi_{j,k} \rangle| &\lesssim  2^{-(j+j')(\widetilde{d}+n/2)} \sup_{x\in \supp(\psi_{j,k}), y\in \supp(\psi_{j',k'}),|\alpha|=|\beta|=\widetilde{d}}\mathrm{dist}(x,y)^{-(n+r+|\alpha|+|\beta|)}\\
    &= 2^{-(j+j')(\widetilde{d}+n/2)}\mathrm{dist}(S_{j,k},S_{j',k'})^{-(n+r+2\widetilde{d})}.
\end{align*}
We refer the reader to \cite[Proposition 2]{harbrecht2024multilevel} and \cite{dahmen1997wavelet,schneider2013multiskalen} for further background.

\medskip

The estimate in (ii) shows that matrix entries corresponding to wavelets with overlapping supports decay as the  difference in scales increases; see \cite[Section 9.4]{dahmen1997wavelet}, \cite[(2.28)]{cohen2001adaptive}, and \cite[Remark, p.2261]{dahmen2006compression}. We provide a brief derivation on this estimate following \cite[Section 9.4]{dahmen1997wavelet}. 

Without loss of generality, assume $j\ge j'$. Under Assumption \ref{assumption:operator_data_noise} (i), the operator $\mcA\in \OPS^{r}(\mcM)$ acts continuously and bijectively from $H^{s}(\mcM)$ to $H^{s-r}(\mcM)$ for any $s \in \R$; see \cite[Proposition 11]{harbrecht2024multilevel}. 
Using Cauchy-Schwarz inequality together with the bound $\|\mcA \psi_{j',k'}\|_{H^{-r/2+\sigma}}\lesssim \|\psi_{j',k'}\|_{H^{r/2+\sigma}}$, we obtain
\begin{align}\label{eq:second_estimate_aux2}
|\langle \mcA \psi_{j',k'},\psi_{j,k} \rangle|\le \|\mcA \psi_{j',k'}\|_{H^{-r/2+\sigma}} \|\psi_{j,k}\|_{H^{r/2-\sigma}}\lesssim \|\psi_{j',k'}\|_{H^{r/2+\sigma}} \|\psi_{j,k}\|_{H^{r/2-\sigma}}.
\end{align}
If $r/2+\sigma<\gamma$ and $r/2-\sigma>-\widetilde{\gamma}$, the norm equivalence in Lemma \ref{lemma:wavelet_property} (iv) can be applied to each factor on the right-hand side of \eqref{eq:second_estimate_aux2}. Using biorthogonality then yields
\begin{align*}
|\langle \mcA \psi_{j',k'},\psi_{j,k} \rangle| &\lesssim_{\sigma,r} \|\psi_{j',k'}\|_{H^{r/2+\sigma}} \|\psi_{j,k}\|_{H^{r/2-\sigma}}\\ 
&\asymp_{\sigma,r} 2^{j'(r/2+\sigma)}\cdot 2^{j(r/2-\sigma)}=2^{(j+j')r/2-\sigma(j-j')}.
\end{align*}
Hence the estimate follows. The parameter $\sigma$ must satisfy $0<\sigma<\min\{\gamma-r/2,\widetilde{\gamma}+r/2\}$.

\item \textbf{Diagonal preconditioning:} 

The argument follows the ideas in \cite[Propositions 3 and 13]{harbrecht2024multilevel}, but we provide a self-contained proof for the reader’s convenience.

We first consider $\mathcal{C}_u$ and $\widetilde{\bfC}_u$. Under Assumption \ref{assumption:operator_data_noise} (ii), $\mathcal{C}_u$ is self-adjoint and compact on $L^2(\mcM)$; see \cite[Proposition 1]{harbrecht2024multilevel}. Consequently, the bi-infinite matrix $\widetilde{\bfC}_u$ defines a symmetric and compact operator on $\ell^2(\mcJ)$. Moreover, \cite[Proposition 11]{harbrecht2024multilevel}, $\mcC_u:H^{-r_1}(\mcM)\to H^{r_1}(\mcM)$ is continuous, and hence
\[
\langle \mcC_u v,v \rangle\le \|\mcC_w v\|_{H^{r_1}} \|v\|_{H^{-r_1}}  \lesssim \|v\|_{H^{-r_1}}^2, \quad \forall \ v\in H^{-r_1}.
\]
Combined with Assumption \ref{assumption:operator_data_noise}(iii), this yields the norm equivalence
\[
\langle \mcC_{u} v,v\rangle\asymp \|v\|^2_{H^{-r_1}},\quad \forall \ v\in H^{-r_1}.
\]

Let $v=\widetilde{\bfv}^{\top}\widetilde{\Psi} =\sum_{\lambda\in\mcJ}\widetilde{v}_{\lambda}\widetilde{\psi}_{\lambda}$. Since $\widetilde{\bfC}_u$ is the matrix representation of $\mcC_u$ in the biorthogonal system $(\Psi,\widetilde{\Psi})$,
\[
\langle \mcC_{u} v,v\rangle= \langle \widetilde{\bfC}_u \widetilde{\bfv},\widetilde{\bfv} \rangle.
\]
By Lemma \ref{lemma:wavelet_property} (iv), if $-r_1 \in(-\gamma, \widetilde{\gamma})$, then
\[
\|v\|^2_{H^{-r_1}}\asymp \sum_{j \geq j_0} \sum_{k \in \nabla_j} 2^{-2 j r_1}\left|\left\langle v, \psi_{j, k}\right\rangle\right|^2=\|\bfD^{-r_1}\widetilde{\bfv}\|^2.
\]
Therefore, for all $\widetilde{\bfv}$ with $\bfD^{-r_1}\widetilde{\bfv}\in \ell^2(\mcJ)$,
\[
\langle \widetilde{\bfC}_u \widetilde{\bfv},\widetilde{\bfv} \rangle \asymp \|\bfD^{-r_1}\widetilde{\bfv}\|^2.
\]
This implies uniform spectral bounds for the diagonally preconditioned matrix:
\[
c_{-}\le \sigma_{\min}(\bfD^{r_1} \widetilde{\bfC}_u \bfD^{r_1})\le \sigma_{\max}(\bfD^{r_1} \widetilde{\bfC}_u \bfD^{r_1})\le c_{+}.
\]
The same argument can be applied to $\bfA$ in part (i) and $\bfC_w$ in part (iii), provided that $r/2, -r_2 \in (-\widetilde{\gamma}, \gamma)$.

\end{enumerate}
\end{proof}

\section{Auxiliary Materials Section \ref{sec:define_estimator}}
\subsection{Domain Partition}

Mathematically, the six disjoint regions $D_1$-$D_6$ in Figure \ref{fig:figure1} are defined as:
\begin{align*}
D_1&:=\bigg\{(j,j'):J \ge j> j'\ge 0, \ j\ge \frac{t+t'-r}{\sigma-\frac{n}{2}+t'-\frac{r}{2}}J+\frac{\sigma-\frac{n}{2}-(t-\frac{r}{2})}{\sigma-\frac{n}{2}+t'-\frac{r}{2}} j'\bigg\},
\\
D_2 &:=\bigg\{(j,j') : J\ge j'> j\ge 0, \ j'\ge \frac{t+t'-r}{\sigma-\frac{n}{2}+t-\frac{r}{2}}J+\frac{\sigma-\frac{n}{2}-(t'-\frac{r}{2})}{\sigma-\frac{n}{2}+t-\frac{r}{2}} j\bigg\},\\
D_3&:=\bigg\{(j,j'):J \ge j> j'\ge 0, \  j\le \frac{t+t'-r}{\sigma-\frac{n}{2}+t'-\frac{r}{2}}J+\frac{\sigma-\frac{n}{2}-(t-\frac{r}{2})}{\sigma-\frac{n}{2}+t'-\frac{r}{2}} j',\\
&\qquad \qquad\quad j\ge \frac{t+t'-r}{\widetilde{d}+t'}J+\frac{\widetilde{d}+r-t}{\widetilde{d}+t'}j'
\bigg\},
\\
D_4&:=\bigg\{(j,j'):J \ge j'> j\ge 0, \ j'\le \frac{t+t'-r}{\sigma-\frac{n}{2}+t-\frac{r}{2}}J+\frac{\sigma-\frac{n}{2}-(t'-\frac{r}{2})}{\sigma-\frac{n}{2}+t-\frac{r}{2}} j,\\
&\qquad \qquad\quad j'\ge  \frac{t+t'-r}{\widetilde{d}+t}J+\frac{\widetilde{d}+r-t'}{\widetilde{d}+t}j \bigg\}, \\
D_5 &:=\bigg\{(j,j'):J(t+t'-r)-jt'-j't-(j+j')\widetilde{d}<0, \ j\le \frac{t+t'-r}{\widetilde{d}+t'}J+\frac{\widetilde{d}+r-t}{\widetilde{d}+t'}j',\\
&\qquad \qquad\quad j'\le  \frac{t+t'-r}{\widetilde{d}+t}J+\frac{\widetilde{d}+r-t'}{\widetilde{d}+t}j\bigg\},\\
D_6 &:=\left\{(j,j'):J(t+t'-r)-jt'-j't-(j+j')\widetilde{d}\ge 0\right\}.
\end{align*}

\subsection{Support Monotonicity and Inclusion}\label{ssec:Appendixsupportmonotonicity}

\begin{lemma}[Support monotonicity]\label{lemma:supp_monotonicity}
 Let $(J,t,t')$ and $(\widetilde{J},\widetilde{t},\widetilde{t}')$ be two sets of parameters, and let $\supp(J,t,t')$ and $\supp(\widetilde{J},\widetilde{t},\widetilde{t}')$ be defined as in Definition \ref{def:supp_Jtt'}. Assume $\widetilde{J}\ge J,\widetilde{t}\ge t, \widetilde{t}'\ge t'$, and $\sigma-n/2>\max\{\widetilde{t},\widetilde{t}'\}-r/2, \ \widetilde{d}>\sigma-n/2-r/2$. Then,
\[
\supp(J,t,t')\subset \supp(\widetilde{J},\widetilde{t},\widetilde{t}').
\]
\end{lemma}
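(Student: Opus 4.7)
The plan is to verify that membership in $\supp(J,t,t')$ entails membership in $\supp(\widetilde{J},\widetilde{t},\widetilde{t}')$ condition by condition. Any element $(\lambda,\lambda') = ((j,k),(j',k')) \in \supp(J,t,t')$ must satisfy three requirements: (a) $(\lambda,\lambda') \in \Lambda_J \times \Lambda_J$; (b) the distance–threshold inequality $\mathrm{dist}(S_{j,k},S_{j',k'}) \le \tau_{jj'}$; and (c) the two slope inequalities in the $(j,j')$–plane. Requirement (a) is immediate from $\widetilde{J} \ge J$, which yields $\Lambda_J \subset \Lambda_{\widetilde{J}}$. It remains to check that (b) and (c) are preserved when the parameters are enlarged.

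For the slope conditions, I plan to clear denominators (which is legitimate because $\sigma - n/2 > \max\{\widetilde{t},\widetilde{t}'\} - r/2 \ge \widetilde{t}' - r/2 > 0$ forces both $\sigma-n/2+t'-r/2$ and $\sigma-n/2+\widetilde{t}'-r/2$ to be strictly positive, and similarly for the symmetric denominators) and then compute the gap between the enlarged and original forms. After multiplying the first slope condition through, the target reduces to showing
\[
\widetilde{J}(\widetilde{t}+\widetilde{t}'-r) + j'(\sigma-n/2 - \widetilde{t}+r/2) - j(\sigma-n/2+\widetilde{t}'-r/2) \ \ge \ J(t+t'-r) + j'(\sigma-n/2-t+r/2) - j(\sigma-n/2+t'-r/2).
\]
The $\sigma,n,r$ contributions cancel identically, and the resulting difference telescopes to
\[
(\widetilde{J}-j')(\widetilde{t}-t) + (\widetilde{J}-j)(\widetilde{t}'-t') + (\widetilde{J}-J)(t+t'-r),
\]
which is non-negative because $j,j' \le J \le \widetilde{J}$, $\widetilde{t}\ge t$, $\widetilde{t}'\ge t'$, and $t+t'>r$ (recall $t,t'>r/2$ in our setting). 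The second slope inequality is handled by the symmetric computation after swapping $(j,t) \leftrightarrow (j',t')$.

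For the threshold (b), I plan to show $\tau^{(J,t,t')}_{jj'} \le \tau^{(\widetilde{J},\widetilde{t},\widetilde{t}')}_{jj'}$ by arguing monotonicity inside each of the two arguments of the $\max$ defining $\tau_{jj'}$. The first argument $2^{-\min\{j,j'\}}$ is independent of $(J,t,t')$. For the second argument, the exponent $J(t+t'-r) - jt' - j't - (j+j')\widetilde d$ has difference
\[
(\widetilde{J}-j)(\widetilde{t}'-t') + (\widetilde{J}-j')(\widetilde{t}-t) + (\widetilde{J}-J)(t+t'-r) \ \ge \ 0
\]
under the enlargement, giving the desired monotonicity of $\tau_{jj'}$ since $2\widetilde d + r > 0$.

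There is no real obstacle here beyond careful bookkeeping of signs; the only substantive point is recording at the outset why the denominators in the slope conditions are strictly positive under the standing hypotheses, so that clearing denominators preserves inequalities. The assumption $\widetilde d > \sigma - n/2 - r/2$ is not needed for monotonicity itself but is used upstream to ensure Definition~\ref{def:supp_Jtt'} produces a meaningful admissible region.
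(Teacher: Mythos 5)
Your proof is correct and follows the same componentwise verification as the paper (index-set inclusion, threshold monotonicity, and the two slope conditions). The only cosmetic difference is in the slope step: you clear denominators and telescope the cleared difference to $(\widetilde{J}-j')(\widetilde{t}-t)+(\widetilde{J}-j)(\widetilde{t}'-t')+(\widetilde{J}-J)(t+t'-r)\ge 0$, while the paper first applies $\widetilde{J}\ge J$ and then factors the remaining difference as a fraction gap times $(J-j')$; both are valid, and your form has the minor bonus that it is literally the same expression used to control the numerator in the threshold comparison.
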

\begin{proof}
%[Proof of Lemma \ref{lemma:supp_monotonicity}]
By definition,
\begin{align*}
\supp(J,t,t')&=\bigg\{(\lambda,\lambda')\in \Lambda_J\times \Lambda_J: \mathrm{dist}(S_{\lambda},S_{\lambda'})\le \tau_{jj'}, j \le \frac{t+t'-r}{\sigma-\frac{n}{2}+t'-\frac{r}{2}}J+\frac{\sigma-\frac{n}{2}-(t-\frac{r}{2})}{\sigma-\frac{n}{2}+t'-\frac{r}{2}} j',\nonumber\\
&\qquad j'\le \frac{t+t'-r}{\sigma-\frac{n}{2}+t-\frac{r}{2}}J+\frac{\sigma-\frac{n}{2}-(t'-\frac{r}{2})}{\sigma-\frac{n}{2}+t-\frac{r}{2}} j\bigg\},
\end{align*}
where
  \[ \tau_{jj'}=a\max \left\{2^{-\min\{j,j'\}}, 2^{(J(t+t'-r)-jt'-j't-(j+j
')\widetilde{d})/(2\widetilde{d}+r)}\right\}.
\]
Likewise,
\begin{align*}
\supp(\widetilde{J},\widetilde{t},\widetilde{t}')&:=\bigg\{(\lambda,\lambda')\in \Lambda_{\widetilde{J}}\times \Lambda_{\widetilde{J}}: \mathrm{dist}(S_{\lambda},S_{\lambda'})\le \widetilde{\tau}_{jj'}, j \le \frac{\widetilde{t}+\widetilde{t}'-r}{\sigma-\frac{n}{2}+\widetilde{t}'-\frac{r}{2}}\widetilde{J}+\frac{\sigma-\frac{n}{2}-(\widetilde{t}-\frac{r}{2})}{\sigma-\frac{n}{2}+\widetilde{t}'-\frac{r}{2}} j',\nonumber\\
&\qquad j'\le \frac{\widetilde{t}+\widetilde{t}'-r}{\sigma-\frac{n}{2}+\widetilde{t}-\frac{r}{2}}\widetilde{J}+\frac{\sigma-\frac{n}{2}-(\widetilde{t}'-\frac{r}{2})}{\sigma-\frac{n}{2}+\widetilde{t}-\frac{r}{2}} j\bigg\},
\end{align*}
with
\[
 \widetilde{\tau}_{jj'}=a\max \left\{2^{-\min\{j,j'\}}, 2^{(\widetilde{J}(\widetilde{t}+\widetilde{t}'-r)-j\widetilde{t}'-j'\widetilde{t}-(j+j
')\widetilde{d})/(2\widetilde{d}+r)}\right\}.
\]

Fix $(\lambda,\lambda')\in \supp(J,t,t')$; we show that $(\lambda,\lambda')\in \supp(\widetilde{J},\widetilde{t},\widetilde{t}')$. Since $J\le \widetilde{J}$, we immediately have
\[
(\lambda,\lambda')\in \Lambda_J\times \Lambda_J \quad \Longrightarrow \quad (\lambda,\lambda')\in \Lambda_{\widetilde{J}}\times \Lambda_{\widetilde{J}}.
\]
Next, note that 
\[
J(t+t'-r)-jt'-j't-(j+j')\widetilde{d}\le \widetilde{J}(\widetilde{t}+\widetilde{t}'-r)-j\widetilde{t}'-j'\widetilde{t}-(j+j')\widetilde{d},
\]
and hence $\tau_{jj'}\le \widetilde{\tau}_{jj'}$. Consequently,
\[
\mathrm{dist}(S_{\lambda},S_{\lambda'})\le \tau_{jj'}\quad \Longrightarrow \quad \mathrm{dist}(S_{\lambda},S_{\lambda'})\le \widetilde{\tau}_{jj'}.
\]
Moreover,
\[
j \le \frac{t+t'-r}{\sigma-\frac{n}{2}+t'-\frac{r}{2}}J+\frac{\sigma-\frac{n}{2}-(t-\frac{r}{2})}{\sigma-\frac{n}{2}+t'-\frac{r}{2}} j' \quad \Longrightarrow \quad j \le \frac{\widetilde{t}+\widetilde{t}'-r}{\sigma-\frac{n}{2}+\widetilde{t}'-\frac{r}{2}}\widetilde{J}+\frac{\sigma-\frac{n}{2}-(\widetilde{t}-\frac{r}{2})}{\sigma-\frac{n}{2}+\widetilde{t}'-\frac{r}{2}} j',
\]
which follows from
\begin{align*}
&\bigg(\frac{\widetilde{t}+\widetilde{t}'-r}{\sigma-\frac{n}{2}+\widetilde{t}'-\frac{r}{2}}\widetilde{J}+\frac{\sigma-\frac{n}{2}-(\widetilde{t}-\frac{r}{2})}{\sigma-\frac{n}{2}+\widetilde{t}'-\frac{r}{2}} j'\bigg)-\bigg(\frac{t+t'-r}{\sigma-\frac{n}{2}+t'-\frac{r}{2}}J+\frac{\sigma-\frac{n}{2}-(t-\frac{r}{2})}{\sigma-\frac{n}{2}+t'-\frac{r}{2}} j'\bigg)\\
&\ge \bigg(\frac{\widetilde{t}+\widetilde{t}'-r}{\sigma-\frac{n}{2}+\widetilde{t}'-\frac{r}{2}}J+\frac{\sigma-\frac{n}{2}-(\widetilde{t}-\frac{r}{2})}{\sigma-\frac{n}{2}+\widetilde{t}'-\frac{r}{2}} j'\bigg)-\bigg(\frac{t+t'-r}{\sigma-\frac{n}{2}+t'-\frac{r}{2}}J+\frac{\sigma-\frac{n}{2}-(t-\frac{r}{2})}{\sigma-\frac{n}{2}+t'-\frac{r}{2}} j'\bigg)\\
&= \bigg(\frac{\widetilde{t}+\widetilde{t}'-r}{\sigma-\frac{n}{2}+\widetilde{t}'-\frac{r}{2}}-\frac{t+t'-r}{\sigma-\frac{n}{2}+t'-\frac{r}{2}}\bigg)(J-j')\ge 0,
\end{align*}
since $t\le\widetilde{t}$, $t'\le\widetilde{t}', J\le \widetilde{J}$, and $\sigma-n/2>\max\{\widetilde{t},\widetilde{t}'\}-r/2$. A completely analogous argument yields
\[
j'\le \frac{t+t'-r}{\sigma-\frac{n}{2}+t-\frac{r}{2}}J+\frac{\sigma-\frac{n}{2}-(t'-\frac{r}{2})}{\sigma-\frac{n}{2}+t-\frac{r}{2}} j\quad \Longrightarrow \quad  j'\le \frac{\widetilde{t}+\widetilde{t}'-r}{\sigma-\frac{n}{2}+\widetilde{t}-\frac{r}{2}}\widetilde{J}+\frac{\sigma-\frac{n}{2}-(\widetilde{t}'-\frac{r}{2})}{\sigma-\frac{n}{2}+\widetilde{t}-\frac{r}{2}} j.
\]
Therefore, $\supp(J,t,t')\subset \supp(\widetilde{J},\widetilde{t},\widetilde{t}')$.
\end{proof}

\medskip

\begin{proof}[Proof of Lemma \ref{lemma:upper_contain_lower}]
  Let $(\lambda,\lambda')=((j,k),(j',k'))$ with $j>j'$ and suppose
$(\bfM_{(J,t,t')})_{\lambda,\lambda'}=1$.  
Then, by definition of the mask $\bfM_{(J,t,t')}$, the following conditions hold: 
    \begin{itemize}
    \item \emph{Distance condition:} \ $\mathrm{dist}(S_{j,k},S_{j',k'})\le \tau_{jj'}$.
     \item \emph{Scale constraint for $j'$:} \  $j'\le \frac{t+t'-r}{\sigma-\frac{n}{2}+t-\frac{r}{2}}J+\frac{\sigma-\frac{n}{2}-(t'-\frac{r}{2})}{\sigma-\frac{n}{2}+t-\frac{r}{2}} j$.
      \item \emph{Scale constraint for $j$:} \  $j \le \frac{t+t'-r}{\sigma-\frac{n}{2}+t'-\frac{r}{2}}J+\frac{\sigma-\frac{n}{2}-(t-\frac{r}{2})}{\sigma-\frac{n}{2}+t'-\frac{r}{2}} j'$.
      \end{itemize}
    To show $(\bfM_{(J,t,t')})_{\lambda',\lambda}=1$, we must verify that 
$(\lambda',\lambda)=((j',k'),(j,k))$ also satisfies the conditions defining 
$\supp(J,t,t')$.

\begin{itemize}
    \item \emph{Distance condition.}
    Since $\mathrm{dist}(S_{j,k},S_{j',k'})=\mathrm{dist}(S_{j',k'},S_{j,k})$, we have
    \[
        \mathrm{dist}(S_{j',k'},S_{j,k})
        \le \tau_{jj'}
        \overset{(\star)}{\le} \tau_{j'j},
    \]
    where $(\star)$ follows from
    \[
        \tau_{jj'} \le \tau_{j'j}
        \quad\Longleftrightarrow\quad
        (-jt'-j't) \le (-j't'-jt)
        \quad\Longleftrightarrow\quad
        (j'-j)(t'-t) \le 0.
    \]
    Since $j>j'$ and $t\le t'$, the last inequality holds.  Thus the distance 
    condition for $(\lambda',\lambda)$ is satisfied.

    \item \emph{Scale constraint for $j'$.}
    Since $j'<j$, it follows that
    \[
        j' < j 
        \le 
        \frac{t+t'-r}{\sigma-\frac{n}{2}+t'-\frac{r}{2}}\,J
        +\frac{\sigma-\frac{n}{2}-(t-\frac{r}{2})}{\sigma-\frac{n}{2}+t'-\frac{r}{2}}\, j,
    \]
    so the required upper bound for $j'$ holds.

    \item \emph{Scale constraint for $j$.}
    From the assumed inequality
    \[
    j \le 
    \frac{t+t'-r}{\sigma-\frac{n}{2}+t'-\frac{r}{2}}\,J
    + \frac{\sigma-\frac{n}{2}-(t-\frac{r}{2})}{\sigma-\frac{n}{2}+t'-\frac{r}{2}}\, j'
    \]
    and using the fact that $t\le t'$, we obtain
    \[
    j \le 
    \frac{t+t'-r}{\sigma-\frac{n}{2}+t-\frac{r}{2}}\,J
    + \frac{\sigma-\frac{n}{2}-(t'-\frac{r}{2})}{\sigma-\frac{n}{2}+t-\frac{r}{2}}\, j'.
    \]
\end{itemize}

Since all defining conditions of $\supp(J,t,t')$ hold for $(\lambda',\lambda)$, 
we conclude that $(\bfM_{(J,t,t')})_{\lambda',\lambda}=1$, completing the proof.
\end{proof}

\section{Auxiliary Materials Section \ref{sec:convergencerate}}\label{app:convergencerate}

\subsection{Sparsity Estimates}\label{app:sparsity_estimates}
 
In this subsection, we first present several basic decay estimates in Lemma~\ref{lemma:basic_estimates}, derived from Proposition~\ref{prop:property_UAW} \ref{approximate_sparsity}. These bounds will be used repeatedly in the analysis of our estimator. Although similar estimates appear in various places in the literature ---see, for instance, \cite{dahmen1997wavelet, dahmen2006compression, harbrecht2024multilevel}--- we provide here a concise and self-contained exposition for convenience. Proposition~\ref{prop:sparsity_count} below characterizes the sparsity pattern $\supp(J,t,t')$ introduced in Definition~\ref{def:supp_Jtt'} and counts the corresponding number of nonzero entries in the compressed matrix, which plays a central role in the variance analysis. In particular, this shows that the compression achieves optimal sparsity, since $\mathrm{nnz}\big(\bfM_{(J,t,t')}\big)\lesssim 2^{Jn}$, which matches the intrinsic number of degrees of freedom in $V_J$.

Recall that, under Definition \ref{def:supp_Jtt'}, the thresholding parameter is given by
\[
\tau_{jj'}\asymp \max \left\{2^{-\min\{j,j'\}}, 2^{(J(t+t'-r)-jt'-j't-(j+j
')\widetilde{d})/(2\widetilde{d}+r)}\right\}.
\]

\begin{lemma}[Basic estimates]\label{lemma:basic_estimates}

\begin{enumerate}[label=(\roman*)]
    \item For $j\ge j'\ge j_0$ and $\alpha>\frac{n}{n+r+2\widetilde{d}}$,
    \begin{align*}
    \sum_{k\in \nabla_j} |\langle \mcA\psi_{j,k},\psi_{j',k'}\rangle|^{\alpha} &\lesssim 2^{\alpha(j+j')r/2}\cdot 2^{-(j-j')\alpha\sigma} 2^{(j-j')n}, \\
     \sum_{k'\in \nabla_{j'}} |\langle \mcA\psi_{j,k},\psi_{j',k'}\rangle|^{\alpha}&\lesssim 2^{\alpha(j+j')r/2}\cdot 2^{-(j-j')\alpha\sigma}.
    \end{align*}
    \item For $j\ge j'\ge j_0$ and $\tau_{jj'}\gtrsim 2^{-j'
}$, 
    \[
    \big|\{k\in \nabla_j: \mathrm{dist}(S_{j,k},S_{j',k'})\le \tau_{jj'}\}\big|\lesssim 2^{jn}\tau_{jj'}^n,
    \]
    \[
     \big|\{k'\in \nabla_{j'}: \mathrm{dist}(S_{j,k},S_{j',k'})\le \tau_{jj'}\}\big|\lesssim 2^{j'n}\tau_{jj'}^n.
    \]
   \item For $j\ge j'\ge j_0$, $\tau_{jj'}\gtrsim 2^{-j'
}$ and $\alpha>\frac{n}{n+r+2\widetilde{d}}$,
    \[
     \sum_{k\in \nabla_j: \mathrm{dist}(S_{j,k},S_{j',k'})\ge \tau_{jj'}} |\langle \mcA\psi_{j,k},\psi_{j',k'}\rangle|^{\alpha}\lesssim 2^{-\alpha(j+j')(\widetilde{d}+n/2)}2^{jn}\tau_{jj'}^{-\alpha(n+r+2\widetilde{d})+n},
    \]
    \[
    \sum_{k'\in \nabla_{j'}: \mathrm{dist}(S_{j,k},S_{j',k'})\ge \tau_{jj'}} |\langle \mcA\psi_{j,k},\psi_{j',k'}\rangle|^{\alpha}\lesssim 2^{-\alpha(j+j')(\widetilde{d}+n/2)}2^{j'n}\tau_{jj'}^{-\alpha(n+r+2\widetilde{d})+n}.
    \]
\end{enumerate}
\end{lemma}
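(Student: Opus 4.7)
The plan is to establish all three parts by combining the two complementary entry-wise bounds in Proposition~\ref{prop:property_UAW}~\ref{approximate_sparsity} with the standard volume-counting estimate for wavelet indices from Lemma~\ref{lemma:wavelet_property}~(i). For fixed $(j',k')$ (respectively fixed $(j,k)$), the key counting fact is that the number of $k\in\nabla_j$ with $\mathrm{dist}(S_{j,k},S_{j',k'})\le R$ is bounded by $\mathcal{O}\bigl(2^{jn}(R+2^{-j'})^n\bigr)$, since the admissible region has Lebesgue volume of this order and the supports $S_{j,k}$ are quasi-uniformly distributed with density $\asymp 2^{jn}$.

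I would start with part (ii), since the cardinality bounds follow directly from this counting argument: under the standing assumption $\tau_{jj'}\gtrsim 2^{-j'}$, the threshold dominates the diameter $2^{-j'}$ of $S_{j',k'}$ (and also the diameter $2^{-j}$ of $S_{j,k}$ since $j\ge j'$), so the relevant volume is $\asymp \tau_{jj'}^n$ and the claimed counts $2^{jn}\tau_{jj'}^n$ and $2^{j'n}\tau_{jj'}^n$ follow immediately.

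For part (iii), I would peel the complement $\{k:\mathrm{dist}(S_{j,k},S_{j',k'})\ge \tau_{jj'}\}$ into dyadic annuli $A_\ell:=\{k\in\nabla_j:\,2^\ell\tau_{jj'}\le \mathrm{dist}(S_{j,k},S_{j',k'})<2^{\ell+1}\tau_{jj'}\}$ for $\ell\ge 0$. By part (ii), $|A_\ell|\lesssim 2^{jn}(2^\ell\tau_{jj'})^n$, and since $2^\ell\tau_{jj'}\gtrsim 2^{-j'}$ the well-separated estimate Proposition~\ref{prop:property_UAW}~\ref{approximate_sparsity}~(i) applies on each annulus, giving $|\bfA_{\lambda,\lambda'}|\lesssim 2^{-(j+j')(\widetilde d+n/2)}(2^\ell\tau_{jj'})^{-(n+r+2\widetilde d)}$. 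Summing the $\alpha$-th power over $\ell$ yields a geometric series with common ratio $2^{n-\alpha(n+r+2\widetilde d)}$, which converges precisely under the assumption $\alpha>n/(n+r+2\widetilde d)$; the $\ell=0$ term dominates and gives the stated bound. The column-sum version is identical after swapping the roles of the two scales.

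Finally, for part (i) I would split the full sum over $k\in\nabla_j$ (respectively $k'\in\nabla_{j'}$) into a near region $\{\mathrm{dist}\lesssim 2^{-j'}\}$ and a far region $\{\mathrm{dist}\gtrsim 2^{-j'}\}$. On the near region I apply the scale-separation bound Proposition~\ref{prop:property_UAW}~\ref{approximate_sparsity}~(ii), which gives $|\bfA_{\lambda,\lambda'}|^\alpha\lesssim 2^{\alpha(j+j')r/2}2^{-\alpha\sigma(j-j')}$ uniformly, multiplied by the corresponding cardinality ($\lesssim 2^{(j-j')n}$ for the row sum and $\mathcal{O}(1)$ for the column sum, again from the counting fact). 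On the far region I would invoke part~(iii) with $\tau_{jj'}\asymp 2^{-j'}$. The main obstacle, and really the only nontrivial bookkeeping in the argument, is to verify that the far-region contribution does not exceed the near-region target: a direct exponent comparison shows that the deficit in the $j$-exponent and the excess in the $j'$-exponent of the far bound, relative to the target, are both equal to $\alpha(\widetilde d+n/2+r/2-\sigma)$, which is strictly positive by Assumption~\ref{assumption:wavelets}~(iii); since $j\ge j'$, these two discrepancies cancel and the far contribution is absorbed into the stated bound.
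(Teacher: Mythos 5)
Your proof is correct and follows essentially the same approach as the paper: part (ii) is the volume-counting argument, and part (iii) is the standard estimate on the complement, where your dyadic-annulus decomposition with a convergent geometric series (ratio $2^{n-\alpha(n+r+2\widetilde d)}<1$ under $\alpha>n/(n+r+2\widetilde d)$) is just the discretized version of the integral comparison the paper uses. The one place you deviate is part (i): the paper factors out $2^{\alpha(j+j')r/2}2^{-(j-j')\alpha\sigma}$ globally via the combined estimate $|\bfA_{\lambda,\lambda'}|\lesssim 2^{(j+j')r/2}2^{-\sigma|j-j'|}(1+2^{j'}\mathrm{dist})^{-(n+r+2\widetilde d)}$ and then only needs to show that the remaining sum is $\lesssim 2^{(j-j')n}$ (resp.\ $\lesssim 1$), whereas you keep the near and far contributions separate and absorb the far region into the target by an explicit exponent comparison. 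That comparison is carried out correctly --- both the $j$-deficit and $j'$-excess equal $\alpha(\widetilde d+n/2+r/2-\sigma)>0$ by Assumption~\ref{assumption:wavelets}~(iii), and they cancel since $j\ge j'$ --- and it relies on precisely the same hypothesis that makes the paper's combined estimate valid, so the two routes are equivalent in content; yours is a bit longer but makes the role of $\widetilde d+n/2+r/2>\sigma$ more transparent.
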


\begin{proof}
\begin{enumerate}[label=(\roman*)]
\item By Proposition \ref{prop:property_UAW} \ref{approximate_sparsity}, for $j\ge j'\ge j_0$,
\[
|\langle \mcA \psi_{j,k},\psi_{j',k'} \rangle|\lesssim 2^{(j+j')r/2}\cdot 2^{-(j-j')\sigma}\left(1+2^{j'}\mathrm{dist}(S_{j,k},S_{j',k'})\right)^{-(n+r+2\widetilde{d})}.
\]
where $0<\sigma <\min\left\{\gamma-r/2,\widetilde{\gamma}+r/2\right\}$. Therefore,
  \begin{align*}
       &\sum_{k\in \nabla_j} |\langle \mcA\psi_{j,k},\psi_{j',k'}\rangle|^{\alpha}\\
       &\lesssim 2^{\alpha(j+j')r/2}\cdot 2^{-(j-j')\alpha\sigma}\sum_{k\in \nabla_j}\left(1+2^{j'}\mathrm{dist}(S_{j,k},S_{j',k'})\right)^{-\alpha(n+r+2\widetilde{d})}\\
       &=2^{\alpha(j+j')r/2}\cdot 2^{-(j-j')\alpha\sigma} \sum_{k\in \nabla_j:\mathrm{dist}(S_{j,k},S_{j',k'})\le C2^{-j'}}\left(1+2^{j'}\mathrm{dist}(S_{j,k},S_{j',k'})\right)^{-\alpha(n+r+2\widetilde{d})}\\
  &\quad +2^{\alpha(j+j')r/2}\cdot 2^{-(j-j')\alpha\sigma}\sum_{k\in \nabla_j:\mathrm{dist}(S_{j,k},S_{j',k'})\ge C2^{-j'}}\left(1+2^{j'}\mathrm{dist}(S_{j,k},S_{j',k'})\right)^{-\alpha(n+r+2\widetilde{d})}.
  \end{align*}
 Since $j\ge j'$, there are at most $O(2^{(j-j')n})$ terms in the first sum: 
 \[
\sum_{k\in \nabla_j:\mathrm{dist}(S_{j,k},S_{j',k'})\le C2^{-j'}}\left(1+2^{j'}\mathrm{dist}(S_{j,k},S_{j',k'})\right)^{-\alpha(n+r+2\widetilde{d})}\lesssim 2^{(j-j')n}.
 \]
Moreover, since $\alpha(n+r+2\widetilde{d})>n$, we can estimate the second sum by an integral:
 \begin{align*}
 &\sum_{k\in \nabla_j:\mathrm{dist}(S_{j,k},S_{j',k'})\ge C2^{-j'}}\left(1+2^{j'}\mathrm{dist}(S_{j,k},S_{j',k'})\right)^{-\alpha(n+r+2\widetilde{d})}\\
 &\qquad \lesssim 2^{-\alpha(n+r+2\widetilde{d}) j'}\cdot 2^{jn}\int_{\|x\|\ge C2^{-j'} }\|x\|^{-\alpha(n+r+2\widetilde{d})}dx\\
 &\qquad \lesssim 2^{-\alpha(n+r+2\widetilde{d}) j'}\cdot 2^{jn}\cdot (C2^{-j'})^{-\alpha(n+r+2\widetilde{d})+n}\\
 &\qquad \asymp 2^{(j-j')n}.
 \end{align*}
Combining the above estimates yields that
\[
 \sum_{k\in \nabla_j} |\langle \mcA\psi_{j,k},\psi_{j',k'}\rangle|^{\alpha}\lesssim 2^{\alpha(j+j')r/2}\cdot 2^{-(j-j')\alpha\sigma} 2^{(j-j')n}.
\]

\item  By Lemma \ref{lemma:wavelet_property} (i), there exist fixed constants $C$ and $M$ such that 
\[
\mathrm{diam}(\mathrm{supp}(\psi_{j,k})) \le C 2^{-j},
\]
and, for each $k\in \nabla_j$, there are at most $M$ indices $k' \in \nabla_j$ satisfying \[
\mathrm{meas}(\mathrm{supp}(\psi_{j,k}) \cap \mathrm{supp}(\psi_{j,k'})) \ne 0.
\] 
Moreover, since $\tau_{jj'}\gtrsim 2^{-\min\{j,j'\}}$, it follows that
  \[
  \big|\{k\in \nabla_j: \mathrm{dist}(S_{j,k},S_{j',k'})\le \tau_{jj'}\}\big| \lesssim \left(\frac{\tau_{jj'}}{2^{-j}}\right)^n  = 2^{jn}\tau_{jj'}^n,
    \]
    and similarly,
    \[
     \big|\{k'\in \nabla_{j'}: \mathrm{dist}(S_{j,k},S_{j',k'})\le \tau_{jj'}\}\big|\lesssim \left(\frac{\tau_{jj'}}{2^{-j'}}\right)^n = 2^{j'n}\tau_{jj'}^n.
    \]
    
\item 
By Proposition \ref{prop:property_UAW} \ref{approximate_sparsity} (i),
    \begin{align*}
     &\sum_{k\in \nabla_j: \mathrm{dist}(S_{j,k},S_{j',k'})\ge \tau_{jj'}} |\langle \mcA\psi_{j,k},\psi_{j',k'}\rangle|^{\alpha}\\
     &\lesssim \sum_{k\in \nabla_j: \mathrm{dist}(S_{j,k},S_{j',k'})\ge \tau_{jj'}} 2^{-\alpha(j+j')(\widetilde{d}+n/2)}\mathrm{dist}(S_{j,k},S_{j',k'})^{-\alpha(n+r+2\widetilde{d})}.
    \end{align*}
    For $j\ge j'\ge j_0$, $\tau_{jj'}\gtrsim 2^{-j'
}=2^{-\min\{j,j'\}}$, we can estimate the sum by an integral:
\begin{align*}
    &\lesssim 2^{-\alpha(j+j')(\widetilde{d}+n/2)}2^{jn}\tau_{jj'}^{-\alpha(n+r+2\widetilde{d})+n},
\end{align*}
where we used $\alpha(n+r+2\widetilde{d})>n$ in the last step. Similarly,
 \[
    \sum_{k'\in \nabla_{j'}: \mathrm{dist}(S_{j,k},S_{j',k'})\ge \tau_{jj'}} |\langle \mcA\psi_{j,k},\psi_{j',k'}\rangle|^{\alpha}\lesssim 2^{-\alpha(j+j')(\widetilde{d}+n/2)}2^{j'n}\tau_{jj'}^{-\alpha(n+r+2\widetilde{d})+n}.
    \]
\end{enumerate}

\end{proof}

\begin{proposition}[Sparsity pattern of $\supp(J,t,t')$]\label{prop:sparsity_count}
For any matrix $B$, write $\mathrm{nnz}(B):=\#\{(i,j): B_{ij}\neq 0\}$, $\mathrm{nnz}_{\mathrm{row}}(B):=\max_i \#\{j: B_{ij}\neq 0\}$, and $
\mathrm{nnz}_{\mathrm{col}}(B):=\max_j \#\{i: B_{ij}\neq 0\}$. Let $\bfM_{(J,t,t')}\in \{0,1\}^{\Lambda_J\times \Lambda_J}$ be defined as in Definition \ref{def:supp_Jtt'}. Then the following estimates hold.
\begin{enumerate}[label=(\roman*)]
\item For $(j,j')\in D_1\cup D_2$, 
\[
\mathrm{nnz}\big( (\bfM_{(J,t,t')})_{j,j'} \big)=0, \qquad \mathrm{nnz}_{\mathrm{row}}\big( (\bfM_{(J,t,t')})_{j,j'} \big)=0 , \qquad \mathrm{nnz}_{\mathrm{col}}\big( (\bfM_{(J,t,t')})_{j,j'} \big)=0.
\]
\item For $(j,j')\in D_3$, $\tau_{jj'}\asymp 2^{-j'}$,
\[
\mathrm{nnz}\big( (\bfM_{(J,t,t')})_{j,j'} \big)\lesssim 2^{jn}, \qquad \mathrm{nnz}_{\mathrm{row}}\big( (\bfM_{(J,t,t')})_{j,j'} \big)\lesssim 1 , \qquad \mathrm{nnz}_{\mathrm{col}}\big( (\bfM_{(J,t,t')})_{j,j'} \big)\lesssim 2^{(j-j')n}.
\]
\item For $(j,j')\in D_4$, $\tau_{jj'}\asymp 2^{-j}$,
\[
\mathrm{nnz}\big( (\bfM_{(J,t,t')})_{j,j'} \big)\lesssim 2^{j'n}, \qquad \mathrm{nnz}_{\mathrm{row}}\big( (\bfM_{(J,t,t')})_{j,j'} \big)\lesssim 2^{(j'-j)n}, \qquad \mathrm{nnz}_{\mathrm{col}}\big( (\bfM_{(J,t,t')})_{j,j'} \big)\lesssim 1. 
\]

\item For $(j,j')\in D_5$, $\tau_{jj'}\asymp 2^{(J(t+t'-r)-jt'-j't-(j+j
')\widetilde{d})/(2\widetilde{d}+r)}$, 
\[
\mathrm{nnz}\big( (\bfM_{(J,t,t')})_{j,j'} \big)\lesssim 2^{(j+j')n}\tau_{jj'}^n, \quad \mathrm{nnz}_{\mathrm{row}}\big( (\bfM_{(J,t,t')})_{j,j'} \big)\lesssim 2^{j'n}\tau_{jj'}^n, \quad \mathrm{nnz}_{\mathrm{col}}\big( (\bfM_{(J,t,t')})_{j,j'} \big)\lesssim 2^{jn}\tau_{jj'}^n.
\]
\item For $(j,j')\in D_6$, $(A^{\varepsilon}_{\Lambda_J})_{j,j'}=(A_{\Lambda_J})_{j,j'}$,
\[
\mathrm{nnz}\big( (\bfM_{(J,t,t')})_{j,j'} \big)\lesssim 2^{(j+j')n}, \qquad \mathrm{nnz}_{\mathrm{row}}\big( (\bfM_{(J,t,t')})_{j,j'} \big)=2^{j'n} , \qquad \mathrm{nnz}_{\mathrm{col}}\big( (\bfM_{(J,t,t')})_{j,j'} \big)\lesssim 2^{jn}.
\]
\end{enumerate}
Moreover,
\[
\mathrm{nnz}\big(\bfM_{(J,t,t')}\big)\lesssim 2^{Jn},\quad
\mathrm{nnz}_{\mathrm{row}}\big(\bfM_{(J,t,t')}\big)\lesssim 2^{Jn\frac{t+t'-r}{\sigma-n/2+t-r/2}},
\quad
\mathrm{nnz}_{\mathrm{col}}\big(\bfM_{(J,t,t')}\big)\lesssim 2^{Jn\frac{t+t'-r}{\sigma-n/2+t'-r/2}}.
\]
All implicit constants may depend on $n,t,t',r,\sigma$ and the wavelet system $d,\widetilde{d},\gamma,\widetilde{\gamma}$, but are independent of $j,j',J$.

\end{proposition}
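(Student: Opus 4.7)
The plan is to derive the blockwise estimates (i)--(v) from Lemma~\ref{lemma:basic_estimates}~(ii) by identifying which branch in the max defining $\tau_{jj'}$ is active on each region $D_i$, and then to obtain the three global bounds by summing the blockwise counts over $(j,j')\in\Lambda_J\times\Lambda_J$. The core tool is: whenever $\tau_{jj'}\gtrsim 2^{-\min\{j,j'\}}$, $\#\{k\in\nabla_j:\mathrm{dist}(S_{j,k},S_{j',k'})\le\tau_{jj'}\}\lesssim 2^{jn}\tau_{jj'}^{n}$ for each fixed $k'$ (and symmetrically for $k'$); multiplying by $|\nabla_j|\asymp 2^{jn}$ or $|\nabla_{j'}|\asymp 2^{j'n}$ yields the full block nnz. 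For (i), the slope conditions in Definition~\ref{def:supp_Jtt'} exclude $D_1\cup D_2$ outright. For (ii), on $D_3$ one has $j>j'$ and $\tau_{jj'}\asymp 2^{-j'}$ because the first branch $2^{-\min\{j,j'\}}=2^{-j'}$ dominates; substitution gives $\mathrm{nnz}_{\mathrm{row}}\lesssim 2^{j'n}\tau_{jj'}^{n}=1$, $\mathrm{nnz}_{\mathrm{col}}\lesssim 2^{jn}\tau_{jj'}^{n}=2^{(j-j')n}$, and block total $\lesssim 2^{jn}$. Case (iii) is symmetric under $j\leftrightarrow j'$. For (iv), the second branch is active on $D_5$, and direct substitution yields the stated counts. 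For (v), the defining inequality of $D_6$ forces $\tau_{jj'}\gtrsim 1\gtrsim\mathrm{diam}(\mcM)$ (choosing the constant $a$ in Definition~\ref{def:supp_Jtt'} large enough), making the distance constraint vacuous, so the block is full.

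For the global $\mathrm{nnz}$ bound, $D_1\cup D_2$ contributes nothing. On $D_3$, with $\alpha:=(t+t'-r)/(\sigma-n/2+t'-r/2)\in(0,1)$ by Assumption~\ref{assumption:wavelets}~(iii), the sum $\sum_{D_3}2^{jn}$ is split into $j\le\alpha J$ and $\alpha J<j<J$: the first piece is $\lesssim\alpha J\cdot 2^{\alpha Jn}\lesssim 2^{Jn}$, and the second, after the change of variable $m=J-j$, reduces to $2^{Jn}\sum_{m\ge 1}m\,2^{-mn}\lesssim 2^{Jn}$. The $D_4$ sum is analogous. On $D_5$, the block count $2^{(j+j')n}\tau_{jj'}^{n}$ equals $2^{h(j,j')}$ with
\[
h(j,j')=\frac{n}{2\widetilde{d}+r}\bigl[(\widetilde{d}+r-t')\,j+(\widetilde{d}+r-t)\,j'+J(t+t'-r)\bigr];
\]
Assumption~\ref{assumption:wavelets}~(iii) gives $\widetilde{d}+r>\max\{t,t'\}$ (via $\widetilde{d}+n/2+r/2>\sigma>n/2+\max\{t,t'\}-r/2$), so both coefficients of $j,j'$ in $h$ are strictly positive, $h$ attains its maximum $Jn$ on the $D_5/D_6$ interface, and $\sum_{D_5}2^{h}\lesssim 2^{Jn}$ by a two-dimensional geometric sum. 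On $D_6$, the defining inequality together with $t\le t'$ yields $(j+j')(t+\widetilde{d})\le J(t+t'-r)$, so $j+j'\le J$ (using $\widetilde{d}>t'-r$), and $\sum_{D_6}2^{(j+j')n}\lesssim 2^{Jn}$ by direct enumeration. For $\mathrm{nnz}_{\mathrm{row}}$, fixing a row $\lambda=(j,k)$ and summing blockwise row counts over $j'$, the dominant term arises from the maximum admissible $j'=\alpha_2 J+(1-\alpha_2)j$, with $\alpha_2=(t+t'-r)/(\sigma-n/2+t-r/2)<1$; smaller $j'$ contribute a geometric series absorbed into this leading term, and maximizing over $j$ gives $2^{\alpha_2 Jn}$. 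The column bound is analogous with $\alpha_1=(t+t'-r)/(\sigma-n/2+t'-r/2)$.

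The main obstacle is the $D_5$ sum, whose two-dimensional geometric convergence relies on the strict positivity of both $\widetilde{d}+r-t$ and $\widetilde{d}+r-t'$ in $h(j,j')$, together with the fact that the maximum of $h$ on $\overline{D_5}$ is attained on its boundary with $D_6$ rather than on its boundary with $D_3\cup D_4$. The first ingredient is precisely what Assumption~\ref{assumption:wavelets}~(iii) delivers; the second is a geometric check: on the $D_3/D_5$ interface $E:=(J(t+t'-r)-jt'-j't-(j+j')\widetilde{d})/(2\widetilde{d}+r)=-j'$ so $h=jn\le Jn$, with a symmetric bound on the $D_4/D_5$ interface, so the $D_5$ contribution glues continuously with the $D_3\cup D_4$ contributions already shown to be $\lesssim 2^{Jn}$. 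A minor but necessary cross-check is that the blockwise counts agree across the branch transitions of $\tau_{jj'}$ (where the two arguments of the max coincide), so that no double counting or discontinuity is incurred in the global sums; this follows immediately from the definition of $\tau_{jj'}$ as a max.
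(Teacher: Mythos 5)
Your blockwise estimates (i)--(v) follow the same route as the paper: deduce which branch of the $\max$ defining $\tau_{jj'}$ is active in each region and plug into Lemma~\ref{lemma:basic_estimates}~(ii). Where you differ is in the global bounds. For the total $\mathrm{nnz}$ you sum directly over the regions $D_3,\dots,D_6$; the paper instead aggregates the per-column counts $\mathrm{nnz}_{\mathrm{col}}((\bfM_{(J,t,t')})_{\cdot,\lambda'})$ over $j'$ via $\sum_{j'}2^{j'n}\,\mathrm{nnz}_{\mathrm{col}}$. Both are valid; your region-by-region version is arguably cleaner for $\mathrm{nnz}$, and your observations that $c_1+c_2=1$ for the slope coefficients (so $J-j'\lesssim J-j$ on $D_3$), and that the two linear coefficients of $h(j,j')$ are strictly positive once $\widetilde d+r>\max\{t,t'\}$, are exactly the right ingredients. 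For $\mathrm{nnz}_{\mathrm{row}}$ and $\mathrm{nnz}_{\mathrm{col}}$ you invoke monotonicity of the per-block row/column count in $j'$ (resp.\ $j$) and let the geometric series absorb the rest; this works because that monotonicity in fact holds (the exponent has positive slope $\propto(\widetilde d+r-t)$ or $\propto(\widetilde d+r-t')$ within $D_5$ and glues continuously with the $D_3/D_4/D_6$ counts at the interfaces), but you should state and verify the monotonicity explicitly --- the paper instead carries out an explicit three-range enumeration over $j'$, which is more laborious but self-contained.

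One slip: you claim $h$ attains its maximum $Jn$ on the $D_5/D_6$ interface. That is false. On that interface $h(j,j')=(j+j')n$ and $j+j'\le J(t+t'-r)/(t+\widetilde d)<J$ by $\widetilde d>t'-r$, so $h<Jn$ there. Since $h$ is strictly increasing in both variables, its maximum over $\overline{D_5}$ is at the opposite corner $(J,J)$, where indeed $h(J,J)=Jn$ (and which lies in $D_5$ because $J(t+t'-r)-2Jt-2J\widetilde d<0$ when $2\widetilde d+r>0$). The value you quote is correct and the geometric-sum bound $\sum_{D_5}2^{h}\lesssim 2^{Jn}$ holds for the reason you give (positive slopes and $\max h=Jn$), so this mislocation does not break the argument; but the picture should be corrected, since it also matters for the cross-check at the $D_3/D_5$ interface you use to glue the regions.
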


\begin{proof}
(i) follows directly from the construction of $\bfM_{(J,t,t')}$ in Definition \ref{def:supp_Jtt'}: the indicator matrix $(\bfM_{(J,t,t')})_{j,j'}=0$ for $(j,j')\in D_1\cup D_2$. For (ii)-(v), use Lemma \ref{lemma:basic_estimates}: for $j\ge j'$ and $\tau_{jj'}\gtrsim 2^{-j'}$, then
  \begin{align*}
   \big|\{k\in \nabla_j: \mathrm{dist}(S_{j,k},S_{j',k'})\le \tau_{jj'}\}\big| &\lesssim 2^{jn}\tau_{jj'}^n, \\
     \big|\{k'\in \nabla_{j'}: \mathrm{dist}(S_{j,k},S_{j',k'})\le \tau_{jj'}\}\big| &\lesssim 2^{j'n}\tau_{jj'}^n,
    \end{align*}
 together with
    \[
    \tau_{jj'}\asymp \begin{cases}
     2^{-j'}   &\quad \text{ if } (j,j')\in D_3, \\
      2^{-j}  &\quad \text{ if } (j,j')\in D_4,\\
     2^{(J(t+t'-r)-jt'-j't-(j+j
')\widetilde{d})/(2\widetilde{d}+r)} & \quad \text{ if } (j,j')\in D_5,\\
 \Theta(1) & \quad \text{ if } (j,j')\in D_6, \\
    \end{cases}
    \]
    which gives the stated bounds for each region. 

    To prove the global bounds, we sum the blockwise estimates over $(j,j')$ in all regions. Specifically, for the $\lambda'=(j',k')$-th column with $j'\le \frac{t+t'-r}{\widetilde{d}+t}J$, the number of nonzero entries is upper bounded by
    \begin{align*}
    &\sum_{0\le j\le \frac{t+t'-r}{\tilde{d}+t'}J-\frac{\tilde{d}+t}{\tilde{d}+t'}j'}2^{jn}+\sum_{\frac{t+t'-r}{\tilde{d}+t'}J-\frac{\tilde{d}+t}{\tilde{d}+t'}j'\le j\le \frac{t+t'-r}{\tilde{d}+t'}J+\frac{\tilde{d}+r-t}{\tilde{d}+t'}j'} 2^{jn}\cdot 2^{n(J(t+t'-r)-jt'-j't-(j+j
')\widetilde{d})/(2\widetilde{d}+r)}\\
&\qquad +\sum_{ \frac{t+t'-r}{\tilde{d}+t'}J+\frac{\tilde{d}+r-t}{\tilde{d}+t'}j'\le j\le \frac{t+t'-r}{\sigma-n/2+t'-r/2}J+\frac{\sigma-n/2-(t-r/2)}{\sigma-n/2+t'-r/2}j' }  2^{(j-j')n}\\
& \lesssim  2^{n\big(\frac{t+t'-r}{\tilde{d}+t'}J-\frac{\tilde{d}+t}{\tilde{d}+t'}j'\big)}+2^{n\frac{t+t'-r}{\tilde{d}+t'}(J-j')}+2^{n\frac{t+t'-r}{\sigma-n/2+t'-r/2}(J-j')}\\
&\lesssim 2^{Jn(t+t'-r)/(\sigma-\frac{n}{2}+t'-\frac{r}{2})}.
    \end{align*}
    For $(j',k')$-th column with $\frac{t+t'-r}{\widetilde{d}+t}J\le j'\le \frac{t+t'-r}{\sigma-\frac{n}{2}+t-\frac{r}{2}}J$, the number of nonzero entries is at most
    \begin{align*}
        &\sum_{0\le j\le \frac{\tilde{d}+t}{\tilde{d}+r-t'}j'-\frac{t+t'-r}{\tilde{d}+r-t'}J}1 +\sum_{\frac{\tilde{d}+t}{\tilde{d}+r-t'}j'-\frac{t+t'-r}{\tilde{d}+r-t'}J\le j\le \frac{t+t'-r}{\tilde{d}+t'}J+\frac{\tilde{d}+r-t}{\tilde{d}+t'}j'} 2^{jn}\cdot 2^{n(J(t+t'-r)-jt'-j't-(j+j
')\widetilde{d})/(2\widetilde{d}+r)}\\
&\qquad +\sum_{ \frac{t+t'-r}{\tilde{d}+t'}J+\frac{\tilde{d}+r-t}{\tilde{d}+t'}j'\le j\le \frac{t+t'-r}{\sigma-n/2+t'-r/2}J+\frac{\sigma-n/2-(t-r/2)}{\sigma-n/2+t'-r/2}j' }  2^{(j-j')n}\\
&\lesssim \frac{\widetilde{d}+t}{\widetilde{d}+r-t'}j'-\frac{t+t'-r}{\widetilde{d}+r-t'}J+2^{n\frac{t+t'-r}{\tilde{d}+t'}(J-j')}+2^{n\frac{t+t'-r}{\sigma-n/2+t'-r/2}(J-j')}\\
&\lesssim 2^{Jn(t+t'-r)/(\sigma-\frac{n}{2}+t'-\frac{r}{2})}.
    \end{align*}
For $(j',k')$-th column with $\frac{t+t'-r}{\sigma-\frac{n}{2}+t-\frac{r}{2}}J \le j'\le J$, the number of nonzero entries is at most
\begin{align*}
    &\sum_{\frac{\sigma-n/2+t-r/2}{\sigma-n/2-(t'-r/2)}j'-\frac{t+t'-r}{\sigma-n/2-(t'-r/2)}J\le j\le \frac{\tilde{d}+t}{\tilde{d}+r-t'}j'-\frac{t+t'-r}{\tilde{d}+r-t'}J} 1\\
    &\qquad +\sum_{\frac{\tilde{d}+t}{\tilde{d}+r-t'}j'-\frac{t+t'-r}{\tilde{d}+r-t'}J\le j\le \frac{t+t'-r}{\tilde{d}+t'}J+\frac{\tilde{d}+r-t}{\tilde{d}+t'}j'} 2^{jn}\cdot 2^{n(J(t+t'-r)-jt'-j't-(j+j
')\widetilde{d})/(2\widetilde{d}+r)}\\
&\qquad +\sum_{ \frac{t+t'-r}{\tilde{d}+t'}J+\frac{\tilde{d}+r-t}{\tilde{d}+t'}j'\le j\le \frac{t+t'-r}{\sigma-n/2+t'-r/2}J+\frac{\sigma-n/2-(t-r/2)}{\sigma-n/2+t'-r/2}j' }  2^{(j-j')n}\\
&\lesssim \frac{\widetilde{d}+t}{\widetilde{d}+r-t'}j'-\frac{t+t'-r}{\widetilde{d}+r-t'}J-\left(\frac{\sigma-\frac{n}{2}+t-\frac{r}{2}}{\sigma-\frac{n}{2}-(t'-\frac{r}{2})}j'-\frac{t+t'-r}{\sigma-\frac{n}{2}-(t'-\frac{r}{2})}J\right)\\
&\qquad +2^{n\frac{t+t'-r}{\tilde{d}+t'}(J-j')}+2^{n\frac{t+t'-r}{\sigma-n/2+t'-r/2}(J-j')}\\
&\lesssim 2^{Jn(t+t'-r)/(\sigma-\frac{n}{2}+t'-\frac{r}{2})}.
\end{align*}
Combining these three cases yields that
\[
\mathrm{nnz}_{\mathrm{col}}\big(\bfM_{(J,t,t')}\big)\lesssim 2^{Jn(t+t'-r)/(\sigma-\frac{n}{2}+t'-\frac{r}{2})}.
\]
Similarly,
\[
\mathrm{nnz}_{\mathrm{row}}\big(\bfM_{(J,t,t')}\big)\lesssim 2^{Jn(t+t'-r)/(\sigma-\frac{n}{2}+t-\frac{r}{2})}.
\]
Finally,
\begin{align*}
&\mathrm{nnz}\big(\bfM_{(J,t,t')}\big)\lesssim \sum_{j'=0}^{J}2^{j'n}\cdot \mathrm{nnz}((\bfM_{(J,t,t')})_{\cdot, \lambda'})\\
    &\lesssim \sum_{0\le j'\le \frac{t+t'-r}{\tilde{d}+t}J}2^{j'n}\left(2^{n\big(\frac{t+t'-r}{\tilde{d}+t'}J-\frac{\tilde{d}+t}{\tilde{d}+t'}j'\big)}+2^{n\frac{t+t'-r}{\sigma-n/2+t'-r/2}(J-j')}\right)\\
    &\quad +\sum_{\frac{t+t'-r}{\tilde{d}+t}J\le j'\le \frac{t+t'-r}{\sigma-n/2+t-r/2}J}2^{j'n}\left(\frac{\widetilde{d}+t}{\widetilde{d}+r-t'}j'-\frac{t+t'-r}{\widetilde{d}+r-t'}J+2^{n\frac{t+t'-r}{\sigma-n/2+t'-r/2}(J-j')}\right)\\
    &\quad+\sum_{\frac{t+t'-r}{\sigma-n/2+t-r/2}J\le j'\le J} 2^{j'n}\left(\frac{\widetilde{d}+t}{\widetilde{d}+r-t'}j'-\frac{t+t'-r}{\widetilde{d}+r-t'}J-\left(\frac{\sigma-\frac{n}{2}+t-\frac{r}{2}}{\sigma-\frac{n}{2}-(t'-\frac{r}{2})}j'-\frac{t+t'-r}{\sigma-\frac{n}{2}-(t'-\frac{r}{2})}J\right)\right)\\
    &\quad+ \sum_{\frac{t+t'-r}{\sigma-n/2+t-r/2}J\le j'\le J}2^{j'n}\cdot 2^{n\frac{t+t'-r}{\sigma-n/2+t'-r/2}(J-j')}\\
    &\lesssim 2^{Jn},
\end{align*}
where we have used the inequality $\sum_{0\le j'\le J}2^{j'n}(J-j')\lesssim 2^{Jn}$.
    \end{proof}

\subsection{Truncation}\label{app:truncation}
\begin{proof}[Proof of Proposition \ref{prop:Error-Truncation}]
We write $(\cdot)_{j,j'}$ for the $(j,j')$ wavelet block. By Lemma \ref{lemma:basic_estimates} (i) with $\alpha=1$, for any $j,j'\ge j_0$,
\begin{align*}
    \|\bfA_{j,j'}\|&\le \|\bfA_{j,j'}\|_1^{1/2}\; \|\bfA_{j,j'}\|_{\infty}^{1/2}\\
    &\le \sup_{k'\in \nabla_{j'}}\bigg( \sum_{k\in \nabla_j} |\langle \mcA\psi_{j,k},\psi_{j',k'}\rangle|\bigg)^{1/2}\cdot \sup_{k\in \nabla_j}\bigg( \sum_{k'\in \nabla_{j'}} |\langle \mcA\psi_{j,k},\psi_{j',k'}\rangle|\bigg)^{1/2}\\
    &\lesssim \left(2^{(j+j')r/2}\cdot 2^{-|j-j'|\sigma} 2^{|j-j'|n}  \cdot  2^{(j+j')r/2}\cdot 2^{-|j-j'|\sigma}\right)^{1/2}\\
    &\asymp 2^{(j+j')r/2-|j-j'|(\sigma-n/2)}.
\end{align*}

We now apply Lemma \ref{lemma:norm_compression} to $\bfD^{-t'} (\bfA_{\Lambda_J}^{\uparrow} - \bfA) \bfD^{-t}$ with the natural $(j,j')$-block partition. Let $\mathcal{N}(\cdot)$ denote the matrix of block norms.  Then
\begin{align*}
    \| \bfD^{-t'} (\bfA_{\Lambda_J}^{\uparrow} - \bfA) \bfD^{-t} \|&\le \|\mathcal{N}(\bfD^{-t'} (\bfA_{\Lambda_J}^{\uparrow} - \bfA) \bfD^{-t})\|\\
    &\le  \|\mathcal{N}(\bfD^{-t'} (\bfA_{\Lambda_J}^{\uparrow} - \bfA) \bfD^{-t})\|_1^{1/2} \;  \|\mathcal{N}(\bfD^{-t'} (\bfA_{\Lambda_J}^{\uparrow} - \bfA) \bfD^{-t})\|_{\infty}^{1/2}.
\end{align*}
For $j,j'\ge j_0$,
\begin{align*}\left(\mathcal{N}(\bfD^{-t'} (\bfA_{\Lambda_J}^{\uparrow} - \bfA) \bfD^{-t})\right)_{j,j'}= \|(\bfD^{-t'} (\bfA_{\Lambda_J}^{\uparrow} - \bfA) \bfD^{-t})_{j,j'}\|=\begin{cases}
    0 & \text{ if } j,j'\le J, \\
    2^{-jt'-j't}\|\bfA_{j,j'}\|  & \text{ otherwise}.
\end{cases}
\end{align*}

We first estimate $\|\mathcal{N}(\bfD^{-t'} (\bfA_{\Lambda_J}^{\uparrow} - \bfA) \bfD^{-t})\|_1$. Split into the cases $j'\le J$ and $j'>J$: 
\begin{align*}
\|\mathcal{N}(\bfD^{-t'} (\bfA_{\Lambda_J}^{\uparrow} - \bfA) \bfD^{-t})\|_1\le \bigg(\sup_{j'\le J}\sum_{j>J}   2^{-jt'-j't}\|\bfA_{j,j'}\|\bigg) +\bigg(\sup_{j'>J}\sum_{j\ge j_0}   2^{-jt'-j't}\|\bfA_{j,j'}\|\bigg).
\end{align*}

\noindent\textbf{Case 1: $j'\le J$.} \quad Using the bound for $\|\bfA_{j,j'}\|$,
\begin{align*}
   \sup_{j'\le J}\sum_{j>J}   2^{-jt'-j't}\|\bfA_{j,j'}\|
   &\lesssim  \sup_{j'\le J}\sum_{j>J}   2^{-jt'-j't}\cdot 2^{(j+j')r/2-|j-j'|(\sigma-n/2)}\\
   &=\sup_{j'\le J}2^{j'(\sigma-n/2-t+r/2)}\sum_{j>J}   2^{-j(\sigma-n/2+t'-r/2)}\\
    &\lesssim 2^{J(\sigma-n/2-t+r/2)}\cdot 2^{-J(\sigma-n/2+t'-r/2)}\\
    &=2^{-J(t+t'-r)}.
\end{align*}

\noindent\textbf{Case 2: $j'>J$.} \quad Using the estimate for $\|\bfA_{j,j'}\|$ and decomposing the sum into the two regions $j_0\le j\le j'$ and $j>j'$, we obtain
\begin{align*}
   &\sup_{j'>J}\sum_{j\ge j_0}   2^{-jt'-j't}\|\bfA_{j,j'}\| \lesssim \sup_{j'>J}\sum_{j\ge j_0}   2^{-jt'-j't}\cdot 2^{(j+j')r/2-|j-j'|(\sigma-n/2)}\\
    &=\sup_{j'>J}\bigg(\sum_{j_0\le j\le j'}+\sum_{j>j'}\bigg)   2^{-jt'-j't+(j+j')r/2-|j-j'|(\sigma-n/2)}\\
    &=\sup_{j'>J}\bigg(\sum_{j_0\le j\le j'}2^{j'(-t+r/2-\sigma+n/2)}\cdot 2^{j(\sigma-n/2-(t'-r/2))}+\sum_{j>j'}2^{j'(-t+r/2+\sigma-n/2)}\cdot 2^{-j(\sigma-n/2+t'-r/2)}\bigg)\\
    &\lesssim \sup_{j'>J} \bigg(2^{j'(-t+r/2-\sigma+n/2)}\cdot 2^{j'(\sigma-n/2-(t'-r/2))}+2^{j'(-t+r/2+\sigma-n/2)}\cdot 2^{-j'(\sigma-n/2+t'-r/2)}\bigg)\\
    &\asymp 2^{-J(t+t'-r)},
\end{align*}
where we used that $\sigma-n/2>\max\{t,t'\}-r/2\ge 0$.

Combining both cases, we conclude that
\[
\|\mathcal{N}(\bfD^{-t'} (\bfA_{\Lambda_J}^{\uparrow} - \bfA) \bfD^{-t})\|_{1}\lesssim 2^{-J(t+t'-r)}.
\]
The bound for $\|\mathcal{N}(\cdot)\|_{\infty}$ follows identically, yielding
\[
\|\mathcal{N}(\bfD^{-t'} (\bfA_{\Lambda_J}^{\uparrow} - \bfA) \bfD^{-t})\|_{\infty} \lesssim 2^{-J(t+t'-r)}.
\]
Therefore,
\begin{align*}
\ErrorTruncation &=\| \bfD^{-t'} (\bfA_{\Lambda_J}^{\uparrow} - \bfA) \bfD^{-t} \|\\
&\le  \|\mathcal{N}(\bfD^{-t'} (\bfA_{\Lambda_J}^{\uparrow} - \bfA) \bfD^{-t})\|_1^{1/2} \;  \|\mathcal{N}(\bfD^{-t'} (\bfA_{\Lambda_J}^{\uparrow} - \bfA) \bfD^{-t})\|_{\infty}^{1/2}\\
&\lesssim 2^{-J(t+t'-r)}.
\end{align*}
\end{proof}

\subsection{Compression}\label{app:compression}
\begin{proof}[Proof of Proposition \ref{prop:Error-Compression}]

We first consider the case $t_{\ell}=t', t_r=t$. Recall that
\[
\ErrorCompression=\|\bfD^{-t'}_{\Lambda_{J}}(\bfA^{\varepsilon}_{\Lambda_J}-\bfA_{\Lambda_J})\bfD^{-t}_{\Lambda_{J}}\|.
\]
We apply the norm-compression inequality (Lemma \ref{lemma:norm_compression}) with the natural $(j,j')$-block partition, writing $(\cdot)_{j,j'}$ for the $(j,j')$-block:
\[
\|\bfD^{-t'}_{\Lambda_{J}}(\bfA^{\varepsilon}_{\Lambda_J}-\bfA_{\Lambda_J})\bfD^{-t}_{\Lambda_{J}}\|\le \bigg(\sum_{j,j'= j_0}^{J} 2^{-2jt'-2j't} \|(\bfA^{\varepsilon}_{\Lambda_J})_{j,j'}-(\bfA_{\Lambda_J})_{j,j'}\|^2\bigg)^{1/2}.
\]

Next, we analyze $\|(\bfA^{\varepsilon}_{\Lambda_J})_{j,j'}-(\bfA_{\Lambda_J})_{j,j'}\|$ for $j_0\le j,j'\le J$.

\medskip
\noindent\textbf{Case 1.}
If 
\[
j> j' \text{  and  } j\ge \frac{t+t'-r}{\sigma-\frac{n}{2}+t'-\frac{r}{2}}J+\frac{\sigma-\frac{n}{2}-(t-\frac{r}{2})}{\sigma-\frac{n}{2}+t'-\frac{r}{2}} j',
\]
then $(\bfA^{\varepsilon}_{\Lambda_J})_{j,j'}=0$, hence $\|(\bfA^{\varepsilon}_{\Lambda_J})_{j,j'}-(\bfA_{\Lambda_J})_{j,j'}\|= \|(\bfA_{\Lambda_J})_{j,j'}\|$. Using the matrix norm inequality $\|B\|\le \|B\|_1^{1/2}\|B\|_{\infty}^{1/2}$ together with Lemma \ref{lemma:basic_estimates} (i) (with $\alpha=1$), we obtain
\begin{align*}
\|(\bfA_{\Lambda_J})_{j,j'}\|& \le \|(\bfA_{\Lambda_J})_{j,j'}\|_1^{1/2}\cdot \|(\bfA_{\Lambda_J})_{j,j'}\|_{\infty}^{1/2}\\
&=\sup_{k'\in \nabla_{j'}}\bigg( \sum_{k\in \nabla_j} |\langle \mcA\psi_{j,k},\psi_{j',k'}\rangle|\bigg)^{1/2}\cdot \sup_{k\in \nabla_j}\bigg( \sum_{k'\in \nabla_{j'}} |\langle \mcA\psi_{j,k},\psi_{j',k'}\rangle|\bigg)^{1/2}\\
&\lesssim \left(2^{(j+j')r/2}\cdot 2^{-(j-j')\sigma} 2^{(j-j')n}  \cdot  2^{(j+j')r/2}\cdot 2^{-(j-j')\sigma}\right)^{1/2}\\
&= 2^{(j+j')r/2-(j-j')(\sigma-n/2)}\\
&\le 2^{-J(t+t'-r)+jt'+j't},
\end{align*}
where the last inequality follows from the stated slope condition:
\[
j\ge \frac{t+t'-r}{\sigma-\frac{n}{2}+t'-\frac{r}{2}}J+\frac{\sigma-\frac{n}{2}-(t-\frac{r}{2})}{\sigma-\frac{n}{2}+t'-\frac{r}{2}} j'.
\]

\medskip
\noindent\textbf{Case 2.}
If 
\[
j'>j  \text{  and  } j'\ge \frac{t+t'-r}{\sigma-\frac{n}{2}+t-\frac{r}{2}}J+\frac{\sigma-\frac{n}{2}-(t'-\frac{r}{2})}{\sigma-\frac{n}{2}+t-\frac{r}{2}} j,
\]
then, by the same reasoning,
\[
\|(\bfA^{\varepsilon}_{\Lambda_J})_{j,j'}-(\bfA_{\Lambda_{J}})_{j,j'}\|=\|(\bfA_{\Lambda_{J}})_{j,j'}\|\lesssim 2^{(j+j')r/2-(j'-j)(\sigma-n/2)}\lesssim 2^{-J(t+t'-r)+jt'+j't}.
\]

\medskip
\noindent\textbf{Case 3.} In the remaining blocks, we set to zero all entries with $\mathrm{dist}(S_{j,k},S_{j',k'})>\tau_{jj'}$. Hence
\[
\|(\bfA^{\varepsilon}_{\Lambda_J})_{j,j'}-(\bfA_{\Lambda_{J}})_{j,j'}\|\le \|(\bfA^{\varepsilon}_{\Lambda_J})_{j,j'}-(\bfA_{\Lambda_{J}})_{j,j'}\|_1^{1/2}\cdot \|(\bfA^{\varepsilon}_{\Lambda_J})_{j,j'}-(\bfA_{\Lambda_{J}})_{j,j'}\|_{\infty}^{1/2},
\]
with
\begin{align*}
\|(\bfA^{\varepsilon}_{\Lambda_J})_{j,j'}-(\bfA_{\Lambda_{J}})_{j,j'}\|_1 &=\sup_{k'\in\nabla_{j'}}\sum_{k\in \nabla_j: \mathrm{dist}(S_{j,k},S_{j',k'})\ge \tau_{jj'}} |\langle \mcA\psi_{j,k},\psi_{j',k'}\rangle|\\
&\lesssim  2^{-(j+j')(\widetilde{d}+n/2)}2^{jn}\tau_{jj'}^{-(r+2\widetilde{d})},
\end{align*}
and 
\begin{align*}
\|(\bfA^{\varepsilon}_{\Lambda_J})_{j,j'}-(\bfA_{\Lambda_{J}})_{j,j'}\|_\infty &=\sup_{k\in\nabla_{j}}\sum_{k'\in \nabla_{j'}: \mathrm{dist}(S_{j,k},S_{j',k'})\ge \tau_{jj'}} |\langle \mcA\psi_{j,k},\psi_{j',k'}\rangle|\\
&\lesssim  2^{-(j+j')(\widetilde{d}+n/2)}2^{j'n}\tau_{jj'}^{-(r+2\widetilde{d})},
\end{align*}
by Lemma \ref{lemma:basic_estimates} (iii). Therefore,
\[
\|(\bfA^{\varepsilon}_{\Lambda_J})_{j,j'}-(\bfA_{\Lambda_{J}})_{j,j'}\|\lesssim 2^{-(j+j')\widetilde{d}}\cdot \tau_{jj'}^{-(2\widetilde{d}+r)}\lesssim  2^{-J(t+t'-r)+jt'+j't},
\]
where the last inequality uses $\tau_{jj'}\gtrsim 2^{(J(t+t'-r)-jt'-j't-(j+j
')\widetilde{d})/(2\widetilde{d}+r)}$.

Combining \textbf{Cases 1–3} yields,
\begin{align*}
\|\bfD^{-t'}_{\Lambda_{J}}(\bfA^{\varepsilon}_{\Lambda_J}-\bfA_{\Lambda_{J}})\bfD^{-t}_{\Lambda_{J}}\| &\le \bigg(\sum_{j,j'= j_0}^{J} 2^{-2jt'-2j't} \|(\bfA^{\varepsilon}_{\Lambda_J})_{j,j'}-(\bfA_{\Lambda_J})_{j,j'}\|^2\bigg)^{1/2}\\
& \lesssim  \bigg(\sum_{j,j'= j_0}^{J} 2^{-2jt'-2j't} 2^{-2J(t+t'-r)+2jt'+2j't}\bigg)^{1/2}\\
&\lesssim J\, 2^{-J(t+t'-r)},
\end{align*}
as claimed.

For general weights $t_{\ell}$ and $t_r$ satisfying $r/2\le t_{\ell}, t_{r}<\gamma$, we have
\begin{align*}
\|\bfD^{-t_{\ell}}_{\Lambda_{J}}(\bfA^{\varepsilon}_{\Lambda_J}-\bfA_{\Lambda_{J}})\bfD^{-t_{r}}_{\Lambda_{J}}\| &= \|\bfD^{t'-t_{\ell}}_{\Lambda_{J}}\,\bfD^{-t'}_{\Lambda_{J}}(\bfA^{\varepsilon}_{\Lambda_J}-\bfA_{\Lambda_{J}})\bfD^{-t}_{\Lambda_{J}}\, \bfD^{t-t_{r}}_{\Lambda_{J}}\| \\
&\le \|\bfD^{t'-t_{\ell}}_{\Lambda_{J}}\|\cdot \|\bfD^{-t'}_{\Lambda_{J}}(\bfA^{\varepsilon}_{\Lambda_J}-\bfA_{\Lambda_{J}})\bfD^{-t}_{\Lambda_{J}}\| \cdot \|\bfD^{t-t_{r}}_{\Lambda_{J}}\| \\
& \lesssim J\, 2^{-J(t+t'-r)}\cdot 2^{J\max\{0,t'-t_{\ell}\}}\cdot 2^{J\max\{0,t-t_{r}\}}\\
&\asymp J 2^{-J\left(\min\{t',t_{\ell}\}+\min\{t,t_{r}\}-r\right)}.
\end{align*}
\end{proof}

\subsection{Estimation}\label{app:estimation}

\subsubsection{Error Decomposition}

\begin{proof}[Proof of Lemma \ref{lemma:error_decomposition}]
We recall that the estimator $\widehat{\bfA} \in \mathbb{R}^{\Lambda_J \times \Lambda_J}$ in \eqref{eq:estimator_matrix_box} is defined entrywise by
\begin{align*}
\widehat{\bfA}_{\lambda,\lambda'}
:=
\begin{cases}
 (\widehat{\bfA}^{(1)})_{\lambda,\lambda'}  & \text{if } (\lambda,\lambda')\in \supp(J,t,t') \text{ and }  j \le j', \\[2mm]
 (\widehat{\bfA}^{(1)})_{\lambda',\lambda} & \text{if } (\lambda,\lambda')\in \supp(J,t,t') \text{ and } j > j',\\[2mm]
 0 & \text{if } (\lambda,\lambda')\notin  \supp(J,t,t').
\end{cases}
\end{align*}
Since $\widehat{\bfA}^{(1)}\in \R^{\Lambda_{\widetilde{J}}\,\times \Lambda_J}$ is rectangular, we introduce an intermediate square matrix 
\[
\widehat{\bfA}^{(2)}:=(\widehat{\bfA}^{(1)})_{\Lambda_J,\cdot}\in \R^{\Lambda_J\times \Lambda_J},
\]
obtained by restricting $\widehat{\bfA}^{(1)}$ to the rows indexed by $\Lambda_J$.

To facilitate the analysis, we decompose any matrix $\bfB\in \R^{\Lambda_J\times \Lambda_J}$ into its \emph{upper triangular} and \emph{lower triangular} parts, according to the ordering of the scale indices:
\begin{align*}
(\bfB_{\mathrm{up}})_{\lambda,\lambda'}:=\begin{cases}
 \bfB_{\lambda,\lambda'} &  j\le j',\\
 0&  j> j',
    \end{cases}\qquad (\bfB_{\mathrm{low}})_{\lambda,\lambda'}:=\begin{cases}
 0 & j\le j',\\
 \bfB_{\lambda,\lambda'} & j> j'.
    \end{cases}
\end{align*}
Thus, $\bfB=\bfB_{\mathrm{up}}+\bfB_{\mathrm{low}}$. We apply this decomposition to $\widehat{\bfA}, \widehat{\bfA}^{(2)}, \bfA_{\Lambda_J},\bfA_{\Lambda_J}^{\varepsilon}$, and the mask matrix $\bfM_{(J,t,t')}$.  

By construction in \eqref{eq:estimator_matrix_box}, the upper and lower triangular parts of $\widehat{\bfA}$ satisfy
\[
\widehat{\bfA}_{\mathrm{up}}=(\bfM_{(J,t,t')})_{\mathrm{up}}\odot \widehat{\bfA}^{(2)}, \qquad \widehat{\bfA}_{\mathrm{low}}=(\bfM_{(J,t,t')})_{\mathrm{low}}\odot (\widehat{\bfA}^{(2)})^{\top}.
\]
Since the compressed matrix is given by $\bfA_{\Lambda_J}^{\varepsilon}=\bfM_{(J,t,t')}\odot \bfA_{\Lambda_J}$, we likewise have
\[
(\bfA_{\Lambda_J}^{\varepsilon})_{\mathrm{up}}=(\bfM_{(J,t,t')})_{\mathrm{up}}\odot \bfA_{\Lambda_J},\qquad (\bfA_{\Lambda_J}^{\varepsilon})_{\mathrm{low}}=(\bfM_{(J,t,t')})_{\mathrm{low}}\odot \bfA_{\Lambda_J}.
\]
We therefore obtain
\begin{align}\label{eq:erro_decomposition_aux0}
   & \| \bfD^{-t'}_{\Lambda_J} (\widehat{\bfA} - \bfA_{\Lambda_J}^{\varepsilon}) \bfD^{-t}_{\Lambda_J} \| \nonumber\\
   &=\left\| \bfD^{-t'}_{\Lambda_J} \left(\widehat{\bfA}_{\mathrm{up}}+\widehat{\bfA}_{\mathrm{low}} - (\bfA_{\Lambda_J}^{\varepsilon})_{\mathrm{up}}-(\bfA_{\Lambda_J}^{\varepsilon})_{\mathrm{low}}\right) \bfD^{-t}_{\Lambda_J} \right\| \nonumber\\
    &\le \left\| \bfD^{-t'}_{\Lambda_J} \left(\widehat{\bfA}_{\mathrm{up}}- (\bfA_{\Lambda_J}^{\varepsilon})_{\mathrm{up}}\right) \bfD^{-t}_{\Lambda_J} \right\|+\left\| \bfD^{-t'}_{\Lambda_J} \left(\widehat{\bfA}_{\mathrm{low}}-(\bfA_{\Lambda_J}^{\varepsilon})_{\mathrm{low}}\right) \bfD^{-t}_{\Lambda_J} \right\| \nonumber\\
    &=\left\| \bfD^{-t'}_{\Lambda_J} (\bfM_{(J,t,t')})_{\mathrm{up}}\odot \left(\widehat{\bfA}^{(2)}- \bfA_{\Lambda_J}\right) \bfD^{-t}_{\Lambda_J} \right\|+\left\| \bfD^{-t'}_{\Lambda_J} (\bfM_{(J,t,t')})_{\mathrm{low}}\odot \left((\widehat{\bfA}^{(2)})^{\top}-\bfA_{\Lambda_J}\right) \bfD^{-t}_{\Lambda_J} \right\| \nonumber\\
    &\overset{(\star)}{=}\left\| \bfD^{-t'}_{\Lambda_J} (\bfM_{(J,t,t')})_{\mathrm{up}}\odot \left(\widehat{\bfA}^{(2)}- \bfA_{\Lambda_J}\right) \bfD^{-t}_{\Lambda_J} \right\|+\left\| \bfD^{-t}_{\Lambda_J} ((\bfM_{(J,t,t')})_{\mathrm{low}})^{\top}\odot \left(\widehat{\bfA}^{(2)}-\bfA_{\Lambda_J}\right) \bfD^{-t'}_{\Lambda_J} \right\|,
\end{align}
where $(\star)$ follows by transposing the second term and using $(\bfA_{\Lambda_J})^{\top}=\bfA_{\Lambda_J}$. Notice that both terms reduce to a similar structural error involving $\widehat{\bfA}^{(2)}-\bfA_{\Lambda_J}$.

Recall that the regression support $\Omega_{\lambda'}=\left\{\lambda\in \mcJ: (\lambda,\lambda')\in \supp(\widetilde{J},\widetilde{t},\widetilde{t}') \right\}$, $\Omega_{\lambda'}^c=\mcJ\setminus \Omega_{\lambda'}$, and \eqref{eq:model_column_partitioned}:
\begin{align*}
\bfF_{\cdot,\lambda'}=\bfU \bfA_{\cdot,\lambda'}+\bfW_{\cdot,\lambda'}=\bfU_{\cdot,\Omega_{\lambda'}}\bfA_{\Omega_{\lambda'},\lambda'}+\bfU_{\cdot,\Omega_{\lambda'}^c} \bfA_{\Omega_{\lambda'}^c,\lambda'}+\bfW_{\cdot,\lambda'},
\end{align*}
and that our preliminary estimator in \textbf{Step 1} \eqref{eq:estimator_column_box} is defined columnwise by
\begin{align*}
(\widehat{\bfA}^{(1)})_{\cdot,\lambda'}
:=
\begin{bmatrix}
(\bfU_{\cdot,\Omega_{\lambda'}}^{\top} \bfU_{\cdot,\Omega_{\lambda'}})^{-1}
\bfU_{\cdot,\Omega_{\lambda'}}^{\top} \bfF_{\cdot,\lambda'} \\
\boldsymbol{0}
\end{bmatrix}
\in \mathbb{R}^{\Lambda_{\widetilde{J}}},
\qquad
\lambda' \in \Lambda_{J}.
\end{align*}
Observe that
\begin{align*}
&(\bfU_{\cdot,\Omega_{\lambda'}}^{\top} \bfU_{\cdot,\Omega_{\lambda'}})^{-1}
\bfU_{\cdot,\Omega_{\lambda'}}^{\top} \bfF_{\cdot,\lambda'}=(\bfU_{\cdot,\Omega_{\lambda'}}^{\top} \bfU_{\cdot,\Omega_{\lambda'}})^{-1}
\bfU_{\cdot,\Omega_{\lambda'}}^{\top}\left(\bfU_{\cdot,\Omega_{\lambda'}}\bfA_{\Omega_{\lambda'},\lambda'}+\bfU_{\cdot,\Omega_{\lambda'}^c} \bfA_{\Omega_{\lambda'}^c,\lambda'}+\bfW_{\cdot,\lambda'}\right)\\
&=\bfA_{\Omega_{\lambda'},\lambda'}+(\bfU_{\cdot,\Omega_{\lambda'}}^{\top} \bfU_{\cdot,\Omega_{\lambda'}})^{-1}
\bfU_{\cdot,\Omega_{\lambda'}}^{\top}\bfU_{\cdot,\Omega_{\lambda'}^c} \bfA_{\Omega_{\lambda'}^c,\lambda'}+(\bfU_{\cdot,\Omega_{\lambda'}}^{\top} \bfU_{\cdot,\Omega_{\lambda'}})^{-1}
\bfU_{\cdot,\Omega_{\lambda'}}^{\top}\bfW_{\cdot,\lambda'}.
\end{align*}
Hence, for each column $\lambda'\in \Lambda_J$,
\begin{align*}
    (\widehat{\bfA}^{(1)})_{\cdot,\lambda'}
=\begin{bmatrix}
\bfA_{\Omega_{\lambda'},\lambda'} \\
\boldsymbol{0}
\end{bmatrix}+\begin{bmatrix}
(\bfU_{\cdot,\Omega_{\lambda'}}^{\top} \bfU_{\cdot,\Omega_{\lambda'}})^{-1}
\bfU_{\cdot,\Omega_{\lambda'}}^{\top}\bfU_{\cdot,\Omega_{\lambda'}^c} \bfA_{\Omega_{\lambda'}^c,\lambda'} \\
\boldsymbol{0}
\end{bmatrix}+\begin{bmatrix}
   (\bfU_{\cdot,\Omega_{\lambda'}}^{\top} \bfU_{\cdot,\Omega_{\lambda'}})^{-1}
\bfU_{\cdot,\Omega_{\lambda'}}^{\top}\bfW_{\cdot,\lambda'}  \\
\boldsymbol{0}
\end{bmatrix}.
\end{align*}

Let $E_{\Omega_{\lambda'}}:\R^{\Omega_{\lambda'}}\to \R^{\Lambda_{\widetilde{J}}}$ be the row embedding operator (pads zeros outside $\Omega_{\lambda'}$). We define
\[
\mathsf{A}:=\sum_{\lambda'\in \Lambda_J} \left(E_{\Omega_{\lambda'}}\bfA_{\Omega_{\lambda'},\lambda'}\right) e_{\lambda'}^{\top}\in \R^{\Lambda_{\widetilde{J}}\,\times \Lambda_J },
\]
\[
\OVB:=\sum_{\lambda'\in \Lambda_J} \left(E_{\Omega_{\lambda'}}(\bfU_{\cdot,\Omega_{\lambda'}}^{\top} \bfU_{\cdot,\Omega_{\lambda'}})^{-1}
\bfU_{\cdot,\Omega_{\lambda'}}^{\top}\bfU_{\cdot,\Omega_{\lambda'}^c} \bf\bfA_{\Omega^c_{\lambda'},\lambda'}\right) e_{\lambda'}^{\top}\in \R^{\Lambda_{\widetilde{J}}\,\times \Lambda_J },
\]
and
\[
\Var:=\sum_{\lambda'\in \Lambda_J} \left(E_{\Omega_{\lambda'}}(\bfU_{\cdot,\Omega_{\lambda'}}^{\top} \bfU_{\cdot,\Omega_{\lambda'}})^{-1} \bfU_{\cdot,\Omega_{\lambda'}}^{\top}\bfW_{\cdot,\lambda'}\right) e_{\lambda'}^{\top}\in \R^{\Lambda_{\widetilde{J}}\,\times \Lambda_J },
\]
where $e_{\lambda'}\in \R^{\Lambda_J}$ is the standard basis vector whose only nonzero entry is a $1$ at coordinate  $\lambda'$. Therefore,
\[
\widehat{\bfA}^{(1)}=\mathsf{A}+\OVB+\Var\in \R^{\Lambda_{\widetilde{J}}\,\times \Lambda_J }.
\]
Taking the rows indexed by $\Lambda_J$, we have
\[
\widehat{\bfA}^{(2)}=(\widehat{\bfA}^{(1)})_{\Lambda_J,\cdot}=\mathsf{A}_{\Lambda_J,\cdot}+\OVB_{\Lambda_J,\cdot}+\Var_{\Lambda_J,\cdot} \in \R^{\Lambda_J\times \Lambda_J}.
\]

Because for each $\lambda'\in \Lambda_J$, the support of the $\lambda'$-th column of  $M_{J,t,t'}$ is contained in $\Omega_{\lambda'}$, and because $\mathsf{A}_{\cdot,\lambda'}$ coincides with $(\bfA_{\Lambda_J})_{\cdot,\lambda'}$ on $\Omega_{\lambda'}$, we have 
\[
\bfM_{(J,t,t')}\odot (\mathsf{A}_{\Lambda_J,\cdot}-\bfA_{\Lambda_J})=0.
\]
Therefore,
\begin{align}\label{eq:erro_decomposition_aux1}
(\bfM_{(J,t,t')})_{\mathrm{up}}\odot \left(\widehat{\bfA}^{(2)}- \bfA_{\Lambda_J}\right) &= (\bfM_{(J,t,t')})_{\mathrm{up}}\odot \left(\mathsf{A}_{\Lambda_J,\cdot}+\OVB_{\Lambda_J,\cdot}+\Var_{\Lambda_J,\cdot}-\bfA_{\Lambda_J}\right)\nonumber\\
&=(\bfM_{(J,t,t')})_{\mathrm{up}}\odot \left(\OVB_{\Lambda_J,\cdot}+\Var_{\Lambda_J,\cdot}\right).
\end{align}

Moreover, by Lemma \ref{lemma:upper_contain_lower}, the support of $((\bfM_{(J,t,t')})_{\mathrm{low}})^{\top}$ is contained in the support of $(\bfM_{(J,t,t')})_{\mathrm{up}}$. Since we already have $(\bfM_{(J,t,t')})_{\mathrm{up}}\odot (\mathsf{A}_{\Lambda_J,\cdot}-\bfA_{\Lambda_J})=0$, it follows that
\[
((\bfM_{(J,t,t')})_{\mathrm{low}})^{\top}\odot (\mathsf{A}_{\Lambda_J,\cdot}-\bfA_{\Lambda_J})=0.
\]
Hence,
\begin{align}\label{eq:erro_decomposition_aux2}
 ((\bfM_{(J,t,t')})_{\mathrm{low}})^{\top}\odot \left(\widehat{\bfA}^{(2)}-\bfA_{\Lambda_J}\right)&= ((\bfM_{(J,t,t')})_{\mathrm{low}})^{\top}\odot\left(\mathsf{A}_{\Lambda_J,\cdot}+\OVB_{\Lambda_J,\cdot}+\Var_{\Lambda_J,\cdot}-\bfA_{\Lambda_J}\right)\nonumber\\
 &=((\bfM_{(J,t,t')})_{\mathrm{low}})^{\top}\odot\left(\OVB_{\Lambda_J,\cdot}+\Var_{\Lambda_J,\cdot}\right).
\end{align}

Combining \eqref{eq:erro_decomposition_aux0}, \eqref{eq:erro_decomposition_aux1}, and \eqref{eq:erro_decomposition_aux2}, we obtain the following upper bound on the  estimation error:
\begin{align}\label{eq:estimator_error_decomp}
    &\| \bfD^{-t'}_{\Lambda_J} (\widehat{\bfA} - \bfA_{\Lambda_J}^{\varepsilon}) \bfD^{-t}_{\Lambda_J} \| \nonumber\\
    &\le  \left\| \bfD^{-t'}_{\Lambda_J} (\bfM_{(J,t,t')})_{\mathrm{up}}\odot \left(\widehat{\bfA}^{(2)}- \bfA_{\Lambda_J}\right) \bfD^{-t}_{\Lambda_J} \right\|+\left\| \bfD^{-t}_{\Lambda_J} ((\bfM_{(J,t,t')})_{\mathrm{low}})^{\top}\odot \left(\widehat{\bfA}^{(2)}-\bfA_{\Lambda_J}\right) \bfD^{-t'}_{\Lambda_J} \right\|\nonumber\\
    &= \left\| \bfD^{-t'}_{\Lambda_J} (\bfM_{(J,t,t')})_{\mathrm{up}}\odot \left(\OVB_{\Lambda_J,\cdot}+\Var_{\Lambda_J,\cdot}\right) \bfD^{-t}_{\Lambda_J} \right\| + \left\| \bfD^{-t}_{\Lambda_J} ((\bfM_{(J,t,t')})_{\mathrm{low}})^{\top}\odot \left(\OVB_{\Lambda_J,\cdot}+\Var_{\Lambda_J,\cdot}\right) \bfD^{-t'}_{\Lambda_J} \right\| \nonumber\\
    &\le \underbrace{\left\| \bfD^{-t'}_{\Lambda_J} \left((\bfM_{(J,t,t')})_{\mathrm{up}}\odot \left(\OVB_{\Lambda_J,\cdot}\right)\right) \bfD^{-t}_{\Lambda_J} \right\| + \left\| \bfD^{-t}_{\Lambda_J} \left(((\bfM_{(J,t,t')})_{\mathrm{low}})^{\top}\odot \left(\OVB_{\Lambda_J,\cdot}\right)\right) \bfD^{-t'}_{\Lambda_J} \right\|}_{=:\ErrorOVB  } \nonumber\\
    &\quad + \underbrace{\left\| \bfD^{-t'}_{\Lambda_J} \left((\bfM_{(J,t,t')})_{\mathrm{up}}\odot \left(\Var_{\Lambda_J,\cdot}\right)\right) \bfD^{-t}_{\Lambda_J} \right\| + \left\| \bfD^{-t}_{\Lambda_J} \left(((\bfM_{(J,t,t')})_{\mathrm{low}})^{\top}\odot \left(\Var_{\Lambda_J,\cdot}\right)\right) \bfD^{-t'}_{\Lambda_J} \right\|}_{=:\ErrorVar}.
\end{align}
This completes the proof.
\end{proof}

\subsubsection{Omitted-Variable Bias}

\begin{proof}[Proof of Proposition \ref{prop:Error-OVB}]
Recall from \eqref{eq:def_parameter} that the parameters $\widetilde{J},\widetilde{t},\widetilde{t}'$ are defined by
\begin{align}\label{eq:def_tilde_Jtt'}
\widetilde{J}:=\frac{t+t'-r+\varepsilon_1}{\min\{t',r_1\}+t-r}J, \quad \widetilde{t}:=\max\{t,t'\}=t',\quad \widetilde{t}':=\max\{r_1,t'\}.
\end{align}

We further recall the definition of $\ErrorOVB$ from \eqref{eq:estimator_error_decomp}:
\begin{align*}
\ErrorOVB&=\left\| \bfD^{-t'}_{\Lambda_J} \left((\bfM_{(J,t,t')})_{\mathrm{up}}\odot \left(\OVB_{\Lambda_J,\cdot}\right)\right) \bfD^{-t}_{\Lambda_J} \right\| + \left\| \bfD^{-t}_{\Lambda_J} \left(((\bfM_{(J,t,t')})_{\mathrm{low}})^{\top}\odot \left(\OVB_{\Lambda_J,\cdot}\right)\right) \bfD^{-t'}_{\Lambda_J} \right\| \\
&=\left\|(\bfM_{(J,t,t')})_{\mathrm{up}}\odot \left(\bfD^{-t'}_{\Lambda_J} \OVB_{\Lambda_J,\cdot} \bfD^{-t}_{\Lambda_J}\right)  \right\| + \left\|((\bfM_{(J,t,t')})_{\mathrm{low}})^{\top}\odot \left(\bfD^{-t}_{\Lambda_J} \OVB_{\Lambda_J,\cdot} \bfD^{-t'}_{\Lambda_J}\right)  \right\| ,
\end{align*}
where \[
\OVB=\sum_{\lambda'\in \Lambda_J} \left(E_{\Omega_{\lambda'}}(\bfU_{\cdot,\Omega_{\lambda'}}^{\top} \bfU_{\cdot,\Omega_{\lambda'}})^{-1}
\bfU_{\cdot,\Omega_{\lambda'}}^{\top}\bfU_{\cdot,\Omega_{\lambda'}^c} \bf\bfA_{\Omega^c_{\lambda'},\lambda'}\right) e_{\lambda'}^{\top}\in \R^{\Lambda_{\widetilde{J}}\,\times \Lambda_J }.
\]
Define $B^{t',t}:= \bfD_{\Lambda_{\widetilde{J}}}^{-t'} \,\OVB\, \bfD_{\Lambda_J}^{-t}$ and $ B^{t,t'}:= \bfD_{\Lambda_{\widetilde{J}}}^{-t} \,\OVB\, \bfD_{\Lambda_J}^{-t'}$, then
\begin{align}\label{eq:ovb_proof}
\ErrorOVB=\left\|(\bfM_{(J,t,t')})_{\mathrm{up}}\odot (B^{t',t})_{\Lambda_J,\cdot} \right\| + \left\|((\bfM_{(J,t,t')})_{\mathrm{low}})^{\top}\odot (B^{t,t'})_{\Lambda_J,\cdot}  \right\|.
\end{align}

To simply notation, we write
\[
\widehat{\Sigma}:= \frac{1}{N}\bfU^{\top}\bfU,\quad \widehat{\Sigma}_{\Omega_{\lambda'},\Omega_{\lambda'}}:=\frac{1}{N}\bfU_{\cdot,\Omega_{\lambda'}}^{\top} \bfU_{\cdot,\Omega_{\lambda'}},\quad \widehat{\Sigma}_{\Omega_{\lambda'},\Omega^c_{\lambda'}}:=\frac{1}{N}\bfU_{\cdot,\Omega_{\lambda'}}^{\top} \bfU_{\cdot,\Omega^c_{\lambda'}}.
\]
We also define the corresponding population quantities\footnote{Here we write $\Sigma$ in place of $\widetilde{\bfC}_u$ defined in Proposition \ref{prop:property_UAW} \ref{diagonal_preconditioning} (ii) to streamline the notation.}
\[
\Sigma:= \E \left[\frac{1}{N}\bfU^{\top}\bfU\right] ,\quad  \Sigma_{\Omega_{\lambda'},\Omega_{\lambda'}}:=\E \left[\frac{1}{N}\bfU_{\cdot,\Omega_{\lambda'}}^{\top}\bfU_{\cdot,\Omega_{\lambda'}}\right],\quad \Sigma_{\Omega_{\lambda'},\Omega^c_{\lambda'}}:=\E \left[\frac{1}{N}\bfU_{\cdot,\Omega_{\lambda'}}^{\top}\bfU_{\cdot,\Omega_{\lambda'}^c}\right].
\]
Moreover, by Proposition \ref{prop:property_UAW} \ref{diagonal_preconditioning} (ii), let $\bar{\Sigma}=\bfD^{r_1}\Sigma \bfD^{r_1}$ be the well-conditioned design covariance, which satisfies $c_{-}\le \sigma_{\min}(\bar{\Sigma})\le \sigma_{\max}(\bar{\Sigma})\le c_{+}$ for absolute constants $c_{-},c_{+}>0$; and its empirical version $\widehat{\bar{\Sigma}}=\bfD^{r_1}\widehat{\Sigma} \bfD^{r_1}$. 

The $\lambda'$-th column of $B^{t',t}$ can be written as 
\begin{align}\label{eq:ovb_proof_aux1}
(B^{t',t})_{\cdot,\lambda'}=E_{\Omega_{\lambda'}} \bfD^{-(t'-r_1)}_{\Omega_{\lambda'}}\big(\widehat{\bar{\Sigma}}_{\Omega_{\lambda'},\Omega_{\lambda'}}\big)^{-1}\widehat{\bar{\Sigma}}_{\Omega_{\lambda'},\Omega^c_{\lambda'}} \big(\bfD^{-r_1}_{\Omega^c_{\lambda'}}\bfA_{\Omega^c_{\lambda'},\lambda'}\big) (\bfD_{\Lambda_J}^{-t})_{\lambda',\lambda'}\in \R^{\Lambda_{\widetilde{J}}},
\end{align}
where $(\bfD_{\Lambda_J}^{-t})_{\lambda',\lambda'}=2^{-j't}\in \R$. 

Fix $\delta\in (0,1)$. By the first part of Lemma \ref{lemma:tech1}, for every $\lambda'\in \Lambda_J$, if  $N\gtrsim |\Omega_{\lambda'}|+\log(1/\delta)$, then with probability at least $1-\delta/2$, the matrix $\widehat{\bar{\Sigma}}_{\Omega_{\lambda'},\Omega_{\lambda'}}$ is invertible and satisfies
\begin{align}\label{eq:ovb_proof_aux2}
\|\big(\widehat{\bar{\Sigma}}_{\Omega_{\lambda'},\Omega_{\lambda'}}\big)^{-1}\|=1/\sigma_{\min}(\widehat{\bar{\Sigma}}_{\Omega_{\lambda'},\Omega_{\lambda'}})\lesssim 1.
\end{align}
We write
\[
\widehat{\bar{\Sigma}}_{\Omega_{\lambda'},\Omega^c_{\lambda'}}=\bfD^{r_1}_{\Omega_{\lambda'}}\widehat{\Sigma}_{\Omega_{\lambda'},\Omega^c_{\lambda'}}\bfD^{r_1}_{\Omega_{\lambda'}^c}=\frac{1}{N}\sum_{i=1}^{N} \bar{\bfU}_{i,\Omega_{\lambda'}}^{\top} \bar{\bfU}_{i,\Omega_{\lambda'}^c},
\]
where $\bar{\bfU}_{i,\Omega_{\lambda'}}:=\bfU_{i,\Omega_{\lambda'}}\bfD^{r_1}_{\Omega_{\lambda'}}, \bar{\bfU}_{i,\Omega^c_{\lambda'}}:=\bfU_{i,\Omega^c_{\lambda'}}\bfD^{r_1}_{\Omega^c_{\lambda'}}$. Thus,
\begin{align}\label{eq:ovb_aux1}
\left\|\widehat{\bar{\Sigma}}_{\Omega_{\lambda'},\Omega^c_{\lambda'}}\big(\bfD^{-r_1}_{\Omega^c_{\lambda'}} \bfA_{\Omega^c_{\lambda'},\lambda'}\big)(\bfD_{\Lambda_J}^{-t'})_{\lambda',\lambda'}\right\|=\left\|\frac{1}{N}\sum_{i=1}^{N}\bar{\bfU}_{i,\Omega_{\lambda'}}^{\top}\left\langle \bar{\bfU}_{i,\Omega_{\lambda'}^c} ,\big(\bfD^{-r_1}_{\Omega^c_{\lambda'}} \bfA_{\Omega^c_{\lambda'},\lambda'}\big)(\bfD_{\Lambda_J}^{-t'})_{\lambda',\lambda'}\right\rangle \right\|.
\end{align}

To control \eqref{eq:ovb_aux1}, we apply Lemma \ref{lemma:empirical_cross_term} with 
\[
X_i=\bar{\bfU}_{i,\Omega_{\lambda'}}^{\top}\in \R^{|\Omega_{\lambda'}|},\qquad Z_i=\left\langle \bar{\bfU}_{i,\Omega_{\lambda'}^c} ,\big(\bfD^{-r_1}_{\Omega^c_{\lambda'}} \bfA_{\Omega^c_{\lambda'},\lambda'}\big)(\bfD_{\Lambda_J}^{-t'})_{\lambda',\lambda'}\right\rangle,
\]
and note that
\[
\mu=\E [X_i Z_i]=\bar{\Sigma}_{\Omega_{\lambda'},\Omega^c_{\lambda'}}\big(\bfD^{-r_1}_{\Omega^c_{\lambda'}} \bfA_{\Omega^c_{\lambda'},\lambda'}\big)(\bfD_{\Lambda_J}^{-t'})_{\lambda',\lambda'}, 
\]
\[
\sigma_{Z}^2=\E [Z_i^2]= \left\langle\bar{\Sigma}_{\Omega^c_{\lambda'},\Omega^c_{\lambda'}} \big(\bfD^{-r_1}_{\Omega^c_{\lambda'}} \bfA_{\Omega^c_{\lambda'},\lambda'}\big)(\bfD_{\Lambda_J}^{-t'})_{\lambda',\lambda'}, \big(\bfD^{-r_1}_{\Omega^c_{\lambda'}} \bfA_{\Omega^c_{\lambda'},\lambda'}\big)(\bfD_{\Lambda_J}^{-t'})_{\lambda',\lambda'}\right\rangle,
\]
\[
\Sigma_{XX}= \E [X_iX_i^{\top}] =\bar{\Sigma}_{\Omega_{\lambda'},\Omega_{\lambda'}}.
\]
It follows that, with probability at least $1-\delta/2$,
\begin{align}\label{eq:ovb_proof_aux3}
&\left\|\widehat{\bar{\Sigma}}_{\Omega_{\lambda'},\Omega^c_{\lambda'}}\big(\bfD^{-r_1}_{\Omega^c_{\lambda'}} \bfA_{\Omega^c_{\lambda'},\lambda'}\big)(\bfD_{\Lambda_J}^{-t'})_{\lambda',\lambda'}\right\|_2 \nonumber\\
&=\left\|\frac{1}{N}\sum_{i=1}^{N}\bar{\bfU}_{i,\Omega_{\lambda'}}\left\langle \bar{\bfU}_{i,\Omega_{\lambda'}^c} ,\big(\bfD^{-r_1}_{\Omega^c_{\lambda'}} \bfA_{\Omega^c_{\lambda'},\lambda'}\big)(\bfD_{\Lambda_J}^{-t'})_{\lambda',\lambda'}\right\rangle \right\|_2 \nonumber\\
&\lesssim \big(\sigma_Z \|\Sigma_{XX}\|^{1/2} + \|\mu\|_2\big)\!\left(
\sqrt{\frac{|\Omega_{\lambda'}|+\log(4/\delta)}{N}}
+
\frac{|\Omega_{\lambda'}|+\log(4/\delta)}{N}
\right) \nonumber\\
&\overset{(\star)}{\lesssim} \|\big(\bfD^{-r_1}_{\Omega^c_{\lambda'}} \bfA_{\Omega^c_{\lambda'},\lambda'}\big)(\bfD_{\Lambda_J}^{-t'})_{\lambda',\lambda'}\|_2 \left(
\sqrt{\frac{|\Omega_{\lambda'}|+\log(4/\delta)}{N}}
+
\frac{|\Omega_{\lambda'}|+\log(4/\delta)}{N}
\right) \nonumber\\
&\lesssim \|\big(\bfD^{-r_1}_{\Omega^c_{\lambda'}} \bfA_{\Omega^c_{\lambda'},\lambda'}\big)(\bfD_{\Lambda_J}^{-t'})_{\lambda',\lambda'}\|_2,
\end{align}
provided that $N\gtrsim |\Omega_{\lambda'}|+\log (1/\delta)$. The inequality $(\star)$ follows from the bounds
\begin{align*}
\sigma_Z =(\E [Z_i^2])^{1/2}&=\left\langle\bar{\Sigma}_{\Omega^c_{\lambda'},\Omega^c_{\lambda'}} \big(\bfD^{-r_1}_{\Omega^c_{\lambda'}} \bfA_{\Omega^c_{\lambda'},\lambda'}\big)(\bfD_{\Lambda_J}^{-t'})_{\lambda',\lambda'}, \big(\bfD^{-r_1}_{\Omega^c_{\lambda'}} \bfA_{\Omega^c_{\lambda'},\lambda'}\big)(\bfD_{\Lambda_J}^{-t'})_{\lambda',\lambda'}\right\rangle^{1/2}\\
&\lesssim \|\big(\bfD^{-r_1}_{\Omega^c_{\lambda'}} \bfA_{\Omega^c_{\lambda'},\lambda'}\big)(\bfD_{\Lambda_J}^{-t'})_{\lambda',\lambda'}\|_2, \qquad\text{since }\|\bar{\Sigma}_{\Omega^c_{\lambda'},\Omega^c_{\lambda'}}\|\lesssim 1,
\end{align*}
and 
\[
\|\mu\|_2=\left\| \bar{\Sigma}_{\Omega_{\lambda'},\Omega^c_{\lambda'}}\big(\bfD^{-r_1}_{\Omega^c_{\lambda'}} \bfA_{\Omega^c_{\lambda'},\lambda'}\big)(\bfD_{\Lambda_J}^{-t'})_{\lambda',\lambda'}\right\|_2 \lesssim \|\big(\bfD^{-r_1}_{\Omega^c_{\lambda'}} \bfA_{\Omega^c_{\lambda'},\lambda'}\big)(\bfD_{\Lambda_J}^{-t'})_{\lambda',\lambda'}\|_2,\quad \text{since }\|\bar{\Sigma}_{\Omega_{\lambda'},\Omega^c_{\lambda'}}\|\lesssim 1.
\]

Therefore, combining \eqref{eq:ovb_proof_aux1}, \eqref{eq:ovb_proof_aux2}, and \eqref{eq:ovb_proof_aux3}, we conclude that if 
\[
N\gtrsim |\Omega_{\lambda'}|+\log (1/\delta),
\]
then with probability at least $1-\delta$,
\begin{align}\label{eq:ovb_proof_aux4}
   \| (B^{t',t})_{\cdot,\lambda'}\|_2
   &=\left\| E_{\Omega_{\lambda'}} \bfD^{-(t'-r_1)}_{\Omega_{\lambda'}}\big(\widehat{\bar{\Sigma}}_{\Omega_{\lambda'},\Omega_{\lambda'}}\big)^{-1}\widehat{\bar{\Sigma}}_{\Omega_{\lambda'},\Omega^c_{\lambda'}} \big(\bfD^{-r_1}_{\Omega^c_{\lambda'}}\bfA_{\Omega^c_{\lambda'},\lambda'}\big) (\bfD_{\Lambda_J}^{-t})_{\lambda',\lambda'}\right\|_2 \nonumber\\
   &\le  \|E_{\Omega_{\lambda'}} \|\cdot \|\bfD^{-(t'-r_1)}_{\Omega_{\lambda'}}\|\cdot \|\big(\widehat{\bar{\Sigma}}_{\Omega_{\lambda'},\Omega_{\lambda'}}\big)^{-1}\|\cdot \left\|\widehat{\bar{\Sigma}}_{\Omega_{\lambda'},\Omega^c_{\lambda'}}\big(\bfD^{-r_1}_{\Omega^c_{\lambda'}} \bfA_{\Omega^c_{\lambda'},\lambda'}\big)(\bfD_{\Lambda_J}^{-t})_{\lambda',\lambda'}\right\|_2 \nonumber\\
   &\lesssim 2^{\widetilde{J}\max\{r_1-t',0\}} \cdot \|\big(\bfD^{-r_1}_{\Omega^c_{\lambda'}} \bfA_{\Omega^c_{\lambda'},\lambda'}\big)(\bfD_{\Lambda_J}^{-t})_{\lambda',\lambda'}\|_2,
\end{align}
where the last inequality uses $\|E_{\Omega_{\lambda'}} \|= 1, \|\bfD^{-(t'-r_1)}_{\Omega_{\lambda'}}\|\le 2^{\widetilde{J}\max\{r_1-t',0\}}$.

Let $\bfA^{\varepsilon}_{\Lambda_{\widetilde{J}}}$ be the compressed matrix as in Definition \ref{def:supp_Jtt'} with parameters $(\widetilde{J},\,\widetilde{t},\,\widetilde{t}')$, i.e.
\[
\bfA^{\varepsilon}_{\Lambda_{\widetilde{J}}}:= M_{(\widetilde{J},\,\widetilde{t},\,\widetilde{t}')} \odot \bfA_{\Lambda_{\widetilde{J}}}\,.
\]
Applying Proposition~\ref{prop:Error-Compression} with parameters $(\widetilde{J},\,\widetilde{t},\,\widetilde{t}')$, we obtain that, for $r_1, t\in [r/2,\gamma)$,
\[
\|\bfD^{-r_1}_{\Lambda_{\widetilde{J}}}(\bfA^{\varepsilon}_{\Lambda_{\widetilde{J}}}-\bfA_{\Lambda_{\widetilde{J}}})\bfD^{-t}_{\Lambda_{\widetilde{J}}}\|\lesssim  \widetilde{J} 2^{-\widetilde{J}\left(\min\{\widetilde{t}',r_1\}+\min\{\widetilde{t},t\}-r\right)}= \widetilde{J} 2^{-\widetilde{J}\left(r_1+t-r\right)},
\]
where the last equality follows since $\widetilde{t}=\max\{t,t'\}\ge t$ and $\widetilde{t}'=\max\{r_1,t'\}\ge r_1$.

On the other hand, since $t> r/2$ and $r_1> n/2+\max\{0,r\}\ge r/2$ by Assumption \ref{assumption:operator_data_noise} (ii), by Proposition \ref{prop:Error-Truncation},
\[
\| \bfD^{-r_1} (\bfA_{\Lambda_{\widetilde{J}}}^{\uparrow} - \bfA) \bfD^{-t} \|\lesssim 2^{-\widetilde{J}\left(r_1+t-r\right)}.
\]

 Recall that the regression support is defined by \[
 \Omega_{\lambda'}:=\left\{\lambda\in \mcJ: (\lambda,\lambda')\in \supp(\widetilde{J},\widetilde{t},\widetilde{t}') \right\}, \qquad \Omega_{\lambda'}^c:=\mcJ\setminus \Omega_{\lambda'}.
 \]
For every $\lambda'\in \Lambda_J$, the vector $ \bfA_{\Omega^c_{\lambda'},\lambda'}$ collects exactly those entries of the $\lambda'$-th column of $\bfA$ that are omitted from the regression. By construction of the truncation $\bfA_{\Lambda_{\widetilde{J}}}^{\uparrow}$ and the compression mask $M_{(\widetilde{J},\,\widetilde{t},\,\widetilde{t}')}$, these omitted entries coincide with the entries of the $\lambda'$-th column of $\bfA_{\Lambda_{\widetilde{J}}}^{\uparrow} - \bfA$ and $\bfA^{\varepsilon}_{\Lambda_{\widetilde{J}}}-\bfA_{\Lambda_{\widetilde{J}}}$\,. Therefore, after weighting by the diagonal matrices on both sides,
\begin{align}\label{eq:ovb_proof_aux5}
\|\big(\bfD^{-r_1}_{\Omega^c_{\lambda'}} \bfA_{\Omega^c_{\lambda'},\lambda'}\big)(\bfD_{\Lambda_J}^{-t})_{\lambda',\lambda'}\|_2\le \| \bfD^{-r_1} (\bfA_{\Lambda_{\widetilde{J}}}^{\uparrow} - \bfA) \bfD^{-t} \|+\|\bfD^{-r_1}_{\Lambda_{\widetilde{J}}}(\bfA^{\varepsilon}_{\Lambda_{\widetilde{J}}}-\bfA_{\Lambda_{\widetilde{J}}})\bfD^{-t}_{\Lambda_{\widetilde{J}}}\| \lesssim \widetilde{J} 2^{-\widetilde{J}(r_1+t-r)}.
\end{align}

Combining \eqref{eq:ovb_proof_aux4} and \eqref{eq:ovb_proof_aux5} yields
\begin{align*}
\|(B^{t',t})_{\cdot,\lambda'}\|_2&\lesssim 2^{\widetilde{J}\max\{r_1-t',0\}} \cdot \|\big(\bfD^{-r_1}_{\Omega^c_{\lambda'}} \bfA_{\Omega^c_{\lambda'},\lambda'}\big)(\bfD_{\Lambda_J}^{-t})_{\lambda',\lambda'}\|_2\\
&\lesssim 2^{\widetilde{J}\max\{r_1-t',0\}}\cdot \widetilde{J} 2^{-\widetilde{J}(r_1+t-r)}\\
&=\widetilde{J} 2^{-\widetilde{J}(\min\{t',r_1\}+t-r)}.
\end{align*}

Taking a union bound over all columns indexed by $\lambda'\in \Lambda_{J}$, we obtain that, if
\[
N\gtrsim \max_{\lambda'\in \Lambda_{J}}|\Omega_{\lambda'}|+J+\log (1/\delta),
\]
then with probability at least $1-\delta$, the following bound holds simultaneously for all $\lambda'\in \Lambda_{J}$:
\[
\|(B^{t',t})_{\cdot,\lambda'}\|_2\lesssim \widetilde{J} 2^{-\widetilde{J}(\min\{t',r_1\}+t-r)}.
\]

For any matrix $B$, write $\mathrm{nnz}(B):=\#\{(i,j): B_{ij}\neq 0\}$, $\mathrm{nnz}_{\mathrm{row}}(B):=\max_i \#\{j: B_{ij}\neq 0\}$, and $
\mathrm{nnz}_{\mathrm{col}}(B):=\max_j \#\{i: B_{ij}\neq 0\}$. By Proposition \ref{prop:sparsity_count},
\[
\mathrm{nnz}_{\mathrm{row}}(\bfM_{(J,t,t')})\lesssim 2^{Jn(t+t'-r)/(\sigma-\frac{n}{2}+t-\frac{r}{2})},\quad \mathrm{nnz}_{\mathrm{col}}(\bfM_{(J,t,t')})\lesssim 2^{Jn(t+t'-r)/(\sigma-\frac{n}{2}+t'-\frac{r}{2})}.
\]

Hence,
\begin{align*}
    \left\|(\bfM_{(J,t,t')})_{\mathrm{up}}\odot (B^{t',t})_{\Lambda_J,\cdot}  \right\|_1&=\max_{\lambda'\in\Lambda_J} \left\|\left((\bfM_{(J,t,t')})_{\mathrm{up}}\odot (B^{t',t})_{\Lambda_J,\cdot}\right)_{\cdot,\lambda'}\right\|_1\\
    &\le \max_{\lambda'\in\Lambda_J} \sqrt{\mathrm{nnz}_{\mathrm{col}}((\bfM_{(J,t,t')})_{\mathrm{up}})}\left\|\left((\bfM_{(J,t,t')})_{\mathrm{up}}\odot (B^{t',t})_{\Lambda_J,\cdot}\right)_{\cdot,\lambda'}\right\|_2\\
    &\le \max_{\lambda'\in\Lambda_J} \sqrt{\mathrm{nnz}_{\mathrm{col}}(\bfM_{(J,t,t')})}\cdot \,\|(B^{t',t})_{\cdot,\lambda'}\|_2\\
    &\lesssim 2^{Jn(t+t'-r)/2(\sigma-\frac{n}{2}+t'-\frac{r}{2})}\cdot \widetilde{J} 2^{-\widetilde{J}(\min\{t',r_1\}+t-r)}.
\end{align*}

On the other hand,
\begin{align*}
     \left\|(\bfM_{(J,t,t')})_{\mathrm{up}}\odot (B^{t',t})_{\Lambda_J,\cdot}  \right\|_{\infty}&=\max_{\lambda\in\Lambda_J} \left\|\left((\bfM_{(J,t,t')})_{\mathrm{up}}\odot (B^{t',t})_{\Lambda_J,\cdot}\right)_{\lambda,\cdot}\right\|_1\\
     &\le \mathrm{nnz}_{\mathrm{row}}((\bfM_{(J,t,t')})_{\mathrm{up}})\cdot \left\|(\bfM_{(J,t,t')})_{\mathrm{up}}\odot (B^{t',t})_{\Lambda_J,\cdot} \right\|_{\max}\\
     &\le \mathrm{nnz}_{\mathrm{row}}(\bfM_{(J,t,t')})\cdot \max_{\lambda'\in\Lambda_J} \|(B^{t',t})_{\cdot,\lambda'}\|_2\\
     &\lesssim 2^{Jn(t+t'-r)/(\sigma-\frac{n}{2}+t-\frac{r}{2})}\cdot \widetilde{J} 2^{-\widetilde{J}(\min\{t',r_1\}+t-r)}.
\end{align*}
Therefore,
\begin{align}\label{eq:ovb_proof_aux6}
\left\|(\bfM_{(J,t,t')})_{\mathrm{up}}\odot (B^{t',t})_{\Lambda_J,\cdot}  \right\|&\le \left(\,\left\|(\bfM_{(J,t,t')})_{\mathrm{up}}\odot (B^{t',t})_{\Lambda_J,\cdot}  \right\|_1 \left\|(\bfM_{(J,t,t')})_{\mathrm{up}}\odot (B^{t',t})_{\Lambda_J,\cdot}  \right\|_{\infty}\right)^{1/2}\nonumber\\
&\lesssim  \widetilde{J} 2^{-\widetilde{J}(\min\{t',r_1\}+t-r)}\cdot 2^{\varepsilon_1 J},
\end{align}
where
\[
\varepsilon_1:= \frac{n(t+t'-r)}{\sigma-n/2+t-r/2}\ge n(t+t'-r)\left(\frac{1}{4(\sigma-n/2+t'-r/2)}+\frac{1}{2(\sigma-n/2+t-r/2)}\right).
\]

Similarly, if $N\gtrsim \max_{\lambda'\in \Lambda_{J}}|\Omega_{\lambda'}|+J+\log (1/\delta)$, then with probability at least $1-\delta$, the following bound holds simultaneously for all $\lambda'\in \Lambda_{J}$:
\[
\|(B^{t,t'})_{\cdot,\lambda'}\|\lesssim \widetilde{J} 2^{-\widetilde{J}(\min\{t,r_1\}+t'-r)}.
\]
Using the same argument yields
\begin{align}\label{eq:ovb_proof_aux7}
\left\|((\bfM_{(J,t,t')})_{\mathrm{low}})^{\top}\odot (B^{t,t'})_{\Lambda_J,\cdot}  \right\|\lesssim \widetilde{J} 2^{-\widetilde{J}(\min\{t,r_1\}+t'-r)}\cdot 2^{\varepsilon_1 J},
\end{align}
where we used 
\[
\varepsilon_1=\frac{n(t+t'-r)}{\sigma-n/2+t-r/2}\ge n(t+t'-r)\left(\frac{1}{4(\sigma-n/2+t-r/2)}+\frac{1}{2(\sigma-n/2+t'-r/2)}\right).
\]

Combining \eqref{eq:ovb_proof}, \eqref{eq:ovb_proof_aux6}, and \eqref{eq:ovb_proof_aux7} yields that, if $N\gtrsim  \max_{\lambda'\in \Lambda_{J}}|\Omega_{\lambda'}|+J+\log (1/\delta)$, then with probability at least $1-\delta$, 
\begin{align*}
\ErrorOVB&=\left\|(\bfM_{(J,t,t')})_{\mathrm{up}}\odot (B^{t',t})_{\Lambda_J,\cdot} \right\| + \left\|((\bfM_{(J,t,t')})_{\mathrm{low}})^{\top}\odot (B^{t,t'})_{\Lambda_J,\cdot}  \right\|\\
&\lesssim \widetilde{J} 2^{-\widetilde{J}(\min\{t',r_1\}+t-r)}\cdot 2^{\varepsilon_1 J}+\widetilde{J} 2^{-\widetilde{J}(\min\{t,r_1\}+t'-r)}\cdot 2^{\varepsilon_1 J}\\
&\lesssim J\, 2^{-J(t+t'-r)},
\end{align*}
where the last line follows from the definition of $\widetilde{J}$
\[
\widetilde{J}=\left\lceil\frac{t+t'-r+\varepsilon_1}{\min\{t',r_1\}+t-r}\,J\right\rceil\ge \max\left\{\frac{t+t'-r+\varepsilon_1}{\min\{t',r_1\}+t-r}J,\, \frac{t+t'-r+\varepsilon_1}{\min\{t,r_1\}+t'-r}J\right\}.
\]
The final equality holds because $t\le t'$ implies
\[
\min\{t',r_1\}+t-r\le \min\{t,r_1\}+t'-r.
\]

In summary, we use the set $\supp(\widetilde{J},\widetilde{t},\widetilde{t}')$ to perform the regression. Then, we conclude that, if 
\[
N\gtrsim \max_{\lambda'\in \Lambda_{J}}|\Omega_{\lambda'}|+J+\log (1/\delta)\asymp 2^{\widetilde{J}n(\widetilde{t}+\widetilde{t}'-r)/(\sigma-\frac{n}{2}+\widetilde{t}'-\frac{r}{2})}+\log (1/\delta),
\]
then with probability at least $1-\delta$,
\begin{align}\label{eq:ovb_final_error}
\ErrorOVB\lesssim J\, 2^{-J(t+t'-r)}.
\end{align}

\end{proof}

\subsubsection{Variance}

\begin{proof}[Proof of Proposition \ref{prop:Error-Var}]

Recall that
\begin{align*}
\ErrorVar&=\left\| \bfD^{-t'}_{\Lambda_J} \left((\bfM_{(J,t,t')})_{\mathrm{up}}\odot \left(\Var_{\Lambda_J,\cdot}\right)\right) \bfD^{-t}_{\Lambda_J} \right\| + \left\| \bfD^{-t}_{\Lambda_J} \left(((\bfM_{(J,t,t')})_{\mathrm{low}})^{\top}\odot \left(\Var_{\Lambda_J,\cdot}\right)\right) \bfD^{-t'}_{\Lambda_J} \right\|\\
&=\left\|(\bfM_{(J,t,t')})_{\mathrm{up}}\odot \left(\bfD^{-t'}_{\Lambda_J} \Var_{\Lambda_J,\cdot} \bfD^{-t}_{\Lambda_J}\right)  \right\| + \left\|((\bfM_{(J,t,t')})_{\mathrm{low}})^{\top}\odot \left(\bfD^{-t}_{\Lambda_J} \Var_{\Lambda_J,\cdot} \bfD^{-t'}_{\Lambda_J}\right)  \right\| ,
\end{align*}
where 
\[
\Var:=\sum_{\lambda'\in \Lambda_J} \left(E_{\Omega_{\lambda'}}(\bfU_{\cdot,\Omega_{\lambda'}}^{\top} \bfU_{\cdot,\Omega_{\lambda'}})^{-1} \bfU_{\cdot,\Omega_{\lambda'}}^{\top}\bfW_{\cdot,\lambda'}\right) e_{\lambda'}^{\top}\in \R^{\Lambda_{\widetilde{J}}\,\times \Lambda_J }.
\]
Define $V^{t',t}:= \bfD_{\Lambda_{\widetilde{J}}}^{-t'} \,\Var\, \bfD_{\Lambda_J}^{-t}$ and $ V^{t,t'}:= \bfD_{\Lambda_{\widetilde{J}}}^{-t} \,\Var\, \bfD_{\Lambda_J}^{-t'}$, then
\begin{align}\label{eq:var_proof}
\ErrorVar=\left\|(\bfM_{(J,t,t')})_{\mathrm{up}}\odot (V^{t',t})_{\Lambda_J,\cdot} \right\| + \left\|((\bfM_{(J,t,t')})_{\mathrm{low}})^{\top}\odot (V^{t,t'})_{\Lambda_J,\cdot}  \right\|.
\end{align}

To control the first term in \eqref{eq:var_proof}, we apply the norm-compression inequality (Lemma \ref{lemma:norm_compression}) with the natural $(j,j')$-block partition, writing $(\cdot)_{j,j'}$ for the $(j,j')$-block. We obtain that
\begin{align}\label{eq:var_aux1}
&\left\|(\bfM_{(J,t,t')})_{\mathrm{up}}\odot (V^{t',t})_{\Lambda_J,\cdot} \right\| \le \bigg(\sum_{j_0\le j\le j' \le J}\bigg\|\left(\bfM_{(J,t,t')})_{\mathrm{up}}\odot (V^{t',t})_{\Lambda_J,\cdot}\right)_{j,j'}\bigg\|^2\bigg)^{1/2} \nonumber\\
&\le \bigg(\sum_{j_0\le j\le j' \le J}\bigg\|\left(\bfM_{(J,t,t')})_{\mathrm{up}}\odot (V^{t',t})_{\Lambda_J,\cdot}\right)_{j,j'}\bigg\|_{1}\cdot \bigg\|\left(\bfM_{(J,t,t')})_{\mathrm{up}}\odot (V^{t',t})_{\Lambda_J,\cdot}\right)_{j,j'}\bigg\|_{\infty}\bigg)^{1/2} \nonumber\\
&\le\bigg(\sum_{j_0\le j\le j' \le J} \mathrm{nnz}_{\mathrm{col}}\big(\big(\bfM_{(J,t,t')}\big)_{j,j'}\big)\cdot \mathrm{nnz}_{\mathrm{row}}\big(\big(\bfM_{(J,t,t')}\big)_{j,j'}\big)\cdot  \big\|\big(V^{t',t}\big)_{j,j'}\big\|^2_{\max}\bigg)^{1/2}\nonumber\\
&=\bigg(\sum_{j_0\le j\le j' \le J}  2^{-2jt'-2j't} \cdot (\mathrm{count}(j,j'))^2\cdot \| (\Var)_{j,j'}\|^2_{\max}\bigg)^{1/2},
\end{align}

where in the last line we used $(V^{t',t})_{j,j'}= 2^{-jt'-j't} (\Var)_{j,j'}$ and 
\[
\mathrm{count}(j,j'):=\left(\mathrm{nnz}_{\mathrm{col}}\big(\big(\bfM_{(J,t,t')}\big)_{j,j'}\big)\cdot \mathrm{nnz}_{\mathrm{row}}\big(\big(\bfM_{(J,t,t')}\big)_{j,j'}\big)\right)^{1/2}.
\]
By Proposition \ref{prop:sparsity_count}, we have
\begin{align}\label{eq:var_aux2}
    \mathrm{count}(j,j')\lesssim \begin{cases}
      0  & \text{if } (j,j')\in D_1\cup D_2, \\
      2^{(j-j')n/2}  & \text{if } (j,j')\in D_3, \\
      2^{(j'-j)n/2}  & \text{if } (j,j')\in D_4, \\
      2^{(j+j')n/2} 2^{n(J(t+t'-r)-jt'-j't-(j+j
')\widetilde{d})/(2\widetilde{d}+r)}  &\text{if } (j,j')\in D_5, \\
       2^{(j+j')n/2} & \text{if } (j,j')\in D_6.
    \end{cases}
\end{align}

Next, we derive a high-probability upper bound on $\|(\Var)_{j,j'}\|_{\max}$ for all $j_0\le j,j'\le J$. 

For $j_0\le j,j'\le J$ and $\lambda'=(j',k')$ with $k'\in \nabla_{j'}$, let $(\Var)_{j,\lambda'}=(\Var)_{j,(j',k')}\in \R^{\nabla_j}$ denote the $\lambda'$-th column of $(\Var)_{j,j'}$. Then,
\[
\|(\Var)_{j,j'}\|_{\max}=\max_{k'\in \nabla_{j'}}\|(\Var)_{j,\lambda'}\|_{\max}.
\]

Recall that the $\lambda'$-th column of the variance term $\Var$ is given by 
\[
(\Var)_{\cdot,\lambda'}= E_{\Omega_{\lambda'}}(\bfU_{\cdot,\Omega_{\lambda'}}^{\top} \bfU_{\cdot,\Omega_{\lambda'}})^{-1} \bfU_{\cdot,\Omega_{\lambda'}}^{\top}\bfW_{\cdot,\lambda'}\in \R^{\Lambda_{\widetilde{J}}},
\]
where $\bfW_{\cdot,\lambda'}\sim \mcN(0,\sigma^2_{\lambda'} I_{N})$ is independent of $\bfU$, and $\sigma_{\lambda'}\asymp 2^{-j'r_2}$ by Proposition \ref{prop:property_UAW} \ref{diagonal_preconditioning} (iii). Conditioning on $\bfU$, 
\[
(\bfU_{\cdot,\Omega_{\lambda'}}^{\top} \bfU_{\cdot,\Omega_{\lambda'}})^{-1} \bfU_{\cdot,\Omega_{\lambda'}}^{\top}\bfW_{\cdot,\lambda'} \,\mid\, \bfU \sim \mcN\left(0,\sigma^2_{\lambda'}(\bfU_{\cdot,\Omega_{\lambda'}}^{\top} \bfU_{\cdot,\Omega_{\lambda'}})^{-1}\right).
\]
Write $\widehat{\Sigma}_{\Omega_{\lambda'},\Omega_{\lambda'}}:=\frac{1}{N}\bfU_{\cdot,\Omega_{\lambda'}}^{\top} \bfU_{\cdot,\Omega_{\lambda'}}$ and $\widehat{\bar{\Sigma}}_{\Omega_{\lambda'},\Omega_{\lambda'}}:=\bfD^{r_1}_{\Omega_{\lambda'}}\widehat{\Sigma}_{\Omega_{\lambda'},\Omega_{\lambda'}} \bfD^{r_1}_{\Omega_{\lambda'}}$. Hence,
\[
(\bfU_{\cdot,\Omega_{\lambda'}}^{\top} \bfU_{\cdot,\Omega_{\lambda'}})^{-1} \bfU_{\cdot,\Omega_{\lambda'}}^{\top}\bfW_{\cdot,\lambda'} \,\mid\, \bfU \sim \mcN\left(0,\frac{\sigma^2_{\lambda'}}{N}\,\widehat{\Sigma}_{\Omega_{\lambda'},\Omega_{\lambda'}}^{-1}\right)=\mcN\left(0,\frac{\sigma^2_{\lambda'}}{N}\,\bfD^{r_1}_{\Omega_{\lambda'}}\widehat{\bar{\Sigma}}_{\Omega_{\lambda'},\Omega_{\lambda'}}^{-1}\bfD^{r_1}_{\Omega_{\lambda'}}\right).
\]

Observe that the vector $(\Var)_{j,\lambda'}\in \R^{\nabla_j}$ is the subvector of $(\Var)_{\cdot,\lambda'}$ obtained by restricting the row index to the $j$-th block. Therefore, conditioning on $\bfU$, the nonzero entries in $(\Var)_{j,\lambda'}$ has the same distribution as $v_{j,\lambda'}$, where
\[
v_{j,\lambda'}\sim \mathcal{N}\bigg(0,\frac{2^{2jr_1}\sigma^2_{\lambda'}}{N}\big(\widehat{\bar{\Sigma}}_{\Omega_{\lambda'},\Omega_{\lambda'}}^{-1}\big)_{j,j}\bigg).
\]
Let $ \bar{v}_{j,\lambda'}:=v_{j,\lambda'}\big(\frac{2^{jr_1}\sigma_{\lambda'}}{\sqrt{N}}\big)^{-1}$, then $\bar{v}_{j,\lambda'}\sim \mcN\big(0,\big(\widehat{\bar{\Sigma}}_{\Omega_{\lambda'},\Omega_{\lambda'}}^{-1}\big)_{j,j}\big)$. 

Fix $\delta\in (0,1)$ and set 
\[\delta_{j,\lambda'}:=2^{-j'n}J^{-2}\delta,\quad \lambda'=(j',k').
\]
Since $\bar{\Sigma}_{\Omega_{\lambda'},\Omega_{\lambda'}}$ is well-conditioned with eigenvalues of order one, Lemma \ref{lemma:tech1} implies that, provided $N\gtrsim |\Omega_{\lambda'}|+\log(1/\delta_{j,\lambda'})$, we have with probability at least $1-\delta_{j,\lambda'}$,
\[
\|\bar{v}_{j,\lambda'}\|_{\max}\lesssim \sqrt{\log\big(\mathrm{dim}(\bar{v}_{j,\lambda'})/\delta_{j,\lambda'}\big)} \lesssim  \sqrt{\log\big(2^{jn}/\delta_{j,\lambda'}\big)}\lesssim \sqrt{j}+\sqrt{\log(1/\delta_{j,\lambda'})}.
\]
Taking a union bound over $k'\in \nabla_{j'}$, if $N\gtrsim \max_{k'\in \nabla_{j'}}\big( |\Omega_{\lambda'}|+\log(1/\delta_{j,\lambda'})\big)$, then  with probability at least $1-\sum_{k'\in\nabla_{j'}}\delta_{j,\lambda'}$,
\[
\max_{k'\in \nabla_{j'}}\|\bar{v}_{j,\lambda'}\|_{\max}\lesssim \sqrt{j}+\sqrt{\log (1/\min_{k'\in \nabla_{j'}}\delta_{j,\lambda'})}.
\]
Finally, applying a union bound over all $(j,j')$–blocks, if \[
N\gtrsim \max_{j,j'} \left( \max_{k'\in \nabla_{j'}} \big(|\Omega_{\lambda'}|+\log(1/\delta_{j,\lambda'})\big)\right)\asymp \max_{\lambda'}|\Omega_{\lambda'}|+J+\log(1/\delta),
\]
then with probability at least $1-\sum_{j,j'}\sum_{k'\in\nabla_{j'}}\delta_{j,\lambda'}\ge 1-\delta$, we have simultaneously for all $j,j'$,
\[
\max_{k'\in \nabla_{j'}}\|\bar{v}_{j,\lambda'}\|_{\max}\lesssim \sqrt{j}+\sqrt{\log (1/\min_{k'\in \nabla_{j'}}\delta_{j,\lambda'})}\lesssim \sqrt{J+\log(1/\delta)}.
\]
Recall that $\max_{\lambda'}|\Omega_{\lambda'}|$ denotes the maximal column-wise regression support size over $\lambda'\in \Lambda_J$. Since the regression support is chosen as $\supp(\widetilde{J},\widetilde{t},\widetilde{t}')$ defined in \eqref{eq:supp_Jtt'}, we have
\[
\max_{\lambda'}|\Omega_{\lambda'}|=\mathrm{nnz}_{\mathrm{col}}\left(\supp(\widetilde{J},\widetilde{t},\widetilde{t}')\right)\lesssim 2^{\widetilde{J}n(\widetilde{t}+\widetilde{t}'-r)/(\sigma-\frac{n}{2}+\widetilde{t}'-\frac{r}{2})},
\]
as stated in Proposition~\ref{prop:sparsity_count}.

Consequently, if 
\[
N\gtrsim 2^{\widetilde{J}n(\widetilde{t}+\widetilde{t}'-r)/(\sigma-n/2+\widetilde{t}'-r/2)}+\log (1/\delta),
\]
then with probability at least $1-\delta$, simultaneously for all $j,j'$,
\begin{align}\label{eq:var_aux3}
\|(\Var)_{j,j'}\|_{\max}&=\max_{k'\in \nabla_{j'}}\|(\Var)_{j,\lambda'}\|_{\max}\nonumber\\
&\le \max_{k'\in\nabla_{j'}} \frac{2^{jr_1}\sigma_{\lambda'}}{\sqrt{N}}\|\bar{v}_{j,\lambda'}\|_{\max}\nonumber\\
&\lesssim \frac{2^{jr_1-j'r_2}}{\sqrt{N}}\max_{k'\in \nabla_{j'}}\|\bar{v}_{j,\lambda'}\|_{\max}\nonumber\\
&\lesssim  \frac{2^{jr_1-j'r_2}}{\sqrt{N}}\cdot \sqrt{J+\log(1/\delta)}.
\end{align}

Let $D_{\mathrm{up}}:=\{(j,j'):0\le j\le j'\le J\}$ and $D_{\mathrm{low}}:=\{(j,j'):0\le j'< j\le J\}$. Combining the estimates \eqref{eq:var_aux1}, \eqref{eq:var_aux2}, and \eqref{eq:var_aux3} yields that, if $N\gtrsim 2^{\widetilde{J}n(\widetilde{t}+\widetilde{t}'-r)/(\sigma-n/2+\widetilde{t}'-r/2)}+\log (1/\delta)$, then with probability at least $1-\delta$,
\begin{align}\label{eq:var_proof_1}
    &\left\|(\bfM_{(J,t,t')})_{\mathrm{up}}\odot (V^{t',t})_{\Lambda_J,\cdot} \right\| \le \bigg(\sum_{j_0\le j\le j' \le J}  2^{-2jt'-2j't} \cdot (\mathrm{count}(j,j'))^2\cdot \| (\Var)_{j,j'}\|^2_{\max}\bigg)^{1/2} \nonumber\\
    &\le  \sqrt{\frac{J+\log (1/\delta)}{N}}\bigg(\sum_{(j,j')\in D_{\mathrm{up}}}2^{2j(r_1-t')-2j'(r_2+t)}\cdot(\mathrm{count}(j,j'))^2\bigg)^{1/2} \nonumber\\
    &\lesssim \sqrt{\frac{J+\log(1/\delta)}{N}} \bigg(\sum_{(j,j')\in D_4}2^{2j(r_1-t')-2j'(r_2+t)}\cdot 2^{(j'-j)n}+ \sum_{(j,j')\in D_{\mathrm{up}}\cap D_6}2^{2j(r_1-t')-2j'(r_2+t)}\cdot 2^{(j+j')n} \nonumber\\
    &\quad + \sum_{(j,j')\in D_{\mathrm{up}}\cap D_5} 2^{2j(r_1-t')-2j'(r_2+t)}\cdot 2^{(j+j')n} 2^{2n(J(t+t'-r)-jt'-j't-(j+j
')\widetilde{d})/(2\widetilde{d}+r)}\bigg)^{1/2} \nonumber\\
&\lesssim \sqrt{\frac{J+\log(1/\delta)}{N}}\cdot J\cdot\left(1+2^{J\frac{t+t'-r}{\sigma-n/2+t-r/2}(-t-r_2+n/2)}+2^{J\frac{t+t'-r}{2\widetilde{d}+t+t'}(-t-t'+r_1-r_2+n)}+2^{J(-t-t'+r_1-r_2)}\right),
\end{align}
where, in the last step, we repeatedly used the standard exponential–sum estimate
\[
\sum_{j=k_1}^{k_2} 2^{j\alpha} \asymp \begin{cases}
 2^{k_2 \alpha}  & \text{ if } \alpha>0,\\
  k_2-k_1 &  \text{ if } \alpha=0,\\
 2^{k_1}\alpha  & \text{ if } \alpha<0,\\
\end{cases}
\]
and, moreover, since the regions $D_1$-$D_6$ are delineated by straight lines, the dominant contributions to the sum arise from the pairs $(j,j')$ lying on the boundary lines and at the corner points. This observation reduces the extraction of the leading-order terms to checking finitely many boundary/corner contributions. Indeed, the four terms inside the parentheses in the last line of \eqref{eq:var_proof_1} correspond to the contributions from the following four blocks in the $(j,j')$--plane: 
\[
(0,0),\quad \bigg(0,\ \frac{t+t'-r}{\sigma-\frac{n}{2}+t-\frac{r}{2}}J\bigg), \quad \bigg(\frac{t+t'-r}{2\widetilde{d}+t+t'}J,\ \frac{t+t'-r}{2\widetilde{d}+t+t'}J\bigg), \quad  (J,J).
\]

Similarly, for the second term in \eqref{eq:var_proof}, we have
\begin{align}\label{eq:var_proof_2}
   & \left\|((\bfM_{(J,t,t')})_{\mathrm{low}})^{\top}\odot (V^{t,t'})_{\Lambda_J,\cdot}  \right\|=  \left\|(\bfM_{(J,t,t')})_{\mathrm{low}}\odot (V^{t,t'})^{\top}_{\Lambda_J,\cdot}  \right\| \nonumber\\
   &\le \bigg(\sum_{j_0\le j'< j \le J}  2^{-2jt'-2j't} \cdot (\mathrm{count}(j,j'))^2\cdot \| (\Var)_{j',j}\|^2_{\max}\bigg)^{1/2}\nonumber\\
   &\lesssim  \sqrt{\frac{J+\log (1/\delta)}{N}}\bigg(\sum_{(j,j')\in D_{\mathrm{low}}}2^{2j'(r_1-t)-2j(r_2+t')}\cdot(\mathrm{count}(j,j'))^2\bigg)^{1/2} \nonumber\\
   &\lesssim \sqrt{\frac{J+\log(1/\delta)}{N}} \bigg(\sum_{(j,j')\in D_3}2^{2j'(r_1-t)-2j(r_2+t')}\cdot 2^{(j-j')n}+ \sum_{(j,j')\in D_{\mathrm{low}}\cap D_6}2^{2j'(r_1-t)-2j(r_2+t')}\cdot 2^{(j+j')n} \nonumber\\
    &\quad + \sum_{(j,j')\in D_{\mathrm{low}}\cap D_5} 2^{2j'(r_1-t)-2j(r_2+t')}\cdot 2^{(j+j')n} 2^{2n(J(t+t'-r)-jt'-j't-(j+j
')\widetilde{d})/(2\widetilde{d}+r)}\bigg)^{1/2} \nonumber\\
&\lesssim \sqrt{\frac{J+\log(1/\delta)}{N}}\cdot J \cdot\left(1+2^{J\frac{t+t'-r}{\sigma-n/2+t'-r/2}(-t'-r_2+n/2)}+2^{J\frac{t+t'-r}{2\widetilde{d}+t+t'}(-t-t'+r_1-r_2+n)}+2^{J(-t-t'+r_1-r_2)}\right).
\end{align}

Therefore, combining \eqref{eq:var_proof}, \eqref{eq:var_proof_1}, and \eqref{eq:var_proof_2}, we conclude that, if 
\[
N\gtrsim 2^{\widetilde{J}n(\widetilde{t}+\widetilde{t}'-r)/(\sigma-n/2+\widetilde{t}'-r/2)}+\log (1/\delta),
\]
then with probability at least $1-\delta$,
\begin{align}
    &\ErrorVar=\left\|(\bfM_{(J,t,t')})_{\mathrm{up}}\odot (V^{t',t})_{\Lambda_J,\cdot} \right\| + \left\|((\bfM_{(J,t,t')})_{\mathrm{low}})^{\top}\odot (V^{t,t'})_{\Lambda_J,\cdot}  \right\| \nonumber\\
    &\lesssim  \sqrt{\frac{J+\log(1/\delta)}{N}}\cdot J\cdot\Bigg(1+2^{J\frac{t+t'-r}{\sigma-n/2+t'-r/2}(-t'-r_2+n/2)}+2^{J\frac{t+t'-r}{\sigma-n/2+t-r/2}(-t-r_2+n/2)} \nonumber\\
    &\quad +2^{J\frac{t+t'-r}{2\widetilde{d}+t+t'}(-t-t'+r_1-r_2+n)}+2^{J(-t-t'+r_1-r_2)}\Bigg) \nonumber\\
    &\asymp \sqrt{\frac{J+\log(1/\delta)}{N}}\cdot J\cdot 2^{\rho J(t+t'-r)/2},
\end{align}
where
\[
\rho=2\max\left\{ \frac{-t-r_2+n/2}{\sigma-n/2+t-r/2},\, \frac{-t'-r_2+n/2}{\sigma-n/2+t'-r/2},\,\frac{-t-t'+r_1-r_2+n}{t+t'+2\widetilde{d}} ,\,\frac{-t-t'+r_1-r_2}{t+t'-r},0\right\}.
\]
 The leading contributions to the final variance arise from the following five blocks in the $(j,j')$--plane:
\[
(0,0),\ \ \bigg(0,\ \frac{t+t'-r}{\sigma-\frac{n}{2}+t-\frac{r}{2}}J\bigg), \ \ \bigg(\frac{t+t'-r}{\sigma-\frac{n}{2}+t'-\frac{r}{2}}J,\ 0\bigg), \ \ \bigg(\frac{t+t'-r}{2\widetilde{d}+t+t'}J,\ \frac{t+t'-r}{2\widetilde{d}+t+t'}J\bigg), \ \  (J,J).
\]
\end{proof}

\subsection{Technical Lemmas}\label{subsec:technical_lemmas}

\begin{lemma}[Norm compression inequality]\label{lemma:norm_compression}
    Let $A\in\mathbb{R}^{p\times p}$ and let $(I_1,\dots,I_G)$ be a partition of $\{1,\dots,p\}$ with $|I_j|=p_j$ and $p_1+\cdots+p_G=p$.
For $1\le i,j\le G$ write $A_{ij}:=A[I_i,I_j]\in\mathbb{R}^{p_i\times p_j}$ for the $(i,j)$ block.
Define the \emph{norm compression} of $A$ to be the matrix
\[
\mathcal{N}(A)\in\mathbb{R}^{G\times G},\qquad
\big(\mathcal{N}(A)\big)_{ij}:=\|A_{ij}\|,
\]
where $\|\cdot\|$ denotes the operator norm. For any $A\in\mathbb{R}^{p\times p}$ and any partition $(I_1,\dots,I_G)$ as above,
\[
\|A\| \le \|\mathcal{N}(A)\| \le  \bigg(\sum_{i,j=1}^{G}\|A_{ij}\|^2\bigg)^{1/2}.
\]
\end{lemma}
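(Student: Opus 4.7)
The plan is to establish both inequalities by elementary arguments. For the left inequality $\|A\|\le\|\mathcal{N}(A)\|$, I would proceed as follows. Given any $x\in\mathbb{R}^p$, partition it conformally with the block structure as $x=(x_1,\ldots,x_G)$ with $x_i\in\mathbb{R}^{p_i}$, and let $y\in\mathbb{R}^G$ be the vector of block norms $y_i:=\|x_i\|_2$. Note that $\|y\|_2=\|x\|_2$. Writing $Ax$ blockwise, the $i$-th block is $\sum_{j}A_{ij}x_j$, so by the triangle inequality and the definition of the operator norm,
\[
\Bigl\|\sum_{j}A_{ij}x_j\Bigr\|_2 \le \sum_{j}\|A_{ij}\|\,\|x_j\|_2=\sum_{j}\bigl(\mathcal{N}(A)\bigr)_{ij}y_j.
\]
Squaring and summing over $i$ gives $\|Ax\|_2^2\le\|\mathcal{N}(A)y\|_2^2\le\|\mathcal{N}(A)\|^2\|y\|_2^2=\|\mathcal{N}(A)\|^2\|x\|_2^2$. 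Taking the supremum over unit vectors yields $\|A\|\le\|\mathcal{N}(A)\|$.

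For the right inequality, I would simply invoke the standard fact that the operator norm is bounded by the Frobenius norm: for any matrix $M\in\mathbb{R}^{G\times G}$, $\|M\|\le\|M\|_F$. Applying this to $M=\mathcal{N}(A)$ gives
\[
\|\mathcal{N}(A)\|\le\|\mathcal{N}(A)\|_F=\biggl(\sum_{i,j=1}^{G}\|A_{ij}\|^{2}\biggr)^{1/2},
\]
which is exactly the claimed bound.

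There is no real obstacle here: the result is purely linear-algebraic and both inequalities follow from standard arguments (triangle inequality plus operator norm of a nonnegative matrix on the vector of block norms, and Frobenius-dominates-operator-norm). The only minor point to be careful about is to verify that $\mathcal{N}(A)$ acts on $\mathbb{R}^G$ in the expected way, which is immediate from its definition. Since the proof is short and self-contained, I would present it essentially in the form of the two displays above, with a brief sentence connecting them.
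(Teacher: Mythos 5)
Your proposal is correct and follows essentially the same argument as the paper: partition the test vector, bound each block of $Ax$ via the triangle inequality to reduce to $\mathcal{N}(A)$ acting on the vector of block norms, then finish with the Frobenius bound $\|\mathcal{N}(A)\|\le\|\mathcal{N}(A)\|_F$.
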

\begin{proof}[Proof of Lemma \ref{lemma:norm_compression}]
Let $x\in\mathbb{R}^p$ with $\|x\|_2=1$ and decompose $x=(x_1;\dots;x_G)$ with $x_j\in\mathbb{R}^{p_j}$.
Then the $i$th block of $Ax$ is $\sum_{j=1}^G A_{ij}x_j$, so by the triangle inequality and the definition of the spectral norm,
\[
\| (Ax)_i \|_2 \;\le\; \sum_{j=1}^G \|A_{ij}\|\,\|x_j\|_2
\;=\; \sum_{j=1}^G \big(\mathcal{N}(A)\big)_{ij}\, y_j,
\]
where $y=(y_1,\dots,y_G)^\top$ with $y_j:=\|x_j\|_2$.
Therefore,
\[
\|Ax\|_2^2=\sum_{i=1}^G \|(Ax)_i\|_2^2
\;\le\; \sum_{i=1}^G \Big(\sum_{j=1}^G \big(\mathcal{N}(A)\big)_{ij} y_j\Big)^2
= \|\mathcal{N}(A)\,y\|_2^2
\;\le\; \|\mathcal{N}(A)\|^2 \,\|y\|_2^2
\;\le\; \|\mathcal{N}(A)\|^2,
\]
since $\|y\|_2^2=\sum_j \|x_j\|_2^2=\|x\|_2^2=1$.
Taking the supremum over all unit vectors $x$ gives the first inequality. The second inequality in the claim follows from
\[
\|\mathcal{N}(A)\| \le \|\mathcal{N}(A)\|_{F}=   \bigg(\sum_{i,j=1}^{G}\|A_{ij}\|^2\bigg)^{1/2}.
\]
\end{proof}

\begin{lemma}\label{lemma:empirical_cross_term}
Let $(X,Z)$ be jointly mean-zero Gaussian with $X\in\mathbb{R}^d$ and $Z\in\mathbb{R}$. 
Write
\[
\mu := \E[XZ]\in\R^d, \qquad 
\sigma_Z^2 := \E[Z^2], \qquad
\Sigma_{XX}:=\E[XX^\top].
\]
Let $(X_i,Z_i)_{i=1}^{N}$ be i.i.d.\ copies of $(X,Z)$. 
Then, for any $\delta\in(0,1)$, with probability at least $1-\delta$,
\[
\bigg\|\frac{1}{N}\sum_{i=1}^N X_i Z_i - \mu \bigg\|_2
\le
C \big(\sigma_Z \|\Sigma_{XX}\|^{1/2} + \|\mu\|_2\big)\!\left(
\sqrt{\frac{d+\log(2/\delta)}{N}}
+
\frac{d+\log(2/\delta)}{N}
\right),
\]
for a universal constant $C>0$.
\end{lemma}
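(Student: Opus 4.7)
The plan is to reduce the vector-valued deviation bound to a one-dimensional concentration inequality via discretization of the unit sphere, and then apply Bernstein's inequality to a centered sub-exponential sum. Concretely, fix a $1/2$-net $\mathcal{N}\subset S^{d-1}$ of the Euclidean unit sphere with $|\mathcal{N}|\le 5^d$, so that for any vector $v\in\R^d$,
\[
\|v\|_2 \;\le\; 2\sup_{u\in\mathcal{N}}\,u^\top v.
\]
Applying this to $v = \frac{1}{N}\sum_{i=1}^N X_iZ_i -\mu$, it suffices to control $u^\top v = \frac{1}{N}\sum_{i=1}^N Y_i(u)$ for each fixed $u\in\mathcal{N}$, where $Y_i(u):=u^\top X_i Z_i - u^\top\mu$ are i.i.d.\ centered random variables, and then take a union bound over the net.

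Next I would bound the sub-exponential norm of $Y_i(u)$. Since $(X,Z)$ is jointly Gaussian, both $u^\top X$ and $Z$ are centered Gaussians with $\|u^\top X\|_{\psi_2}\lesssim (u^\top\Sigma_{XX}u)^{1/2}\le \|\Sigma_{XX}\|^{1/2}$ and $\|Z\|_{\psi_2}\lesssim \sigma_Z$. Using the product bound $\|AB\|_{\psi_1}\lesssim \|A\|_{\psi_2}\|B\|_{\psi_2}$ and the triangle inequality with the constant shift $u^\top\mu$ (which has $|u^\top\mu|\le\|\mu\|_2$), we obtain
\[
\|Y_i(u)\|_{\psi_1}\;\lesssim\; \|\Sigma_{XX}\|^{1/2}\sigma_Z + \|\mu\|_2 \;=:\; K.
\]
Bernstein's inequality for sub-exponential sums then yields that, for any $t\ge 0$,
\[
\Prob\!\left(\Big|\tfrac{1}{N}\sum_{i=1}^N Y_i(u)\Big|\ge t\right)
\;\le\; 2\exp\!\left(-c\min\!\left(\tfrac{Nt^2}{K^2},\tfrac{Nt}{K}\right)\right).
\]
Setting the right-hand side equal to $\delta/|\mathcal{N}|$ and solving for $t$ gives, with probability at least $1-\delta/|\mathcal{N}|$,
\[
\Big|\tfrac{1}{N}\sum_{i=1}^N Y_i(u)\Big|\;\lesssim\; K\!\left(\sqrt{\tfrac{\log(2|\mathcal{N}|/\delta)}{N}} + \tfrac{\log(2|\mathcal{N}|/\delta)}{N}\right).
\]

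Finally I would take the union bound over $u\in\mathcal{N}$: using $|\mathcal{N}|\le 5^d$ and $\log|\mathcal{N}|\le d\log 5$, we get $\log(2|\mathcal{N}|/\delta)\lesssim d + \log(2/\delta)$. Combining with the net approximation bound $\|v\|_2\le 2\sup_{u\in\mathcal{N}}u^\top v$ delivers the stated inequality. There is no real obstacle here; the argument is the standard discretization-plus-Bernstein scheme for bounding norms of empirical means of products of Gaussians. The one point requiring slight care is the handling of the constant shift $u^\top\mu$: its sub-exponential norm contributes the additive $\|\mu\|_2$ term in $K$, which is exactly what produces the $\|\mu\|_2$ factor in the final bound. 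An alternative route, giving essentially the same constants, is to use Isserlis's theorem to compute $\Var(u^\top X Z)=(u^\top\Sigma_{XX}u)\sigma_Z^2+2(u^\top\mu)^2$ and feed the variance proxy $\sigma^2\lesssim K^2$ and tail proxy $K$ into a Bernstein-type bound for sub-exponential random variables directly.
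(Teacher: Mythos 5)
Your proposal is correct and follows the same path as the paper's proof: a $1/2$-net of $\mathbb{S}^{d-1}$ with cardinality $\le 5^d$, the sub-exponential bound $\|\langle u, X_iZ_i-\mu\rangle\|_{\psi_1}\lesssim\sigma_Z\|\Sigma_{XX}\|^{1/2}+\|\mu\|_2$ via the $\psi_2\cdot\psi_2\to\psi_1$ product rule, scalar Bernstein with $t=d+\log(2/\delta)$, a union bound over the net, and the standard $\|v\|_2\le 2\max_{u\in\mathcal N}\langle u,v\rangle$ recovery step. The Isserlis remark at the end is an extra, but the core argument coincides with the paper's.
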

\begin{proof}[Proof of Lemma \ref{lemma:empirical_cross_term}]
Fix a unit vector $u\in \mathbb{S}^{d-1}$. Since both $\langle u,X\rangle$ and $Z$ are centered Gaussian, with
$\|\langle u,X\rangle\|_{\psi_2} \lesssim \|\Sigma_{XX}\|^{1/2}$,
$\|Z\|_{\psi_2} \lesssim \sigma_Z$, hence their product is sub-exponential
(see, e.g., \cite[Lemma 2.7.7]{vershynin2018high}) and
\begin{align*}
\|\langle u, XZ - \mu\rangle\|_{\psi_1} 
\lesssim \|\langle u,X\rangle Z\|_{\psi_1} + |\langle u,\mu\rangle|
\lesssim \sigma_Z \|\Sigma_{XX}\|^{1/2} + \|\mu\|_2.
\end{align*}
Let $W_i(u):=\langle u, X_i Z_i - \mu\rangle$. By the scalar Bernstein inequality for independent sub-exponential variables
(\cite[Theorem 2.8.1]{vershynin2018high}), for any $t>0$,
\begin{equation}\label{eq:bernstein-1d}
\mathbb{P}\left[\bigg|\frac1N \sum_{i=1}^N W_i(u)\bigg|
\ge
C(\sigma_Z \|\Sigma_{XX}\|^{1/2} + \|\mu\|_2)\bigg(\sqrt{\frac{t}{N}} + \frac{t}{N}\bigg)
\right]
\le 2 e^{-t}.
\end{equation}
Let $\mathcal{N}$ be a $\frac{1}{2}$-net of $\mathbb{S}^{d-1}$ with $|\mathcal{N}|\le 5^d$. 
By a union bound over $\mathcal{N}$ and \eqref{eq:bernstein-1d} with $t=d+\log(2/\delta)$,
we obtain with probability at least $1-\delta$,
\[
\max_{u\in\mathcal{N}} \bigg|\frac{1}{N}\sum_{i=1}^N W_i(u)\bigg|
\le C \big(\sigma_Z \|\Sigma_{XX}\|^{1/2} + \|\mu\|_2\big)\left(
\sqrt{\frac{d+\log(2/\delta)}{N}}
+
\frac{d+\log(2/\delta)}{N}
\right).
\]
Finally, a standard covering argument shows that for any $v\in\mathbb{R}^d$,
\[
\|v\|_2 \le 2 \max_{u\in\mathcal{N}} \langle u, v\rangle.
\]
Applying this with 
\(v = \frac{1}{N}\sum_{i=1}^N X_i Z_i - \mu\)
yields the desired Euclidean-norm bound and completes the proof.
\end{proof}

\begin{lemma}\label{lemma:tech1}
Let $X\sim\mcN(0,\Sigma)$ in $\R^d$ with $c I_d \preceq \Sigma \preceq C I_d$ for some constants $0<c\le C<\infty$, and let $X_1,\ldots,X_N$ be i.i.d.\ copies of $X$. Define $\widehat{\Sigma}:=\frac{1}{N}\sum_{i=1}^N X_i X_i^\top.$ Fix $\delta\in(0,1)$. If $N\gtrsim (\frac{C}{c})^2(d+\log(1/\delta))$, then with probability at least $1-\delta/2$,
\[
\sigma_{\min}(\widehat{\Sigma})\ge c/2,
\]
and in particular $\widehat{\Sigma}$ is invertible. Let $\Omega\subset \{1,\ldots,d\}$, and conditioned on $\widehat{\Sigma}$, let $v\sim \mcN(0,(\widehat{\Sigma}^{-1})_{\Omega,\Omega})$. Then, with probability at least $1-\delta$ jointly over $(X_1,\ldots, X_N,v)$,
\[
\|v\|_{\max}\lesssim \sqrt{\frac{\log(|\Omega|/\delta)}{c}}.
\]
\end{lemma}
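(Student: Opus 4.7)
The plan is to establish the two parts of the lemma separately and then combine them via a union bound.

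For the first part, I will reduce to the isotropic case by the standard whitening trick. Setting $Y_i := \Sigma^{-1/2} X_i \sim \mathcal{N}(0, I_d)$, I have
\[
\widehat{\Sigma} = \Sigma^{1/2} \Big(\frac{1}{N}\sum_{i=1}^N Y_i Y_i^\top\Big)\Sigma^{1/2},
\]
so $\sigma_{\min}(\widehat{\Sigma}) \ge c \cdot \sigma_{\min}\big(\frac{1}{N}\sum_i Y_i Y_i^\top\big)$. A standard concentration bound for sample covariance matrices of sub-Gaussian vectors (e.g., a covering net argument on $\mathbb{S}^{d-1}$ combined with the scalar Bernstein inequality applied to $\langle u, Y_iY_i^\top u\rangle - 1$, or directly \cite[Theorem 4.6.1]{vershynin2018high}) yields, with probability at least $1-\delta/2$,
\[
\Big\|\frac{1}{N}\sum_{i=1}^N Y_i Y_i^\top - I_d\Big\| \lesssim \sqrt{\frac{d+\log(2/\delta)}{N}} + \frac{d+\log(2/\delta)}{N}.
\]
Under the hypothesis $N \gtrsim (C/c)^2(d+\log(1/\delta))$, the right-hand side can be made at most $1/2$, giving $\sigma_{\min}\big(\frac{1}{N}\sum_i Y_i Y_i^\top\big) \ge 1/2$ and hence $\sigma_{\min}(\widehat{\Sigma}) \ge c/2$, so $\widehat{\Sigma}$ is invertible on this event.

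For the second part, I condition on the event $\mathcal{E}$ that $\sigma_{\min}(\widehat{\Sigma}) \ge c/2$. On $\mathcal{E}$, the operator norm bound $\|\widehat{\Sigma}^{-1}\| \le 2/c$ implies that each diagonal entry of the submatrix satisfies
\[
\big((\widehat{\Sigma}^{-1})_{\Omega,\Omega}\big)_{j,j} \le \sigma_{\max}(\widehat{\Sigma}^{-1}) \le 2/c, \qquad j \in \Omega.
\]
Thus each coordinate $v_j$ of $v \sim \mathcal{N}(0, (\widehat{\Sigma}^{-1})_{\Omega,\Omega})$ is centered Gaussian with variance at most $2/c$. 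By the standard Gaussian tail bound $\mathbb{P}(|v_j| \ge t \mid \widehat{\Sigma}) \le 2\exp(-c t^2/4)$, a union bound over $j \in \Omega$ gives, with conditional probability at least $1-\delta/2$ given $\widehat{\Sigma}\in\mathcal E$,
\[
\|v\|_{\max} \lesssim \sqrt{\log(|\Omega|/\delta)/c}.
\]

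Finally, I combine the two events by the law of total probability: the event $\mathcal{E}$ holds with probability at least $1-\delta/2$, and conditional on $\mathcal{E}$ the $v$-event holds with probability at least $1-\delta/2$, so the joint event occurs with probability at least $1-\delta$. There is no real technical obstacle here: both ingredients are classical. The only slightly delicate point is being careful about the two sources of randomness ($X_1,\ldots,X_N$ and $v$) and making sure the conditional Gaussian tail bound on $v$ is applied on the high-probability event $\mathcal{E}$ where $\widehat{\Sigma}^{-1}$ is well-behaved, which is exactly why the problem is set up with conditioning on $\widehat{\Sigma}$.
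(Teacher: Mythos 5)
Your proposal is correct, and both parts agree with the paper's proof in substance. In Part 1 you whiten via $Y_i=\Sigma^{-1/2}X_i$ and apply the isotropic sample-covariance concentration bound, extracting $\sigma_{\min}(\widehat{\Sigma})\ge c\,\sigma_{\min}\!\big(\frac{1}{N}\sum_iY_iY_i^\top\big)$; the paper instead applies the deviation bound directly to $\|\widehat{\Sigma}-\Sigma\|\lesssim\|\Sigma\|(\cdots)$ and invokes Weyl's inequality, $\sigma_{\min}(\widehat{\Sigma})\ge\sigma_{\min}(\Sigma)-\|\widehat{\Sigma}-\Sigma\|$. These are minor variants of the same concentration argument; your whitened version is actually marginally sharper, since it only requires $N\gtrsim d+\log(1/\delta)$ to ensure the deviation is at most $1/2$, whereas the paper's un-whitened route genuinely needs the $(C/c)^2$ factor to make $\|\widehat{\Sigma}-\Sigma\|\le c/2$ (the stated hypothesis is of course sufficient for both). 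Part 2 --- bounding the diagonal of $(\widehat{\Sigma}^{-1})_{\Omega,\Omega}$ by $1/\sigma_{\min}(\widehat{\Sigma})\le 2/c$ on the good event, applying a Gaussian tail bound coordinatewise, and taking a union bound over $\Omega$ --- is identical to the paper's, and your handling of the conditioning and the intersection of the two events is clean and correct.
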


\begin{proof}[Proof of Lemma \ref{lemma:tech1}]

By the standard operator-norm deviation bound for Gaussian sample covariances \cite{koltchinskii2017concentration},
\[
\mathbb{P}\bigg[\,\|\hat\Sigma-\Sigma\|
\lesssim \|\Sigma\|\Big(\sqrt{\tfrac{d+\log(2/\delta)}{N}}
+\tfrac{d+\log(2/\delta)}{N}\Big)\bigg] \ge 1-\tfrac{\delta}{2}.
\]
If $N\gtrsim (\frac{C}{c})^2 (d+\log(1/\delta))$, the deviation term is at most $\frac{1}{2}\sigma_{\min}(\Sigma)$. By Weyl's inequality,
\[
\sigma_{\min}(\widehat{\Sigma}) \ge \sigma_{\min}(\Sigma)-\|\widehat{\Sigma}-\Sigma\|
 \ge \frac{1}{2}\sigma_{\min}(\Sigma) \ge c/2
\]
with probability at least $1-\delta/2$. Denote this event by $\mathcal{E}$; in particular, $\widehat{\Sigma}$ is invertible on~$\mathcal{E}$.

Condition on $\hat\Sigma$ and let $v\sim\mathcal N(0,(\hat\Sigma^{-1})_{\Omega,\Omega})$.
Since $v=(v_j)_{j\in \Omega}$ has coordinates $v_j\sim \mcN(0,(\widehat{\Sigma}^{-1})_{jj})$ for $j\in \Omega$, we have
\[
\mathbb{P}\left[\|v\|_{\max}>t\,\big|\,\hat\Sigma\right] \le \sum_{j\in \Omega} 2\exp\!\Big(-\tfrac{t^2}{2(\hat\Sigma^{-1})_{jj}}\Big)
\le 2|\Omega|\exp\!\Big(-\tfrac{t^2\,\sigma_{\min}(\hat\Sigma)}{2}\Big).
\]
Taking $t=\sqrt{\frac{2\log(4|\Omega|/\delta)}{\sigma_{\min}(\widehat{\Sigma})}}$ gives conditional probability at most $\delta/2$. On~$\mathcal{E}$, $\sigma_{\min}(\widehat{\Sigma})\ge c/2$, hence
\[
\|v\|_{\max} \le \sqrt{\frac{4\log(4|\Omega|/\delta)}{c}}\lesssim \sqrt{\frac{\log(|\Omega|/\delta)}{c}}
\]
with conditional probability at least $1-\delta/2$.

Intersecting the two events yields the result with probability at least $1-\delta$ jointly over $(X_1,\ldots,X_N,v)$.
\end{proof}

\section{Auxiliary Materials Section \ref{sec:stability}}\label{app:stability}

We explain why the modified estimator defined through the updated support $\supp^{\mathrm{new}}(J,t,t')$ in \eqref{eq:supp_Jtt'_new} can yield the enhanced bound \eqref{eq:enhanced_estimate_0}.
  
  A direct modification of the proof of Proposition~\ref{prop:Error-Compression} gives
\[
\ErrorCompression\lesssim \varepsilon 2^{-J(t+t'-r)}.
\]
Indeed, recall that
\[
\ErrorCompression=\|\bfD^{-t'}_{\Lambda_{J}}(\bfA^{\varepsilon}_{\Lambda_J}-\bfA_{\Lambda_J})\bfD^{-t}_{\Lambda_{J}}\|\le \bigg(\sum_{j,j'= j_0}^{J} 2^{-2jt'-2j't} \|(\bfA^{\varepsilon}_{\Lambda_J})_{j,j'}-(\bfA_{\Lambda_J})_{j,j'}\|^2\bigg)^{1/2}.
\]
For indices $(j,j')$ where the new thresholding rule is performed, we have
\begin{align*}
\|(\bfA^{\varepsilon}_{\Lambda_J})_{j,j'}-(\bfA_{\Lambda_{J}})_{j,j'}\|\lesssim 2^{-(j+j')\widetilde{d}} \cdot (\tau_{jj'}^{\mathrm{new}})^{-(2\widetilde{d}+r)} \le  \frac{\varepsilon}{J}2^{-J(t+t'-r)+jt'+j't},
\end{align*}
and for discarded indices
\begin{align*}
\|(\bfA^{\varepsilon}_{\Lambda_J})_{j,j'}-(\bfA_{\Lambda_{J}})_{j,j'}\|=\|(\bfA_{\Lambda_{J}})_{j,j'}\|\lesssim 2^{(j+j')r/2-|j-j'|(\sigma-n/2)}  \le  \frac{\varepsilon}{J}2^{-J(t+t'-r)+jt'+j't}.
\end{align*}
Substituting these bounds above yields the claimed estimate for $\ErrorCompression$.

Next, observe that the new thresholding parameters and slope conditions differ from those in~\eqref{eq:supp_Jtt'} only by polynomial factors in $J$ and by at most $O(\log J)$ shifts in scale. Consequently, the arguments in Proposition~\ref{prop:Error-OVB} and Proposition~\ref{prop:Error-Var} carry over with minor changes.

For Proposition~\ref{prop:Error-OVB}, the modifications consist of verifying the support-inclusion relations (as in Lemma~\ref{lemma:supp_monotonicity}) under the new thresholding rule and slope conditions, and choosing an appropriately adjusted $\widetilde J$. This gives
\[
\ErrorOVB \lesssim \varepsilon 2^{-J(t+t'-r)}.
\]
For the variance term, one adapts the support-pattern analysis in Proposition \ref{prop:sparsity_count} to the new support $\supp^{\mathrm{new}}(J,t,t')$. With this in place, the proof of Proposition~\ref{prop:Error-Var} extends directly and yields
\[
\ErrorVar\lesssim \sqrt{\frac{\log(1/\delta)}{N}}\cdot (J/\varepsilon)^{\kappa}\cdot 2^{\rho J(t+t'-r)/2},
\]
for some constant $\kappa>0$, where $\rho$ is the same exponent as in~\eqref{eq:rho}.
We omit the technical details for brevity. 

Finally, combining the bounds for $\ErrorCompression,\ErrorOVB$, and $\ErrorVar$ yields the estimate \eqref{eq:enhanced_estimate_0}.

\end{document}